\def\scl{0.3}
\newtheorem{thm}{Theorem}[section]
\newtheorem{lemma}[thm]{Lemma}
\newtheorem{cor}[thm]{Corollary}
\newtheorem{prop}[thm]{Proposition}
\newtheorem{conj}[thm]{Conjecture}
\newtheorem{question}[thm]{Question}
\newtheorem{claim}{Claim}
\newtheorem{Definition}[thm]{Definition}
\newenvironment{definition}
  {\begin{Definition}\rm}{\end{Definition}}
\newtheorem{Example}[thm]{Example}
\newenvironment{example}
  {\begin{Example}\rm}{\end{Example}}
\newtheorem{Remark}[thm]{Remark}
\newenvironment{remark}
  {\begin{Remark}\rm}{\end{Remark}}
\numberwithin{equation}{section}
\apptocmd{\sloppy}{\hbadness 10000\relax}{}{}
\newcommand{\oldpaperpermcontainment}{Proposition~2.2}
\newcommand{\oldpapersymprop}{Theorem~5.1}
\newcommand{\oldpaperconfluence}{Corollaries~9.2 and~11.5}
\newcommand{\oldpapernonescaping}{Lemma~8.2}
\newcommand{\oldpapersinks}{Lemmas~6.6 and~11.1}
\newcommand{\oldpaperconjecture}{Conjectures~1 and~2}
\newcommand{\oldpapertruncatedsymremark}{Remark~16.3}
\newcommand{\emailhref}[1]{\email{\href{#1}{#1}}}
\title[A positive formula for Ehrhart-like polynomials]{A positive formula for the Ehrhart-like polynomials from root system chip-firing}
\author[S. Hopkins]{Sam Hopkins}\emailhref{samuelfhopkins@gmail.com}
\author[A. Postnikov]{Alexander Postnikov}\emailhref{apost@math.mit.edu}
\address{Department of Mathematics, Massachusetts Institute of Technology, Cambridge, MA 02139, USA}
\date{\today}
\subjclass[2010]{17B22; 52B20}
\keywords{Root systems; chip-firing; Ehrhart polynomials; permutohedra; zonotopes; root polytopes}
\begin{document}

\begin{abstract}
In earlier work in collaboration with Pavel Galashin and Thomas McConville we introduced a version of chip-firing for root systems. Our investigation of root system chip-firing led us to define certain polynomials analogous to Ehrhart polynomials of lattice polytopes, which we termed the \emph{symmetric} and \emph{truncated Ehrhart-like polynomials}. We conjectured that these polynomials have nonnegative integer coefficients. Here we affirm ``half'' of this positivity conjecture by providing a positive, combinatorial formula for the coefficients of the symmetric Ehrhart-like polynomials. This formula depends on a subtle integrality property of slices of permutohedra, and in turn a lemma concerning dilations of projections of root polytopes, which both may be of independent interest. We also discuss how our formula very naturally suggests a conjecture for the coefficients of the truncated Ehrhart-like polynomials that turns out to be false in general, but which may hold in some cases.
\end{abstract}

\maketitle

\section{Introduction and statement of results} \label{sec:intro}

In~\cite{galashin2017rootfiring1} and~\cite{galashin2017rootfiring2}, together with Pavel Galashin and Thomas McConville, we introduced an analog of chip-firing for root systems. More specifically, in these papers we studied certain discrete dynamical processes whose states are the weights of a root system and whose transition moves consist of adding roots under certain conditions. We referred to these processes as \emph{root-firing} processes. Our investigation of root-firing was originally motivated by Jim Propp's labeled chip-firing game~\cite{hopkins2017sorting}: indeed, \emph{central root-firing}, which is the main subject of~\cite{galashin2017rootfiring2}, is exactly the same as Propp's labeled chip-firing when the root system is of Type A. But in~\cite{galashin2017rootfiring1} we instead focused on some remarkable deformations of central root-firing, which we called \emph{interval root-firing}, or just \emph{interval-firing} for short. It is these interval-firing processes which concern us in this present paper. So let us briefly review the definitions and key properties of these interval-firing processes.

Let $\Phi$ be an irreducible, crystallographic root system in an $n$-dimensional Euclidean vector space~$(V,\langle\cdot,\cdot\rangle)$, with weight lattice $P \coloneqq \{v \in V\colon \langle v,\alpha^\vee\rangle \in \mathbb{Z} \textrm{ for all $\alpha\in \Phi$}\}$ and set of positive roots~$\Phi^+$. Let $k \in \mathbb{Z}_{\geq 0}$ be any nonnegative integer. The \emph{symmetric interval-firing process} is the binary relation on~$P$ defined by
\[ \lambda \xrightarrow[\mathrm{sym},\,k]{} \lambda + \alpha \textrm{ whenever $\langle \lambda,\alpha^\vee\rangle+1 \in \{-k,-k+1,\ldots,k\}$ for $\lambda \in P$, $\alpha \in \Phi^+$};\]
and the \emph{truncated interval-firing process} is the binary relation on $P$ defined by
\[ \lambda \xrightarrow[\mathrm{tr},\,k]{} \lambda + \alpha \textrm{ whenever $\langle \lambda,\alpha^\vee\rangle + 1 \in \{-k+1,-k+2,\ldots,k\}$ for $\lambda \in P$, $\alpha \in \Phi^+$}.\]
(The intervals defining these processes are the same as those defining the extended $\Phi^\vee$-Catalan and extended $\Phi^\vee$-Shi hyperplane arrangements, and, although we have no precise statement to this effect, empirically it seems that the remarkable properties of these families of hyperplane arrangements~\cite{edelman1996free, postnikov2000deformations, athanasiadis2000deformations,terao2002multiderivations, yoshinaga2004characterization, abe2011freeness} are reflected in the interval-firing processes.) One should think of the ``$k$'' here as a deformation parameter: we are interested in understanding how these processes change as $k$ varies.

One of the main results of~\cite{galashin2017rootfiring1} is that for any $\Phi$ and any $k \in \mathbb{Z}_{\geq 0}$ these two interval-firing processes are both confluent (and terminating), meaning that there is always a unique stabilization starting from any initial weight~$\lambda \in P$; or in other words, in the directed graphs corresponding to these relations, each connected component contains a unique sink. It was also shown that these sinks are (a subset of) $\{\eta^k(\lambda)\colon \lambda \in P\}$ where~$\eta\colon P\to P$ is the piecewise-linear ``dilation'' operator on the weight lattice defined by $\eta(\lambda) \coloneqq \lambda + w_{\lambda}(\rho)$. Here $w_{\lambda} \in W$ is the minimal length element of the Weyl group~$W$ of $\Phi$ such that $w_{\lambda}^{-1}(\lambda)$ is dominant, and~$\rho \coloneqq \frac{1}{2}\sum_{\alpha \in \Phi^+} \alpha$ is the \emph{Weyl vector} of $\Phi$. Hence, it makes sense to define the stabilization maps $s^{\mathrm{sym}}_k,s^{\mathrm{tr}}_k\colon P \to P$ by
\begin{align*}
s^{\mathrm{sym}}_k(\mu) = \lambda &\Leftrightarrow \textrm{ the $\xrightarrow[\mathrm{sym},\,k]{}$-stabilization of $\mu$ is $\eta^k(\lambda)$}; \\
s^{\mathrm{tr}}_k(\mu) = \lambda &\Leftrightarrow \textrm{ the $\xrightarrow[\mathrm{tr},\,k]{}$-stabilization of $\mu$ is $\eta^k(\lambda)$}.
\end{align*}

These interval-firing processes turn out to be closely related to convex polytopes. For instance, it was observed in~\cite{galashin2017rootfiring1} that the set $(s^{\mathrm{sym}}_k)^{-1}(\lambda)$ (or $(s^{\mathrm{tr}}_k)^{-1}(\lambda)$) of weights with stabilization $\eta^k(\lambda)$ looks ``the same'' across all values of $k$ except that it gets ``dilated'' as $k$ is scaled. In analogy with the \emph{Ehrhart polynomial}~\cite{ehrhart1977polynomes} $L_{\mathcal{P}}(k)$ of a convex lattice polytope $\mathcal{P}$, which counts the number of lattice points in the $k$th dilate $k\mathcal{P}$ of the polytope, in~\cite{galashin2017rootfiring1} we began to investigate for all $\lambda \in P$ the quantities
\begin{align*}
L^{\mathrm{sym}}_{\lambda}(k) &\coloneqq \#(s^{\mathrm{sym}}_k)^{-1}(\lambda); \\
L^{\mathrm{tr}}_{\lambda}(k) &\coloneqq \#(s^{\mathrm{tr}}_k)^{-1}(\lambda),
\end{align*}
as functions of $k \in \mathbb{Z}_{\geq 0}$. It was shown in~\cite{galashin2017rootfiring1} that $L^{\mathrm{sym}}_{\lambda}(k)$ is a polynomial in $k$ for all~$\lambda \in P$ for any $\Phi$, and it was shown that $L^{\mathrm{tr}}_{\lambda}(k)$ is a polynomial in $k$ for all~$\lambda \in P$ assuming that $\Phi$ is simply-laced. Hence we refer to $L^{\mathrm{sym}}_{\lambda}(k)$ and $L^{\mathrm{tr}}_{\lambda}(k)$ as the \emph{symmetric} and \emph{truncated Ehrhart-like polynomials}, respectively. It was conjectured in~\cite{galashin2017rootfiring1} that for any $\Phi$ both $L^{\mathrm{sym}}_{\lambda}(k)$ and $L^{\mathrm{tr}}_{\lambda}(k)$ are polynomials in $k$ for all~$\lambda \in P$, and was moreover conjectured that these polynomials always have nonnegative integer coefficients. This positivity conjecture about Ehrhart-like polynomials connects root system chip-firing to the broader study of \emph{Ehrhart positivity} (see for instance the recent survey of Liu~\cite{liu2017positivity}). In fact, although the set of weights with a fixed interval-firing stabilization is in general not the set of lattice points in a convex polytope, these Ehrhart-like polynomials turn out to be closely related to Ehrhart polynomials of zonotopes, as discussed below.  %

In this paper we affirm ``half'' of the positivity conjecture: we show that the symmetric Ehrhart-like polynomials $L^{\mathrm{sym}}_{\lambda}(k)$ have nonnegative integer coefficients by providing an explicit, positive formula for these polynomials. We need just a bit more notation to write down this formula. Suppose the simple roots of $\Phi$ are $\alpha_1,\ldots,\alpha_{n}$. Recall that a weight $\lambda \in P$ is \emph{dominant} if $\langle\lambda,\alpha_i^\vee\rangle \geq 0$ for all $i \in [n] \coloneqq \{1,\ldots,n\}$. We denote the dominant weights by $P_{\geq 0}$. For $I \subseteq [n]$, let~$W_I$ denote the parabolic subgroup of~$W$ generated by the simple reflections $s_{\alpha_i}$ for~$i \in I$. For any weight $\lambda \in P$, let $\lambda_{\mathrm{dom}}$ denote the unique dominant element of~$W(\lambda)$. For a dominant weight $\lambda \in P_{\geq 0}$ we set~$I^{0,1}_{\lambda} \coloneqq \{i\in [n]\colon \langle \lambda,\alpha_i^\vee\rangle \in \{0,1\}\}$, and for an arbitrary weight $\lambda \in P$ we set $I^{0,1}_{\lambda} \coloneqq I^{0,1}_{\lambda_{\mathrm{dom}}}$. Let~$Q \coloneqq  \mathrm{Span}_{\mathbb{Z}}(\Phi)$ denote the \emph{root lattice} of $\Phi$.

Finally, for any linearly independent subset $X \subseteq \Lambda$ of a lattice $\Lambda$, we use $\mathrm{rVol}_{\Lambda}(X)$ to denote the \emph{relative volume} (with respect to~$\Lambda$) of $X$, which is the greatest common divisor of the maximal minors of the matrix whose columns are the coefficients expressing the elements of $X$ in some basis of $\Lambda$.\footnote{This is the relative volume, with respect to  $\Lambda$, of the paralellepiped $\sum_{v \in X} [0,v]$; it is also equal to the number of $\Lambda$-points in the half-open parallelepied $\sum_{\alpha \in X} [0,v)$ (see e.g.~\cite[Lemma 9.8]{beck2015computing}).}

Then we have the following:

\begin{thm} \label{thm:main_intro}
Let $\lambda \in P$ be any weight. Then $L^{\mathrm{sym}}_{\lambda}(k)=0$ if $\langle \lambda,\alpha^\vee\rangle = -1$ for some positive root~$\alpha \in \Phi^+$, and otherwise
\[L^{\mathrm{sym}}_{\lambda}(k) = \hspace{-0.5cm} \sum_{\substack{X\subseteq \Phi^+, \\ \textrm{$X$ is linearly} \\ \textrm{independent}}} \hspace{-0.5cm} \#\left\{\mu \in w_{\lambda}W_{I^{0,1}_{\lambda}} (\lambda_{\mathrm{dom}})\colon \parbox{1.6in}{\begin{center}$\langle\mu,\alpha^\vee\rangle \in \{0,1\}$ for \\ all $\alpha \in \Phi^+\cap \mathrm{Span}_{\mathbb{R}}(X)$\end{center}}\right\} \cdot \mathrm{rVol}_Q(X) \,\, k^{\#X}.\]
\end{thm}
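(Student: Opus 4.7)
The plan is to express $(s^{\mathrm{sym}}_k)^{-1}(\lambda)$ as a disjoint union of weight-lattice points in $k$-dilates of lattice zonotopes, and then invoke Stanley's classical formula for the Ehrhart polynomial of a zonotope: $L_Z(k) = \sum_X \mathrm{rVol}(X)\, k^{|X|}$, summed over linearly independent subsets $X$ of the zonotope generators. The vanishing claim—that $L^{\mathrm{sym}}_\lambda(k) = 0$ whenever $\langle \lambda,\alpha^\vee \rangle = -1$ for some $\alpha \in \Phi^+$—should follow from the sink characterization of \oldpapersinks, combined with the Weyl-equivariance of interval-firing from \oldpapersymprop, which together show that no such weight can be $\eta^k$ applied to any stabilization.

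For the nontrivial case, I would first use Weyl equivariance to reduce to dominant $\lambda = \lambda_{\mathrm{dom}}$ and then transport the result by $w_\lambda$. The natural decomposition of the preimage should be indexed by elements $\mu$ of the coset $w_\lambda W_{I^{0,1}_{\lambda_{\mathrm{dom}}}}(\lambda_{\mathrm{dom}})$ appearing in the theorem: the parabolic subgroup $W_{I^{0,1}_{\lambda_{\mathrm{dom}}}}$ is precisely the stabilizer of the local structure of the interval-firing graph near $\eta^k(\lambda_{\mathrm{dom}})$ (the simple reflections indexed by $I^{0,1}_{\lambda_{\mathrm{dom}}}$ correspond to walls across which the firing moves are ``symmetric'' up to reflection), so distinct such $\mu$'s should contribute disjoint chambers of the preimage. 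For each $\mu$ in this coset, I would identify the corresponding chamber with the weight-lattice points in a translate of the $k$-dilate of a slice of the permutohedron $\mathrm{Perm}(\rho)$—namely, the slice cut out by hyperplanes $\langle x,\alpha^\vee\rangle \in \{0,1\}$ for positive roots $\alpha$ whose pairing with $\mu$ lies in $\{0,1\}$.

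The main obstacle, and the technical heart of the proof, will be establishing the ``subtle integrality property of slices of permutohedra'' mentioned in the abstract: one must show that these slices, although a priori only rational polytopes, are in fact lattice zonotopes generated by roots in the subsystem $\Phi^+\cap\mathrm{Span}_{\mathbb{R}}(X)$ for a linearly independent $X \subseteq \Phi^+$, and that their $P$-lattice points match up (modulo the base translate) with $Q$-lattice points of the generating zonotope. I expect this to reduce, by induction on $\dim\mathrm{Span}_{\mathbb{R}}(X)$, to a lemma asserting that suitable projections of root polytopes remain lattice polytopes under an appropriate dilation—this is the ``dilations of projections of root polytopes'' lemma also advertised in the abstract. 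With both geometric inputs in place, applying Stanley's zonotope formula to each chamber and summing over $\mu$ should yield exactly $\sum_X \#\{\mu:\ldots\}\cdot \mathrm{rVol}_Q(X)\, k^{\#X}$, matching the theorem. The factor $\mathrm{rVol}_Q$ rather than $\mathrm{rVol}_P$ is the clean consequence of measuring the zonotope generators in the root lattice, which is the natural habitat of the positive roots.
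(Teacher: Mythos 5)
Your core strategy---tiling $(s^{\mathrm{sym}}_k)^{-1}(\lambda)$ into chambers indexed by $\mu \in w_\lambda W_{I^{0,1}_{\lambda_{\mathrm{dom}}}}(\lambda_{\mathrm{dom}})$ and applying Stanley's zonotope formula chamber-by-chamber---is not what the paper does, and the natural version of it provably fails. The obvious candidate decomposition indexed by such $\mu$ is into the truncated preimages $(s^{\mathrm{tr}}_k)^{-1}(\mu)$, and for your plan to go through you would need $\#(s^{\mathrm{tr}}_k)^{-1}(\mu) = L^{\mathrm{tr}}_\mu(k)$ to equal $\sum_X \mathrm{rVol}_Q(X)\,k^{\#X}$ over linearly independent $X \subseteq \Phi^+$ with $\langle\mu,\alpha^\vee\rangle \in \{0,1\}$ for all $\alpha \in \Phi^+\cap\mathrm{Span}_{\mathbb{R}}(X)$; this is precisely Conjecture~\ref{conj:main}, which the paper shows is \emph{false} (the smallest counterexamples are in $G_2$, with more in $C_3$, $D_4$, $C_4$). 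Moreover the sets $(s^{\mathrm{tr}}_k)^{-1}(\mu)$ are in general not the lattice points of any dilated lattice polytope---the introduction says as much---so ``identify the corresponding chamber with the weight-lattice points in a translate of the $k$-dilate of a slice'' is not achievable for the natural chambers. In fact the existence of \emph{any} compatible tiling by half-open parallelepipeds with the right count, which would realize your intuition geometrically, is posed as an open question in Section~4.2. So your plan does not close the gap; it restates it.

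The paper's actual route is essentially algebraic. One first proves a formula for $\#(\mathcal{P}+\mathbf{k}\mathcal{Z})\cap\mathbb{Z}^n$ (Theorem~\ref{thm:poly_plus_zono}) using McMullen's multi-parameter Ehrhart polynomiality and a slicing argument to identify leading coefficients---no tiling required. The integrality lemma (Lemma~\ref{lem:slice}), proved via the root-polytope projection-dilation Lemma~\ref{lem:root_projection}, is then used not to show that chambers are zonotopes, but to replace the opaque quantity $\#\bigl(\mathrm{quot}_X(\Pi(\lambda))\cap\mathrm{quot}_X(Q+\lambda)\bigr)$ with the combinatorial quantity $\#\mathrm{quot}_X(\Pi^Q(\lambda))$, yielding Theorem~\ref{thm:perm}. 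Next the permutohedron non-escaping lemma gives $\bigcup_{w\in W^{I^{0,1}_\lambda}}(s^{\mathrm{sym}}_{\mathbf{k}})^{-1}(w\lambda) = \Pi^Q(\lambda+\rho_{\mathbf{k}})\setminus\bigcup_{\mu<\lambda}\Pi^Q(\mu+\rho_{\mathbf{k}})$ (Proposition~\ref{prop:fibers}); one applies M\"obius inversion over the dominant-weight poset (using Stembridge's meet semi-lattice structure, together with Propositions~\ref{prop:intersections_perm}, \ref{prop:intersections_quot}, \ref{prop:quots_minuscule}) to push this set difference through Theorem~\ref{thm:perm} coefficientwise, and finally divides by $[W:W_{I^{0,1}_\lambda}]$ via $W$-equivariance (Propositions~\ref{prop:poly_symmetry}, \ref{prop:orbit_symmetry}). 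Your vanishing argument, your reduction to dominant $\lambda$, and your identification of the two key lemmas (the integrality of slices and the projection-dilation property of root polytopes) as the technical heart are all correct; but the glue holding them together in the paper is inclusion-exclusion over dominant weights, not a decomposition of the fiber into zonotopal pieces.
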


Note that the condition $\langle \mu, \alpha^\vee\rangle \in \{0,1\}$ for all $\alpha \in \Phi^+\cap\mathrm{Span}_{\mathbb{R}}(X)$ is equivalent to saying that the orthogonal projection of $\mu$ onto $\mathrm{Span}_{\mathbb{R}}(X)$ is zero or a minuscule weight of the sub-root system $\Phi\cap\mathrm{Span}_{\mathbb{R}}(X)$. Thus, as one might expect, the combinatorics of minuscule weights and more generally the combinatorics of the partial order of dominant weights (which was first extensively investigated by Stembridge~\cite{stembridge1998partial}) features prominently in the proof of Theorem~\ref{thm:main_intro}.

The other major ingredient in the proof of Theorem~\ref{thm:main_intro} is a kind of extension of the Ehrhart theory of zonotopes. In 1980, Stanley~\cite{stanley1980decompositions} (see also~\cite{stanley1991zonotope}) proved that the Ehrhart polynomial~$L_{\mathcal{Z}}(k)$ of a lattice zonotope~$\mathcal{Z}$ has nonnegative integer coefficients; in fact, he gave the following explicit formula:
\begin{equation} \label{eqn:zonotope_formula}
L_{\mathcal{Z}}(k) = \sum_{\substack{X \subseteq \{v_1,\ldots,v_m\}, \\ X \textrm{ is linearly} \\ \textrm{independent}}}\mathrm{rVol}_{\mathbb{Z}^n}(X) \, k^{\#X},
\end{equation}
where $\mathcal{Z} \coloneqq \sum_{i=1}^{m}[0,v_i]$ is the Minkowski sum of the lattice vectors $v_1,\ldots,v_m \in \mathbb{Z}^n$. (See~\cite[\S9]{beck2015computing} for another presentation of this result.) In~\cite{galashin2017rootfiring1} we proved a slight extension of Stanley's result: we showed that for any convex lattice polytope $\mathcal{P}$ and any lattice zonotope $\mathcal{Z}$, the number of lattice points in $\mathcal{P} + k\mathcal{Z}$ is given by a polynomial with nonnegative integer coefficients in $k$. The case where $\mathcal{P}$ is a point recaptures Stanley's result. However, in~\cite{galashin2017rootfiring1} we did not give any explicit formula for the coefficients of the polynomial analogous to the formula~\eqref{eqn:zonotope_formula} for zonotopes. The first thing we need to do in the present paper is provide such a formula, whose simple proof we also go over now. In fact, this result is stated most naturally in its ``multi-parameter'' formulation:

\begin{thm} \label{thm:poly_plus_zono}
Let $\mathcal{P}$ be a convex lattice polytope in $\mathbb{R}^n$, and $v_1,\ldots,v_m \in \mathbb{Z}^n$ lattice vectors. Set $\mathcal{Z} \coloneqq \sum_{i=1}^{m} [0,v_i]$, and for $\mathbf{k} = (k_1,\ldots,k_m)\in\mathbb{Z}_{\geq 0}^m$ define $\mathbf{k}\mathcal{Z} \coloneqq\sum_{i=1}^{m}k_i[0,v_i]$. Then for any~$\mathbf{k} \in \mathbb{Z}^m_{\geq 0}$ we have
\[\#\left(\mathcal{P}+\mathbf{k}\mathcal{Z}\right)\cap\mathbb{Z}^n = \sum_{\substack{X \subseteq \{v_1,\ldots,v_m\}, \\ X \textrm{ is linearly} \\ \textrm{independent}}} \# \left(\mathrm{quot}_X(\mathcal{P}) \cap \mathrm{quot}_X(\mathbb{Z}^n)\right) \cdot \mathrm{rVol}_{\mathbb{Z}^n}(X) \, \mathbf{k}^{X}\]
where $\mathbf{k}^X \coloneqq \prod_{v_i \in X} k_i$ and $\mathrm{quot}_X\colon \mathbb{R}^n \twoheadrightarrow \mathbb{R}^n/\mathrm{Span}_{\mathbb{R}}(X)$ is the canonical quotient map.
\end{thm}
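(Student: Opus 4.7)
My plan is to proceed by induction on $m$, powered by a one-generator lemma that isolates the linear dependence on a single $k_i$. The base case $m = 0$ is immediate: only $X = \emptyset$ is linearly independent, and the right-hand side collapses to $\#(\mathcal{P} \cap \mathbb{Z}^n)$. It is convenient to state (and prove) the theorem more generally for any lattice $\Lambda$ in place of $\mathbb{Z}^n$, so that induction can be applied inside a quotient; the proof is identical and the original statement is the case $\Lambda = \mathbb{Z}^n$.

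The key lemma is the following ``$m = 1$'' version: for any convex lattice polytope $\mathcal{Q}$ (with respect to $\Lambda$) and any $v \in \Lambda$,
\[\#((\mathcal{Q} + k[0,v]) \cap \Lambda) = \#(\mathcal{Q} \cap \Lambda) + k \cdot \#(\mathrm{quot}_{\{v\}}(\mathcal{Q}) \cap \mathrm{quot}_{\{v\}}(\Lambda)) \cdot \mathrm{rVol}_\Lambda(\{v\}).\]
I would prove this by fibering along $v$. Since $\mathrm{quot}_{\{v\}}(\mathcal{Q} + k[0,v]) = \mathrm{quot}_{\{v\}}(\mathcal{Q})$, only $\bar{p} \in \mathrm{quot}_{\{v\}}(\mathcal{Q}) \cap \mathrm{quot}_{\{v\}}(\Lambda)$ contribute lattice points, and for each such $\bar{p}$ I pick a representative $\tilde{p} \in \Lambda$. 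The fiber of $\mathcal{Q}$ over $\bar{p}$ is a closed interval in the $v$-direction; adding $k[0,v]$ stretches it by length $k$ at the top, picking up a half-open interval. The lattice points on the $v$-line through $\tilde{p}$ have $v$-coordinate spacing $1/\mathrm{rVol}_\Lambda(\{v\})$, so a half-open interval of length $k$ along this line contains exactly $k \cdot \mathrm{rVol}_\Lambda(\{v\})$ lattice points, independent of the endpoint location. Summing over $\bar{p}$ yields the lemma.

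For the inductive step, let $\mathcal{Z}' = \sum_{i<m}[0,v_i]$ and $\mathbf{k}' = (k_1,\ldots,k_{m-1})$, and apply the lemma with $\mathcal{Q} = \mathcal{P} + \mathbf{k}'\mathcal{Z}'$ (a convex lattice polytope as a Minkowski sum), $v = v_m$, $k = k_m$. The constant-in-$k_m$ summand is $\#((\mathcal{P} + \mathbf{k}'\mathcal{Z}') \cap \mathbb{Z}^n)$, to which the inductive hypothesis applies directly and produces the desired terms with $v_m \notin X$. For the linear-in-$k_m$ summand I rewrite $\mathrm{quot}_{\{v_m\}}(\mathcal{P} + \mathbf{k}'\mathcal{Z}') = \mathrm{quot}_{\{v_m\}}(\mathcal{P}) + \mathbf{k}'\,\mathrm{quot}_{\{v_m\}}(\mathcal{Z}')$ and invoke the inductive hypothesis inside $\Lambda := \mathrm{quot}_{\{v_m\}}(\mathbb{Z}^n)$. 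Independent subsets $X'$ of $\{\mathrm{quot}_{\{v_m\}}(v_i)\}_{i<m}$ correspond bijectively to $X' \subseteq \{v_1,\ldots,v_{m-1}\}$ for which $X' \cup \{v_m\}$ is independent in $\mathbb{R}^n$; the compositional identity $\mathrm{quot}_{X'} \circ \mathrm{quot}_{\{v_m\}} = \mathrm{quot}_{X' \cup \{v_m\}}$ aligns the point-count factors, and the multiplicativity identity
\[\mathrm{rVol}_\Lambda(\mathrm{quot}_{\{v_m\}}(X')) \cdot \mathrm{rVol}_{\mathbb{Z}^n}(\{v_m\}) = \mathrm{rVol}_{\mathbb{Z}^n}(X' \cup \{v_m\})\]
aligns the volume factors; this follows from the multiplicativity of lattice indices in the tower $\mathrm{Span}_\mathbb{Z}(X') \oplus \mathbb{Z} v_m \subseteq \mathrm{Span}_\mathbb{Z}(X') \oplus (\mathbb{Z}^n \cap \mathbb{R} v_m) \subseteq \mathbb{Z}^n \cap \mathrm{Span}_\mathbb{R}(X' \cup \{v_m\})$. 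The main obstacle is the fiber count in the lemma, where one must confirm that the added region contributes the same number of lattice points regardless of where its endpoint sits; this drops out cleanly from the fact that a half-open interval of length $k$ on a line with lattice spacing $1/d$ contains exactly $kd$ lattice points. Everything else is bookkeeping with lattice indices and the inductive rearrangement.
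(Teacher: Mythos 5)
Your argument is correct, and it is genuinely different from the proof in the paper. The paper's proof invokes McMullen's theorem to know a priori that $\#(\mathcal{P}+\mathbf{k}\mathcal{Z})\cap\mathbb{Z}^n$ is a polynomial of bounded total degree, and then extracts each coefficient by slicing $\mathcal{P}$ along translates of $\mathrm{Span}_{\mathbb{R}}(X)$ and sandwiching $f(\mathbf{k})$ between two polynomials with the same degree and leading coefficient. Your route is a self-contained induction on $m$: the single-generator lemma produces the explicit count directly, and polynomiality falls out of the final formula rather than being an input. The heart of your argument---the observation that a half-open interval of length $k$ along a line with lattice spacing $1/d$ contains exactly $kd$ lattice points, independent of the endpoint---is the same kind of paving insight the paper alludes to (the Shephard/Stanley paving of zonotopes) but applied one generator at a time. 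What your approach buys is elementarity: you do not need the external input of McMullen's multi-parameter Ehrhart polynomiality. What it costs is a bit more bookkeeping---you have to check that $\mathrm{quot}_{\{v_m\}}(\mathcal{P})$ is a lattice polytope (true, since it is the convex hull of the images of the lattice vertices of $\mathcal{P}$), handle $v_m=0$ as a degenerate case, and verify the multiplicativity of $\mathrm{rVol}$ through the quotient via the lattice-index tower, which you correctly identify. I also checked the one boundary case your lemma must handle, where the fiber of $\mathcal{Q}$ over $\bar p$ contains no lattice point: since for $a\le b$ one always has $\lceil ad\rceil - \lfloor bd\rfloor \le 1$, the fiber of $\mathcal{Q}+k[0,v]$ then contains exactly $kd$ points, so the count is consistent with your formula. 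Both proofs are valid; yours is arguably the more ``Ehrhart-style'' argument, closer in spirit to the classical zonotope paving proof.
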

\begin{proof}
The standard proof of Stanley's formula for the Ehrhart polynomial of a lattice zonotope (and indeed the proof originally given by Stanley~\cite{stanley1980decompositions, stanley1991zonotope}) is via ``paving'' the zonotope, i.e., decomposing it into disjoint half-open parallelepipeds (see also~\cite[\S9]{beck2015computing}). This decomposition goes back to Shephard~\cite{shephard1974combinatorial}. In~\cite{galashin2017rootfiring1}, we explained how the technique of paving can be adapted to apply to $\mathcal{P}+\mathbf{k}\mathcal{Z}$ as well. But we can actually establish the claimed formula for $\#(\mathcal{P}+\mathbf{k}\mathcal{Z})\cap\mathbb{Z}^n$ just from some general properties of ``multi-parameter'' Ehrhart polynomials. We need only the following result:

\begin{lemma}[{McMullen~\cite[Theorem 6]{mcmullen1977valuations}}] \label{lem:multiparam_ehrhart}
Let $\mathcal{Q}_0,\mathcal{Q}_1,\ldots,\mathcal{Q}_m$ be convex lattice polytopes in $\mathbb{R}^n$. Then for nonnegative integers $k_0,\ldots,k_m \in \mathbb{Z}_{\geq 0}$, the number of lattice points in $k_0\mathcal{Q}_0 + \cdots +k_m\mathcal{Q}_m$ is a polynomial (with real coefficients) in $k_0,\ldots,k_m$ of total degree at most the dimension of the smallest affine subspace containing all of $\mathcal{Q}_0,\ldots,\mathcal{Q}_m$.
\end{lemma}

First of all, Lemma~\ref{lem:multiparam_ehrhart} immediately gives that $\#(\mathcal{P}+\mathbf{k}\mathcal{Z})\cap\mathbb{Z}^n$ is a polynomial in~$\mathbf{k}$: we can just take $\mathcal{Q}_0 \coloneqq \mathcal{P}$, $\mathcal{Q}_i \coloneqq [0,v_i]$ for $i \in [m]$, and set~$k_0 \coloneqq 1$. We use~$f(\mathbf{k})$ to denote this polynomial.

Now we check that each coefficient of $f(\mathbf{k})$ agrees with the claimed formula. So fix some~$\mathbf{a} = (a_1,\ldots,a_m) \in \mathbb{Z}_{\geq 0}^m$ and set $\mathbf{k}^{\mathbf{a}} \coloneqq \prod_{i=1}^{m}x_i^{a_i}$. We will check that the coefficient of $\mathbf{k}^{\mathbf{a}}$ is as claimed. By substituting $k_i \coloneqq 0$ for any~$i$ for which $a_i = 0$, we can assume that $a_i \neq 0$ for all $i \in [m]$. Set~$X \coloneqq \{v_1,\ldots,v_m\}$; the goal will be to show that the coefficient of $\mathbf{k}^{\mathbf{a}}$ is zero if $\mathbf{k}^{\mathbf{a}}\neq k^{X}$ or $X$ is not linearly independent. We can count the number of lattice points in $\mathcal{P}+\mathbf{k}\mathcal{Z}$ by dividing them into ``slices'' which lie in affine translates of~$\mathrm{Span}_{\mathbb{R}}(X)$. Accordingly, let $u_1,u_2,\ldots,u_{\ell} \in \mathbb{Z}^n$ be such that:
\begin{itemize}
\item $(u_i + \mathrm{Span}_{\mathbb{R}}(X) ) \cap \mathcal{P} \neq \varnothing$ for all $i \in [\ell]$;
\item if $(u + \mathrm{Span}_{\mathbb{R}}(X) )\cap \mathcal{P} \neq \varnothing$ for some $u \in \mathbb{Z}^n$, then $ u + \mathrm{Span}_{\mathbb{R}}(X) = u_i +\mathrm{Span}_{\mathbb{R}}(X)$ for some $i \in [\ell]$;
\item $u_i +\mathrm{Span}_{\mathbb{R}}(X) \neq u_j +\mathrm{Span}_{\mathbb{R}}(X)$ for $i\neq j \in [\ell]$.
\end{itemize}
Set $\mathcal{P}_i \coloneqq \mathcal{P}\cap (u_i+\mathrm{Span}_{\mathbb{R}}(X))$ for $i = 1,\ldots,\ell$ and observe that
\[(\mathcal{P}+\mathbf{k}\mathcal{Z})\cap \mathbb{Z}^n = \bigcup_{i=1}^{\ell} (\mathcal{P}_i+\mathbf{k}\mathcal{Z})\cap \mathbb{Z}^n.\]
Because $\mathcal{Z}$ is full-dimensional inside of $\mathrm{Span}_{\mathbb{R}}(X)$, for each $i \in [\ell]$ there is $c_i \in \mathbb{Z}_{\geq 0}$ such that $\mathcal{P}_i$ is contained, up to lattice translation, in $c_i\mathcal{Z}$. Hence we obtain the inequalities
\begin{equation} \label{eqn:slice_ineqs}
\sum_{i=1}^{\ell}\#\left(\mathbf{k}\mathcal{Z}\cap\mathbb{Z}^n\right) \leq f(\mathbf{k}) \leq \sum_{i=1}^{\ell}\#\left((\mathbf{k}+c_i)\mathcal{Z}\cap \mathbb{Z}^n\right),
\end{equation}
for all $\mathbf{k} \in \mathbb{Z}_{\geq 0}^{m}$. First consider the case where either $\mathbf{k}^{\mathbf{a}} \neq \mathbf{k}^{X}$, or $X$ is not linearly independent. Then~$\sum_{i=1}^{m}a_i$ is strictly greater than the dimension of $\mathrm{Span}_{\mathbb{R}}(X)$. But by Lemma~\ref{lem:multiparam_ehrhart} the left- and right-hand sides of~\eqref{eqn:slice_ineqs} are polynomials in $\mathbf{k}$ of degree at most the dimension of $\mathrm{Span}_{\mathbb{R}}(X)$. So in this case it must be that the coefficient of~$\mathbf{k}^{\mathbf{a}}$ in~$f(\mathbf{k})$ is zero. Now assume that $\mathbf{k}^{\mathbf{a}} = \mathbf{k}^{X}$ and $X$ is linearly independent. In this case, $\mathcal{Z}$ is a parallelepiped whose relative volume is in fact $\mathrm{rVol}_{\mathbb{Z}^n}(X)$: see for example~\cite[Lemma~9.8]{beck2015computing}. Hence, the leading coefficient of $\#(k\mathcal{Z}\cap \mathbb{Z}^n)$ as a polynomial in $k$ is $\mathrm{rVol}_{\mathbb{Z}^n}(X)$. By substituting $k \coloneqq k+c$ into this polynomial for some constant $c\in\mathbb{Z}_{\geq 0}$, we see that the leading coefficient of $\#((k+c)Z\cap \mathbb{Z}^n)$ as a polynomial in $k$ is also $\mathrm{rVol}_{\mathbb{Z}^n}(X)$. Hence when we make the substitution $k_i \coloneqq k$ for all $i \in [m]$, the leading coefficient of both the left- and right-hand sides of~\eqref{eqn:slice_ineqs} is~$\ell\cdot\mathrm{rVol}_{\mathbb{Z}^n}(X)$. Furthermore, the degree of both of these polynomials is the dimension of $\mathrm{Span}_{\mathbb{R}}(X)$, which is the same as $\#X=m$. By induction on $m$, we know that the coefficient of $\mathbf{k}^{\mathbf{b}}$ in~$f(\mathbf{k})$ is zero for any $\mathbf{b}=(b_1,\ldots,b_m) \in \mathbb{Z}^{m}_{\geq 0}$ with $\mathbf{b}\neq (1,1,\ldots,1)$ and $\sum_{i=1}^{m}b_i=m$. Thus we conclude that~$\ell\cdot\mathrm{rVol}_{\mathbb{Z}^n}(X)$ is also the coefficient of $\mathbf{k}^{\mathbf{a}}$ in~$f(\mathbf{k})$. But then note that $\ell = \# \left(\mathrm{quot}_X(\mathcal{P}) \cap \mathrm{quot}_X(\mathbb{Z}^n)\right)$, finishing the proof of the theorem.
\end{proof}

\begin{remark}
Although we will not need this, we believe that Theorem~\ref{thm:poly_plus_zono} holds verbatim in the case where $\mathcal{P}$ is a rational convex polytope (as in~\cite{mcmullen1978lattice}), or even in the case where $\mathcal{P}$ is an arbitrary convex polytope in $\mathbb{R}^n$.
\end{remark}

In general, the formula in Theorem~\ref{thm:poly_plus_zono} is not ideal from a combinatorial perspective because in order to compute the quantity $\# \left(\mathrm{quot}_X(\mathcal{P}) \cap \mathrm{quot}_X(\mathbb{Z}^n)\right)$ we have to consider every rational point in $\mathcal{P}$. But in particularly nice situations we may actually have that~$\mathrm{quot}_X(\mathcal{P}) \cap \mathrm{quot}_X(\mathbb{Z}^n) = \mathrm{quot}_X(\mathcal{P}\cap\mathbb{Z}^n)$ for all $X \subseteq \{v_1,\ldots,v_m\}$. In fact, this is exactly what happens in the case of Theorem~\ref{thm:poly_plus_zono} that is relevant to interval-firing: the Minkowski sum of a permutohedron and a dilating regular permutohedron.

For $\lambda \in P$, the \emph{permutohedron} corresponding to~$\lambda$ is $\Pi(\lambda) \coloneqq \mathrm{ConvexHull} \; W(\lambda)$. We use $\Pi^Q(\lambda) \coloneqq \Pi(\lambda)\cap(Q+\lambda)$ to denote the (root) lattice points in $\Pi(\lambda)$. The \emph{regular permutohedron} of $\Phi$ is $\Pi(\rho)$. Note that the regular permutohedron is in fact a zonotope: $\Pi(\rho) = \sum_{\alpha \in \Phi^+}[-\alpha/2,\alpha/2]$. Also note that if $\lambda \in P_{\geq 0}$ is a dominant weight, then $\Pi(\lambda + k\rho) = \Pi(\lambda)+k\Pi(\rho)$, so $\Pi(\lambda + k\rho)$ really is a polytope of the form~$\mathcal{P}+k\mathcal{Z}$. It is this polytope $\Pi(\lambda + k\rho)$ which is relevant to interval-firing. 

We will show that permutohedra satisfy the following subtle integrality property:

\begin{lemma} \label{lem:slice_intro}
Let $\lambda\in P_{\geq 0}$ and $X\subseteq \Phi^+$. Then
\[\mathrm{quot}_X(\Pi(\lambda)) \cap \mathrm{quot}_X(Q+\lambda) = \mathrm{quot}_X(\Pi^Q(\lambda)),\]
where $\mathrm{quot}_X$ is the canonical quotient map $\mathrm{quot}_X\colon V \twoheadrightarrow V/\mathrm{Span}_{\mathbb{R}}(X)$.
\end{lemma}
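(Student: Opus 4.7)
The inclusion $\mathrm{quot}_X(\Pi^Q(\lambda)) \subseteq \mathrm{quot}_X(\Pi(\lambda)) \cap \mathrm{quot}_X(Q + \lambda)$ is immediate from $\Pi^Q(\lambda) \subseteq \Pi(\lambda) \cap (Q+\lambda)$. The content is the reverse inclusion, so I focus there.

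Write $U := \mathrm{Span}_{\mathbb{R}}(X)$ and let $\Phi_X := \Phi \cap U$ be the induced sub-root-system, with root lattice $Q_X := \mathrm{Span}_{\mathbb{Z}}(\Phi_X)$. A standard root-system fact (checked, for instance, by writing elements in terms of a basis of simple roots of $\Phi_X$) gives $Q_X = Q \cap U$. Given a class $\bar v$ in the right-hand intersection, I would lift it to some $q \in Q+\lambda$ with $\mathrm{quot}_X(q) = \bar v$; the fiber of $\bar v$ inside $Q+\lambda$ is then precisely the coset $q + Q_X$. The lemma therefore reduces to showing that the slice
\[
\mathcal{S} := \Pi(\lambda) \cap (q + U),
\]
which is nonempty by hypothesis, always meets $q + Q_X$.

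The approach I would pursue is to establish the stronger claim that $\mathcal{S}$, viewed inside the affine space $q + U$, is a lattice polytope with respect to $q + Q_X$; any vertex would then supply the required point. The vertices of $\mathcal{S}$ arise as transverse intersections of $q + U$ with positive-dimensional faces of $\Pi(\lambda)$, and since edges of $\Pi(\lambda)$ are integer-length segments in root directions with endpoints in $W(\lambda) \subseteq Q+\lambda$, this becomes a concrete question about how $q + U$ cuts through the root-directed edges and higher faces of the permutohedron. This is precisely the setting where the lemma on dilations of projections of root polytopes advertised in the introduction should come into play. I would formulate and prove that auxiliary lemma first, phrased roughly as a statement that orthogonal projection modulo $U$ sends the $Q$-lattice points of an appropriate root polytope (built from $\Phi^+ \setminus \Phi_X$) onto the full set of relevant lattice points of the projected polytope and its integer dilates; applied locally on each face of $\Pi(\lambda)$ meeting $q + U$, it should force each vertex of $\mathcal{S}$ to lie in $q + Q_X$.

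The main obstacle I anticipate is the projection-of-root-polytopes lemma itself, since the corresponding integrality property fails for generic lattice polytopes and really depends on the fine structure of $\Phi$—in particular on how non-orthogonal positive roots decompose under orthogonal projection to $U^{\perp}$. Once that lemma is in hand, the deduction of Lemma~\ref{lem:slice_intro} should be a relatively formal argument organized around the face structure of $\Pi(\lambda)$, with no further arithmetic miracles required.
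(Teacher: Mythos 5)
Your reduction to the statement ``whenever $\mathcal{S}:=\Pi(\lambda)\cap(q+U)$ is nonempty it meets $q+Q_X$'' is exactly the right reformulation (it is the paper's Lemma~\ref{lem:slice}), and you are also right that the root-polytope projection bound (Lemma~\ref{lem:root_polytope_projection_intro}) is the essential ingredient. But the strategy you propose for the remaining step does not work: you want to argue that $\mathcal{S}$ is a lattice polytope with respect to $q+Q_X$, so that any vertex of $\mathcal{S}$ supplies the required point. That stronger claim is \emph{false} in general. The paper explicitly discusses this in the final section (``Slices of permutohedra''): the vertices of $\mathcal{S}$ need not lie in $Q+\lambda$, and Figure~\ref{fig:slice} gives counterexamples. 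In Example~\ref{ex:slice_b2} ($\Phi=B_2$) the inner-most point of the slice is a weight but lies in the wrong coset of $Q$; in Example~\ref{ex:slice_g2} ($\Phi=G_2$) the inner-most point is not even a weight. The only-in-type-A argument the paper sketches at the very end (using total unimodularity) is precisely the kind of vertex analysis you describe, and the authors note explicitly that it does not generalize to other types.

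The paper instead \emph{chooses} a good lattice point inside $\mathcal{S}$ rather than trying to show that every vertex is one. After normalizing via $W$-invariance and Proposition~\ref{prop:parabolic_subspace} so that $U$ is a parabolic subspace and $\mu^0_X\in P_{\geq 0}^{\mathbb{R}}$, one uses Stembridge's meet operation (Propositions~\ref{prop:meet_weights} and~\ref{prop:minuscule}) to replace $\mu$ by a representative $\mu\in q+U$ with $\pi_I(\mu)$ zero or minuscule in $\Phi_I$ and $\mu\leq\lambda$ in root order. This $\mu$ is generally not a vertex of $\mathcal{S}$; the point is that the projection lemma, applied to the polar-dual root polytope, shows $\langle\mu,\alpha^\vee\rangle>-2$ (hence $\geq -1$) for all $\alpha\in\Phi^+$, so Proposition~\ref{prop:unique_dominant_greater} gives $\mu_{\mathrm{dom}}\leq\lambda$, whence $\mu\in\Pi(\lambda)$ by Proposition~\ref{prop:perm_containment}. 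Your write-up correctly anticipates the role of the projection-dilation lemma, but is missing the key idea of targeting a \emph{minuscule-projecting} lattice point in the slice together with the root-order analysis that converts the $\kappa<2$ bound into the desired containment.
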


We prove Lemma~\ref{lem:slice_intro} in the second section of the paper. The proof turns out to be quite involved. In particular, we show that this lemma follows from a certain property of dilations of projections of root polytopes. Here the \emph{root polytope} $\mathcal{P}_{\Phi}$ of $\Phi$ is the convex hull of the roots: $\mathcal{P}_{\Phi} \coloneqq \mathrm{ConvexHull}(\Phi)$. Lemma~\ref{lem:slice_intro} follows (in a non-obvious way) from the following lemma about root polytopes:

\begin{lemma} \label{lem:root_polytope_projection_intro}
Let $\{0\}\neq U\subseteq V$ be a nonzero subspace of $V$ spanned by a subset of~$\Phi$. Set $\Phi_U\coloneqq\Phi\cap U$, a sub-root system of $\Phi$. Let $\pi_U\colon V\to U$ denote the orthogonal projection of $V$ onto $U$. Then there exists some $1\leq \kappa < 2$ such that $\pi_U(\mathcal{P}_{\Phi}) \subseteq \kappa \cdot \mathcal{P}_{\Phi_U}$.
\end{lemma}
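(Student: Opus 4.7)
The plan is to reformulate the containment via support functions, reduce to a pointwise strict inequality, and then handle the remaining combinatorial content via the structure of sub-root systems.

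Since $\pi_U$ is linear, $\pi_U(\mathcal{P}_\Phi) = \mathrm{ConvexHull}(\pi_U(\Phi))$, and for $v \in U$ the support function of $\pi_U(\mathcal{P}_\Phi)$ at $v$ equals $h_\Phi(v) := \max_{\alpha \in \Phi}\langle\alpha, v\rangle$ (using $\langle\pi_U(w), v\rangle = \langle w, v\rangle$ for $v \in U$). The containment $\pi_U(\mathcal{P}_\Phi) \subseteq \kappa\,\mathcal{P}_{\Phi_U}$ is therefore equivalent to $h_\Phi(v) \le \kappa\,h_{\Phi_U}(v)$ for all $v \in U$, where $h_{\Phi_U}(v) := \max_{\beta \in \Phi_U}\langle\beta, v\rangle$. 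Because $\Phi_U$ spans $U$, $h_{\Phi_U}$ is strictly positive on $U \setminus \{0\}$; hence by continuity, positive homogeneity, and compactness of the unit sphere in $U$, the supremum $\kappa^* := \sup_{v \neq 0} h_\Phi(v)/h_{\Phi_U}(v)$ is attained, and the lemma reduces to the strict inequality $\kappa^* < 2$.

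I argue by contradiction. Assume there exist nonzero $v \in U$ and $\alpha \in \Phi$ with $\langle\alpha, v\rangle \ge 2\,h_{\Phi_U}(v)$, where $\alpha$ attains the maximum $\max_{\alpha \in \Phi}\langle\alpha, v\rangle$. Then $\alpha \notin \Phi_U$. Since $h_\Phi$ and $h_{\Phi_U}$ are $W_U$-invariant (as $W_U \le W$ permutes both $\Phi$ and $\Phi_U$), after a $W_U$-translation I may assume $v$ is dominant with respect to some chosen simple system $\Delta_U$ of $\Phi_U$. For each $\gamma \in \Delta_U$, maximality of $\alpha$ forces $\langle s_\gamma(\alpha), v\rangle \le \langle\alpha, v\rangle$, which simplifies to $\langle\alpha, \gamma^\vee\rangle\langle\gamma, v\rangle \ge 0$; since $\langle\gamma, v\rangle \ge 0$, we conclude $\langle\pi_U(\alpha), \gamma^\vee\rangle = \langle\alpha, \gamma^\vee\rangle \ge 0$, so $\mu := \pi_U(\alpha)$ is a dominant weight of $\Phi_U$.

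The main obstacle is then to derive a contradiction from the existence of such a dominant $\mu = \pi_U(\alpha)$ with $\langle\mu, v\rangle \ge 2\,h_{\Phi_U}(v)$. The key constraints on $\mu$ are: (i) it lies in the weight lattice of $\Phi_U$, with $\langle\mu, \gamma^\vee\rangle = \langle\alpha, \gamma^\vee\rangle \in \{-3,\ldots,3\}$ for every $\gamma \in \Phi_U$; (ii) $\|\mu\| \le \|\alpha\| \le \ell_{\max}(\Phi)$; and (iii) $\mu$ lies in the finite set $\pi_U(\Phi)$. I plan to rule out the bad configuration by case analysis using the classification of sub-root systems of the form $\Phi_U = \Phi \cap U$ inside an irreducible $\Phi$ (via Borel--de Siebenthal and its refinements for non-maximal subsystems): for each such pair $(\Phi, \Phi_U)$ one enumerates the dominant elements of $\pi_U(\Phi)$ and verifies the inequality directly. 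The extremal case is $\Phi = G_2$ with $\Phi_U = \{\pm\alpha\}$ for $\alpha$ a short root, where the largest $\pi_U$-image of a root is $\frac{3}{2}\alpha$, yielding the sharp bound $\kappa^* = 3/2 < 2$. An alternative to the case analysis would be a uniform argument exploiting the decomposition of $\mathfrak{g}$ as a $\mathfrak{g}_U$-module: $\mu$ lies in the $W_U$-permutohedron of a highest weight that is itself of the form $\pi_U(\alpha')$ for some $\alpha' \in \Phi$, which by convexity reduces the bound to controlling these finitely many highest weights.
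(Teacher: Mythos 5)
Your support-function reformulation is correct and is essentially the polar-dual viewpoint the paper uses: the containment $\pi_U(\mathcal{P}_\Phi)\subseteq\kappa\,\mathcal{P}_{\Phi_U}$ is equivalent to $h_\Phi(v)\le\kappa\,h_{\Phi_U}(v)$ for all $v\in U$, which the paper phrases dually as $\mathcal{P}^*_{\Phi_U^\vee}\subseteq\kappa\,\mathcal{P}^*_{\Phi^\vee}$ and then reduces to checking vertices of the dual root polytope. Your reduction to $\Phi_U$-dominant $v$ and $\Phi_U$-dominant $\pi_U(\alpha)$ is the right move (the step $\langle\alpha,\gamma^\vee\rangle\langle\gamma,v\rangle\ge 0$ only gives $\langle\alpha,\gamma^\vee\rangle\ge 0$ when $\langle\gamma,v\rangle>0$, but this is easily repaired by iterating over the finitely many maximizers in the $W_U$-stable set $\pi_U(\Phi)$).

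There are, however, two genuine gaps. First, your asserted extremal case is incorrect: the largest $\kappa$ needed is not $3/2$ (the $G_2$, short-root $U$ case) but is much closer to $2$. The paper's tables show that for $E_8$ with a certain maximal parabolic $U$ one needs $\kappa = 59/30 \approx 1.967$, and for $A_n$ with a middle node removed the required constant $\frac{i-1}{i}+\frac{n-i}{n-i+1}$ tends to $2$ as $n\to\infty$. The bound $\kappa<2$ is barely true, and claiming $3/2$ as the sharp value strongly suggests the intended case analysis would not close the argument. Second, the case analysis is not actually carried out, and the classification you point to is the wrong one: Borel--de Siebenthal classifies closed (maximal-rank) sub-root systems, which is a larger class than the $\Phi\cap U$ you need. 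The paper instead uses a Bourbaki-type fact (its Proposition~\ref{prop:parabolic_subspace}) that every $\Phi\cap U$ is $W$-conjugate to a standard Levi/parabolic sub-root system, reduces further to maximal parabolic $U = \mathrm{Span}\{\alpha_j : j\neq i\}$, and then enumerates the dominant vertices of $\mathcal{P}^*_{\Phi_i^\vee}$ via Cellini--Marietti's facet description of root polytopes and the $\Phi_i^\vee$-dominant coroots via Oshima's lemma. Without something like these tools you have neither an a priori finite list of cases nor a handle on the near-extremal ones ($A_n$, $E_8$), which is precisely where the lemma is delicate.
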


Note that the constant $2$ in Lemma~\ref{lem:root_polytope_projection_intro} cannot be replaced with any smaller constant; and conversely, if we replaced $2$ with any larger constant then Lemma~\ref{lem:root_polytope_projection_intro} would no longer imply Lemma~\ref{lem:slice_intro}. Observe how Lemmas~\ref{lem:slice_intro} and~\ref{lem:root_polytope_projection_intro} are both formulated in a uniform way across all root systems $\Phi$. Moreover, our proof that Lemma~\ref{lem:root_polytope_projection_intro} implies Lemma~\ref{lem:slice_intro} is uniform (and indeed, we show that these two lemmas are ``almost'' equivalent; see Remark~\ref{rem:lem_equivalence}). However, we were unfortunately unable to find a uniform proof of Lemma~\ref{lem:root_polytope_projection_intro} and instead had to rely on the classification of root systems and a case-by-case analysis. We relegated the case-by-case check of Lemma~\ref{lem:root_polytope_projection_intro} to the appendix of the paper. We believe that both of these lemmas may be of independent interest. We leave it as an open problem to find uniform proofs of them.

Lemma~\ref{lem:slice_intro} together with Theorem~\ref{thm:poly_plus_zono} immediately implies the following:

\begin{thm} \label{thm:perm_intro}
Let $\lambda \in P_{\geq 0}$ be a dominant weight and $k \in \mathbb{Z}_{\geq 0}$. Then
\[ \#\Pi^Q(\lambda+k\rho) = \sum_{\substack{X\subseteq \Phi^+ \\ \textrm{ $X$ is linearly} \\ \textrm{independent}}} \# \mathrm{quot}_X(\Pi^Q(\lambda))\cdot \mathrm{rVol}_Q(X) \, k^{\#X}.\]
\end{thm}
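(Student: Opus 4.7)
The plan is to reduce the count $\#\Pi^Q(\lambda+k\rho)$ to the Minkowski-sum setting of Theorem~\ref{thm:poly_plus_zono}, using $\Pi(\lambda)$ as the fixed polytope and a shifted version of $\Pi(\rho)$ as the zonotope, and then invoke Lemma~\ref{lem:slice_intro} to simplify each coefficient.

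First I will exploit the identity $\Pi(\lambda+k\rho) = \Pi(\lambda) + k\Pi(\rho)$, valid because $\lambda$ is dominant. The standard zonotope decomposition $\Pi(\rho) = \sum_{\alpha \in \Phi^+}[-\alpha/2,\alpha/2]$ uses half-lattice generators, so I will shift by $\rho$ to obtain the genuine lattice-zonotope expression $\Pi(\rho) + \rho = \sum_{\alpha \in \Phi^+}[0,\alpha]$. Since $2\rho = \sum_{\alpha \in \Phi^+}\alpha \in Q$, the coset $Q+\lambda+2k\rho$ coincides with $Q+\lambda$, so translating the pair $(\Pi(\lambda+k\rho),\, Q+\lambda+k\rho)$ by $+k\rho$ preserves cardinality and yields
\[
\#\Pi^Q(\lambda+k\rho) \;=\; \#\Bigl(\bigl(\Pi(\lambda) + k\textstyle\sum_{\alpha \in \Phi^+}[0,\alpha]\bigr) \cap (Q+\lambda)\Bigr).
\]
Because $W$ preserves cosets of $Q$ in $P$, the orbit $W(\lambda)$ lies in $\lambda+Q$, so after translating by $-\lambda$ I have a genuine $Q$-lattice polytope Minkowski-summed with a $k$-dilate of a $Q$-lattice zonotope whose generators are the positive roots.

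Second I will apply Theorem~\ref{thm:poly_plus_zono} with all parameters $k_i$ specialized to $k$, using $Q$ in place of $\mathbb{Z}^n$ (the theorem transports to any rank-$n$ lattice by a linear change of basis, with $\mathrm{rVol}_Q$ replacing $\mathrm{rVol}_{\mathbb{Z}^n}$). The output has exactly the shape claimed, except that each coefficient reads $\#\bigl(\mathrm{quot}_X(\Pi(\lambda)) \cap \mathrm{quot}_X(Q+\lambda)\bigr)$ rather than $\#\mathrm{quot}_X(\Pi^Q(\lambda))$.

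Finally I will invoke Lemma~\ref{lem:slice_intro}, which says precisely that these two quantities are equal for $X \subseteq \Phi^+$, and the theorem is proved. The whole argument is mechanical once the two previous results are in hand; the only delicate piece is the translation bookkeeping (using $2\rho\in Q$) to land in the right coset, and all the real content is hidden in Lemma~\ref{lem:slice_intro} and the subsequent Lemma~\ref{lem:root_polytope_projection_intro}, which have already been dealt with upstream.
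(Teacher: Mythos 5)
Your proposal is correct and takes essentially the same route as the paper: the paper also derives Theorem~\ref{thm:perm_intro} directly from Theorem~\ref{thm:poly_plus_zono} together with Lemma~\ref{lem:slice_intro} (restated as Corollary~\ref{cor:quotient}). You spell out the translation bookkeeping ($\Pi(\lambda+k\rho)+k\rho=\Pi(\lambda)+k\sum_{\alpha\in\Phi^+}[0,\alpha]$ and $Q+\lambda+2k\rho=Q+\lambda$) that the paper leaves implicit under the phrase ``immediately implies,'' but the argument is identical in substance.
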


The relevance of Theorem~\ref{thm:perm_intro} to interval-firing is that for any~$\lambda \in P_{\geq 0}$, the discrete permutohedron $\Pi^Q(\lambda+k\rho)$ is a disjoint union of connected components of the directed graph corresponding to $\xrightarrow[\mathrm{sym},\,k]{}$: this follows from the ``permutohedron non-escaping lemma,'' a key technical result in~\cite{galashin2017rootfiring1}. In fact, as we will see later, if $\lambda \in P_{\geq 0}$ is a dominant weight with $I^{0,1}_{\lambda} = [n]$, then
\begin{equation} \label{eqn:sym_components_intro}
(s^{\mathrm{sym}}_k)^{-1}(\lambda) = \Pi^Q(\lambda+k\rho) \setminus \bigcup_{\substack{\mu \neq \lambda \in P_{\geq 0}, \\ \mu \leq \lambda}}\Pi^Q(\mu+k\rho)
\end{equation}
where $\mu \leq \lambda$ means $\lambda - \mu = \sum_{i=1}^{n}a_i\alpha_i$ with all $a_i \in \mathbb{Z}_{\geq 0}$ (this is the partial order of dominant weights sometimes referred to as \emph{root order} or \emph{dominance order}). Moreover, for any $\lambda \in P$ which satisfies $\langle \lambda,\alpha^\vee \rangle \neq -1$ for all $\alpha \in \Phi^+$, we can express the fiber~$(s^\mathrm{sym}_k)^{-1}(\lambda)$ as a difference of permutohedra as in~\eqref{eqn:sym_components_intro} except that we may need to first project to a smaller-dimensional sub-root system of $\Phi$. Therefore, Theorem~\ref{thm:main_intro} follows from Theorem~\ref{thm:perm_intro} and Lemma~\ref{lem:slice_intro} via inclusion-exclusion, together with some fundamental facts about root order established by Stembridge~\cite{stembridge1998partial}.

At the end of the paper we also discuss the truncated Ehrhart-like polynomials. It was shown in~\cite{galashin2017rootfiring1} that for any $\lambda \in P_{\geq 0}$ with $I^{0,1}_{\lambda} = [n]$, we have
\[ (s^{\mathrm{sym}}_k)^{-1}(\lambda) = \bigcup_{\mu \in W (\lambda)} (s^{\mathrm{tr}}_k)^{-1}(\mu),\]
or at the level of Ehrhart-like polynomials,
\[ L^{\mathrm{sym}}_{\lambda}(k) = \sum_{\mu \in W(\lambda)} L^{\mathrm{tr}}_{\mu}(k).\]
Hence, the formula in Theorem~\ref{thm:main_intro} very naturally suggests the following conjecture:
\begin{conj} \label{conj:main_intro}
Let $\lambda \in P$ be any weight. Then
\[L^{\mathrm{tr}}_{\lambda}(k) = \sum_{X} \mathrm{rVol}_Q(X) \, k^{\#X},\]
where the sum is over all $X\subseteq \Phi^+$ such that:
\begin{itemize}
\item $X$ is linearly independent;
\item $\langle \lambda,\alpha^\vee\rangle \in \{0,1\}$ for all $\alpha \in \Phi^+\cap\mathrm{Span}_{\mathbb{R}}(X)$.
\end{itemize}
\end{conj}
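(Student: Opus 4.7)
The plan is to mimic the strategy that proves Theorem~\ref{thm:main_intro}: realize the fiber $(s^{\mathrm{tr}}_k)^{-1}(\lambda)$ as the lattice points of a polytope of the form $\mathcal{P}+k\mathcal{Z}$ (possibly after inclusion-exclusion), apply Theorem~\ref{thm:poly_plus_zono} to extract a combinatorial formula, and then match the coefficients using an integrality property in the spirit of Lemma~\ref{lem:slice_intro}. Truncated-firing should satisfy its own ``permutohedron non-escaping'' phenomenon, but with the relevant polytope being an \emph{asymmetric} slice of the permutohedron, reflecting the asymmetric firing interval $\{-k+1,\ldots,k\}$ that defines the truncated process.

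As a first step I would verify consistency with the symmetric case. Substituting the conjectured formula into the known identity $L^{\mathrm{sym}}_\lambda(k) = \sum_{\mu \in W(\lambda)} L^{\mathrm{tr}}_\mu(k)$ (valid for dominant $\lambda$ with $I^{0,1}_\lambda = [n]$) and swapping the order of summation reproduces exactly the formula in Theorem~\ref{thm:main_intro}, because in this case $w_\lambda W_{I^{0,1}_{\lambda_\mathrm{dom}}}(\lambda_\mathrm{dom}) = W(\lambda)$. This reduces the problem to refining the proof of Theorem~\ref{thm:main_intro} orbit-by-orbit. Next, for each weight $\lambda \in P$ with $\langle\lambda,\alpha^\vee\rangle \neq -1$ for all $\alpha \in \Phi^+$, I would define an asymmetric polytope $\mathcal{T}_\lambda \subseteq V$, cut out from $\Pi(\lambda_\mathrm{dom})$ by the hyperplanes $\langle\cdot,\alpha^\vee\rangle = 0$ and $\langle\cdot,\alpha^\vee\rangle = 1$ that reflect the one-sided boundary of the truncated firing interval, so that the translates $\{w(\mathcal{T}_{\lambda_\mathrm{dom}}) : w \in w_\lambda W_{I^{0,1}_{\lambda_\mathrm{dom}}}\}$ pave the relevant portion of $\Pi(\lambda_\mathrm{dom})$ and so that $(s^{\mathrm{tr}}_k)^{-1}(\lambda) = (\mathcal{T}_\lambda + k\Pi(\rho))\cap(Q+\lambda)$ after subtracting lattice points of analogous polytopes attached to dominant $\mu \leq \lambda_\mathrm{dom}$. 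Applying Theorem~\ref{thm:poly_plus_zono} to $\mathcal{T}_\lambda + k\Pi(\rho)$ together with inclusion-exclusion would then deliver the conjectured formula, \emph{provided} the analog of Lemma~\ref{lem:slice_intro} holds for $\mathcal{T}_\lambda$.

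The step I expect to be the main obstacle, and indeed the likely source of the conjecture's failure in general, is precisely this integrality property for the asymmetric slices $\mathcal{T}_\lambda$. The proof of Lemma~\ref{lem:slice_intro} relies essentially on the full $W$-symmetry of $\Pi(\lambda)$, channeled through Lemma~\ref{lem:root_polytope_projection_intro} on projections of root polytopes. The polytopes $\mathcal{T}_\lambda$ have much less symmetry, and for a general linearly independent $X\subseteq \Phi^+$ the set $\mathrm{quot}_X(\mathcal{T}_\lambda)\cap\mathrm{quot}_X(Q+\lambda)$ can strictly contain $\mathrm{quot}_X(\mathcal{T}_\lambda\cap(Q+\lambda))$, producing extra contributions to the Ehrhart-like polynomial that the clean coefficient $\mathrm{rVol}_Q(X)$ cannot account for. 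A realistic salvage plan would be to identify a subclass of $\lambda$ on which the asymmetric integrality \emph{does} hold---for instance those with trivial $W$-stabilizer, or those in sufficiently generic position with respect to the root hyperplanes---and to prove the conjecture there, while exhibiting explicit lattice-point collisions in the quotient to document the failure elsewhere.
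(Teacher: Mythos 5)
You are attempting to prove a statement that the paper itself explicitly declares to be \emph{false in general}. Immediately after stating Conjecture~\ref{conj:main_intro} the paper asserts that it fails, and Table~\ref{tab:counterexamples_1} exhibits explicit counterexamples: already in $G_2$, for $\lambda=\omega_1$ the truncated polynomial is $4k_l+2k_s+1$ whereas the conjectured right-hand side gives $3k_l+2k_s+1$ (with the complementary deficit appearing at $-\omega_1+\omega_2$ in the same Weyl orbit, as forced by the orbit-sum identity and Theorem~\ref{thm:main}); further counterexamples occur in $C_3$, $D_4$, and $C_4$. So there is no proof to compare against, and any argument concluding the displayed formula must contain an error.

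To your credit, you sense this: the last paragraph of your proposal predicts that the argument breaks at an integrality step. But the proposal is not a proof nor a disproof, and the specific mechanism you posit is not established. First, the polytope $\mathcal{T}_\lambda$ with the property $(s^{\mathrm{tr}}_k)^{-1}(\lambda)=(\mathcal{T}_\lambda+k\Pi(\rho))\cap(Q+\lambda)$ is only asserted to exist; the paper never proves a truncated analogue of Proposition~\ref{prop:fibers} or of the permutohedron non-escaping lemma, and the one description it does give of the truncated fibers, equation~\eqref{eq:tr_decompose}, involves $L^{\mathrm{sym}}_\mu(k-1)$ with a shift of the parameter by one, which does not directly present the fiber as lattice points of a polytope plus a $k$-dilating zonotope. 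Without that presentation, Theorem~\ref{thm:poly_plus_zono} does not even apply, so the claim that the ``only'' obstruction is a Lemma~\ref{lem:slice_intro}-style integrality failure is unsupported. Second, your proposed salvage criterion (``trivial $W$-stabilizer'' or ``sufficiently generic with respect to root hyperplanes'') does not match the data: the paper's counterexamples include $\omega_1$ in $G_2$ and $\omega_2$ in $D_4$, both of which lie on several root hyperplanes and have large stabilizers, while the paper's Question~\ref{question:conjecture_ab} points instead to a type-by-type dichotomy (the conjecture appears to hold in Types~A and~B and fail in $C$, $D$, $G_2$). Finally, your ``consistency check'' against $L^{\mathrm{sym}}_\lambda(k)=\sum_{\mu\in W(\lambda)}L^{\mathrm{tr}}_\mu(k)$ is correct but carries no information about individual truncated fibers: the paper explicitly observes that the orbit sum is always right (by Theorem~\ref{thm:main}) and that the counterexamples within an orbit therefore come in compensating pairs. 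To actually settle the statement, you would need to either produce such a counterexample directly (e.g., by computing $L^{\mathrm{tr}}_{\omega_1}(\mathbf{k})$ in $G_2$) or establish the conjectured truncated non-escaping lemma together with the asymmetric integrality property, and the latter is precisely what fails.
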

However, in fact Conjecture~\ref{conj:main_intro} is {\bf false in general}! We discuss examples where Conjecture~\ref{conj:main_intro} fails, as well as some cases where it may possibly hold (such as Type~A and Type~B), in the last section. As it is, the truncated Ehrhart-like polynomials remain largely a mystery.

Here is a brief outline of the rest of the paper. In Section~\ref{sec:lattice_pts} we establish the subtle integrality property of permutohedra (Lemma~\ref{lem:slice_intro}), conditional on the root polytope projection-dilation lemma (Lemma~\ref{lem:root_polytope_projection_intro}). We go on in this section to prove the formula for the number of points in a permutohedron plus dilating regular permutohedron (Theorem~\ref{thm:perm_intro}). We also briefly discuss the specifics of what this formula looks like in Type~A. In Section~\ref{sec:sym_formula} we use our formula for the number of points in a permutohedron plus dilating regular permutohedron to establish the formula for the symmetric Ehrhart-like polynomials (Theorem~\ref{thm:main_intro}). In Section~\ref{sec:future} we discuss related questions and possible future directions, including the truncated Ehrhrat-like polynomials (specifically, Conjecture~\ref{conj:main_intro}). In Appendix~\ref{sec:root_polytopes} we prove the root polytope projection-dilation lemma in a case-by-case manner.

\medskip

\noindent {\bf Acknowledgments}: We thank Pavel Galashin and Thomas McConville, with whom we have had countless conversations about root system chip-firing in the past year, which invariably aided us in the present research. We also thank Federico Castillo and Fu Liu for some enlightening discussions concerning Ehrhart positivity. We thank Matthew Dyer for introducing us to ``Oshima's lemma''~\cite{oshima2006classification, dyer2018parabolic}, which describes orbit representatives for the roots under the action of a parabolic subgroup. And we thank Christian Stump for directing us to the work of Cellini and Marietti~\cite{cellini2015root}, which gives a uniform facet description of root polytopes. Finally, we thank the anonymous referees for a careful reading of our manuscript and many helpful comments. This material is based upon work supported by the National Science Foundation under Grant No.~1440140, while the authors were in residence at the Mathematical Sciences Research Institute in Berkeley, California, during the fall semester of 2017. The first author was also partially supported by NSF Grant No.~1122374. We used the Sage mathematical software system~\cite{sagemath, Sage-Combinat} for many computations during the course of this research. 

\section{Lattice points in the Minkowski sum of a permutohedron and a dilating regular permutohedron} \label{sec:lattice_pts}

\subsection{Background on root systems, sub-root sytstems, permutohedra, and root order} \label{subsec:root_background}

In this subsection we briefly review basics on root systems and collect some results about sub-root systems, permutohedra, and root order that we will need going forward. For a more detailed treatment with complete proofs for all the results mentioned, consult~\cite{bourbaki2002lie},~\cite{humphreys1972lie}, or~\cite{bjorner2005coxeter}. Generally speaking, we follow the notation from~\cite{galashin2017rootfiring1}.

Fix $V$, a $n$-dimensional Euclidean vector space with inner product~$\langle \cdot,\cdot\rangle$. For a nonzero vector $v \in V$, we define the \emph{covector} of $v$ to be $v^\vee\coloneqq\frac{2}{\langle v,v\rangle}\,v$ and then define the orthogonal \emph{reflection} across the hyperplane with normal vector $v$ to be the linear map $s_v\colon V \to V$ given by $s_v(u) \coloneqq u - \langle u,v^\vee\rangle v$ for all $u \in V$. A \emph{(crystallographic, reduced) root system} in $V$ is a finite collection $\Phi\subseteq V \setminus \{0\}$ of nonzero vectors satisfying:
\begin{itemize}
\item $\mathrm{Span}_{\mathbb{R}}(\Phi) = V$;
\item $s_\alpha(\Phi) = \Phi$ for all $\alpha \in \Phi$;
\item $\mathrm{Span}_{\mathbb{R}}(\{\alpha\})\cap\Phi = \{\alpha,-\alpha\}$ for all $\alpha\in \Phi$;
\item $\langle \beta,\alpha^\vee\rangle \in \mathbb{Z}$ for all $\alpha,\beta \in \Phi$.
\end{itemize}
From now on, fix such a root system $\Phi$ in $V$. The elements of $\Phi$ are called \emph{roots}. The dimension $n$ of $V$ is called the \emph{rank} of $\Phi$. We use $W$ to denote the \emph{Weyl group} of $\Phi$, which is the subgroup of $GL(V)$ generated by the reflections $s_{\alpha}$ for all roots $\alpha \in \Phi$. 

It is well-known that we can choose a set $\Phi^+$ of \emph{positive roots} with the properties that: if $\alpha,\beta \in \Phi^+$ and $\alpha+\beta\in\Phi$ then $\alpha+\beta\in \Phi^+$; and $\{\Phi^+,-\Phi^+\}$ is a partition of $\Phi$. The choice of set of positive roots~$\Phi^+$ is equivalent to a choice of \emph{simple roots}~$\alpha_1,\ldots,\alpha_{n}$, which have the properties that: the $\alpha_i$ form a basis of $V$; and every root is either a nonnegative or a nonpositive integral combination of the $\alpha_i$. Of course, $\Phi^+$ consists exactly of those roots which are nonnegative integral combinations of the $\alpha_i$. The Weyl group acts freely and transitively on the set of possible choices of $\Phi^+$. Therefore, since all choices are equivalent in this sense, let us fix a choice $\Phi^+$ of positive roots, and thus also a collection $\alpha_1,\ldots,\alpha_{n}$ of simple roots. The simple roots are pairwise non-acute: i.e., $\langle \alpha_i,\alpha_j^\vee \rangle \leq 0$ for $i \neq j \in [n]$.

The \emph{coroots} $\alpha^\vee$ for $\alpha \in \Phi$ themselves form a root system which we call the \emph{dual root system} of $\Phi$ and which we denote $\Phi^\vee$. We always consider $\Phi^\vee$ with its positive roots being $\alpha^\vee$ for $\alpha \in \Phi^+$; hence $\alpha_i^\vee$ for $i=1,\ldots,n$ are the \emph{simple coroots}.

 The \emph{simple reflections} $s_{\alpha_i}$ for $i\in[n]$ generate $W$. The \emph{length} of a Weyl group element~$w \in W$ is the minimum length of a word expressing $w$ as a product of simple reflections. It is known that the length of $w\in W$ coincides with the number of inversions of $w$, where an \emph{inversion} of $w$ is a positive root $\alpha \in \Phi^+$ with $w(\alpha)\notin \Phi^+$.

There are two important lattices associated to $\Phi$: the \emph{root lattice} $Q \coloneqq \mathrm{Span}_{\mathbb{Z}}(\Phi)$ and the \emph{weight lattice} $P \coloneqq \{v\in V\colon\langle v,\alpha^\vee \rangle \in \mathbb{Z}\}$. By assumption of crystallography, we have that $Q\subseteq P$. The elements of $P$ are called \emph{weights}. We use $\omega_1,\ldots,\omega_{n}$ to denote the set of \emph{fundamental weights}, which form a dual basis to $\alpha^\vee_1,\ldots,\alpha^\vee_{n}$, i.e., they are defined by $\langle \omega_i,\alpha_j^\vee\rangle = \delta_{ij}$ for $i,j \in [n]$, where $\delta_{ij}$ is the Kronecker delta. Observe that $Q = \mathrm{Span}_{\mathbb{Z}}(\{\alpha_1,\ldots,\alpha_{n}\})$ and $P=\mathrm{Span}_{\mathbb{Z}}(\{\omega_1,\ldots,\omega_{n}\})$. We use the following notation for the ``positive parts'' of these lattices: $Q_{\geq 0} \coloneqq\mathrm{Span}_{\mathbb{Z}_{\geq 0}}(\{\alpha_1,\ldots,\alpha_{n}\})$; $P_{\geq 0} \coloneqq\mathrm{Span}_{\mathbb{Z}_{\geq 0}}(\{\omega_1,\ldots,\omega_{n}\})$. We also use the following notation for the two associated cones:
$Q^{\mathbb{R}}_{\geq 0} \coloneqq\mathrm{Span}_{\mathbb{R}_{\geq 0}}(\{\alpha_1,\ldots,\alpha_{n}\});$ $P^{\mathbb{R}}_{\geq 0} \coloneqq\mathrm{Span}_{\mathbb{R}_{\geq 0}}(\{\omega_1,\ldots,\omega_{n}\})$. Note that~$P^{\mathbb{R}}_{\geq 0}$ and~$Q^{\mathbb{R}}_{\geq 0}$ are \emph{dual cones}, meaning that $P^{\mathbb{R}}_{\geq 0} = \{v\in V\colon \langle v, u \rangle \geq 0 \textrm{ for all $u \in Q^{\mathbb{R}}_{\geq 0}$}\}$. A weight $\lambda \in P$ is called \emph{dominant} if~$\langle \lambda,\alpha_i^\vee\rangle \geq 0$ for all $i \in [n]$. Hence $P_{\geq 0}$ is the set of dominant weights. An important dominant weight is the \emph{Weyl vector} $\rho \coloneqq \sum_{i=1}^{n}\omega_i$. As mentioned earlier, we also have that $\rho = \frac{1}{2}\sum_{\alpha\in\Phi^+}\alpha$. That these two descriptions of $\rho$ agree implies that $P^{\mathbb{R}}_{\geq 0}\subseteq Q^{\mathbb{R}}_{\geq 0}$.

Let $X \subseteq \Phi$. Then $\Phi \cap \mathrm{Span}_{\mathbb{R}}(X)$ is a root system in $\mathrm{Span}_{\mathbb{R}}(X)$, and we call this root system $\Phi \cap \mathrm{Span}_{\mathbb{R}}(X)$ a \emph{sub-root system} of $\Phi$. (Note that this terminology may be slightly nonstandard insofar as we do not consider every subset of $\Phi$ which forms a root system to be a sub-root system.) It is always the case that $\Phi^+ \cap \mathrm{Span}_{\mathbb{R}}(X)$ is a choice of positive roots for $\Phi \cap \mathrm{Span}_{\mathbb{R}}(X)$ and we always consider sub-root systems with this choice of positive roots. However, note that $\{\alpha_1,\ldots,\alpha_{n}\}\cap\mathrm{Span}_{\mathbb{R}}(X)$ need not be a collection of simple roots for $\Phi \cap \mathrm{Span}_{\mathbb{R}}(X)$. The case where this intersection~$\{\alpha_1,\ldots,\alpha_{n}\}\cap\mathrm{Span}_{\mathbb{R}}(X)$ does form a collection of simple roots is nonetheless an important special case of sub-root system which we call a \emph{parabolic sub-root system}: for~$I\subseteq [n]$ we use the notation~$\Phi_I \coloneqq \Phi \cap \mathrm{Span}_{\mathbb{R}}(\{\alpha_i\colon i \in I\})$. We also use $W_I$ to  denote the corresponding \emph{parabolic subgroup} of $W$, which is the subgroup generated by~$s_{\alpha}$ for~$\alpha \in \Phi_I$.

We will often need to consider projections of weights onto sub-root systems. For a subspace $U\subseteq V$ we use $\pi_U\colon V \to U$ to denote the orthogonal (with respect to~$\langle \cdot,\cdot\rangle$) projection of $V$ onto $U$. And for~$X \subseteq \Phi$ we use the shorthand $\pi_{X} \coloneqq \pi_{\mathrm{Span}_{\mathbb{R}}(X)}$. Note that for $\lambda \in P$ we always have that $\pi_{X}(\lambda)$ is a weight of $\Phi\cap\mathrm{Span}_{\mathbb{R}}(X)$, although $\pi_X(\lambda)$ need not be a weight of $\Phi$. Similarly, if $\lambda \in P_{\geq 0}$ is dominant then $\pi_{X}(\lambda)$ is a dominant weight of $\Phi\cap\mathrm{Span}_{\mathbb{R}}(X)$. For $I\subseteq [n]$ we use the notation $\pi_I \coloneqq \pi_{\{\alpha_i\colon i \in I\}}$. Observe that~$\pi_I(\sum_{i=1}^{n}c_i\omega_i)=\sum_{i\in I}c_i\omega'_i$, where $\{\omega'_i\colon i\in I\}$ is the set of fundamental weights of~$\Phi_I$.

It would be helpful to have a ``standard form'' for sub-root systems. As mentioned, sub-root systems need not be parabolic. Nevertheless, we can always act by a Weyl group element to make them parabolic. In fact, as the following proposition demonstrates, slightly more than this is true: we can also make any given vector which projects to zero in the sub-root system dominant at the same time.

\begin{prop} \label{prop:parabolic_subspace}
Let $X \subseteq \Phi^+$. Let $v \in V$ be such that $\pi_{X}(v)=0$. Then there exists some~$w \in W$ and $I \subseteq [n]$ such that $w\,\mathrm{Span}_{\mathbb{R}}(X)=\mathrm{Span}_{\mathbb{R}}\{\alpha_i\colon i \in I\}$ and $wv \in P_{\geq 0}^{\mathbb{R}}$.
\end{prop}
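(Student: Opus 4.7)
The plan is to produce $w$ via a perturbation trick: I would bump $v$ to a vector $v_0 \in U^\perp$ that lies in generic position with respect to the root hyperplanes, and take $w$ to be an element moving $v_0$ into the closed dominant chamber. Concretely, I would choose $v_\ast \in U^\perp$ with $\langle v_\ast, \alpha\rangle \neq 0$ for every $\alpha \in \Phi \setminus U$ (setting aside the trivial case $U = V$, in which $v = 0$ and $w = \mathrm{id}$, $I = [n]$ already work), writing $U := \mathrm{Span}_{\mathbb{R}}(X)$. Such $v_\ast$ exists because for each $\alpha \in \Phi \setminus U$ the intersection $U^\perp \cap \alpha^\perp$ is a proper subspace of $U^\perp$ (else $\alpha \in (U^\perp)^\perp = U$), and a finite union of proper subspaces cannot cover $U^\perp$. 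Set $v_0 := v + \epsilon v_\ast$ for a small $\epsilon > 0$ to be constrained below. Then $v_0 \in U^\perp$, and one checks $\{\alpha \in \Phi : \alpha \perp v_0\} = \Phi \cap U$: the inclusion $\supseteq$ is immediate, and for $\alpha \in \Phi \setminus U$ the quantity $\langle v, \alpha\rangle + \epsilon \langle v_\ast, \alpha\rangle$ is nonzero for all sufficiently small $\epsilon > 0$ (by continuity if $\langle v, \alpha\rangle \neq 0$, and by the choice of $v_\ast$ if $\langle v, \alpha\rangle = 0$).

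Next, pick $w \in W$ with $wv_0 \in P_{\geq 0}^{\mathbb{R}}$ and set $I := \{i \in [n] : \langle wv_0, \alpha_i^\vee\rangle = 0\}$. A standard argument (expand any root in terms of simple roots and use that the coefficients have a common sign) shows that for any dominant $u \in P_{\geq 0}^{\mathbb{R}}$ the set of roots orthogonal to $u$ equals $\Phi_{I_u}$, where $I_u := \{j : \langle u, \alpha_j^\vee\rangle = 0\}$. Applied to $u = wv_0$, this yields $\{\alpha \in \Phi : \alpha \perp wv_0\} = \Phi_I$. Since this set also equals $w(\{\beta \in \Phi : \beta \perp v_0\}) = w(\Phi \cap U)$, we get $w(\Phi \cap U) = \Phi_I$. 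Taking $\mathbb{R}$-spans and using $U = \mathrm{Span}_{\mathbb{R}}(\Phi \cap U)$ (since $U$ is spanned by roots) yields $wU = \mathrm{Span}_{\mathbb{R}}\{\alpha_i : i \in I\}$.

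It remains to check $wv \in P_{\geq 0}^{\mathbb{R}}$. For $i \in I$: the identity $w(\Phi \cap U) = \Phi_I$ gives $w^{-1}\alpha_i \in \Phi \cap U \subseteq U$, so from $v \perp U$ we get $\langle wv, \alpha_i\rangle = \langle v, w^{-1}\alpha_i\rangle = 0$, hence $\langle wv, \alpha_i^\vee\rangle = 0$. For $i \notin I$: $\langle wv, \alpha_i^\vee\rangle = \langle wv_0, \alpha_i^\vee\rangle - \epsilon\langle wv_\ast, \alpha_i^\vee\rangle$, where the first term is strictly positive (by definition of $I$) and the second is a fixed real number, so after shrinking $\epsilon$ once more if necessary, this expression is strictly positive.

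The main obstacle I anticipate is calibrating the perturbation: $v_\ast$ must be chosen generically enough inside $U^\perp$ that the only root hyperplanes through $v_0$ are those through $U$, and $\epsilon$ must be small enough both to preserve this genericity and to keep $\langle wv_0, \alpha_i^\vee\rangle - \epsilon\langle wv_\ast, \alpha_i^\vee\rangle$ strictly positive for $i \notin I$. Once these are arranged, the identification of $\{\alpha \in \Phi : \alpha \perp wv_0\}$ with $\Phi_I$ is standard Weyl-group fare, and the rest is bookkeeping.
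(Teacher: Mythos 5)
Your perturbation approach is genuinely different from the paper's. The paper follows Bourbaki: it builds a total order $\preccurlyeq$ on $V$ compatible with the vector space structure, lexicographically with respect to a basis that starts with $v$ and ends with simple roots $\beta_1,\ldots,\beta_\ell$ of $\Phi\cap U$; the $\beta_i$ are then automatically simple for the choice $\{\alpha : 0 \preccurlyeq \alpha\}$ of positive roots, and $\langle v,\alpha^\vee\rangle \geq 0$ for all such $\alpha$ because $v$ occupies the leading coordinate. The paper then transports this data to the standard chamber by a single $w\in W$, with no limit or genericity argument at all. Your route — perturb $v$ to a vector $v_0$ with the right isotropy inside $U^\perp$, move $v_0$ dominant, and deduce properties of $wv$ — is sound in spirit, but the bookkeeping at the end is more delicate than the paper's static argument.

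As written, though, the step for $i\notin I$ is circular. You write $\langle wv,\alpha_i^\vee\rangle = \langle wv_0,\alpha_i^\vee\rangle - \epsilon\langle wv_*,\alpha_i^\vee\rangle$ and propose to "shrink $\epsilon$ once more" to make it positive. But this identity is just $v_0 = v+\epsilon v_*$ rearranged: once $w$ is fixed, the left side is a fixed real number, and both terms on the right change as you vary $\epsilon$ — you cannot treat $\langle wv_0,\alpha_i^\vee\rangle$ as constant while letting $\epsilon$ shrink. Concretely, if you fix $\epsilon_1>0$, pick $w$ so $wv_0(\epsilon_1)$ is dominant, and merely observe $\langle wv_0(\epsilon_1),\alpha_i^\vee\rangle>0$, that does not by itself exclude $\langle wv,\alpha_i^\vee\rangle<0$; the error term $\epsilon_1\langle wv_*,\alpha_i^\vee\rangle$ could be large and positive.

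The missing ingredient is a stability (or pigeonhole-plus-closure) argument showing a single $w$ works for all small $\epsilon>0$. One version: the face of the arrangement $\{\alpha^\perp : \alpha\in\Phi\}$ containing $v_0(\epsilon)$ is constant for $\epsilon\in(0,\delta)$ — the sign of $\langle v_0(\epsilon),\alpha\rangle$ equals $\operatorname{sign}\langle v,\alpha\rangle$ when that is nonzero, equals $\operatorname{sign}\langle v_*,\alpha\rangle$ when $\langle v,\alpha\rangle=0$ but $\alpha\notin U$, and is zero for $\alpha\in\Phi\cap U$ — so one $w$ sends the whole segment into the closed fundamental chamber, and since $P_{\geq 0}^{\mathbb{R}}$ is closed, $wv = \lim_{\epsilon\to 0^+} wv_0(\epsilon)$ lies in it. (Alternatively: $W$ is finite, so some $w$ works for a sequence $\epsilon_k\to 0^+$, and closure of $P_{\geq 0}^{\mathbb{R}}$ finishes.) You flag this calibration as the main obstacle but do not actually resolve it; this is the one real gap, and it is exactly what the paper's total-order construction sidesteps.
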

\begin{proof}
This result is a slight extension of a result of Bourbaki~\cite[Chapter IV, \S1.7, Proposition 24]{bourbaki2002lie}, which is equivalent to the present proposition but without the requirement~$wv \in P_{\geq 0}^{\mathbb{R}}$. If $v=0$, then~$wv \in P_{\geq 0}^{\mathbb{R}}$ is automatically satisfied for any $w\in W$, so let us assume that $v\neq 0$.

Following Bourbaki, let us explain one way to choose a set of positive roots. Namely, suppose that $\preccurlyeq$ is a total order on $V$ compatible with the real vector space structure in the sense that if $u \preccurlyeq v$ then $u+u' \preccurlyeq v+u'$ and $\kappa u \preccurlyeq \kappa v$ for all~$u,u',v \in V$ and~$\kappa \in \mathbb{R}_{\geq 0}$. Then the set $\{\alpha\in \Phi\colon 0 \preccurlyeq\alpha\}$ will be a valid choice of positive roots for $\Phi$.

We proceed to define an appropriate total order $\preccurlyeq$. Let $\beta_1,\ldots,\beta_{\ell}$ be a choice of simple roots for $\Phi\cap\mathrm{Span}_{\mathbb{R}}(X)$. Then let $v_1,\ldots,v_{n}$ be an ordered basis of $V$ such that: $v_1=v$; $v_{(n-\ell)+i}=\beta_i$ for all $i=1,\ldots,\ell$; $v_1$ is orthogonal to all of $v_2,\ldots,v_n$. (Such a basis exists because $\pi_{X}(v)=0$ implies $v$ is orthogonal to all of $\beta_1,\ldots,\beta_{\ell}$.) Then let~$\preccurlyeq$ be the lexicographic order on~$V$ with respect to the ordered basis $v_1,\ldots,v_{n}$; that is to say, $\sum_{i=1}^{n}a_iv_i \preccurlyeq \sum_{i=1}^{n}a'_iv_i$ means that either $\sum_{i=1}^{n}a_iv_i= \sum_{i=1}^{n}a'_iv_i$ or there is some~$i\in[n]$ such that $a_j = a'_j$ for all $1\leq j < i$ and $a_{i}<a'_{i}$.

It is clear that $\beta_1,\ldots,\beta_{\ell}$ are minimal (with respect to $\preccurlyeq$) in $\{\alpha\in \Phi\colon 0 \preccurlyeq\alpha\}$, which implies that they are simple roots of $\Phi$ for the choice $\{\alpha\in \Phi\colon 0 \preccurlyeq\alpha\}$ of positive roots.

Moreover, for any $u = \sum_{i=1}^{n}a_iv_i \in V$ we have that $\langle v,u\rangle=a_1$ because~$v$ is orthogonal to $v_2,\ldots,v_{n}$. Hence for any $u \in V$ with $0 \preccurlyeq u$ we have $\langle v,u\rangle\geq 0$. This means in particular that $\langle v,\alpha^\vee\rangle \geq 0$ for any $\alpha \in \Phi$ with $0 \preccurlyeq\alpha$.

Since all choices of positive roots are equivalent up to the action of the Weyl group, there exists $w\in W$ such that $w\{\alpha\in \Phi\colon 0 \preccurlyeq\alpha\} = \Phi^+$. This $w$ transports $\{\beta_1,\ldots,\beta_{\ell}\}$ to a subset of simple roots, so we get $w\,\mathrm{Span}_{\mathbb{R}}(X)=\mathrm{Span}_{\mathbb{R}}\{\alpha_i\colon i \in I\}$. That $wv \in P_{\geq 0}^{\mathbb{R}}$ follows from the previous paragraph and the fact that $w$ is an orthogonal transformation.
\end{proof}

Now let us return to our discussion of ($W$-)permutohedra. We can define the \emph{permutohedron} $\Pi(v)$ for any $v \in V$ to be $\Pi(v) \coloneqq \mathrm{ConvexHull}\, W(v)$. And for a weight $\lambda \in P$ we also define the \emph{discrete permutohedron} to be  $\Pi^Q(\lambda) \coloneqq \Pi(\lambda)\cap(Q+\lambda)$. (This discrete permutohedron only really makes sense for weights $\lambda \in P$ and not arbitrary vectors $v\in V$.) A simple but important consequence of the description of permutohedra containment given in Proposition~\ref{prop:perm_containment} below, which we will often use, is that $\Pi(u+v)=\Pi(u)+\Pi(v)$ for vectors $u,v \in P^{\mathbb{R}}_{\geq 0}$. 

Certain very special permutohedra are zonotopes. As mentioned, the \emph{regular permutohedron} $\Pi(\rho)$ is a zonotope: $\Pi(\rho)= \sum_{\alpha\in\Phi^+}[-\alpha/2,\alpha/2]$ (this can be seen, for instance, by taking the Newton polytope of both sides of the \emph{Weyl denominator formula}~\cite[\S24.3]{humphreys1972lie}). Moreover, for any $k \in \mathbb{Z}_{\geq0}$ we have that $\Pi(k\rho)$ is also a zonotope since $\Pi(k\rho)=k\Pi(\rho)$. In fact, we can obtain a slightly more general family of permutohedra which are zonotopes by scaling each Weyl group orbit of roots separately. We use the notation $\mathbf{k} \in \mathbb{Z}[\Phi]^W$ to mean that~$\mathbf{k}$ is a function~$\mathbf{k}\colon \Phi\to\mathbb{Z}$ which is invariant under the action of the Weyl group. For~$\mathbf{a},\mathbf{b}\in\mathbb{Z}[\Phi]^W$ and $a,b\in\mathbb{Z}$ we ascribe the obvious meanings to $a\mathbf{a}+b\mathbf{b}$, $\mathbf{a}=a$, and~$\mathbf{a}\geq\mathbf{b}$. We use~$\mathbb{N}[\Phi]^W$ to denote the set of $\mathbf{k}\in\mathbb{Z}[\Phi]^W$ with $\mathbf{k}\geq 0$. For any~$\mathbf{k} \in \mathbb{N}[\Phi]^W$ we define~$\rho_{\mathbf{k}} \coloneqq \sum_{i=1}^{n} \mathbf{k}(\alpha_i)\omega_i$ (so that~$\rho=\rho_1$ and $k\rho=\rho_k$). Then for any~$\mathbf{k} \in \mathbb{N}[\Phi]^W$ we have that~$\Pi(\rho_{\mathbf{k}}) = \sum_{\alpha \in \Phi^+} \mathbf{k}(\alpha)[-\alpha/2,\alpha/2]$ (this is an easy exercise given that~ $\Pi(\rho)= \sum_{\alpha\in\Phi^+}[-\alpha/2,\alpha/2]$).

We want to understand containment of permutohedra. As mentioned earlier, for a weight $\lambda \in P$ we use $\lambda_{\mathrm{dom}}$ to denote the unique dominant element of $W(\lambda)$. For any $\lambda,\mu \in P$, some immediate consequences of the $W$-invariance of permutohedra are: $\Pi(\lambda)=\Pi(\lambda_{\mathrm{dom}})$; $\Pi(\mu)\subseteq\Pi(\lambda)$ if and only if $\mu \in \Pi(\lambda)$; and $\mu\in \Pi(\lambda)$ if and only if~$\mu_{\mathrm{dom}}\in\Pi(\lambda)$. Thus to understand containment of permutohedra we can restrict to dominant weights. Recall that \emph{root order} is the partial order $\leq$ on $P$ for which we have~$\mu \leq \lambda$ if and only if $\lambda - \mu \in Q_{\geq 0}$. The following proposition says that for dominant weights, containment of discrete permutohedra is equivalent to root order; it also says that we can describe containment of (real) permutohedra in an exactly analogous way.

\begin{prop}[{See~\cite[Theorem 1.9]{stembridge1998partial} or~\cite[\oldpaperpermcontainment]{galashin2017rootfiring1}}] \label{prop:perm_containment}
Let $u,v \in P_{\geq 0}^{\mathbb{R}}$. Then $u \in \Pi(v)$ if and only if~$v-u \in Q_{\geq 0}^{\mathbb{R}}$. Consequently, for $\mu, \lambda \in P_{\geq 0}$ we have that~$\mu \in \Pi^Q(\lambda)$ if and only if~$\mu \leq \lambda$.
\end{prop}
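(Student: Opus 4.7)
The plan is to first prove the real statement $u \in \Pi(v) \iff v - u \in Q^{\mathbb{R}}_{\geq 0}$ for $u, v \in P^{\mathbb{R}}_{\geq 0}$, and then extract the integer version about $\Pi^Q$ and root order as a short corollary. For the \emph{only if} direction, since $\Pi(v) = \mathrm{ConvexHull}(W(v))$ and $Q^{\mathbb{R}}_{\geq 0}$ is a convex cone, it suffices to check $v - w(v) \in Q^{\mathbb{R}}_{\geq 0}$ for every $w \in W$. I would fix a reduced expression $w = s_{i_1}\cdots s_{i_r}$ and abbreviate $w_k \coloneqq s_{i_1}\cdots s_{i_k}$; a straightforward telescoping then gives
\[ v - w(v) = \sum_{k=1}^{r} \bigl(w_{k-1}(v) - w_{k}(v)\bigr) = \sum_{k=1}^{r} \langle v, \alpha_{i_k}^\vee\rangle \, w_{k-1}(\alpha_{i_k}). \]
Each scalar factor is nonnegative because $v$ is dominant, and the classical description of the inversion set of a reduced word ensures $w_{k-1}(\alpha_{i_k}) \in \Phi^+$ for every $k$, so each summand lies in $Q^{\mathbb{R}}_{\geq 0}$.

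For the converse, I would argue by contradiction using a separating hyperplane. Suppose $u \notin \Pi(v)$; since $\Pi(v)$ is compact and convex, there exists $\phi \in V$ with $\langle u, \phi\rangle > \max_{x \in \Pi(v)} \langle x, \phi\rangle$. Let $\phi_{\mathrm{dom}}$ denote the unique dominant element of $W(\phi)$. Applying the already-proved forward direction to the dominant vector $\phi_{\mathrm{dom}}$ gives $\phi_{\mathrm{dom}} - w(\phi) \in Q^{\mathbb{R}}_{\geq 0}$ for every $w \in W$; pairing with $v \in P^{\mathbb{R}}_{\geq 0}$ and invoking the polar duality of $P^{\mathbb{R}}_{\geq 0}$ and $Q^{\mathbb{R}}_{\geq 0}$ noted in the paper yields $\max_{x\in\Pi(v)}\langle x,\phi\rangle = \max_{w\in W}\langle v,w(\phi)\rangle = \langle v,\phi_{\mathrm{dom}}\rangle$, and the same duality applied to $u$ gives $\langle u,\phi\rangle \leq \langle u,\phi_{\mathrm{dom}}\rangle$. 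Chaining these three inequalities produces $\langle v - u,\phi_{\mathrm{dom}}\rangle < 0$, contradicting polar duality applied to $v - u \in Q^{\mathbb{R}}_{\geq 0}$ and $\phi_{\mathrm{dom}} \in P^{\mathbb{R}}_{\geq 0}$.

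Finally, for the integer statement: given $\mu, \lambda \in P_{\geq 0}$, the real statement already gives $\mu \in \Pi(\lambda) \iff \lambda - \mu \in Q^{\mathbb{R}}_{\geq 0}$, and the additional requirement $\mu \in \lambda + Q$ for membership in $\Pi^Q(\lambda) = \Pi(\lambda) \cap (\lambda + Q)$ forces $\lambda - \mu$ to be an integer combination of simple roots. Since the simple roots form an $\mathbb{R}$-basis of $V$, a vector simultaneously in $Q$ and in $Q^{\mathbb{R}}_{\geq 0}$ has uniquely determined simple-root coefficients that are both nonnegative reals and integers, hence nonnegative integers, so $\lambda - \mu \in Q_{\geq 0}$, i.e., $\mu \leq \lambda$. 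I expect the main obstacle to be the separating-hyperplane step, where one must bootstrap the forward direction to the separating functional itself and keep careful track of which cone is polar dual to which when transferring inequalities between them; the remaining ingredients (reduced-word properties of inversions, convexity, and the basis-expansion observation) are standard.
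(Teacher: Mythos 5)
The paper does not actually prove Proposition~\ref{prop:perm_containment}: it cites it to Stembridge's Theorem~1.9 and to the earlier paper \cite{galashin2017rootfiring1}, so there is no in-paper argument to compare against. Your proof is correct and self-contained. The forward direction via telescoping a reduced word
\[
v - w(v) = \sum_{k=1}^{r}\langle v,\alpha_{i_k}^\vee\rangle\, w_{k-1}(\alpha_{i_k})
\]
is sound: each coefficient $\langle v,\alpha_{i_k}^\vee\rangle$ is nonnegative by dominance of $v$, and each $w_{k-1}(\alpha_{i_k})$ is a positive root (this is the standard description of the inversion set of $w^{-1}$), so $v - w(v) \in Q^{\mathbb{R}}_{\geq 0}$, and convexity of the cone extends this from vertices $w(v)$ to all $u\in\Pi(v)$. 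The converse by separating hyperplane is also correct, and the key maneuver --- dominating the separating functional $\phi$ by $\phi_{\mathrm{dom}}$ via the forward direction you already proved, then invoking polar duality of $P^{\mathbb{R}}_{\geq 0}$ and $Q^{\mathbb{R}}_{\geq 0}$ twice --- works as stated: it yields $\langle u,\phi_{\mathrm{dom}}\rangle \geq \langle u,\phi\rangle > \max_{x\in\Pi(v)}\langle x,\phi\rangle = \langle v,\phi_{\mathrm{dom}}\rangle$, which contradicts $\langle v-u,\phi_{\mathrm{dom}}\rangle \geq 0$. The passage to the discrete statement is also fine, and you have handled (implicitly) the case $\mu\notin Q+\lambda$ correctly, since then both $\mu\in\Pi^Q(\lambda)$ and $\mu\leq\lambda$ fail. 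This is a clean argument; it avoids ever writing down a facet description of $\Pi(v)$, instead bootstrapping the easy telescoping direction to handle the separating functional.
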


In light of Proposition~\ref{prop:perm_containment}, let us review some basic facts about root order which appear in the seminal paper of Stembridge~\cite{stembridge1998partial} (but may have been known in some form earlier). First of all, we have that dominant weights are always maximal in root order in their Weyl group orbits.

\begin{prop}[{\cite[Lemma 1.7]{stembridge1998partial}}] \label{prop:dominant_maximal}
For any $\lambda \in P$ we have that $\lambda \leq \lambda_{\mathrm{dom}}$.
\end{prop}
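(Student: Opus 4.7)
The plan is to prove this by producing an explicit chain from $\lambda$ up to $\lambda_{\mathrm{dom}}$ in root order, where each step is a simple reflection. The key one-step observation is the following: if $\mu \in P$ is not dominant, then by definition there exists a simple root $\alpha_i$ with $\langle \mu, \alpha_i^\vee\rangle < 0$, and then
\[ s_{\alpha_i}(\mu) = \mu - \langle \mu, \alpha_i^\vee\rangle\,\alpha_i = \mu + c\,\alpha_i \qquad \text{with } c \coloneqq -\langle \mu, \alpha_i^\vee\rangle \in \mathbb{Z}_{>0}, \]
so $s_{\alpha_i}(\mu) - \mu = c\,\alpha_i \in Q_{\geq 0}$, i.e., $\mu < s_{\alpha_i}(\mu)$ in root order.

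Given this observation, I would proceed as follows. Starting with $\mu_0 \coloneqq \lambda$, at each stage, if $\mu_j$ is dominant then stop; otherwise choose a simple root $\alpha_{i_j}$ with $\langle \mu_j, \alpha_{i_j}^\vee\rangle < 0$ and set $\mu_{j+1} \coloneqq s_{\alpha_{i_j}}(\mu_j)$. By the one-step observation this produces a chain
\[\lambda = \mu_0 < \mu_1 < \mu_2 < \cdots\]
in root order, and all the $\mu_j$ lie in the Weyl orbit $W(\lambda)$. Because $W(\lambda)$ is finite and root order is a partial order, the chain must terminate after finitely many steps at some $\mu_N$. By the stopping rule, $\mu_N$ is dominant; but since $\lambda_{\mathrm{dom}}$ is the unique dominant element of $W(\lambda)$, we must have $\mu_N = \lambda_{\mathrm{dom}}$. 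Transitivity of root order then gives $\lambda \leq \lambda_{\mathrm{dom}}$.

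The only subtle point in the argument is termination, but this is cost-free: strictly ascending chains in a finite poset are bounded in length, and the Weyl orbit $W(\lambda)$ is finite. So there really is no ``hard part''; the content of the proof is the single algebraic identity $s_{\alpha_i}(\mu) - \mu = -\langle\mu,\alpha_i^\vee\rangle\,\alpha_i$ together with the definition of dominance. No heavier machinery (e.g., reduced expressions in $W$, the exchange condition, or properties of $\rho$) is needed for this direction.
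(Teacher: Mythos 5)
Your proof is correct and is the standard argument for this fact; the paper itself gives no proof but simply cites Stembridge's Lemma 1.7, and your argument is essentially the one given there (and in most textbook treatments). The one-step identity $s_{\alpha_i}(\mu)-\mu = -\langle\mu,\alpha_i^\vee\rangle\,\alpha_i$ combined with finiteness of the orbit $W(\lambda)$ is exactly the right content, and you correctly use that all $\mu_j$ remain in $P$ (so the coefficient $c$ is a positive integer, putting $s_{\alpha_i}(\mu)-\mu$ in $Q_{\geq 0}$ rather than merely $Q^{\mathbb{R}}_{\geq 0}$).
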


Now let us consider root order restricted to the set of dominant weights. Root order on all of $P$ is trivially a disjoint union of lattices\footnote{Here we mean the poset-theoretic concept of lattice: i.e. a poset with joins and meets.}: it is isomorphic to $f$ copies of $\mathbb{Z}^n$ where $f$ is the index of $Q$ in $P$ (this index $f$ is called the \emph{index of connection} of the root system $\Phi$). Stembridge proved, what is much less trivial, that the root order on~$P_{\geq 0}$ is also a disjoint union of $f$ lattices. Let us explain how he did this. For~$\lambda = \sum_{i=1}^{n} a_i\alpha_i, \mu = \sum_{i=1}^{n} a'_i\alpha_i \in P$ with $\lambda - \mu \in Q$, we define their \emph{meet} to be
\[\lambda \wedge \mu \coloneqq \sum_{i=1}^{n} \mathrm{min}(a_i,a'_i) \alpha_i.\]
This is obviously the meet of $\lambda$ and $\mu$ in $P$ with respect to the partial order $\leq$. Stembridge proved the following about this meet operation:

\begin{prop}[{\cite[Lemma 1.2]{stembridge1998partial}}] \label{prop:meet_weights}
Let $\lambda, \mu \in P$ with $\lambda - \mu \in Q$. Let $i \in [n]$ and suppose that $\langle \lambda,\alpha_i^\vee\rangle \geq 0$ and $\langle \mu,\alpha_i^\vee\rangle \geq 0$. Then $\langle \lambda \wedge \mu,\alpha_i^\vee \rangle \geq 0$. Hence, in particular, if~$\lambda, \mu \in P_{\geq 0}$ then $\lambda \wedge \mu \in P_{\geq 0}$ as well.
\end{prop}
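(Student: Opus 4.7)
The plan is to give a direct computation using the expansion in terms of simple roots together with the fact that simple roots are pairwise non-acute. This is really the only tool available, since $\wedge$ is defined coordinate-wise in the simple root basis while $\alpha_i^\vee$ is naturally dual to the fundamental weight basis, and the bridge between the two is the Cartan matrix.

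Write $\lambda = \sum_j a_j \alpha_j$ and $\mu = \sum_j a'_j \alpha_j$, and set $c_{ji} \coloneqq \langle \alpha_j, \alpha_i^\vee\rangle$. Recall the basic fact (stated in the ``background'' subsection) that $c_{ii}=2$ while $c_{ji}\leq 0$ for $j\neq i$. Fix $i \in [n]$, and by the symmetry of the hypothesis and conclusion in $\lambda$ and $\mu$ we may assume without loss of generality that $a_i \leq a'_i$, so that $\min(a_i,a'_i) = a_i$.

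The key observation is that because $c_{ji}\leq 0$ for $j\neq i$, multiplication by $c_{ji}$ reverses inequalities, and in particular $\min(a_j,a'_j)\,c_{ji} \geq a_j\,c_{ji}$ for each $j\neq i$. Summing these inequalities together with the equality $\min(a_i,a'_i)\cdot 2 = a_i \cdot 2$ obtained from the $j = i$ term gives
\[
\langle \lambda\wedge\mu,\alpha_i^\vee\rangle \;=\; \sum_{j} \min(a_j,a'_j)\,c_{ji} \;\geq\; \sum_{j} a_j\, c_{ji} \;=\; \langle \lambda,\alpha_i^\vee\rangle \;\geq\; 0,
\]
where the last inequality is the hypothesis on $\lambda$. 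This establishes the main claim; the final ``in particular'' follows immediately by applying the main claim to every index $i\in[n]$.

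There is no real obstacle here: once one notices that $\wedge$ is best analyzed in the simple-root basis while the Cartan integers $c_{ji}$ have a definite sign pattern, the proof reduces to a one-line inequality. The only subtlety is the opening reduction $a_i\leq a'_i$, which is needed so that the $i$-th term of the meet matches the $i$-th coordinate of $\lambda$ (whose pairing with $\alpha_i^\vee$ we then invoke); the hypothesis $\lambda-\mu\in Q$ is used only to guarantee that $\lambda\wedge\mu$ is well-defined as an element of $P$ (i.e. that $\lambda$ and $\mu$ can be compared coordinate-wise in the simple-root basis over $\mathbb{Z}$).
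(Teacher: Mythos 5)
Your proof is correct, and the paper itself offers none to compare against: it simply cites Stembridge~\cite[Lemma 1.2]{stembridge1998partial}. Your argument (reduce WLOG to $a_i\le a'_i$, then exploit $\langle\alpha_j,\alpha_i^\vee\rangle\le 0$ for $j\ne i$ to bound the pairing of the coordinatewise minimum against $\alpha_i^\vee$ from below by $\langle\lambda,\alpha_i^\vee\rangle$) is exactly the standard one-line proof, and it is essentially Stembridge's original argument. The one supporting fact you use, pairwise non-acuteness of the simple roots, is explicitly recorded in Section~\ref{subsec:root_background} of the paper, so nothing is being smuggled in.
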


Strictly speaking, Proposition~\ref{prop:meet_weights} only implies that $(P_{\geq 0}, \geq)$ is a disjoint union of meet-semilattices; a little more is needed to show that it is a disjoint union of lattices. At any rate, Proposition~\ref{prop:meet_weights} compels us to ask what the minimal elements of $(P_{\geq 0}, \geq)$ are; there will again be $f$ of these, one for every coset of $Q$ in $P$ (because $P_{\geq 0}^{\mathbb{R}}\subseteq Q_{\geq 0}^{\mathbb{R}}$, every element of $(P_{\geq 0}, \geq)$ has to be greater than or equal to a minimal element).

Recall that a dominant, nonzero weight $\lambda \in P_{\geq 0}\setminus \{0\}$ is called \emph{minuscule} if we have that~$\langle \lambda,\alpha^\vee \rangle \in \{-1,0,1\}$ for all $\alpha \in \Phi$.

\begin{prop}[{\cite[Lemma 1.12]{stembridge1998partial}}]  \label{prop:minuscule}
The minimal elements of $(P_{\geq 0}, \geq)$ are precisely the minuscule weights of $\Phi$ together with zero.
\end{prop}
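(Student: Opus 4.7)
The plan is to prove the two directions separately: that $0$ and each minuscule weight are minimal in $(P_{\geq 0}, \geq)$, and conversely that any other nonzero dominant weight is not minimal. For the first direction, the case $\lambda = 0$ is immediate from the inclusion $P^{\mathbb{R}}_{\geq 0} \subseteq Q^{\mathbb{R}}_{\geq 0}$ noted earlier: any $\mu \in P_{\geq 0}$ with $\mu \leq 0$ satisfies simultaneously $\mu \in Q^{\mathbb{R}}_{\geq 0}$ and $-\mu \in Q_{\geq 0}$, which forces $\mu = 0$. For a minuscule $\lambda$, I will appeal to the classical fact that the lattice points of the permutohedron coincide with the Weyl orbit, $\Pi^Q(\lambda) = W(\lambda)$; combining this with Proposition~\ref{prop:perm_containment}, any dominant $\mu \leq \lambda$ lies in $W(\lambda)$, and by uniqueness of the dominant orbit representative this forces $\mu = \lambda$.

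For the converse, the plan is to show that any nonzero non-minuscule $\lambda \in P_{\geq 0}$ fails to be minimal by explicitly producing a dominant weight $\mu < \lambda$. Since $\lambda$ is non-minuscule and $\Phi = -\Phi$, there exists $\alpha \in \Phi^+$ with $\langle\lambda, \alpha^\vee\rangle \geq 2$. The key idea is to choose such $\alpha$ \emph{minimal in root order} among all positive roots with this property, and then argue that $\lambda - \alpha$ is itself dominant; this immediately gives $\mu \coloneqq \lambda - \alpha \in P_{\geq 0}$ with $\mu < \lambda$ strictly (as $\alpha \in Q_{\geq 0}\setminus\{0\}$), completing the proof.

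To verify that $\lambda - \alpha$ is dominant, I will argue by contradiction. If not, then $\langle\alpha, \alpha_i^\vee\rangle > \langle\lambda, \alpha_i^\vee\rangle \geq 0$ for some simple root $\alpha_i$, so $\langle\alpha,\alpha_i^\vee\rangle \geq 1$. The case $\alpha = \alpha_i$ is impossible (it would force $\langle\lambda,\alpha^\vee\rangle = \langle\lambda,\alpha_i^\vee\rangle \leq 1$, contradicting $\geq 2$). Hence $\alpha \neq \alpha_i$, and standard $\alpha_i$-string theory, together with the observation that the $\alpha_i$-coefficient of $\alpha$ must be at least $1$ (since $\langle\alpha, \alpha_i^\vee\rangle \leq 2c_i$ where $c_i$ is that coefficient), yields $\beta \coloneqq \alpha - \alpha_i \in \Phi^+$ with $\beta < \alpha$ in root order. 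A direct computation with $\beta^\vee = 2\beta/\langle\beta,\beta\rangle$ then reduces the desired inequality $\langle\lambda, \beta^\vee\rangle \geq 2$ to
\[\langle\alpha,\alpha\rangle\bigl(\langle\lambda,\alpha^\vee\rangle - 2\bigr) \geq \langle\alpha_i,\alpha_i\rangle\bigl(\langle\lambda,\alpha_i^\vee\rangle - 2(\langle\alpha,\alpha_i^\vee\rangle - 1)\bigr),\]
whose left-hand side is nonnegative (since $\langle\lambda,\alpha^\vee\rangle \geq 2$) and whose right-hand side is nonpositive (since $\langle\lambda,\alpha_i^\vee\rangle \leq \langle\alpha,\alpha_i^\vee\rangle - 1$ and $\langle\alpha,\alpha_i^\vee\rangle \geq 1$). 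This gives $\langle\lambda, \beta^\vee\rangle \geq 2$, contradicting the minimality of $\alpha$. The only real delicacy is the coroot arithmetic in non-simply-laced types, where $\langle\alpha,\alpha_i^\vee\rangle$ may be $2$ or $3$ and $\beta$ may have a different length than $\alpha$; but the displayed inequality dispenses with all cases uniformly, avoiding any case-by-case analysis on root system type.
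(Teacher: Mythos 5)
The paper itself offers no proof of this proposition; it simply cites Stembridge's Lemma~1.12, so you are doing genuinely more work than the authors do here.

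Your converse direction is correct and complete. I checked the displayed inequality: writing it out with $\beta^\vee = 2\beta/\langle\beta,\beta\rangle$ and $\langle\lambda,\alpha\rangle = \tfrac{1}{2}\langle\alpha,\alpha\rangle\langle\lambda,\alpha^\vee\rangle$ does reduce $\langle\lambda,\beta^\vee\rangle\geq 2$ to the inequality you display, and the sign analysis (LHS $\geq 0$ from $\langle\lambda,\alpha^\vee\rangle\geq 2$, RHS $\leq 0$ from $\langle\lambda,\alpha_i^\vee\rangle\leq \langle\alpha,\alpha_i^\vee\rangle-1$ and $\langle\alpha,\alpha_i^\vee\rangle\geq 1$) is sound. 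The string-theory step ($\langle\alpha,\alpha_i^\vee\rangle>0$, $\alpha\neq\pm\alpha_i$ implies $\alpha-\alpha_i\in\Phi$) and the positivity of $\beta$ via the bound $\langle\alpha,\alpha_i^\vee\rangle\leq 2c_i$ are both fine. This is a nice, type-uniform argument.

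There is, however, a circularity problem in your forward direction for minuscule $\lambda$. You invoke ``the classical fact that $\Pi^Q(\lambda)=W(\lambda)$,'' but in this paper that fact is explicitly recorded as a \emph{consequence} of Proposition~\ref{prop:minuscule} together with Proposition~\ref{prop:perm_containment} (see the sentence immediately following the proposition). Via Proposition~\ref{prop:perm_containment}, the statement $\Pi^Q(\lambda)=W(\lambda)$ for $\lambda$ minuscule is logically \emph{equivalent} to the minimality of $\lambda$ in $(P_{\geq 0},\geq)$, which is exactly what you are trying to prove. So as written, this direction assumes what it needs to show. You should either cite an independent source for $\Pi^Q(\lambda)=W(\lambda)$ (it is provable by other means, e.g.\ from the representation theory of minuscule weights) or, better, replace the appeal with a direct argument. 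Here is one: induct on $\mathrm{ht}(\nu)$ for $\nu=\lambda-\mu=\sum c_i\alpha_i\in Q_{\geq 0}$, allowing $\lambda$ to range over all weights satisfying $\langle\lambda,\alpha^\vee\rangle\in\{-1,0,1\}$ for every $\alpha\in\Phi$ (not just dominant ones). If $\nu\neq 0$ then $\langle\nu,\nu\rangle>0$; since $\langle\nu,\mu\rangle\geq 0$ (as $\nu\in Q_{\geq 0}^{\mathbb{R}}$, $\mu\in P_{\geq 0}^{\mathbb{R}}$, and these cones are polar dual), we get $\langle\nu,\lambda\rangle>0$, hence some $i$ with $c_i\geq 1$ and $\langle\lambda,\alpha_i^\vee\rangle=1$. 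Replacing $(\lambda,\nu)$ by $(\lambda-\alpha_i,\nu-\alpha_i)$ preserves the hypotheses and drops the height by one, so by induction $\nu=\alpha_i$ and $\mu=\lambda-\alpha_i$; but then $\langle\mu,\alpha_i^\vee\rangle=-1$, contradicting dominance of $\mu$. This is self-contained and avoids the logical loop.
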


Hence there are $f-1$ minuscule weights. Observe that Proposition~\ref{prop:minuscule} together with Proposition~\ref{prop:perm_containment} gives another characterization of minuscule weights:

\begin{prop} \label{prop:minuscule_alternate}
For~$\lambda \in P_{\geq 0}$ we have that~$\Pi^Q(\lambda)=W(\lambda)$ if and only if $\lambda$ is zero or minuscule. 
\end{prop}

Another simple property of minuscule weights that we will use repeatedly is: if $\lambda \in P$ is zero or a minuscule weight of $\Phi$, then $\pi_{X}(\lambda)$ is a zero or a minuscule weight of~$\Phi\cap\mathrm{Span}_{\mathbb{R}}(X)$ for any $X \subseteq \Phi$.

If $\mu\in P$ is a weight with $\langle \mu, \alpha^\vee \rangle \in \{-1,0,1\}$ for all $\alpha \in \Phi^+$, then $\mu_{\mathrm{dom}}$ is either zero or minuscule, and hence in particular by Proposition~\ref{prop:minuscule} we have that $\mu_{\mathrm{dom}}$ is the minimal dominant weight greater than or equal to $\mu$ in root order. Let us now show that this conclusion (that  $\mu_{\mathrm{dom}}$ is the minimal dominant weight greater than or equal to $\mu$ ) follows from the weaker assumption that  $\langle \mu, \alpha^\vee\rangle\geq -1$ for all $\alpha \in \Phi^+$.

\begin{prop} \label{prop:unique_dominant_greater}
Let $\mu \in P$ be a weight with $\langle \mu,\alpha^\vee \rangle \geq -1$ for all~$\alpha \in \Phi^+$. Then if~$\lambda \in P_{\geq 0}$ is a dominant weight with $\mu \leq \lambda$, it must be that $\mu_{\mathrm{dom}}\leq \lambda$.
\end{prop}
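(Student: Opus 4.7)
The natural approach is induction on the ``defect'' of $\mu$ from being dominant, which I will measure by $N(\mu) \coloneqq \#\{\alpha \in \Phi^+ \colon \langle \mu, \alpha^\vee\rangle < 0\}$. When $N(\mu)=0$ the weight $\mu$ is itself dominant, so $\mu=\mu_{\mathrm{dom}}$ and there is nothing to prove. For the inductive step I would pick a simple root $\alpha_i$ with $\langle \mu, \alpha_i^\vee\rangle < 0$, observe that the hypothesis $\langle \mu, \alpha^\vee\rangle \geq -1$ for all positive roots forces $\langle \mu, \alpha_i^\vee\rangle = -1$ (since $\langle \mu, \alpha_i^\vee\rangle \in \mathbb{Z}$), and replace $\mu$ by $\mu' \coloneqq s_{\alpha_i}(\mu) = \mu + \alpha_i$. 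Then $\mu'_{\mathrm{dom}} = \mu_{\mathrm{dom}}$ since $\mu' \in W(\mu)$, and it suffices to verify that (a) $\mu' \leq \lambda$, (b) $\mu'$ still satisfies $\langle \mu', \alpha^\vee\rangle \geq -1$ for all $\alpha \in \Phi^+$, and (c) $N(\mu') < N(\mu)$, so that the induction applies.

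For (a), I would write $\lambda - \mu = \sum_{j=1}^{n} b_j \alpha_j$ with $b_j \in \mathbb{Z}_{\geq 0}$ and show $b_i \geq 1$. Pairing with $\alpha_i^\vee$ gives
\[
\langle \lambda - \mu, \alpha_i^\vee\rangle = 2 b_i + \sum_{j\neq i} b_j\, \langle \alpha_j, \alpha_i^\vee\rangle.
\]
The left side equals $\langle \lambda, \alpha_i^\vee\rangle - \langle \mu, \alpha_i^\vee\rangle \geq 0 - (-1) = 1$ because $\lambda$ is dominant, while on the right side each term $b_j\langle \alpha_j, \alpha_i^\vee\rangle$ with $j \neq i$ is $\leq 0$ by the non-acuteness of simple roots. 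Hence $2b_i \geq 1$, and since $b_i \in \mathbb{Z}_{\geq 0}$ this forces $b_i \geq 1$. Then $\lambda - \mu' = (b_i - 1)\alpha_i + \sum_{j \neq i} b_j \alpha_j \in Q_{\geq 0}$, giving $\mu' \leq \lambda$.

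For (b) and (c), I would use the standard fact that $s_{\alpha_i}$ permutes $\Phi^+ \setminus \{\alpha_i\}$ and sends $\alpha_i$ to $-\alpha_i$. Thus for $\alpha \in \Phi^+ \setminus \{\alpha_i\}$,
\[
\langle \mu', \alpha^\vee\rangle = \langle \mu, s_{\alpha_i}(\alpha)^\vee\rangle \geq -1
\]
by hypothesis on $\mu$, while $\langle \mu', \alpha_i^\vee\rangle = -\langle \mu, \alpha_i^\vee\rangle = 1 \geq -1$; this gives (b). The same bijection shows the multiset $\{\langle \mu, \alpha^\vee\rangle \colon \alpha \in \Phi^+ \setminus \{\alpha_i\}\}$ is unchanged by the substitution $\mu \mapsto \mu'$, so the number of strictly negative pairings among these is the same; the only net change comes from $\alpha_i$ itself, where a $-1$ becomes a $+1$. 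Hence $N(\mu') = N(\mu) - 1$, which gives (c) and completes the induction.

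The only step that requires any real thought is (a); the subtle point is to exploit the integrality of $b_i$ together with non-acuteness of simple roots, since the naive inequality $2b_i \geq 1$ only looks like it gives $b_i \geq 1/2$. Everything else is bookkeeping using well-known properties of simple reflections acting on positive roots.
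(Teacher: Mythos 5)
Your proof is correct and is essentially the same argument as the paper's. The paper runs the same simple-reflection induction but phrases the induction variable as the minimum length of a $w\in W$ with $w^{-1}(\mu)=\mu_{\mathrm{dom}}$ (which coincides with your $N(\mu)$), and handles your step (a) by contraposition: if $b_i=0$ then $\langle\lambda,\alpha_i^\vee\rangle\le\langle\mu,\alpha_i^\vee\rangle<0$ by non-acuteness, contradicting dominance of $\lambda$ — a slightly cleaner route that avoids the $2b_i\ge 1$ integrality observation, but otherwise the same idea.
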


\begin{proof}
If $\mu$ is dominant, the conclusion is clear. So suppose $\mu$ is not dominant. Hence, there is a simple root~$\alpha_i$ with $\langle \mu,\alpha_i^\vee \rangle < 0$. By supposition, this means $\langle \mu,\alpha_i^\vee \rangle = -1$. We claim that then $\langle s_{\alpha_i}(\mu),\alpha^\vee\rangle \geq -1$ for all positive $\alpha \in \Phi^+$, i.e., that $s_{\alpha_i}(\mu)$ satisfies the hypothesis of the proposition. Indeed, for a positive root $\alpha \in \Phi^+$ we have that $\langle s_{\alpha_i}(\mu),\alpha^\vee\rangle = \langle \mu,s_{\alpha_i}(\alpha^\vee)\rangle=\langle \mu,s_{\alpha_i}(\alpha)^\vee\rangle$. Then note that, since it has length one, the simple reflection $s_{\alpha_i}$ permutes the positive roots other than $\alpha_i$, and sends $\alpha_i$ to~$-\alpha_i$ (here we are using the fact that the length of a Weyl group element is equal to its number of inversions). So if $\alpha\in\Phi^+$ and $\alpha\neq \alpha_i$ we have by supposition that $\langle s_{\alpha_i}(\mu),\alpha^\vee\rangle \geq -1$; on the other hand, if  $\alpha=\alpha_i$ then $\langle s_{\alpha_i}(\mu),\alpha^\vee\rangle = -\langle \mu,\alpha^\vee \rangle=1$. So indeed $s_{\alpha_i}(\mu)=\mu+\alpha_i$ satisfies the  hypothesis of the proposition. Thus by induction on the minimum length of a $w\in W$ with $w^{-1}(\mu)=\mu_{\mathrm{dom}}$ we may assume that $s_{\alpha_i}(\mu)=\mu+\alpha_i$ satisfies the conclusion of the proposition. That is, if~$\lambda \in P_{\geq 0}$ is a dominant weight with $\mu+\alpha_i \leq \lambda$, it must be that $\mu_{\mathrm{dom}}=(s_{\alpha_i}(\mu))_{\mathrm{dom}} \leq \lambda$. But note that if $\lambda \in P_{\geq 0}$ is a dominant weight with $\mu \leq \lambda$ then necessarily $\mu+\alpha_i \leq \lambda$: otherwise we would have $\langle \lambda,\alpha_i^\vee\rangle \leq \langle \mu,\alpha_i^\vee \rangle< 0$ by the pairwise non-acuteness of the simple roots. Hence we conclude that for any dominant weight~$\lambda \in P_{\geq 0}$ with $\mu\leq \lambda$ we have $ \mu_{\mathrm{dom}}$, as claimed.
\end{proof}

If there exist $\varnothing \subsetneq \Phi',\Phi'' \subsetneq \Phi$ for which $\Phi = \Phi'\sqcup\Phi''$ and such that $\langle\alpha,\beta^\vee\rangle = 0$ for all~$\alpha \in \Phi',\beta\in\Phi''$ we say that $\Phi$ is \emph{reducible} and write $\Phi= \Phi'\oplus\Phi''$; otherwise we say that $\Phi$ is \emph{irreducible}. (By fiat let us also declare that the empty set, although it is a root system, is not irreducible.) Any root system is the orthogonal direct sum of its irreducible components and so all constructions related to root systems decompose in a simple way into irreducible factors. So from now on we will assume that $\Phi$ is irreducible. The irreducible root systems have been classified into the \emph{Cartan-Killing types} (e.g., Type~$A_n$, Type~$B_n$, et cetera), but since we will not need to use the classification until the appendix of this paper, we will not go over that classification now.

\subsection{Formula for lattice points and an integrality property of permutohedra}

In this subsection we establish the formula for the number of lattice points in a Minkowski sum of a permutohedron and a scaling regular permutohedron (Theorem~\ref{thm:perm_intro} in Section~\ref{sec:intro}). To do this we need to prove the subtle integrality property of permutohedra we mentioned earlier (Lemma~\ref{lem:slice_intro} in Section~\ref{sec:intro}). Recall that this integrality property asserts that for certain lattice polytopes $\mathcal{P}$ and lattice zonotopes $\mathcal{Z} = \sum_{i=1}^{m}[0,v_i]$ in $\mathbb{R}^n$ we have $\mathrm{quot}_{X}(\mathcal{P})\cap\mathrm{quot}_{X}(\mathbb{Z}^n) = \mathrm{quot}_X(\mathcal{P}\cap\mathbb{Z}^n)$ for all $X\subseteq \{v_1,\ldots,v_m\}$. Before we prove this integrality property in the case relevant to us, let us show how it can fail in the more general situation of arbitrary lattice polytopes and lattice zonotopes.

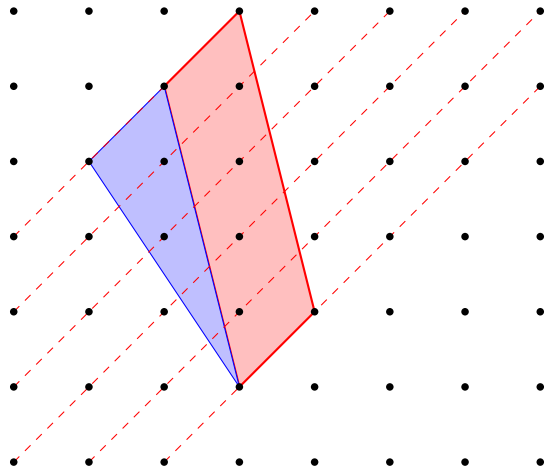
\begin{figure}
\begin{tikzpicture}
\draw[thick, color=red, fill=red!25] (2,6) -- (3,7) -- (4,3) -- (3,2) -- cycle;
\draw[color=blue, fill=blue!25] (2,6) -- (1,5) -- (3,2) -- cycle;
\draw[thin,color=red,dashed] (0,4) -- (3,7);
\draw[thin,color=red,dashed] (0,3) -- (4,7);
\draw[thin,color=red,dashed] (0,2) -- (5,7);
\draw[thin,color=red,dashed] (0,1) -- (6,7);
\draw[thin,color=red,dashed] (1,1) -- (7,7);
\draw[thin,color=red,dashed] (2,1) -- (7,6);
\foreach \i in {0,...,7}
{
        \foreach \j in {1,...,7}
        {
            \fill (\i,\j) circle (0.05);
        }
}
\end{tikzpicture}
\caption{Example~\ref{ex:thin_triangle} of a ``thin'' triangle plus line segment which does not satisfy $\mathrm{quot}_X(\mathcal{P})\cap\mathrm{quot}_X(\mathbb{Z}^n)=\mathrm{quot}_X(\mathcal{P}\cap\mathbb{Z}^n)$.}\label{fig:thin_triangle} 
\end{figure}

\begin{example} \label{ex:thin_triangle}
Let $\mathcal{P}$ be the lattice triangle in $\mathbb{R}^2$ with vertices $(0,3)$, $(1,4)$, and $(2,0)$. Let $v \coloneqq (1,1) \in \mathbb{Z}^2$ and set $\mathcal{Z} \coloneqq [0,v]$, a zonotope (in fact, a line segment). Figure~\ref{fig:thin_triangle} depicts $\mathcal{P}$ as the region shaded in blue, and $\mathcal{P}+\mathcal{Z}$ as the region shaded in blue together with the region shaded in red. The dashed red lines are all the affine subspaces of the form $u+\mathrm{Span}_{\mathbb{R}}(v)$ for which $u+\mathrm{Span}_{\mathbb{R}}(v) \cap \mathcal{P} \neq \varnothing$. There are six such subspaces. However, only four of these subspaces satisfy $u+\mathrm{Span}_{\mathbb{R}}(v) \cap (\mathcal{P}\cap\mathbb{Z}^2) \neq \varnothing$. In other words,
we have~$\#\mathrm{quot}_X(\mathcal{P}\cap\mathbb{Z}^2) = 4 < 6 = \#\mathrm{quot}_X(\mathcal{P})\cap\mathrm{quot}_X(\mathbb{Z}^2)$ when $X \coloneqq\{v\}$. We can verify that $\#(\mathcal{P}+k\mathcal{Z})\cap\mathbb{Z}^2=6k+5$, in agreement with Theorem~\ref{thm:poly_plus_zono}.
\end{example}

The reason that Example~\ref{ex:thin_triangle} fails to satisfy~$\mathrm{quot}_X(\mathcal{P})\cap\mathrm{quot}_X(\mathbb{Z}^n)=\mathrm{quot}_X(\mathcal{P}\cap\mathbb{Z}^n)$ is that the polytope $\mathcal{P}$ is too ``thin'' in the direction of $X$. So in order to show that permutohedra do satisfy this integrality property, we need, roughly speaking, to show that they cannot be too ``thin'' in any direction spanned by roots. Intuitively, the $W$-invariance of permutohedra prevents them from being ``thin'' in any given root direction (because otherwise they would be ``thin'' in \emph{every} root direction). But this is a just a rough intuition for why permutohedra might satisfy the requisite integrality property. The actual argument, which we give now, is rather involved and eventually requires us to invoke the classification of root systems. 

First let us restate the integrality property of permutohedra in a slightly different language, which uses ``slices'' rather than quotients:

\begin{lemma} \label{lem:slice}
Let $\lambda\in P_{\geq 0}$ be a dominant weight, let $\mu \in Q+\lambda$, and let $X\subseteq \Phi^+$. Suppose that $\Pi(\lambda)\cap (\mu+\mathrm{Span}_{\mathbb{R}}(X)) \neq \varnothing$. Then $\Pi^Q(\lambda)\cap (\mu+\mathrm{Span}_{\mathbb{R}}(X))\neq \varnothing$.
\end{lemma}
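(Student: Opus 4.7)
The plan is to first reduce to the case where $\mathrm{Span}_{\mathbb{R}}(X)$ is a parabolic subspace, and then produce the desired lattice point in the slice using Lemma~\ref{lem:root_polytope_projection_intro}.

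\emph{Step 1: Reduction to the parabolic case.} Set $U \coloneqq \mathrm{Span}_{\mathbb{R}}(X)$ and write $\mu = v + u_0$ with $v \coloneqq \pi_{U^\perp}(\mu) \in U^\perp$ and $u_0 \coloneqq \pi_U(\mu) \in U$, so that $\mu + U = v + U$ and $\pi_X(v) = 0$. By Proposition~\ref{prop:parabolic_subspace} there exist $w \in W$ and $I \subseteq [n]$ with $w(U) = \mathrm{Span}_{\mathbb{R}}(\{\alpha_i : i \in I\})$ and $w(v) \in P_{\geq 0}^{\mathbb{R}}$. Now $\Pi(\lambda)$ is $W$-invariant, and the affine lattice $Q + \lambda$ is preserved by $W$ since $w(\lambda) - \lambda \in Q$ for every $w \in W$. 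Hence, replacing $\mu$ by $w(\mu)$ and $X$ by $w(X)$, we may assume $U$ is the parabolic subspace $\mathrm{Span}_{\mathbb{R}}(\{\alpha_i : i \in I\})$ and $v \in P_{\geq 0}^{\mathbb{R}}$. In this parabolic setting $Q \cap U = Q_I \coloneqq \mathrm{Span}_{\mathbb{Z}}(\Phi_I)$, so the lattice points of $Q + \lambda$ on the affine subspace $\mu + U$ form exactly the coset $\mu + Q_I$.

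\emph{Step 2: Finding the lattice point.} It now suffices to produce $q' \in Q_I$ with $\mu + q' \in \Pi(\lambda)$, equivalently (by Proposition~\ref{prop:perm_containment}) with $(\mu + q')_{\mathrm{dom}} \leq \lambda$ in root order. The hypothesis supplies a real point $v + u^* \in \Pi(\lambda)$ with $u^* \in U$, so by Proposition~\ref{prop:perm_containment} we have $\lambda - (v + u^*)_{\mathrm{dom}} \in Q_{\geq 0}^{\mathbb{R}}$. I would attempt to adjust $u^*$ to a point of $u_0 + Q_I$ without leaving the slice; the needed slack is furnished by Lemma~\ref{lem:root_polytope_projection_intro}, whose bound $\pi_U(\mathcal{P}_{\Phi}) \subseteq \kappa\,\mathcal{P}_{\Phi_U}$ with $\kappa < 2$ controls precisely how far the $U$-projection of any root of $\Phi$ can stick out of $\mathcal{P}_{\Phi_U}$. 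Concretely, one uses a facet description of $\Pi(\lambda)$ (so $\mu + q' \in \Pi(\lambda)$ becomes a finite collection of inequalities $\langle \mu + q', \alpha^\vee\rangle \leq \langle \lambda_{\mathrm{dom}}, \alpha^\vee\rangle$ for $\alpha \in \Phi^+$) and handles these in two families: the inequalities for $\alpha \in \Phi_U$ are controlled by the choice of $q' \in Q_I$ together with the meet operation on dominant weights (Proposition~\ref{prop:meet_weights}), while the inequalities for $\alpha \notin \Phi_U$ reduce, after projecting to $U$, to the bound on $\pi_U(\alpha^\vee)$ provided by Lemma~\ref{lem:root_polytope_projection_intro} combined with the dominance of $v$ in $U^\perp$.

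\emph{Main obstacle.} The chief difficulty is rigorously executing Step 2: converting the continuous bound in Lemma~\ref{lem:root_polytope_projection_intro} into the discrete covering statement that the coset $\mu + Q_I$ is guaranteed to meet $\Pi(\lambda)$. The paper remarks that the constant $2$ is sharp for this application, so the strict inequality $\kappa < 2$ must enter somewhere as a strict inequality in the rounding argument---presumably to guarantee that the adjusted point lies in the interior (not merely the closure) of a relevant half-space. I expect the cleanest presentation to construct $q'$ as the unique element of $Q_I$ that integralizes a natural real coefficient vector along $U$, and then to verify containment in $\Pi(\lambda)$ facet by facet, with the critical $\alpha \notin \Phi_U$ case absorbing the $\kappa < 2$ slack. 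Proposition~\ref{prop:meet_weights} would be used at the end to upgrade the corrected weight to a dominant weight $\leq \lambda$.
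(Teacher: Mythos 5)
Your Step 1 matches the paper's proof, and you have correctly identified the two main ingredients (the parabolic reduction via Proposition~\ref{prop:parabolic_subspace} and the dilation bound in Lemma~\ref{lem:root_polytope_projection_intro}) along with the role of the strict inequality $\kappa<2$. But the gap you flag in Step 2 is a real one, and your proposed facet-by-facet rounding would be awkward to carry out: the point you are trying to certify, namely $\mu+q'$ for $q'\in Q_I$, is generally not dominant, so Proposition~\ref{prop:perm_containment} does not apply to it directly, and controlling the facet inequalities for $\alpha\notin\Phi_U$ from the projection bound alone does not obviously close the argument.

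What the paper does instead is produce a distinguished lattice point of the slice \emph{first}, and only then apply the root-polytope lemma. After the parabolic reduction (so $U=\mathrm{Span}_{\mathbb{R}}\{\alpha_i\colon i\in I\}$ and $\mu^0_X\in P_{\geq 0}^{\mathbb{R}}$), one normalizes $\mu$ by adding multiples of $\sum_{\alpha\in\Phi_I}\alpha$ so that $\langle\mu,\alpha_i^\vee\rangle\geq 0$ for $i\in I$; then $\lambda\wedge\mu$ still lies in the slice (its $\alpha_i$-coordinates for $i\notin I$ match those of $\mu^0_X\in\Pi(\lambda)$), and by Propositions~\ref{prop:meet_weights} and~\ref{prop:minuscule} one can replace $\mu$ by a weight $\nu$ in the slice with $\nu\leq\lambda$ and with $\pi_I(\nu)$ zero or minuscule in $\Phi_I$. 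Only now does Lemma~\ref{lem:root_polytope_projection_intro} enter, and not facet by facet: since $\pi_I(\nu)\in\mathcal{P}^*_{\Phi_I^\vee}$, the dilation bound (in its dual form $\mathcal{P}^*_{\Phi_I^\vee}\subseteq\kappa\cdot\mathcal{P}^*_{\Phi^\vee}$) gives $\langle\pi_I(\nu),\alpha^\vee\rangle>-2$ for every $\alpha\in\Phi^+$; adding the dominant vector $\mu^0_X$ preserves this; and crystallographic integrality upgrades $>-2$ to $\geq -1$. The last ingredient your sketch omits is Proposition~\ref{prop:unique_dominant_greater}: a weight $\nu$ satisfying $\langle\nu,\alpha^\vee\rangle\geq -1$ for all $\alpha\in\Phi^+$ has $\nu_{\mathrm{dom}}$ as the unique minimal dominant weight above it in root order, so $\nu\leq\lambda$ forces $\nu_{\mathrm{dom}}\leq\lambda$ and hence $\nu\in\Pi^Q(\lambda)$. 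In short: your ingredients are right, but the missing idea is to use meets and minuscule weights to reduce to a \emph{single} well-chosen point $\nu$ in the slice and then show $\nu$ itself lies in $\Pi(\lambda)$, rather than trying to round a continuous witness $u^*$ to a nearby lattice point.
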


Recall that the \emph{root polytope} $\mathcal{P}_{\Phi}$ of the root system $\Phi$ is simply the convex hull of the roots: $\mathcal{P}_{\Phi} \coloneqq \mathrm{ConvexHull}(\Phi)$.\footnote{Sometimes, as in~\cite{meszaros2011root1, meszaros2011root2}, the term \emph{root polytope} is used to refer to the convex hull of the positive roots together with the origin. We will always use it to mean the convex hull of all the roots, following the terminology in~\cite{cellini2015root}.} It turns out that the integrality property of slices of permutohedra follows from the following lemma concerning dilations of projections of the root polytope for the \emph{dual} root system.

\begin{lemma} \label{lem:root_projection}
Let $\{0\}\neq U\subseteq V$ be a nonzero subspace of $V$ spanned by a subset of~$\Phi^\vee$. Set $\Phi^\vee_U\coloneqq\Phi^\vee\cap U$, a sub-root system of $\Phi^\vee$. Then there exists some $1\leq \kappa < 2$ such that $\pi_U(\mathcal{P}_{\Phi^\vee}) \subseteq \kappa \cdot \mathcal{P}_{\Phi^\vee_U}$.
\end{lemma}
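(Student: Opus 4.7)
The plan is to reduce Lemma~\ref{lem:root_projection} to a case-by-case verification over the Cartan-Killing classification, using the Cellini-Marietti uniform facet description of root polytopes~\cite{cellini2015root} as the main tool. First, since projection commutes with convex hull, $\pi_U(\mathcal{P}_{\Phi^\vee})=\mathrm{ConvexHull}\{\pi_U(\alpha^\vee):\alpha\in\Phi\}$, so it suffices to show that for each $\alpha\in\Phi$ there is some $\kappa_\alpha<2$ with $\pi_U(\alpha^\vee)\in\kappa_\alpha\mathcal{P}_{\Phi^\vee_U}$; taking the maximum over the finitely many roots then produces a uniform $\kappa<2$. Equivalently, since the origin is an interior point of the full-dimensional polytope $\mathcal{P}_{\Phi^\vee_U}\subseteq U$, the required condition is that each projected coroot $\pi_U(\alpha^\vee)$ lies in the \emph{interior} of $2\mathcal{P}_{\Phi^\vee_U}$.

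Next, I would apply Proposition~\ref{prop:parabolic_subspace} to $\Phi^\vee$, and use the Weyl-invariance of the root polytope to reduce to the case where $U=\mathrm{Span}_{\mathbb{R}}\{\alpha_i^\vee:i\in I\}$ is a standard parabolic subspace, so that $\Phi^\vee_U$ is the parabolic coroot subsystem $(\Phi_I)^\vee$. Then I would translate the target inclusion into facet inequalities: identifying each facet-defining functional $f$ of $\mathcal{P}_{\Phi^\vee_U}$ (normalized so $\max_{\mathcal{P}_{\Phi^\vee_U}}f=1$) with its gradient $v_f\in U$, the hypothesis $f|_{\Phi^\vee_U}\leq 1$ becomes $\langle v_f,\beta^\vee\rangle\leq 1$ for all $\beta\in\Phi_U$, and since $v_f\in U$ the inequality $f(\pi_U(\alpha^\vee))<2$ simplifies to $\langle v_f,\alpha^\vee\rangle<2$ for every $\alpha\in\Phi$. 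Cellini and Marietti supply an explicit combinatorial list of the possible $v_f$ in terms of the root-system data, reducing the entire lemma to a finite list of checkable strict inequalities.

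The main obstacle is the case analysis itself, which I expect cannot be avoided. A direct calculation for the one-dimensional parabolic subspaces in $G_2$ already shows that $\kappa$ can be as large as $3/2$, attained when projecting a long coroot onto the axis of a short coroot; so the constant $2$ is genuinely tight and no uniform numerical slack is available. Consequently the verification is sensitive to the specific Cartan-Killing type, and I would expect the exceptional types $E_6,E_7,E_8,F_4,G_2$ to require the most care; a substantial portion of the work would likely consist of routine but tedious (possibly computer-assisted) checks, leaving open the interesting problem of finding a conceptually uniform argument.
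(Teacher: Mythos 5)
Your high-level strategy --- pass to the facet-defining functionals $v_f$ of $\mathcal{P}_{\Phi^\vee_U}$ (equivalently the vertices of the polar dual $\mathcal{P}^*_{\Phi^\vee_U}$), invoke Cellini--Marietti for their description, reduce to parabolic $U$ via Proposition~\ref{prop:parabolic_subspace}, and check $\langle v_f,\alpha^\vee\rangle < 2$ case by case --- is indeed exactly what the paper does. However, as stated your plan has a real gap: for the classical infinite families $A_n, B_n, C_n, D_n$, the number of parabolic subspaces $U$ (and hence the collection of inequalities to verify) grows without bound in $n$, so ``routine but tedious (possibly computer-assisted) checks'' cannot by themselves close the infinitely many cases. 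The paper resolves this with an additional reduction you did not make: it suffices to treat $U$ of codimension one, i.e., a \emph{maximal} parabolic subspace. The point is that if some $U$ and $\alpha^\vee\in\Phi^\vee$ violated the inequality, then the same $U$ and $\alpha^\vee$ would already violate it inside the sub-root system $\Phi\cap\bigl(U+\mathrm{Span}_{\mathbb{R}}\{\alpha\}\bigr)$, where $U$ has codimension one. With this reduction, for a fixed type the cases are indexed only by $i\in[n]$, and the paper further uses Oshima's lemma (Lemma~\ref{lem:oshima}) to catalog the $W_i$-orbit representatives of coroots in $\Phi^\vee\setminus\Phi_i^\vee$ (Table~\ref{tab:parabolic_orbit_reps}), which is what makes the by-hand verification for general $n$ in the classical types feasible. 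You would need both of these organizational devices for your proposed proof to actually terminate outside the exceptional types.

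A minor factual correction: your $G_2$ computation of $\kappa=3/2$ shows only that $\kappa>1$ is unavoidable; since $3/2$ is nowhere near $2$, it does not establish that the constant $2$ in the lemma is tight. What shows tightness is the behavior of the classical families as the rank grows: e.g., Table~\ref{tab:classical_maxes} gives $\kappa=\tfrac{2n-2}{n}\to 2$ in $B_n$.
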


Since Lemma~\ref{lem:root_projection} can be hard to understand at first sight, let's give an example.

\begin{figure}
\begin{center}
\includegraphics[width=5cm]{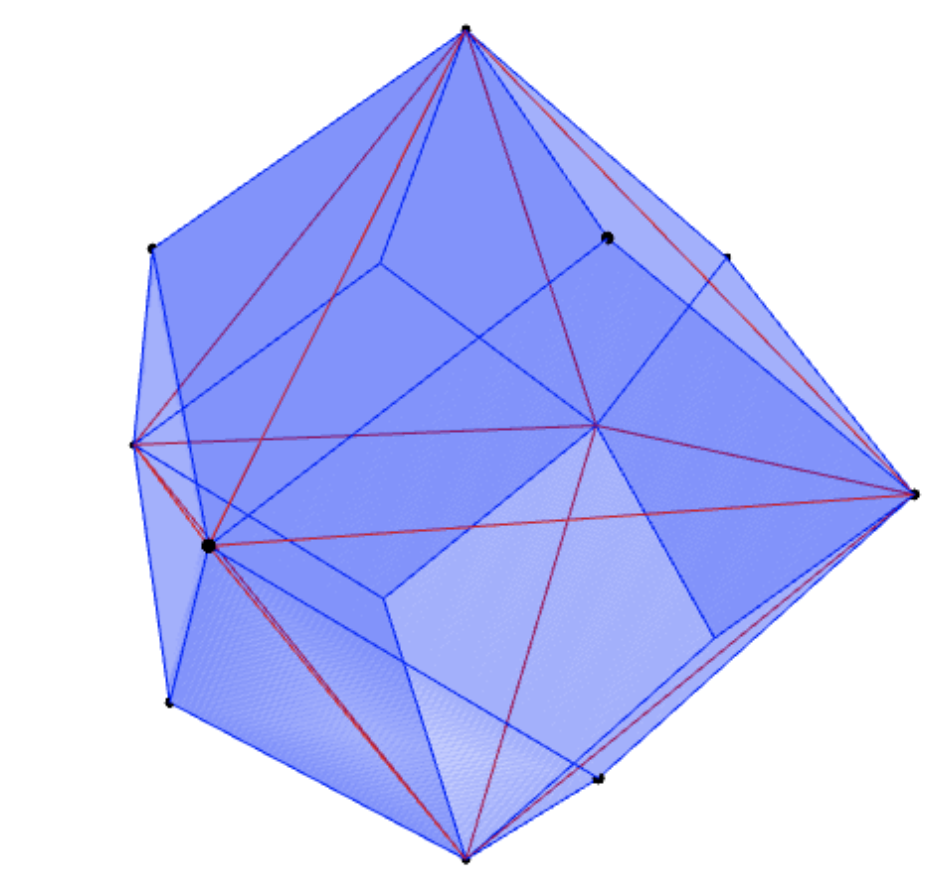}
\end{center}
\caption{The projection of the root polytope of $D_4$ onto the maximal parabolic subspace corresponding to the trivalent node of the Dynkin diagram.} \label{fig:d4}
\end{figure}

\begin{example} \label{ex:d4}
Let $\Phi$ be the root system of Type~$D_4$. Thus $V=\mathbb{R}^4$ with the standard basis $e_1, e_2, e_3, e_4$ and inner product $\langle e_i,e_j\rangle = \delta_{i,j}$, and 
\[\Phi = \{\pm(e_i-e_j), \pm(e_i+e_j)\colon 1 \leq i < j \leq 4\}.\] 
Note that $\Phi^\vee =\Phi$ (i.e., $D_4$ is ``simply laced'') so we will ignore the distinction between $\Phi$ and $\Phi^\vee$ in this example. We choose simple roots $\alpha_1 = e_1-e_2$, $\alpha_2 = e_2-e_3$, $\alpha_3 = e_3-e_4$ and $\alpha_4 = e_3+e_4$. Let $U \coloneqq \mathrm{Span}_{\mathbb{R}}\{\alpha_1,\alpha_3,\alpha_4\}$. Note that $U$ is the subspace of $V$ orthogonal to $\omega_2=e_1+e_2$. Thus for instance we can compute
\[ \pi_U(\alpha_2) = \alpha_2 - \frac{\langle \alpha_2,\omega_2 \rangle}{\langle \omega_2, \omega_2 \rangle} \omega_2 = -\frac{1}{2}e_1+\frac{1}{2}e_2-e_3= -\frac{1}{2}\alpha_1 - \frac{1}{2}\alpha_3 - \frac{1}{2}\alpha_4.\]
In fact, we have that $\pi_U(\Phi)$ consists of $14$ points:
\[\pi_U(\Phi)= \{\pm\frac{1}{2}\alpha_1 \pm\frac{1}{2}\alpha_3 \pm \frac{1}{2}\alpha_4,\pm\alpha_1,\pm\alpha_3,\pm\alpha_4\}.\]
On the other hand, it is also easy to see that $\Phi_U \coloneqq \Phi \cap U$ consists of $6$ points:
\[ \Phi_U =\{\pm\alpha_1,\pm\alpha_3,\pm\alpha_4\}.\]
This means that $\pi_U(\mathcal{P}_{\Phi})$ is a rhombic dodecahedron, and~$\mathcal{P}_{\Phi_U}$ is an octahedron inscribed inside this rhombic dodecahedron. Figure~\ref{fig:d4} depicts $\pi_U(\mathcal{P}_{\Phi})$ (in blue) and~$\mathcal{P}_{\Phi_U}$ (in red wireframe). In this case, it turns out that the minimum~$\kappa \geq 1$ for which $\pi_U(\mathcal{P}_{\Phi}) \subseteq \kappa\cdot \mathcal{P}_{\Phi_U}$ is $\kappa = \frac{3}{2}$. Since $\kappa < 2$, this example agrees with Lemma~\ref{lem:root_projection}.
\end{example}

We now show that Lemma~\ref{lem:root_projection} implies Lemma~\ref{lem:slice}.

\begin{proof}[Proof that Lemma~\ref{lem:root_projection} implies Lemma~\ref{lem:slice}] Let $\lambda$, $\mu$, and~$X$ be as in the statement of Lemma~\ref{lem:slice}. Define $\mu^0_X \in V$ to be the unique vector in the affine subspace $\mu+\mathrm{Span}_{\mathbb{R}}(X)$ for which $\pi_{X}(\mu^0_X) = 0$. We claim that~$\mu^0_X \in \Pi(\lambda)$. Indeed, $\mu^0_X$ is the ``inner-most'' vector in $\mu+\mathrm{Span}_{\mathbb{R}}(X)$, so if any vector of $\mu+\mathrm{Span}_{\mathbb{R}}(X)$ lies in~$\Pi(\lambda)$ then $\mu^0_X$ must as well. To explain this more formally, let $W'\subseteq W$ denote the Weyl group of the sub-root system~$\Phi \cap \mathrm{Span}_{\mathbb{R}}(X)$. There is of course the natural inclusion $W' \subseteq W$. For any $u \in \mathrm{Span}_{\mathbb{R}}(X)$ we have~$0 \in  \mathrm{ConvexHull}\, W'(u)$ (for instance, by Proposition~\ref{prop:perm_containment}). But since $\mathrm{ConvexHull}\, W'(u) = \mu^0_X + \mathrm{ConvexHull}\, W'(\pi_{X}(u))$ for any $u \in \mu+\mathrm{Span}_{\mathbb{R}}(X)$, we conclude~$\mu^0_X \in \mathrm{ConvexHull}\, W'(u)$ for any $u \in \mu+\mathrm{Span}_{\mathbb{R}}(X)$. Hence in particular we have that~$\mu^0_X \in \Pi(u)$ for any $u \in \mu+\mathrm{Span}_{\mathbb{R}}(X)$. By supposition there exists some~$u\in\Pi(\lambda)\cap (\mu+\mathrm{Span}_{\mathbb{R}}(X))$, so $\mu^0_X \in \Pi(u) \subseteq \Pi(\lambda)$ as claimed.

Because of the $W$-invariance of $\Pi(\lambda)$, if the statement of Lemma~\ref{lem:slice} is true for $X$ and $\mu$, then it is true for $wX$ and $w\mu$ as well. Hence, by Proposition~\ref{prop:parabolic_subspace} we may assume that $\mathrm{Span}_{\mathbb{R}}(X) = \mathrm{Span}_{\mathbb{R}}\{\alpha_i\colon i \in I\}$ for some $I \subseteq [n]$ and that $\mu^0_X \in P_{\geq 0}^{\mathbb{R}}$. Note importantly that $\mu^0_X$ need not be a weight of $\Phi$: in general it is just a vector in $V$, and even if $\mu^0_X$ is a weight of $\Phi$ it need not belong to the coset $Q+\lambda$.

Having made some assumptions about $X$ and $\mu^0_X$, let us now show that we can also make some assumptions about $\mu$. First of all, note that $\sum_{\alpha\in \Phi_I} \alpha$ has positive inner product with every $\alpha_i^\vee$ for $i \in I$ (because it is equal to twice the Weyl vector of $\Phi_I$). Thus by repeatedly adding the vector $\sum_{\alpha\in \Phi_I} \alpha$ to $\mu$, we can assume that~$\langle \mu, \alpha_i^\vee \rangle \geq 0$ for all $i \in I$. Furthermore, we claim that $\lambda \wedge \mu \in \mu  + \mathrm{Span}_{\mathbb{R}}(X)$. Indeed, the $\alpha_i$ coordinates (in the basis of simple roots) of $\mu$ and of $\mu^0_X$ are the same for any $i \notin I$.  But since $\mu^0_X \in \Pi(\lambda)$, we have by Proposition~\ref{prop:perm_containment} that the $\alpha_i$ coordinate of $\mu^0_X$ is less than or equal to that of $\lambda$ for all $i \in [n]$. Hence the $\alpha_i$ coordinate of $\mu$ is less than or equal to that of $\lambda$ for any $i \notin I$, which implies that $\mu - (\lambda \wedge \mu)$ belongs to $\mathrm{Span}_{\mathbb{R}}\{\alpha_i\colon i \in I\}$. Because we have assumed that $\langle \mu, \alpha_i^\vee \rangle \geq 0$ for all $i \in I$, by Proposition~\ref{prop:meet_weights} we conclude that $\langle \lambda \wedge \mu, \alpha_i^\vee \rangle \geq 0$ for all~$i \in I$ as well. In other words, we know that $\pi_{I}(\lambda \wedge\mu)$ is dominant in $\Phi_I$. But by Proposition~\ref{prop:minuscule} the minimal, in root order, dominant weights are either zero or minuscule. Hence there exists some weight~$\nu \in \mu+\mathrm{Span}_{\mathbb{R}}(X)$ for which $\pi_{I}(\nu)$ is either zero or a minuscule weight of~$\Phi_I$, and such that $\nu \leq \lambda \wedge \mu$. Of course we also have $\lambda \wedge \mu \leq \lambda$, so in fact $\nu \leq \lambda$. By replacing $\mu$ with $\nu$, we can thus assume that $\pi_{I}(\mu)$ is zero or a minuscule weight of~$\Phi_I$, and that $\mu \leq \lambda$.

To summarize the above, without loss of generality {\bf we from now on in the proof of this lemma assume the following list of additional conditions}:
\begin{enumerate}[(a)]
\item \label{cond:integrality_1} $\mathrm{Span}_{\mathbb{R}}(X) = \mathrm{Span}_{\mathbb{R}}\{\alpha_i\colon i \in I\}$ for some $I\subseteq [n]$;
\item \label{cond:integrality_2} the unique vector $\mu^0_X \in \mu+\mathrm{Span}_{\mathbb{R}}(X)$ with $\pi_{X}(\mu^0_X)=0$ satisfies $\mu^0_X \in P_{\geq 0}^{\mathbb{R}}$;
\item \label{cond:integrality_3} $\pi_{I}(\mu)$ is zero or a minuscule weight of $\Phi_I$;
\item \label{cond:integrality_4} $\mu \leq \lambda$.
\end{enumerate}

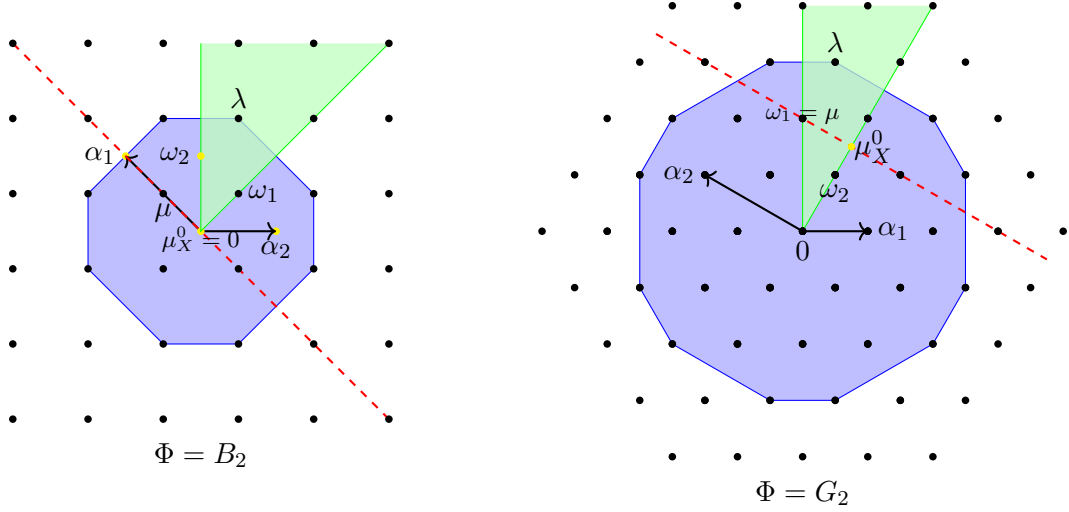
\begin{figure} 
\begin{tikzpicture}
\node (B2) at (0,0) {\begin{tikzpicture}
\draw[blue,fill=blue!25] (1,3)--(2,4)--(3,4)--(4,3)-- (4,2)--(3,1) -- (2,1) -- (1,2) -- cycle;
\fill[yellow] (1.5,3.5) circle (0.05);
\draw[->,thick] (2.5,2.5) -- (1.5,3.5);
\node[anchor=east] at (1.5,3.5) {$\alpha_1$};
\fill[yellow] (3.5,2.5) circle (0.05);
\draw[->,thick] (2.5,2.5) -- (3.5,2.5);
\node[anchor=north] at (3.5,2.5) {$\alpha_2$};
\draw[dashed,red,thick] (5,0) -- (0,5);
\fill[yellow] (2.5,2.5) circle (0.05);
\node[anchor=north,yshift=2mm] at (2.5,2.5) {\footnotesize $\mu^0_X = 0$};
\node[anchor=north] at (2,3) {$\mu$};
\draw[green,fill=green!25,fill opacity=0.75] (5,5) -- (2.5,2.5) -- (2.5,5);
\fill[yellow] (2.5,3.5) circle (0.05);
\node[anchor=east] at (2.5,3.5) {$\omega_2$};
\node[anchor=west] at (3,3) {$\omega_1$};
\node[anchor=south] at (3,4) {$\lambda$};
\foreach \i in {0,...,5}
{
        \foreach \j in {0,...,5}
        {
            \fill (\i,\j) circle (0.05);
        }
}
\end{tikzpicture}};
\node (G2) at (8,0) {\begin{tikzpicture}
\coordinate (L1) at (90:1.5cm);
\coordinate (L2) at (210:1.5cm);
\coordinate (L3) at (-30:1.5cm);
\draw[blue,fill=blue!25] (barycentric cs:L1=4,L2=-1,L3=0) -- (barycentric cs:L1=3,L2=-2,L3=2) -- (barycentric cs:L1=2,L2=-2,L3=3) -- (barycentric cs:L1=0,L2=-1,L3=4) -- (barycentric cs:L1=-1,L2=0,L3=4) --  (barycentric cs:L1=-2,L2=2,L3=3) -- (barycentric cs:L1=-2,L2=3,L3=2) -- (barycentric cs:L1=-1,L2=4,L3=0) -- (barycentric cs:L1=0,L2=4,L3=-1) -- (barycentric cs:L1=2,L2=3,L3=-2) -- (barycentric cs:L1=3,L2=2,L3=-2) -- (barycentric cs:L1=4,L2=0,L3=-1) -- cycle;
\draw[green,fill=green!25,fill opacity=0.75] (barycentric cs:L1=5,L2=-1,L3=-1) -- (barycentric cs:L1=1,L2=1,L3=1) -- (barycentric cs:L1=5,L2=-3,L3=1);
\draw[dashed,red,thick] (barycentric cs:L1=4.5,L2=1.5,L3=-3)--(barycentric cs:L1=0.5,L2=-2.5,L3=5);
\fill[yellow] (barycentric cs:L1=2.5,L2=-0.5,L3=1) circle (0.05);
\node[anchor=west,xshift=-1mm] at (barycentric cs:L1=2.5,L2=-0.5,L3=1) {$\mu^0_X$};
\draw[->,thick] (barycentric cs:L1=1,L2=1,L3=1) -- (barycentric cs:L1=2,L2=2,L3=-1);
\node[anchor=east] at (barycentric cs:L1=2,L2=2,L3=-1) {$\alpha_2$};
\draw[->,thick] (barycentric cs:L1=1,L2=1,L3=1) -- (barycentric cs:L1=1,L2=0,L3=2);
\node[anchor=west] at (barycentric cs:L1=1,L2=0,L3=2) {$\alpha_1$};
\node[anchor=south,yshift=-2.0mm] at (barycentric cs:L1=3,L2=0,L3=0) {\footnotesize $\omega_1=\mu$};
\node[anchor=north] at (barycentric cs:L1=2,L2=0,L3=1) {$\omega_2$};
\node[anchor=north] at (barycentric cs:L1=1,L2=1,L3=1) {$0$};
\node[anchor=south] at (barycentric cs:L1=4,L2=-1,L3=0) {$\lambda$};
\foreach \x in {-2,-1,...,2}{
    \foreach \y in {-2,-1,...,2}{
        \foreach \z in {-2,-1,...,2}{
        \fill (barycentric cs:L1=1.0+\x+\z,L2=1.0-\x+\y,L3=1.0-\y-\z) circle (0.05);
        }
    }
}
\end{tikzpicture}};
\node[anchor=north] at (B2.south) {$\Phi=B_2$};
\node[anchor=north] at (G2.south) {$\Phi=G_2$};
\end{tikzpicture}

\caption{Examples~\ref{ex:slice_b2} and~\ref{ex:slice_g2} of what the setting of Lemma~\ref{lem:slice} might look like for the rank $2$ root systems $B_2$ and $G_2$.} \label{fig:slice}
\end{figure}

We will show that $\mu \in \Pi(\lambda)$ and thus that $\mu \in \Pi^Q(\lambda) \cap (\mu + \mathrm{Span}_{\mathbb{R}}(X))$ to complete the proof of the lemma. But before we do that, let us give two rank $2$ examples of what the setting of this lemma might look like after we have reduced to a case satisfying conditions~\eqref{cond:integrality_1}-\eqref{cond:integrality_4} above. In these examples we follow the numbering of the simple roots from Figure~\ref{fig:dynkin_classification} in the appendix.

\begin{example} \label{ex:slice_b2}
Suppose $\Phi=B_2$, $\lambda = \omega_1+\omega_2$, $\mu = -\omega_1+\omega_2$ and $X=\{\alpha_1\}$. This is depicted on the left of Figure~\ref{fig:slice}. In this figure, the permutohedron $\Pi(\lambda)$ is the region shaded in blue. The dominant cone $P_{\geq 0}^{\mathbb{R}}$ is the region shaded in green. The affine subspace $\mu+\mathrm{Span}_{\mathbb{R}}(X)$ is the dashed red line (in fact in this case it is a linear subspace). Points in the coset $Q+\lambda$ are represented by black circles; other points of interest are marked by yellow circles circles. It is easy to verify that conditions~\eqref{cond:integrality_1}-\eqref{cond:integrality_4} hold in this case: for example, $\pi_{I}(\mu)=\mu=\frac{1}{2}\alpha_1$ is a minuscule weight of $\Phi_{\{1\}}$. Observe that $\mu^0_X=0$ is a weight of $\Phi$, but that it does not belong to the coset $Q+\lambda$.
\end{example}

\begin{example} \label{ex:slice_g2}
Suppose $\Phi=G_2$, $\lambda = \omega_1+\omega_2$, $\mu = \omega_1$ and $X=\{\alpha_2\}$. This is depicted on the right of Figure~\ref{fig:slice}. In this figure, the permutohedron $\Pi(\lambda)$ is the region shaded in blue. The dominant cone $P_{\geq 0}^{\mathbb{R}}$ is the region shaded in green. The affine subspace~$\mu+\mathrm{Span}_{\mathbb{R}}(X)$ is the dashed red line. Points in the coset $Q+\lambda$ are represented by black circles; other points of interest are marked by yellow circles circles. It again is easy to verify that conditions~\eqref{cond:integrality_1}-\eqref{cond:integrality_4} hold in this case: for example, $\pi_{I}(\mu)=\frac{1}{2}\alpha_2$ is a minuscule weight of $\Phi_{\{2\}}$. Observe that $\mu^0_X$ is not even a weight of $\Phi$ here.
\end{example}

Note crucially that even though $\mu^0_X \in  P_{\geq 0}^{\mathbb{R}}$ and $\pi_{I}(\mu)$ is zero or a minuscule weight of~$\Phi_I$, it is not necessarily the case that $\mu \in P_{\geq 0}$. Indeed, we can see already in Example~\ref{ex:slice_b2} that $\mu$ is not dominant, and unavoidably so. If we could assume $\mu \in P_{\geq 0}$, we would be done, because $\mu \leq \lambda$ (by condition~\eqref{cond:integrality_4}) and so by Proposition~\ref{prop:perm_containment} we would have $\mu \in \Pi(\lambda)$. The fact that $\mu$ is not dominant in general presents us with some difficulties. It means that we have to consider the dominant representative $\mu_{\mathrm{dom}}$ of $\mu$ and have to analyze how ``different'' $\mu$ and $\mu_{\mathrm{dom}}$ can be. As it turns out $\mu$ and~$\mu_{\mathrm{dom}}$ cannot be ``too different.'' This is made precise by the following:

\begin{claim} \label{claim:one}
If $\nu \in P_{\geq 0}$ is a dominant weight with $\mu \leq \nu$, then $\mu_{\mathrm{dom}}\leq \nu$.
\end{claim}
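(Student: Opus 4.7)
The plan is to apply Proposition~\ref{prop:unique_dominant_greater} to $\mu$, whose hypothesis is that $\langle\mu,\alpha^\vee\rangle\geq -1$ for every $\alpha\in\Phi^+$. Once that hypothesis is verified, the proposition immediately yields the claim: if $\nu\in P_{\geq 0}$ satisfies $\mu\leq\nu$, then $\mu_{\mathrm{dom}}\leq\nu$.

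To verify the hypothesis, I use the orthogonal decomposition $\mu = \mu^0_X + \pi_I(\mu)$, which comes from condition~\eqref{cond:integrality_1} (so that $U\coloneqq\mathrm{Span}_{\mathbb{R}}(X)=\mathrm{Span}_{\mathbb{R}}\{\alpha_i\colon i\in I\}$), together with the definition of $\mu^0_X$. For any $\alpha\in\Phi^+$, we have
\[\langle\mu,\alpha^\vee\rangle = \langle\mu^0_X,\alpha^\vee\rangle + \langle\pi_I(\mu),\alpha^\vee\rangle.\]
I split into two cases. If $\alpha\in\Phi^+_I$, then $\alpha^\vee\in U$, so $\langle\mu^0_X,\alpha^\vee\rangle=0$, and condition~\eqref{cond:integrality_3} together with the definition of a minuscule weight gives $\langle\pi_I(\mu),\alpha^\vee\rangle\in\{-1,0,1\}$. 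So the bound holds. If instead $\alpha\in\Phi^+\setminus\Phi_I$, I bound each summand separately. First, $\langle\mu^0_X,\alpha^\vee\rangle\geq 0$: indeed, $\mu^0_X\in P^{\mathbb{R}}_{\geq 0}$ by~\eqref{cond:integrality_2}, while $\alpha^\vee$ is a positive root of $\Phi^\vee$ and hence a nonnegative integer combination of the simple coroots $\alpha_j^\vee$, against which fundamental weights pair nonnegatively.

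For the remaining term, since $\pi_I(\mu)\in U$ we have $\langle\pi_I(\mu),\alpha^\vee\rangle=\langle\pi_I(\mu),\pi_U(\alpha^\vee)\rangle$. Now $U$ is spanned by $\{\alpha_i^\vee\colon i\in I\}\subseteq\Phi^\vee$, so Lemma~\ref{lem:root_projection} applies: there exists $1\leq\kappa<2$ with $\pi_U(\alpha^\vee)\in\kappa\cdot\mathcal{P}_{\Phi^\vee_U}$. Writing $\pi_U(\alpha^\vee)=\kappa\sum_j c_j\beta_j^\vee$ as $\kappa$ times a convex combination of coroots $\beta_j^\vee$ with $\beta_j\in\Phi_I$, and using that $\pi_I(\mu)$ is zero or minuscule in $\Phi_I$ (so $\langle\pi_I(\mu),\beta_j^\vee\rangle\geq -1$ for every $j$), we obtain
\[\langle\pi_I(\mu),\pi_U(\alpha^\vee)\rangle = \kappa\sum_j c_j \langle\pi_I(\mu),\beta_j^\vee\rangle \geq -\kappa > -2.\]
Combining this with $\langle\mu^0_X,\alpha^\vee\rangle\geq 0$ yields $\langle\mu,\alpha^\vee\rangle>-2$. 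But $\langle\mu,\alpha^\vee\rangle\in\mathbb{Z}$ because $\mu\in P$, so $\langle\mu,\alpha^\vee\rangle\geq -1$, completing the verification.

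The crucial step is the interplay between the strict inequality $\kappa<2$ in Lemma~\ref{lem:root_projection} and the integrality of $\langle\mu,\alpha^\vee\rangle$: the real-valued bound $\geq -\kappa$ alone would be useless, but integrality promotes it to the integer bound $\geq -1$ demanded by Proposition~\ref{prop:unique_dominant_greater}. This is precisely why Lemma~\ref{lem:root_projection}'s statement records the strict bound $\kappa<2$ and why the constant $2$ cannot be weakened, as flagged in the paper's discussion following Lemma~\ref{lem:root_polytope_projection_intro}.
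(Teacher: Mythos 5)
Your proof is correct and follows essentially the same route as the paper: decompose $\mu = \mu^0_X + \pi_I(\mu)$, use Lemma~\ref{lem:root_projection} to get $\langle\pi_I(\mu),\alpha^\vee\rangle > -2$, add the nonnegative contribution from $\mu^0_X$, promote to $\geq -1$ by integrality, and apply Proposition~\ref{prop:unique_dominant_greater}. The only cosmetic difference is that you extract the bound by writing $\pi_U(\alpha^\vee)$ as $\kappa$ times a convex combination of coroots in $\Phi^\vee_U$, whereas the paper passes to the polar dual statement $\mathcal{P}^*_{\Phi_I^\vee}\subseteq\kappa\cdot\mathcal{P}^*_{\Phi^\vee}$ and notes $\pi_I(\mu)\in\mathcal{P}^*_{\Phi_I^\vee}$; these are two ways of reading the same inequality.
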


Before proving this claim, let us explain why it is enough to finish the proof of the lemma. We assert that if Claim~\ref{claim:one} is true, then $\mu \in \Pi(\lambda)$. Indeed, by condition~\eqref{cond:integrality_4} above we have that~$\mu \leq \lambda$. Thus Claim~\ref{claim:one} says~$\mu_{\mathrm{dom}} \leq \lambda$. So by Proposition~\ref{prop:perm_containment} we get that~$\mu_{\mathrm{dom}} \in \Pi(\lambda)$ and hence that~$\mu \in \Pi(\lambda)$, finishing the proof of the lemma.

We now proceed to prove Claim~\ref{claim:one}. This is where we invoke Lemma~\ref{lem:root_projection}. Recall that if $\mathcal{P}$ is a convex polytope in $V$ containing the origin then \emph{polar dual} $\mathcal{P}^*$ of $\mathcal{P}$ is the polytope $\mathcal{P}^* \coloneqq \{v \in V\colon \langle v,u\rangle \leq 1 \textrm{ for all $u\in\mathcal{P}$}\}$. Hence the polar dual of the root polytope $\mathcal{P}_{\Phi^\vee}$ is $\mathcal{P}^*_{\Phi^\vee} = \{v\in V\colon \langle v,\alpha^\vee\rangle \leq 1  \textrm{ for all $\alpha\in\Phi$}\}$. By Lemma~\ref{lem:root_projection} we have that $\pi_I(\mathcal{P}_{\Phi^\vee}) \subseteq \kappa \cdot \mathcal{P}_{\Phi_I^\vee}$ for some $1 \leq \kappa < 2$. By basic properties of polar duality, this implies that $\mathcal{P}^*_{\Phi_I^\vee} \cap \mathrm{Span}_{\mathbb{R}}(\Phi_I) \subseteq \kappa \cdot \mathcal{P}^*_{\Phi^\vee}$. Note that $\pi_I(\mu)\in \mathcal{P}^*_{\Phi_I^\vee} \cap \mathrm{Span}_{\mathbb{R}}(\Phi_I) $ because $\pi_I(\mu)$ is zero or a minuscule weight of $\Phi_I$.  Thus $\langle \pi_I(\mu),\alpha^\vee\rangle > -2$ for all $\alpha \in \Phi^+$. But since $\mu = \pi_I(\mu) + \mu^0_X$ with $\mu^0_X \in P_{\geq 0}^{\mathbb{R}}$, this means $\langle \mu,\alpha^\vee\rangle > -2$ for all $\alpha \in \Phi^+$. Since $\mu$ is a weight of $\Phi$ these inner products must be integers; hence in fact $\langle \mu,\alpha^\vee\rangle \geq -1$ for all $\alpha \in \Phi^+$. Therefore by Proposition~\ref{prop:unique_dominant_greater} we conclude that $\mu_{\mathrm{dom}}$ is the minimal dominant weight greater than or equal to $\mu$ in root order. That is to say, we conclude that Claim~\ref{claim:one} holds.
\end{proof}

\begin{remark} \label{rem:lem_equivalence}
Suppose that, in contradiction to Lemma~\ref{lem:root_projection}, there exists $I\subseteq [n]$ and $u \in \mathrm{Span}_{\mathbb{R}}(\{\alpha_i\colon i \in I\})$ with $\langle u,\alpha^\vee \rangle \in \{0,1\}$ for all $\alpha \in \Phi_I^+$, and for which $\langle u,\alpha_j^\vee \rangle \leq -2$ for some simple root $\alpha_j$ with $j\notin I$. Then it would be easy to construct a counterexample to Lemma~\ref{lem:slice}: we could take $X \coloneqq \{\alpha_i\colon i \in I\}$; $\mu$ to be such that $\pi_I(\mu)=u$, $\mu-\pi_I(\mu) \in P_{\geq 0}^{\mathbb{R}}$, and $\langle \mu,\alpha_j^\vee\rangle = -2$; and $\lambda\neq \mu_{\mathrm{dom}}$ to be the minimal dominant weight greater than $\mu$ in root order. In this sense, Lemmas~\ref{lem:slice} and~\ref{lem:root_projection} are ``almost'' equivalent to one another.
\end{remark}

Unfortunately, the only proof of Lemma~\ref{lem:root_projection} we could find requires us to invoke the classification of root systems and do a case-by-case check. Thus we have relegated the verification of Lemma~\ref{lem:root_projection} to Appendix~\ref{sec:root_polytopes}. It is worth noting that having reduced Lemma~\ref{lem:slice} to Lemma~\ref{lem:root_projection} is progress at least in the sense for each fixed root system $\Phi$, verifying Lemma~\ref{lem:root_projection} amounts to a finite computation, whereas it is not a priori evident that Lemma~\ref{lem:slice} is a finite statement even for fixed~$\Phi$.

Having established the requisite integrality property of permutohedra, modulo the case-by-case verification of the root polytope projection-dilation property provided in Appendix~\ref{sec:root_polytopes}, we can now complete the proof of the formula for the number of lattice points in a permutohedron plus dilating regular permutohedron. In the language of quotients from Section~\ref{sec:intro}, Lemma~\ref{lem:slice} becomes the following (which is stated as Lemma~\ref{lem:slice_intro} in Section~\ref{sec:intro}):

\begin{cor} \label{cor:quotient}
Let $\lambda \in P_{\geq 0}$  and $X\subseteq \Phi^+$. Then,
\[\mathrm{quot}_X(\Pi(\lambda))\cap \mathrm{quot}_X(Q+\lambda) = \mathrm{quot}_X(\Pi^Q(\lambda)).\]
\end{cor}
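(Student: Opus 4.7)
The plan is to deduce this corollary directly from Lemma~\ref{lem:slice} (the ``slice'' formulation of the same integrality property), which has already been proved. The two inclusions will be handled separately, and the forward one will be essentially immediate, while the reverse one will be a translation between the quotient picture and the slice picture.

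For the inclusion $\mathrm{quot}_X(\Pi^Q(\lambda)) \subseteq \mathrm{quot}_X(\Pi(\lambda)) \cap \mathrm{quot}_X(Q+\lambda)$, I will simply observe that $\Pi^Q(\lambda) = \Pi(\lambda) \cap (Q + \lambda)$ by definition, so $\Pi^Q(\lambda)$ sits inside both $\Pi(\lambda)$ and $Q+\lambda$, and the inclusion follows from applying $\mathrm{quot}_X$ to both containments.

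For the reverse inclusion, I will take an arbitrary class $\bar{v} \in \mathrm{quot}_X(\Pi(\lambda)) \cap \mathrm{quot}_X(Q+\lambda)$ and produce a preimage in $\Pi^Q(\lambda)$. By definition there exist $u \in \Pi(\lambda)$ and $\mu \in Q+\lambda$ with $\mathrm{quot}_X(u) = \mathrm{quot}_X(\mu) = \bar{v}$; this means that $u - \mu \in \mathrm{Span}_{\mathbb{R}}(X)$, so $u \in \Pi(\lambda) \cap (\mu + \mathrm{Span}_{\mathbb{R}}(X))$. In particular the intersection $\Pi(\lambda) \cap (\mu + \mathrm{Span}_{\mathbb{R}}(X))$ is nonempty, which is exactly the hypothesis of Lemma~\ref{lem:slice}. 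Invoking Lemma~\ref{lem:slice}, I obtain some $\mu' \in \Pi^Q(\lambda) \cap (\mu + \mathrm{Span}_{\mathbb{R}}(X))$. Then $\mu' - \mu \in \mathrm{Span}_{\mathbb{R}}(X)$, so $\mathrm{quot}_X(\mu') = \mathrm{quot}_X(\mu) = \bar{v}$, giving $\bar{v} \in \mathrm{quot}_X(\Pi^Q(\lambda))$ as required.

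There is no real obstacle here: all the technical content has already been established in Lemma~\ref{lem:slice} (which in turn rests on Lemma~\ref{lem:root_projection} and the case-by-case analysis deferred to the appendix). The only thing to do is to match up notation, noting that ``$\mathrm{quot}_X(u) = \mathrm{quot}_X(\mu)$'' and ``$u \in \mu + \mathrm{Span}_{\mathbb{R}}(X)$'' are literally the same statement, so the corollary is a purely formal reformulation.
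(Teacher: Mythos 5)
Your proof is correct and follows the same route as the paper: unwind both sides of the equality as collections of affine translates of $\mathrm{Span}_{\mathbb{R}}(X)$ and observe that Lemma~\ref{lem:slice} is exactly the statement that the two collections coincide. The paper phrases this more tersely (it does not separate the two inclusions), but the content is identical.
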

\begin{proof}
A point in $\mathrm{quot}_X(\Pi(\lambda))\cap \mathrm{quot}_X(Q+\lambda)$ is an affine subspace of $V$ of the form $\mu+\mathrm{Span}_{\mathbb{R}}(X)$ for some $\mu \in Q+\lambda$ satisfying $(\mu+\mathrm{Span}_{\mathbb{R}}(X))\cap \Pi(\lambda)\neq\varnothing$, while a point in~$ \mathrm{quot}_X(\Pi^Q(\lambda))$ is an affine subspace of~$V$ of the form $\mu+\mathrm{Span}_{\mathbb{R}}(X)$ for some~$\mu \in \Pi^Q(\lambda)$. These two kinds of affine subspaces coincide thanks to Lemma~\ref{lem:slice}.
\end{proof}

Putting it all together:

\begin{thm} \label{thm:perm}
Let $\lambda \in P_{\geq 0}$ and $\mathbf{k} \in \mathbb{N}[\Phi]^W$. Then
\[ \#\Pi^Q(\lambda+\rho_{\mathbf{k}}) = \sum_{\substack{X\subseteq \Phi^+ \\ \textrm{ $X$ is linearly} \\ \textrm{independent}}} \# \mathrm{quot}_X(\Pi^Q(\lambda))\cdot \mathrm{rVol}_Q(X) \, \mathbf{k}^{X},\]
where $\mathbf{k}^{X} \coloneqq \prod_{\alpha \in X} \mathbf{k}(\alpha)$.
\end{thm}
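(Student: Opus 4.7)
The plan is to apply Theorem~\ref{thm:poly_plus_zono}, taking the ambient lattice to be $Q$ (its proof goes through verbatim for any full-rank lattice via a change of basis), with polytope $\mathcal{P} \coloneqq \Pi(\lambda) - \lambda$ and zonotope $\mathcal{Z} \coloneqq \sum_{\alpha \in \Phi^+}[0,\alpha]$ dilated by the multi-parameter $\mathbf{k}$. The vertices $w(\lambda) - \lambda$ of $\mathcal{P}$ lie in $Q$ (since $w(\lambda) \equiv \lambda \pmod Q$), and each zonotope generator $\alpha$ is a root, so both inputs are $Q$-lattice objects. It then suffices to identify the two sides of the resulting formula with those of Theorem~\ref{thm:perm}.

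For the left-hand side, since $\lambda, \rho_\mathbf{k} \in P_{\geq 0}^{\mathbb{R}}$, Proposition~\ref{prop:perm_containment} gives $\Pi(\lambda + \rho_\mathbf{k}) = \Pi(\lambda) + \Pi(\rho_\mathbf{k})$, and combining with $\Pi(\rho_\mathbf{k}) = \sum_{\alpha \in \Phi^+} \mathbf{k}(\alpha)[-\alpha/2, \alpha/2]$ yields
\[ \mathcal{P} + \mathbf{k}\mathcal{Z} = \Pi(\lambda + \rho_\mathbf{k}) - \lambda + \rho_\mathbf{k}. \]
Now $2\rho_\mathbf{k} = \sum_{\alpha \in \Phi^+} \mathbf{k}(\alpha)\alpha \in Q$ (this identity, equivalent to $\langle\rho_\mathbf{k},\alpha_i^\vee\rangle = \mathbf{k}(\alpha_i)$, follows easily from $W$-invariance of $\mathbf{k}$), so the cosets $Q + \lambda - \rho_\mathbf{k}$ and $Q + \lambda + \rho_\mathbf{k}$ coincide, and translating by $\lambda - \rho_\mathbf{k}$ gives a bijection
\[ (\mathcal{P} + \mathbf{k}\mathcal{Z}) \cap Q \;\longleftrightarrow\; \Pi(\lambda + \rho_\mathbf{k}) \cap (Q + \lambda + \rho_\mathbf{k}) = \Pi^Q(\lambda + \rho_\mathbf{k}). \]
For the right-hand side, linearity of $\mathrm{quot}_X$ gives $\mathrm{quot}_X(\mathcal{P}) = \mathrm{quot}_X(\Pi(\lambda)) - \mathrm{quot}_X(\lambda)$, so translating by $\mathrm{quot}_X(\lambda)$ yields $\#(\mathrm{quot}_X(\mathcal{P}) \cap \mathrm{quot}_X(Q)) = \#(\mathrm{quot}_X(\Pi(\lambda)) \cap \mathrm{quot}_X(Q + \lambda))$, which by Corollary~\ref{cor:quotient} equals $\#\mathrm{quot}_X(\Pi^Q(\lambda))$. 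Plugging both identifications into the formula from Theorem~\ref{thm:poly_plus_zono} produces exactly the claim.

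The only substantive input is Corollary~\ref{cor:quotient} (equivalently Lemma~\ref{lem:slice}), which was the hard work of this section and itself rests on Lemma~\ref{lem:root_projection} proved in Appendix~\ref{sec:root_polytopes}. Everything else is coset bookkeeping, hinging on the identity $2\rho_\mathbf{k} \in Q$. The main obstacle one might worry about—that $\rho_\mathbf{k}$ need not itself lie in $Q$, so shifting by $\rho_\mathbf{k}$ could move between distinct cosets—is precisely neutralized by $2\rho_\mathbf{k} \in Q$.
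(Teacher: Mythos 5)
Your proof is correct and follows exactly the approach the paper intends: apply Theorem~\ref{thm:poly_plus_zono} (over the lattice $Q$) together with Corollary~\ref{cor:quotient}. The paper's proof is a one-line citation, and your write-up spells out the translation/coset bookkeeping that is implicitly being waved away — in particular, the observation that $2\rho_{\mathbf{k}}=\sum_{\alpha\in\Phi^+}\mathbf{k}(\alpha)\alpha\in Q$ is what makes the shift by $\rho_{\mathbf{k}}$ harmless — and all of the steps are right.
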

\begin{proof}
This follows immediately from Theorem~\ref{thm:poly_plus_zono} together with Corollary~\ref{cor:quotient}.
\end{proof}

\subsection{The lattice point formula in Type A} \label{subsec:type_a}

In this subsection we briefly discuss what the formula for $\#\Pi^Q(\lambda+\rho_{\mathbf{k}})$ looks like in Type~A. So suppose $\Phi=A_n$. Recall that, using the standard realization of $A_n$, the roots of $\Phi$ are $e_i-e_j$ for $1 \leq i, j \leq n+1$, where $e_i\in\mathbb{R}^{n+1}$ is the $i$th standard basis vector. The positive roots are $e_i-e_j$ for~$1\leq i<j\leq n+1$. The vector space in which $\Phi$ lives is $V=\mathbb{R}^{n+1}/(1,1,\ldots,1)$, where we mod out by the ``all ones'' vector. The Weyl group $W$ is the symmetric group $S_{n+1}$ acting on $\mathbb{R}^{n+1}$ by permuting entries. The weight lattice is $P=\mathbb{Z}^{n+1}/(1,1,\ldots,1)$. But in fact we can ``extend'' in the obvious way the vector space~$V$ to be all of $\mathbb{R}^{n+1}$ and the lattice $P$ to be all of $\mathbb{Z}^{n+1}$, and if we do so, the notion of $W$-permutohedra coincides with the usual notion of permutohedra. That is, for a vector $\mathbf{a}=(a_1,\ldots,a_{n+1})\in\mathbb{Z}^{n+1}$, we define the \emph{permutohedra} of $\mathbf{a}$ to be $\Pi(\mathbf{a}) \coloneqq \mathrm{ConvexHull} \, \{(a_{\sigma_1},\ldots,a_{\sigma_{n+1}})\colon \sigma\in S_{n+1} \}$. The vector $\mathbf{a}$ is dominant if $a_1 \geq a_2 \geq \cdots \geq a_{n+1}$. Finally, note that we may take the Weyl vector in this setting to be $\rho=(n,n-1,n-2,\ldots,0)\in\mathbb{Z}^{n+1}$.

So Theorem~\ref{thm:perm} gives a formula for~$\#\Pi(a_1+kn,a_2+k(n-1),\ldots,a_{n+1})\cap\mathbb{Z}^{n+1}$ where~$a_1\geq a_2\geq \cdots \geq a_{n+1} \in \mathbb{Z}$. How can we understand this formula more concretely? 

First note that because the collection of vectors $e_i-e_j$ is totally unimodular, we will have $\mathrm{rVol}_{\mathbb{Z}^n}(X)=1$ for all linearly independent $X\subseteq\Phi^+$.

Next, note that via the bijection which sends a positive root $e_i-e_j$ to an edge~$\{i,j\}$, a subset $X\subseteq \Phi^+$ which is linearly independent is the same thing as a forest~$F_X$ on the vertex set~$[n+1]$. Moreover, the subspace $\mathrm{Span}_{\mathbb{R}}(X)$ only depends on the connected components of this forest $F_X$: suppose $F_X$ has components~$I_1,\ldots,I_m\subseteq[n+1]$; then we can explicitly realize the quotient map $\mathrm{quot}_X\colon \mathbb{R}^{n+1} \twoheadrightarrow \mathbb{R}^{m}$ by
\[\mathrm{quot}_{X}(b_1,b_2,\ldots,b_n) \coloneqq \left( \sum_{i\in I_1}b_i,\sum_{i\in I_2}b_i,\ldots, \sum_{i\in I_m}b_i\right).\]
Observe that this construction of the quotient satisfies $\mathrm{quot}_X(\mathbb{Z}^{n+1})=\mathbb{Z}^m$.

In fact, up to the action of the Weyl group (i.e., the symmetric group), a subspace of the form $\mathrm{Span}_{\mathbb{R}}(X)$ only depends on the partition $\lambda=(\lambda_1,\lambda_2,\ldots)$ of $n+1$ which records in decreasing order the sizes of the connected components of $F_X$. (By \emph{partition of $n+1$} we just mean a sequence $\lambda_1\geq \lambda_2\geq \cdots \in \mathbb{Z}_{\geq 0}$ of nonnegative integers which sum to $n+1$; do not confuse the $\lambda$ used only in this subsection to denote integer partitions with a weight~$\lambda\in P$.) Let $\lambda \vdash n+1$ be a partition of $n+1$. We use $\ell(\lambda)$ to denote the \emph{length} of $\lambda$, i.e., the minimum $i \in \mathbb{Z}_{\geq 0}$ for which $\lambda_j=0$ for all~$j> i$. We use $f_{\lambda}$ to denote the number of labeled forests on~$n+1$ vertices whose connected component sizes are~$\lambda$ in decreasing order. Note that Cayley's formula gives~$f_{\lambda} = \prod_{i=1}^{\ell(\lambda)} \lambda_i^{\lambda_i-2}$. Finally, we define the corresponding quotient map $\mathrm{quot}_{\lambda}\colon \mathbb{R}^{n+1} \twoheadrightarrow \mathbb{R}^{\ell(\lambda)}$ by
\[\mathrm{quot}_{\lambda}(b_1,b_2,\ldots,b_n) \coloneqq \left( \sum_{i=1}^{\lambda_1}b_i,\sum_{i=\lambda_1+1}^{\lambda_1+\lambda_2}b_i,\ldots, \sum_{i=\lambda_1+\cdots+\lambda_{\ell(\lambda)-1}+1}^{n+1}b_i\right).\]
Then Theorem~\ref{thm:perm} becomes:

\begin{thm} \label{thm:type_a}
For any $\mathbf{a}=(a_1,\ldots,a_{n+1})\in\mathbb{Z}^{n+1}$ with $a_1\geq a_2\geq \cdots \geq a_{n+1}$ and any~$k\in\mathbb{Z}_{\geq 0}$, we have
\[\#\Pi(a_1+kn,a_2+k(n-1),\ldots,a_{n+1})\cap\mathbb{Z}^{n+1} = \hspace{-0.2cm} \sum_{\lambda \vdash n+1}  \hspace{-0.2cm} \#\mathrm{quot}_{\lambda}(\Pi(\mathbf{a})\cap\mathbb{Z}^{n+1}) \cdot f_{\lambda} \, k^{n+1-\ell(\lambda)}. \]
\end{thm}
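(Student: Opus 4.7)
The plan is to specialize Theorem~\ref{thm:perm} to $\Phi = A_n$ with the constant weight function $\mathbf{k}=k$, and then unpack each factor in the resulting sum using the concrete description of $A_n$ given just above. First I would reconcile notation: in the extended picture ($V=\mathbb{R}^{n+1}$, $P=\mathbb{Z}^{n+1}$), any integer point of $\Pi(\mathbf{a})$ automatically has the same coordinate sum as $\mathbf{a}$ (by $S_{n+1}$-invariance of the sum), hence lies in the coset $Q+\mathbf{a}$. Thus $\Pi^Q(\mathbf{a}+k\rho)$ coincides with $\Pi(a_1+kn,a_2+k(n-1),\ldots,a_{n+1})\cap\mathbb{Z}^{n+1}$, which is precisely the left-hand side we want.

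Next I would simplify each factor in the sum. The relative volume is trivial: the positive roots $e_i-e_j$ are (up to signs) the columns of the signed vertex-edge incidence matrix of $K_{n+1}$, which is totally unimodular, so $\mathrm{rVol}_Q(X)=1$ for every linearly independent $X\subseteq\Phi^+$. Under the bijection $e_i-e_j\leftrightarrow\{i,j\}$, such an $X$ corresponds to a graph $F_X$ on $[n+1]$, and linear independence of $X$ is equivalent to acyclicity of $F_X$ (the standard circuit/cocircuit correspondence for the graphic matroid of $K_{n+1}$). If $F_X$ is a forest with connected components $I_1,\ldots,I_m$, then
\[ \mathrm{Span}_{\mathbb{R}}(X) \;=\; \{v\in\mathbb{R}^{n+1} : \textstyle\sum_{i\in I_j} v_i = 0 \text{ for each } j\}, \]
so $\mathrm{quot}_X\colon\mathbb{R}^{n+1}\twoheadrightarrow\mathbb{R}^m$ may be realized by $v\mapsto(\sum_{i\in I_1}v_i,\ldots,\sum_{i\in I_m}v_i)$, and this map visibly carries $\mathbb{Z}^{n+1}$ onto $\mathbb{Z}^m$ (it sends each standard basis vector to a standard basis vector).

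Finally I would group the linearly independent subsets $X$ according to the partition $\lambda\vdash n+1$ recording the sorted component sizes of $F_X$. By the $S_{n+1}$-invariance of $\Pi(\mathbf{a})\cap\mathbb{Z}^{n+1}$, the cardinality $\#\mathrm{quot}_X(\Pi(\mathbf{a})\cap\mathbb{Z}^{n+1})$ depends only on $\lambda$ and not on the particular partition of $[n+1]$ into components, so it may be computed using the canonical representative $\mathrm{quot}_\lambda$ defined in the excerpt. Since a forest on $n+1$ vertices with $\ell(\lambda)$ components has $n+1-\ell(\lambda)$ edges, the exponent of $k$ is $\#X = n+1-\ell(\lambda)$, and the number of such forests is $f_\lambda$ by definition. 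Substituting all of this into Theorem~\ref{thm:perm} yields the stated formula. The only step requiring any thought is the $S_{n+1}$-invariance argument used to pass from $\mathrm{quot}_X$ to $\mathrm{quot}_\lambda$, but this is immediate from the permutation symmetry of $\Pi(\mathbf{a})$; the rest is routine bookkeeping.
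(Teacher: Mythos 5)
Your proposal is correct and follows essentially the same route as the paper: specialize Theorem~\ref{thm:perm} to $\Phi=A_n$, use total unimodularity to kill the relative volume factor, identify linearly independent subsets with forests, note that $\mathrm{quot}_X$ depends only on the component partition, and then collect terms by $\lambda\vdash n+1$. The paper's subsection~\ref{subsec:type_a} carries out precisely these reductions (presenting the theorem as the culmination of that discussion rather than as a separate proof); your version is slightly more explicit about the bookkeeping --- in particular, the observation that integer points of $\Pi(\mathbf{a})$ automatically lie in the coset $Q+\mathbf{a}$, so that $\Pi^Q$ and $\Pi\cap\mathbb{Z}^{n+1}$ agree in the extended picture --- but this is a worthwhile clarification, not a deviation.
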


The quantities $\#\mathrm{quot}_{\lambda}(\Pi(\mathbf{a})\cap\mathbb{Z}^{n+1})$ appearing in Theorem~\ref{thm:type_a} are in general not so easy to understand. For instance, the quotient $\mathrm{quot}_{\lambda}(\Pi(\mathbf{a}))$ need not be a permutohedron in $\mathbb{R}^{\ell(\lambda)}$. However, we can at least say that it is a \emph{generalized permutohedron} in the sense of~\cite{postnikov2009permutohedra}. Indeed, we can explicitly give its facet description: $\mathrm{quot}_{\lambda}(\Pi(\mathbf{a}))$ consists of all points~$(x_1,\ldots,x_{\ell(\lambda)})\in\mathbb{R}^{\ell(\lambda)}$ for which $x_1+\cdots+x_{\ell(\lambda)} = a_1+\ldots+a_{n+1}$ and
\[\sum_{i\in I} x_i \leq a_1+\cdots+a_{\lambda(I)} \textrm{ for any subset $I\subseteq [\ell(\lambda)]$, where $\lambda(I)\coloneqq\sum_{i\in I}\lambda_ i$}.\]
There are formulas for the number of lattice points of such a polytope (see~\cite{postnikov2009permutohedra}), but none are so explicit.

Nevertheless, for some special choices of $\mathbf{a}\in\mathbb{Z}^{n+1}$ corresponding to minuscule weights we can give a more combinatorial description of $\#\mathrm{quot}_{\lambda}(\Pi(\mathbf{a})\cap\mathbb{Z}^{n+1})$. Namely, suppose that $\mathbf{a}=(\overbrace{1,1,\ldots,1}^{i},0,\ldots,0)$ for some $i\in[n+1]$. Then, since $\Pi(\mathbf{a})\cap\mathbb{Z}^{n+1}$ just consists of permutations of $\mathbf{a}$, we get that 
\[\#\mathrm{quot}_{\lambda}(\Pi(\mathbf{a})\cap\mathbb{Z}^{n+1}) = \#\left\{(\mu_1,\ldots,\mu_{\ell(\lambda)})\in\mathbb{Z}_{\geq 0}^{\ell(\lambda)}\colon\mu_j\leq \lambda_j \textrm{ for all $j$, and } \sum_{j=1}^{i}\mu_j=i\right\}.\]
In other words $\#\mathrm{quot}_{\lambda}(\Pi(\mathbf{a})\cap\mathbb{Z}^{n+1})$ is the number of weak compositions of $i$  into $\ell(\lambda)$ parts whose corresponding diagram fits inside the Young diagram of $\lambda$. Observe that this number would be the same if we replaced $i$ by $n+1-i$, which reflects the symmetry of the Dynkin diagram of Type~A. Let us end with a couple of examples of what the formula in Theorem~\ref{thm:type_a} reduces to for small $i$:

\begin{example}
Suppose $i=1$. For any $\lambda \vdash n+1$, choosing a composition of~$1$ whose diagram fits inside $\lambda$ is the same as choosing a row of~$\lambda$, so the number of such compositions is $\ell(\lambda)$. Thus for $k\in\mathbb{Z}_{\geq 0}$ we have
\[\#\Pi(1+kn,k(n-1),k(n-2),\ldots,k,0)\cap\mathbb{Z}^{n+1}= \sum_{\lambda \vdash n+1}  \ell(\lambda) \, f_{\lambda} \, k^{n+1-\ell(\lambda)}. \]
\end{example}

\begin{example}
Suppose $i=2$. Let $\lambda \vdash n+1$. To make a composition of $2$ which fits inside $\lambda$, we can either choose two different rows of $\lambda$ in which to place one box each, or we can choose one row of $\lambda$ to put two boxes in, but we can only do that if the size of that row is at least two. Therefore the number of such compositions is $\binom{\lambda'_1}{2} + \lambda'_2$, where $\lambda' = (\lambda'_1,\lambda'_2,\ldots)$ is the conjugate partition to $\lambda$. Thus for $k\in\mathbb{Z}_{\geq 0}$ we have
\[\#\Pi(1+kn,1+k(n-1),k(n-2),\ldots,k,0)\cap\mathbb{Z}^{n+1}= \sum_{\lambda \vdash n+1} \left(\binom{\lambda'_1}{2}+ \lambda'_2\right) \, f_{\lambda} \, k^{n+1-\ell(\lambda)} .\]
\end{example}

\section{Symmetric Ehrhart-like polynomials} \label{sec:sym_formula}

\begin{figure}
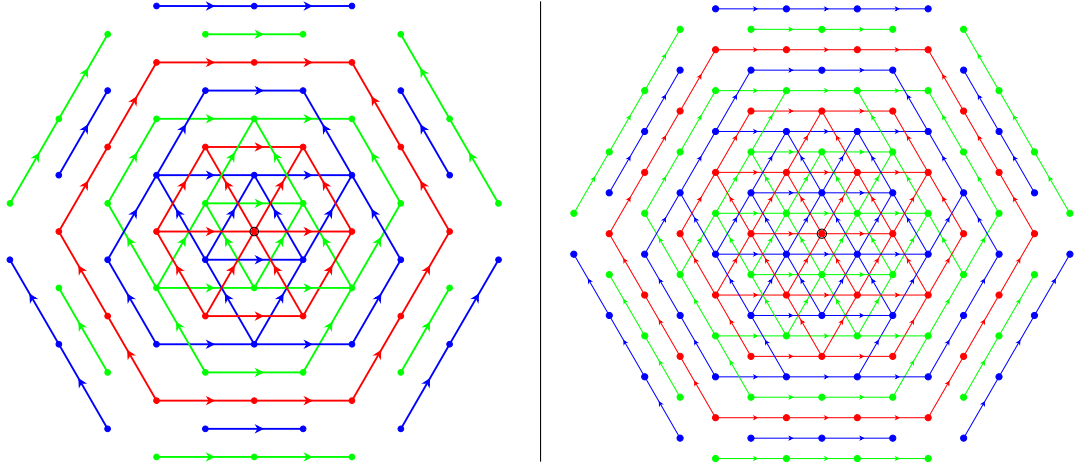

}
\caption{The $k=1,2$ symmetric interval-firing processes for $\Phi=A_2$.} \label{fig:a2_k12}
\end{figure}

\subsection{Background on interval-firing} In this section we finally return to the study of the interval-firing processes introduced in Section~\ref{sec:intro}. We continue to fix an irreducible root system $\Phi$ as in Section~\ref{sec:lattice_pts}. In this subsection we review some definitions and results from~\cite{galashin2017rootfiring1}. We will define the interval-firing processes at a slightly greater level of generality than what was described in the introduction: now we allow our deformation parameter to be an element of~$\mathbb{N}[\Phi]^W$ rather than just~$\mathbb{Z}_{\geq 0}$. For~$\mathbf{k} \in \mathbb{N}[\Phi]^W$ the \emph{symmetric interval-firing process} is the binary relation on~$P$ defined by
\[ \lambda \xrightarrow[\mathrm{sym},\,\mathbf{k}]{} \lambda + \alpha \textrm{ whenever $\langle \lambda + \frac{\alpha}{2},\alpha^\vee\rangle \in \{-\mathbf{k}(\alpha),\ldots,\mathbf{k}(\alpha)\}$ for $\lambda \in P$, $\alpha \in \Phi^+$};\]
and the \emph{truncated interval-firing process} is the binary relation on $P$ defined by
\[ \lambda \xrightarrow[\mathrm{tr},\, \mathbf{k}]{} \lambda + \alpha \textrm{ whenever $\langle \lambda+\frac{\alpha}{2},\alpha^\vee\rangle \in \{-\mathbf{k}(\alpha)+1,\ldots,\mathbf{k}(\alpha)\}$ for $\lambda \in P$, $\alpha \in \Phi^+$}.\]

Let us take a moment to discuss the names for these interval-firing process. The symmetric interval-firing process is so named because the symmetric closure of the relation $ \xrightarrow[\mathrm{sym},\,\mathbf{k}]{}$ is invariant under the Weyl group; that is:

\begin{prop}[{See~\cite[\oldpapersymprop]{galashin2017rootfiring1}}] \label{prop:symmetry}
For $\mathbf{k}\in\mathbb{N}[\Phi]^W$, $w \in W$, and $\lambda,\mu \in P$, we have $(\mu \xrightarrow[\mathrm{sym},\,\mathbf{k}]{} \lambda \textrm{ or } \lambda \xrightarrow[\mathrm{sym},\,\mathbf{k}]{} \mu)$ if and only if $(w(\mu) \xrightarrow[\mathrm{sym},\,\mathbf{k}]{} w(\lambda) \textrm{ or } w(\lambda) \xrightarrow[\mathrm{sym},\,\mathbf{k}]{} w(\mu))$.
\end{prop}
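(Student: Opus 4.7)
The plan is to verify directly that the symmetric closure of $\xrightarrow[\mathrm{sym},\,\mathbf{k}]{}$ is $W$-invariant. The guiding observation is that the defining condition $\langle \mu+\alpha/2,\alpha^\vee\rangle \in \{-\mathbf{k}(\alpha),\ldots,\mathbf{k}(\alpha)\}$ for a firing $\mu \to \mu+\alpha$ depends only on the midpoint of the (undirected) edge $\{\mu,\mu+\alpha\}$ paired against $\alpha^\vee$, and this defining interval is symmetric about~$0$. Combined with the $W$-invariance of $\mathbf{k}$ (which in particular gives $\mathbf{k}(-\alpha)=\mathbf{k}(\alpha)$, via $s_\alpha \in W$), this means the criterion is really a statement about the unordered pair $\{\mu,\mu+\alpha\}$ and the line through $\pm\alpha$, with no actual dependence on a choice of orientation.

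To prove the biconditional, by replacing $w$ with $w^{-1}$ if necessary, I reduce to proving only the forward implication. So assume without loss of generality that $\mu \xrightarrow[\mathrm{sym},\,\mathbf{k}]{} \lambda$, meaning $\lambda = \mu+\alpha$ for some $\alpha \in \Phi^+$ with the midpoint condition satisfied. Applying $w$ yields $w(\lambda)-w(\mu)=w(\alpha) \in \Phi$, which I write as $w(\alpha)=\epsilon\beta$ for a unique $\beta \in \Phi^+$ and sign $\epsilon\in\{\pm 1\}$. In the case $\epsilon=+1$, the candidate edge $w(\mu) \xrightarrow[\mathrm{sym},\,\mathbf{k}]{} w(\mu)+\beta = w(\lambda)$ is valid because $w$ is orthogonal, giving $\langle w(\mu)+\beta/2,\beta^\vee\rangle = \langle \mu+\alpha/2,\alpha^\vee\rangle$, and $\mathbf{k}(\beta)=\mathbf{k}(\alpha)$ by $W$-invariance. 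In the case $\epsilon=-1$, we instead have $w(\mu)=w(\lambda)+\beta$, and a brief computation using $\beta=-w(\alpha)$ and $\beta^\vee=-w(\alpha^\vee)$ gives $\langle w(\lambda)+\beta/2,\beta^\vee\rangle = -\langle \mu+\alpha/2,\alpha^\vee\rangle$; this sign is absorbed by the symmetry of the interval $\{-\mathbf{k}(\alpha),\ldots,\mathbf{k}(\alpha)\}$ about $0$, so $w(\lambda) \xrightarrow[\mathrm{sym},\,\mathbf{k}]{} w(\mu)$, as required.

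I do not foresee a serious obstacle. The only delicate bookkeeping is the sign flip in the $\epsilon=-1$ case when $w$ carries the positive root $\alpha$ to a negative root; it is precisely because we work with the \emph{symmetric} closure of the relation, and with a defining interval that is symmetric about $0$, that this sign is cancelled. The argument uses nothing beyond orthogonality of Weyl group elements and $W$-invariance of $\mathbf{k}$, so it goes through at the full multi-parameter generality $\mathbf{k} \in \mathbb{N}[\Phi]^W$.
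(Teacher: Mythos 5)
Your proof is correct. The paper does not supply an in-text proof of this proposition---it simply refers to the earlier work \cite[\oldpapersymprop]{galashin2017rootfiring1}---so there is no argument to compare against; but your direct verification (reducing to one implication via $w \leftrightarrow w^{-1}$, writing $w(\alpha)=\epsilon\beta$ with $\beta\in\Phi^+$, using orthogonality of $w$ to transport the pairing, and absorbing the sign flip in the $\epsilon=-1$ case into the $0$-symmetry of the interval together with $\mathbf{k}(-\gamma)=\mathbf{k}(\gamma)$) is exactly the standard argument, and it correctly handles the full multi-parameter setting $\mathbf{k}\in\mathbb{N}[\Phi]^W$.
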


The truncated process is so-named because the interval in its definition is truncated by one element on the left compared to the interval for the symmetric process. 

In this paper we will mostly be focused on the symmetric interval-firing process.

\begin{example}
Suppose that $\Phi=A_2$. Since $\Phi$ is simply laced, we have $\mathbf{k}=k$ is some constant. The $k=0$ symmetric interval-firing process for $\Phi$ is depicted in Figure~\ref{fig:a2_k0}. Of course in this figure we draw an arrow from $\mu$ to $\lambda$ if $\mu \xrightarrow[\mathrm{sym},0]{} \lambda$. Here the three different colors correspond to the three different cosets of~$P/Q$. In this figure we depict only the ``interesting'' portion of this relation near the origin. In Figure~\ref{fig:a2_k12} we depict the $k=1$ (left) and $k=2$ (right) symmetric interval-firing processes for $\Phi$. Observe how, as $k$ grows, the figures look the ``same,'' except that they get ``dilated.''
\end{example}

Now we will discuss confluence and stabilizations for the interval-firing processes. So let us review these notions, which apply to an arbitrary binary relation on a set. Let~$\rightarrow$ be a binary relation on a set~$S$. We use $\xrightarrow{*}$ to denote the reflexive, transitive closure of~$\rightarrow$. We say that $\rightarrow$ is \emph{confluent} if for every $x,y_1,y_2\in S$  with $x \xrightarrow{*}y_1$ and~$x\xrightarrow{*}y_2$ there exists~$y_3\in S$ with $y_1\xrightarrow{*} y_3$ and $y_2\xrightarrow{*} y_3$. We say that $\rightarrow$ is \emph{terminating} if there does not exits an infinite sequence $x_0\rightarrow x_1\rightarrow x_2 \rightarrow$ of related elements $x_0,x_1,x_2,\ldots \in S$. We say that $x\in S$ is~\emph{$\rightarrow$-stable} (or just \emph{stable} if the context is clear) if there does not exist $y \in S$ with~$x\rightarrow y$. Observe that if $\rightarrow$ is confluent and terminating, then for every $x\in S$ there exists a unique stable $y\in S$ with~$x\xrightarrow{*}y$ and we call this $y$ the $\rightarrow$-\emph{stabilization} (or just \emph{stabilization} if the context is clear) of $x$.

In~\cite{galashin2017rootfiring1} it was proved that the symmetric and truncated interval-firing processes are confluent. However, there is a slight caveat here when we use the more general parameters $\mathbf{k} \in \mathbb{N}[\Phi]^W$. Namely, we need to disallow some pathological choices of these parameters. For an irreducible root system~$\Phi$ there are at most two orbits of roots under the Weyl group. If $\Phi$ has only a single Weyl group orbit of roots, then we say that $\Phi$ is \emph{simply laced}. Thus if $\Phi$ is simply laced, then for any $\mathbf{k}\in\mathbb{N}[\Phi]^W$ there exists some~$k\in \mathbb{Z}_{\geq0}$ such that $\mathbf{k}=k$. In the non-simply laced case, the roots are divided into an orbit of \emph{long roots} (those which maximize the quantity $\langle \alpha,\alpha\rangle$ among $\alpha\in \Phi$) and and orbit of \emph{short roots} (those which minimize the quantity $\langle \alpha,\alpha\rangle$). Thus if $\Phi$ is not simply laced, then for any $\mathbf{k}\in\mathbb{N}[\Phi]^W$ there exist $k_l,k_s\in \mathbb{Z}_{\geq0}$  such that $\mathbf{k}(\alpha)=k_l$ if~$\alpha$ is long and $\mathbf{k}(\alpha)=k_s$ if $\alpha$ is short.

 \begin{definition}
Here we define what it means for $\mathbf{k} \in \mathbb{N}[\Phi]^W$ to be \emph{good}. If $\Phi$ is simply laced, then $\mathbf{k} \in \mathbb{N}[\Phi]^W$ is always good. If $\Phi$ is not simply laced, then $\mathbf{k} \in \mathbb{N}[\Phi]^W$  is good if $k_s=0\Rightarrow k_l=0$. Note in particular that if $\mathbf{k}=k$ is constant, then it is good.
 \end{definition}

\begin{thm}[{See~\cite[\oldpaperconfluence]{galashin2017rootfiring1}}]  \label{thm:confluence}
If $\mathbf{k}\in \mathbb{N}[\Phi]^W$ is good then both the symmetric and truncated interval-firing processes are confluent (and terminating).
\end{thm}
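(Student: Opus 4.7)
My plan is to apply Newman's lemma: a terminating binary relation is confluent if and only if it is locally confluent. So the argument splits into termination plus local confluence, carried out for each of $\xrightarrow[\mathrm{sym},\,\mathbf{k}]{}$ and $\xrightarrow[\mathrm{tr},\,\mathbf{k}]{}$.

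For termination, I would first observe that each firing condition of the form $\langle \mu + \alpha/2, \alpha^\vee\rangle \in \{-\mathbf{k}(\alpha),\ldots,\mathbf{k}(\alpha)\}$ (and its truncated analogue) confines $\langle \mu, \alpha^\vee\rangle$ to a bounded window for every $\alpha \in \Phi^+$, so by Proposition~\ref{prop:perm_containment} the set of weights reachable from any given $\lambda \in P$ lies inside a fixed discrete permutohedron $\Pi^Q(\nu)$, hence is finite. To rule out cycles I would use the monovariant $\Psi(\mu) \coloneqq \|\mu - \mu_0\|^2$, where $\mu_0 \in V$ is chosen deep in the anti-dominant cone, say $\mu_0 = -N\rho$ with $N$ large. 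A direct computation gives
\[\Psi(\mu+\alpha) - \Psi(\mu) = \langle \alpha, \alpha\rangle \cdot (\langle \mu - \mu_0, \alpha^\vee\rangle + 1),\]
and the firing condition forces $\langle \mu, \alpha^\vee\rangle \geq -\mathbf{k}(\alpha) - 1$, while taking $N$ large enough makes $\langle \mu_0, \alpha^\vee\rangle < -\mathbf{k}(\alpha)$ for every $\alpha \in \Phi^+$. Hence $\Psi$ strictly increases at every firing, and termination follows.

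For local confluence, suppose $\lambda \to \lambda + \alpha$ and $\lambda \to \lambda + \beta$ are both firings with $\alpha \neq \beta \in \Phi^+$. If $\langle \alpha, \beta^\vee\rangle = 0$, the moves commute: the firing condition for $\beta$ from $\lambda + \alpha$ reads $\langle \lambda + \alpha + \beta/2, \beta^\vee\rangle = \langle \lambda + \beta/2, \beta^\vee\rangle$, which holds by assumption, and symmetrically for firing $\alpha$ from $\lambda + \beta$, so $\lambda + \alpha + \beta$ is a common descendant. When $\langle \alpha, \beta^\vee\rangle \neq 0$, the pair spans a rank-$2$ sub-root system of type $A_2$, $B_2$, or $G_2$, and I would close the diamond by a finite chain of firings using only roots in this sub-system. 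This reduces to a case analysis on the Cartan integers $(\langle \alpha, \beta^\vee\rangle, \langle \beta, \alpha^\vee\rangle)$ and on the position of $\lambda$ relative to the two relevant Catalan hyperplane families. The goodness hypothesis on $\mathbf{k}$ is precisely what is needed in the non-simply laced rank-$2$ cases to prevent a pathology in which a long-root firing is enabled while a short-root firing required to close the diamond is not.

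The main obstacle is the rank-$2$ case analysis for $G_2$, where closing the diamond may require several intermediate firings and several sub-cases based on the position of $\lambda$ in the $G_2$ Catalan arrangement must be handled separately. This is the computational heart of the cited \oldpaperconfluence{} of \cite{galashin2017rootfiring1}; combined with termination via Newman's lemma, this completes the proof.
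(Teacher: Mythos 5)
The paper gives no proof of this theorem — it is cited wholesale from \oldpaperconfluence{} of \cite{galashin2017rootfiring1} — so there is no internal argument to compare against. Assessed on its own terms, your Newman's-lemma framing is a sensible strategy and the monovariant $\Psi(\mu) = \|\mu - \mu_0\|^2$ with $\mu_0 = -N\rho$ is a correct device for ruling out cycles; your algebra checks out. But the termination step has a genuine gap. A strictly increasing monovariant on an infinite state space does not by itself rule out infinite firing sequences; you still need an upper bound on $\Psi$ over the reachable set. Your claim that ``by Proposition~\ref{prop:perm_containment} the set of weights reachable from any given $\lambda\in P$ lies inside a fixed discrete permutohedron'' is not justified: Proposition~\ref{prop:perm_containment} is a static containment criterion (root-order containment of dominant weights) and carries no information about firing dynamics, and the firing condition bounds $\langle\mu,\alpha^\vee\rangle$ only for the specific root $\alpha$ being fired at a given instant, not for all $\alpha\in\Phi^+$ along the whole trajectory. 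The boundedness you need is precisely the permutohedron non-escaping lemma (Lemma~\ref{lem:perm_non_escaping}, i.e.\ \oldpapernonescaping{} of \cite{galashin2017rootfiring1}), which is itself a nontrivial technical result cited on the same footing as confluence; your argument implicitly assumes it without proving it.

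On local confluence, the orthogonal case is correct, and the reduction of a non-orthogonal diamond to the rank-$2$ sub-root system spanned by $\alpha$ and $\beta$ is valid in principle (firing roots in that plane keeps you in the plane, and the restricted relation is exactly the interval-firing process of $\Phi\cap\mathrm{Span}_{\mathbb{R}}\{\alpha,\beta\}$). But you defer the entire rank-$2$ case analysis — including the $B_2$/$C_2$ cases where goodness must enter and the $G_2$ case — which is where the actual content lies. Your heuristic for the role of goodness (a long-root firing enabled while a short-root firing needed to close the diamond is not) is a plausible guess but is not verified; a concrete check on $B_2$ with $k_s=0$, $k_l>0$ would be needed to confirm it. As written, the proposal is an outline that smuggles in the non-escaping lemma via an unjustified boundedness claim and leaves the substantive case analysis undone.
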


A key technical tool in the proof of Theorem~\ref{thm:confluence} is the ``permutohedron non-escaping lemma.'' We will also find the non-escaping lemma useful for our purposes; hence, let us state the following version of this lemma:

\begin{lemma}[{See~\cite[\oldpapernonescaping]{galashin2017rootfiring1}}]  \label{lem:perm_non_escaping}
Let $\mathbf{k} \in \mathbb{N}[\Phi]^W$ be good and let $\lambda \in P_{\geq 0}$. Let $\mu,\nu \in P$ with $\mu \in \Pi^Q(\lambda+\rho_{\mathbf{k}})$ and $\nu \notin  \Pi^Q(\lambda+\rho_{\mathbf{k}})$. Then it cannot be the case that~$\mu \xrightarrow[\mathrm{sym},\,\mathbf{k}]{} \nu$ or that~$\nu\xrightarrow[\mathrm{sym},\,\mathbf{k}]{}\mu$.
\end{lemma}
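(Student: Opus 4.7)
The plan is to prove the equivalent statement: if $\mu \xrightarrow[\mathrm{sym},\mathbf{k}]{} \nu$ then $\mu \in \Pi^Q(\lambda + \rho_{\mathbf{k}}) \Leftrightarrow \nu \in \Pi^Q(\lambda + \rho_{\mathbf{k}})$. Since $\nu - \mu \in Q$ (it is a root), the coset condition is preserved automatically, and it suffices to handle containment in $\Pi(\lambda + \rho_{\mathbf{k}})$. By Proposition~\ref{prop:symmetry} the symmetric closure of $\xrightarrow[\mathrm{sym},\mathbf{k}]{}$ is $W$-equivariant, and $\Pi(\lambda + \rho_{\mathbf{k}})$ is $W$-invariant, so acting by a suitable Weyl group element we reduce to the case where the fired root is simple, $\alpha = \alpha_i$.

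Write $p := \langle \mu, \alpha_i^\vee\rangle$; the firing condition reads $-\mathbf{k}(\alpha_i) - 1 \leq p \leq \mathbf{k}(\alpha_i) - 1$. The case $p = -1$ is immediate since then $\mu + \alpha_i = s_{\alpha_i}(\mu) \in W(\mu)$, and $W$-invariance of $\Pi$ gives the result. The case $p \leq -2$ reduces to $p \geq 0$: applying $s_{\alpha_i}$ throughout produces a firing $\mu' \to \mu' + \alpha_i$ with $\mu' := s_{\alpha_i}(\mu + \alpha_i) = s_{\alpha_i}(\mu) - \alpha_i$ and $\langle \mu', \alpha_i^\vee\rangle = -p - 2 \geq 0$, with containment statuses preserved by $W$-invariance. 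In the remaining case $p \geq 0$ we use the Minkowski decomposition
\[
\Pi(\lambda + \rho_{\mathbf{k}}) = A + \mathbf{k}(\alpha_i)\,[-\alpha_i/2, \alpha_i/2], \qquad A := \Pi(\lambda) + \sum_{\beta \in \Phi^+ \setminus \{\alpha_i\}} \mathbf{k}(\beta)\,[-\beta/2, \beta/2].
\]
Since $s_{\alpha_i}$ permutes $\Phi^+ \setminus \{\alpha_i\}$ and $\mathbf{k}$ is $W$-invariant, $A$ is $s_{\alpha_i}$-invariant. Thus on the $s_{\alpha_i}$-invariant line $L := \mu + \mathbb{R}\alpha_i$, both $A \cap L$ and $\Pi(\lambda+\rho_\mathbf{k}) \cap L$ are closed intervals symmetric about the $s_{\alpha_i}$-fixed point $\mu - (p/2)\alpha_i$. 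Parametrizing $L$ by $\mu + s\alpha_i$, these intervals have the form $[-p/2 - d_A, -p/2 + d_A]$ and $[-p/2 - d, -p/2 + d]$ respectively, with $d = d_A + \mathbf{k}(\alpha_i)/2$, and membership becomes $p \leq 2d$ for $\mu$ (at $s = 0$) and $p + 2 \leq 2d$ for $\mu + \alpha_i$ (at $s = 1$).

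The main obstacle is to close the $+2$ gap between these two inequalities using only the firing bound $p \leq \mathbf{k}(\alpha_i) - 1$. When the slice half-width satisfies $d_A \geq 1/2$ we immediately get $2d \geq \mathbf{k}(\alpha_i) + 1 \geq p + 2$, and we are done. The subtle case is when $A \cap L$ is thin, which can occur, for instance, when $\alpha_i$ is short, the long-root value of $\mathbf{k}$ vanishes, and only one other positive root in $\Phi^+ \setminus \{\alpha_i\}$ contributes a nontrivial segment to $A$. In such cases one exploits the integrality of the setup: lattice points on $L$ in the coset $Q + \lambda + \rho_{\mathbf{k}}$ differ from $\mu$ by integer multiples of $\alpha_i$, so their $\alpha_i^\vee$-pairings form an arithmetic progression with common difference $2$, which fixes the parity of $p$ along $L$ and can lower the maximum firing-allowed value of $p$ below $\mathbf{k}(\alpha_i) - 1$. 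Combined with the extra symmetric lattice point $s_{\alpha_i}(\mu) \in \Pi$ (from $W$-invariance) as an anchor and with the ``goodness'' hypothesis on $\mathbf{k}$ which controls how short- and long-root contributions to the zonotope interact, a direct arithmetic verification in each sub-case shows that $p + 2 \leq 2d$ always holds, completing the argument.
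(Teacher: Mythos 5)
First, note that this lemma is not actually proved in the present paper: it is imported verbatim from the earlier paper (cited as Lemma~8.2 of~\cite{galashin2017rootfiring1}), so there is no in-paper argument to compare your proposal against. I can only assess your proposal on its own terms.

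Your reductions are all sound. Using $W$-equivariance of the symmetric closure (Proposition~\ref{prop:symmetry}) together with $W$-invariance of $\Pi(\lambda+\rho_{\mathbf{k}})$ and of $Q+\lambda$ (the latter because $W$ acts trivially on $P/Q$) to reduce to a simple fired root $\alpha_i$ is correct. The disposal of $p=-1$ via $\mu+\alpha_i = s_{\alpha_i}(\mu)$ and of $p\leq -2$ via the $s_{\alpha_i}$-conjugate firing $\mu'\to\mu'+\alpha_i$ with $\mu' = s_{\alpha_i}(\mu+\alpha_i)$, $\langle\mu',\alpha_i^\vee\rangle = -p-2\geq 0$, is also correct, using that the defining interval $\{-\mathbf{k}(\alpha_i),\ldots,\mathbf{k}(\alpha_i)\}$ is symmetric. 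The Minkowski split $\Pi(\lambda+\rho_{\mathbf{k}}) = A + \mathbf{k}(\alpha_i)[-\alpha_i/2,\alpha_i/2]$ with $A$ an $s_{\alpha_i}$-invariant polytope (because $s_{\alpha_i}$ permutes $\Phi^+\setminus\{\alpha_i\}$ and $\mathbf{k}$ is $W$-invariant), and the fact that on $L = \mu+\mathbb{R}\alpha_i$ both $A\cap L$ and $\Pi(\lambda+\rho_{\mathbf{k}})\cap L$ are intervals centered at the $s_{\alpha_i}$-fixed point with $d = d_A + \mathbf{k}(\alpha_i)/2$, are likewise correct. The reformulation as ``show $p+2\leq 2d$ given $p\leq 2d$ and $p\leq\mathbf{k}(\alpha_i)-1$'' is right, and you correctly reduce the problem (after noting $d_A\geq 0$) to the single extremal case $p=\mathbf{k}(\alpha_i)-1$, where what is needed is $d_A\geq 1/2$.

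The gap is that this final, extremal case is never actually handled. You announce that ``a direct arithmetic verification in each sub-case shows that $p+2\leq 2d$ always holds'' but never enumerate the sub-cases, never show that $d_A<1/2$ is impossible (or salvage the inequality when it occurs), and never extract anything concrete from the ``good'' hypothesis on $\mathbf{k}$ --- which is not optional, since the conclusion can fail for bad $\mathbf{k}$. Your description of when $A\cap L$ can be thin (``$\alpha_i$ short, $k_l=0$, only one other root contributes'') is also misleading: $A\cap L$ is thin whenever the transverse projection of $\mu$ lies near the boundary of the transverse projection of $A$, which is a generic boundary phenomenon and not tied to degenerate zonotope summands. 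The entire difficulty of the non-escaping lemma is concentrated in precisely this step --- proving that a lattice point $\mu\in\Pi^Q(\lambda+\rho_{\mathbf{k}})$ with $\langle\mu,\alpha_i^\vee\rangle = \mathbf{k}(\alpha_i)-1$ cannot sit on a slice of half-width less than one --- and the appeal to parity of $p$, the symmetric anchor $s_{\alpha_i}(\mu)$, and goodness of $\mathbf{k}$ is at this point only a list of plausible ingredients, not an argument. As it stands the proposal is an incomplete sketch rather than a proof.
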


Thanks to Theorem~\ref{thm:confluence} we can ask about stabilizations for the symmetric and truncated interval-firing processes. The first thing we need to understand is what are the stable points. To describe these points we will need to define a certain map $\eta\colon P\to P$.

It is well-known that for any $I\subseteq[n]$, every coset of the parabolic subgroup $W_I$ has a unique element of minimal length (see e.g.~\cite[\S2.4]{bjorner2005coxeter}). We use $W^I$ to denote the minimal length coset representatives of $W_I$. The elements of $W^I$ are exactly those $w \in W$ for which $w(\Phi_I)\cap\Phi^+=w(\Phi_I\cap\Phi^+)$.

For a dominant $\lambda \in P_{\geq 0}$ we define $I^0_{\lambda} \coloneqq \{i\in [n]\colon \langle \lambda,\alpha_i^\vee\rangle= 0\}$. For any $\lambda \in P$, the set of $w\in W$ for which $w^{-1}(\lambda) = \lambda_{\mathrm{dom}}$ is a coset of $W_{I^0_{\lambda}}$. Hence it makes sense to define $w_{\lambda}$ to be the minimal length element of the Weyl group for which $w^{-1}(\lambda)$ is dominant. In fact, $W^{I^0_{\lambda}} = \{w_\mu\colon \mu \in W(\lambda)\}$ for a dominant~$\lambda \in P_{\geq 0}$.

For $\mathbf{k}\in \mathbb{N}[\Phi]^W$ we then define $\eta_{\mathbf{k}}\colon P\to P$ by $\eta_{\mathbf{k}}(\lambda) \coloneqq \lambda + w_{\lambda}(\rho_{\mathbf{k}})$. Some basic properties of $\eta_{\mathbf{k}}$ established in~\cite{galashin2017rootfiring1} are that it is always injective, and that $\eta_{\mathbf{a}}(\eta_{\mathbf{b}}) = \eta_{\mathbf{a}+\mathbf{b}}$ (hence if we set $\eta \coloneqq \eta_1$, then $\eta^k=\eta_k$). The stable points of the interval-firing processes are given in terms of $\eta$ as follows:

\begin{prop}[{See~\cite[\oldpapersinks]{galashin2017rootfiring1}}] \label{prop:sinks}
For any $\mathbf{k}\in\mathbb{N}[\Phi]^W$, the stable points of $\xrightarrow[\mathrm{sym},\,\mathbf{k}]{}$ are $\{\eta_{\mathbf{k}}(\lambda)\colon \lambda \in P, \langle \lambda,\alpha^\vee\rangle \neq -1 \textrm{ for all } \alpha\in\Phi^+\}$ and the stable points of $\xrightarrow[\mathrm{tr},\,\mathbf{k}]{}$ are $\{\eta_{\mathbf{k}}(\lambda)\colon \lambda \in P\}$.
\end{prop}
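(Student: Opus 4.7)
The plan is to prove both directions of each characterization by translating stability into concrete pairing inequalities. Unpacking the definitions, $\mu$ is $\xrightarrow[\mathrm{tr},\,\mathbf{k}]{}$-stable iff for every $\alpha \in \Phi^+$ either $\langle\mu,\alpha^\vee\rangle \geq \mathbf{k}(\alpha)$ or $\langle\mu,\alpha^\vee\rangle \leq -\mathbf{k}(\alpha)-1$, while $\xrightarrow[\mathrm{sym},\,\mathbf{k}]{}$-stability is the same except the second option is strengthened to $\langle\mu,\alpha^\vee\rangle \leq -\mathbf{k}(\alpha)-2$. The argument then proceeds by checking both directions using the rewriting $\eta_{\mathbf{k}}(\lambda) = w_{\lambda}(\lambda_{\mathrm{dom}} + \rho_{\mathbf{k}})$ (which follows from $\lambda = w_\lambda(\lambda_{\mathrm{dom}})$).

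For the forward direction, observe that $\langle\eta_{\mathbf{k}}(\lambda), \alpha^\vee\rangle = \langle\lambda_{\mathrm{dom}} + \rho_{\mathbf{k}},\, w_\lambda^{-1}(\alpha)^\vee\rangle$ and split cases by the sign of $w_\lambda^{-1}(\alpha)$. If $\beta := w_\lambda^{-1}(\alpha) \in \Phi^+$, then dominance of $\lambda_{\mathrm{dom}}$ together with the elementary inequality $\langle\rho_{\mathbf{k}},\beta^\vee\rangle \geq \mathbf{k}(\beta)$ (which follows since $\beta^\vee$ is a nonnegative integral combination of simple coroots and the $W$-invariance of $\mathbf{k}$) gives $\langle\eta_{\mathbf{k}}(\lambda),\alpha^\vee\rangle \geq \mathbf{k}(\alpha)$. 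If instead $\gamma := -w_\lambda^{-1}(\alpha) \in \Phi^+$, then the characterization of $W^{I^0_\lambda}$ forces $\gamma \notin \Phi^+_{I^0_\lambda}$, so $\gamma^\vee$ involves with positive coefficient some $\alpha_j^\vee$ with $\langle\lambda_{\mathrm{dom}},\alpha_j^\vee\rangle \geq 1$; this yields $\langle\lambda_{\mathrm{dom}},\gamma^\vee\rangle \geq 1$, hence $\langle\eta_{\mathbf{k}}(\lambda),\alpha^\vee\rangle \leq -\mathbf{k}(\alpha)-1$. This settles truncated stability. The symmetric case is identical except that the hypothesis $\langle\lambda,\alpha^\vee\rangle \neq -1$, rewritten as $\langle\lambda_{\mathrm{dom}},\gamma^\vee\rangle \neq 1$, precisely forces $\langle\lambda_{\mathrm{dom}},\gamma^\vee\rangle \geq 2$, upgrading the bound to $\leq -\mathbf{k}(\alpha)-2$.

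For the converse direction, given a stable $\mu$, I would recover a preimage via a greedy reflection procedure: while some simple $\alpha_i$ satisfies $\langle\mu,\alpha_i^\vee\rangle \leq -\mathbf{k}(\alpha_i)-1$, replace $\mu$ by $s_{\alpha_i}(\mu)$. One checks that (i) stability is preserved by such reflections, since $s_{\alpha_i}$ permutes $\Phi^+ \setminus \{\alpha_i\}$ and negates the $\alpha_i$-pairing; and (ii) the procedure terminates, as each step traces out a reduced word and moves $\mu$ within its finite $W$-orbit. Upon termination at $\mu^*$, we have $\langle\mu^*,\alpha_i^\vee\rangle \geq \mathbf{k}(\alpha_i)$ for every simple root, so $\nu := \mu^* - \rho_{\mathbf{k}}$ is dominant. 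Writing $w$ for the composition of reflections used (so $\mu = w(\nu+\rho_{\mathbf{k}})$), the candidate preimage is $\lambda := w(\nu)$, and running the forward computation backwards confirms $\eta_{\mathbf{k}}(\lambda) = \mu$. The symmetric converse additionally requires $\langle\lambda,\alpha^\vee\rangle \neq -1$, which emerges by transporting the forbidden pairing $-\mathbf{k}(\alpha)-1$ through $w$.

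The main obstacle is that the $w$ produced by the greedy procedure need not equal $w_\lambda$: a priori it lies only in the coset $w_\lambda W_{I^0_\lambda}$, so one must show $w(\rho_{\mathbf{k}}) = w_\lambda(\rho_{\mathbf{k}})$. I would resolve this by proving that no simple reflection $s_{\alpha_i}$ with $i \in I^0_\lambda$ is ever triggered by the greedy procedure: for such $i$ the final pairing $\langle\mu^*,\alpha_i^\vee\rangle$ equals $\mathbf{k}(\alpha_i)$, and by tracking how the $\alpha_i$-pairing evolves under successive reflections (using the non-acuteness of the simple roots together with the stability-preservation step) one shows it could never have dropped to $-\mathbf{k}(\alpha_i)-1$ or lower. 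This careful combinatorial analysis of the reduced word is the subtlest part of the argument, after which both directions of the proposition fall out.
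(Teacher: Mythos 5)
The paper does not prove this proposition itself; it cites it as \oldpapersinks{} of~\cite{galashin2017rootfiring1}, so there is no in-paper argument to compare against. Evaluating your proof on its own terms: the forward direction is sound (the unpacking of stability is correct, the split on the sign of $w_\lambda^{-1}(\alpha)$ is the right move, and the key inequality $\langle\rho_{\mathbf{k}},\beta^\vee\rangle \geq \mathbf{k}(\beta)$ for positive $\beta$ is true, though it deserves a fuller justification: you need that some simple coroot of the same length class as $\beta^\vee$ appears with positive coefficient in $\beta^\vee$, which follows since a parabolic sub-root system generated by simple roots of only one length class contains only roots of that length class).

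The converse direction, however, has a genuine gap. You correctly identify the main obstacle -- the $w$ produced by the greedy procedure lies only in the coset $w_\lambda W_{I^0_\lambda}$, and one must show $w(\rho_{\mathbf{k}}) = w_\lambda(\rho_{\mathbf{k}})$ -- but the proposed resolution, that ``no simple reflection $s_{\alpha_i}$ with $i \in I^0_\lambda$ is ever triggered by the greedy procedure,'' is false. Take $\Phi = A_2$, $\mathbf{k} = 1$, and $\mu = \omega_1 - 3\omega_2 = \eta_1(-\omega_2)$. Here $\lambda = -\omega_2$, $\lambda_{\mathrm{dom}} = \omega_1$, and $I^0_{\lambda_{\mathrm{dom}}} = \{2\}$. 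One checks $\langle\mu,\alpha_1^\vee\rangle = 1$, $\langle\mu,\alpha_2^\vee\rangle = -3$, $\langle\mu,(\alpha_1+\alpha_2)^\vee\rangle = -2$, so $\mu$ is truncated-stable; and the only simple root admitting a greedy reflection at $\mu$ is $\alpha_2$ (since $-3 \leq -2$). Thus $s_{\alpha_2}$ is triggered with $2 \in I^0_{\lambda_{\mathrm{dom}}}$. The heuristic that the $\alpha_i$-pairing ``could never have dropped'' to $-\mathbf{k}(\alpha_i)-1$ or lower confuses the evolution during the procedure with the starting value: here the pairing with $\alpha_2^\vee$ \emph{begins} at $-3$, well below the threshold. (In this example the procedure still happens to output $w = s_2 s_1 = w_\lambda$, so the conclusion survives, but the stated justification does not hold and a different argument is needed.) What one can actually extract from stability is weaker: if $\beta \in \Phi^+_{I^0_\lambda}$ has $\langle\rho_{\mathbf{k}},\beta^\vee\rangle = \mathbf{k}(\beta) > 0$ (in particular if $\beta$ is a simple root with $\mathbf{k}(\beta) > 0$) then $w(\beta) \in \Phi^+$, but bridging from this to $w(\rho_{\mathbf{k}}) = w_\lambda(\rho_{\mathbf{k}})$ in general requires a more careful Coxeter-theoretic argument than the one you sketch.
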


The condition $\langle \lambda,\alpha^\vee\rangle \neq -1$  for all $\alpha\in\Phi^+$ can also be described in terms of parabolic subgroups. For dominant $\lambda \in P_{\geq 0}$, let us define $I^{0,1}_{\lambda} \coloneqq \{i\in [n]\colon \langle \lambda,\alpha_i^\vee\rangle\in\{0,1\}\}$. And for an arbitrary weight $\lambda \in P$ we set $I^{0,1}_{\lambda} \coloneqq I^{0,1}_{\lambda_{\mathrm{dom}}}$. Then~$\lambda \in P$ has $\langle \lambda,\alpha^\vee\rangle \neq -1$ for all $\alpha\in\Phi^+$  if and only if $w_{\lambda}$ is of minimal length in its coset of $W_{I^{0,1}_{\lambda}}$. In fact, $W^{I^{0,1}_{\lambda}} = \{w_{\mu}\colon \mu \in W(\lambda), \langle \mu,\alpha^\vee\rangle \neq -1 \textrm{ for all } \alpha\in\Phi^+\}$ for a dominant $\lambda \in P_{\geq 0}$. 

Theorem~\ref{thm:confluence} and Proposition~\ref{prop:sinks} say that it makes sense to define for good $\mathbf{k}\in\mathbb{N}[\Phi]^W$ the \emph{stabilization maps} $s^{\mathrm{sym}}_{\mathbf{k}},s^{\mathrm{tr}}_{\mathbf{k}}\colon P\to P$ by
\begin{align*}
s^{\mathrm{sym}}_{\mathbf{k}}(\mu) = \lambda &\Leftrightarrow \textrm{ the $\xrightarrow[\mathrm{sym},\,\mathbf{k}]{}$-stabilization of $\mu$ is $\eta_{\mathbf{k}}(\lambda)$}; \\
s^{\mathrm{tr}}_{\mathbf{k}}(\mu) = \lambda &\Leftrightarrow \textrm{ the $\xrightarrow[\mathrm{tr},\,\mathbf{k}]{}$-stabilization of $\mu$ is $\eta_{\mathbf{k}}(\lambda)$}.
\end{align*}
For $\lambda\in P$ and good $\mathbf{k}\in\mathbb{N}[\Phi]^W$ we then define the quantities
\begin{align*}
L^{\mathrm{sym}}_{\lambda}(\mathbf{k}) &\coloneqq \#(s^{\mathrm{sym}}_\mathbf{k})^{-1}(\lambda); \\
L^{\mathrm{tr}}_{\lambda}(\mathbf{k}) &\coloneqq \#(s^{\mathrm{tr}}_\mathbf{k})^{-1}(\lambda),
\end{align*}
In other words, $L^{\mathrm{sym}}_{\lambda}(\mathbf{k})$ and $L^{\mathrm{tr}}_{\lambda}(\mathbf{k})$ count the number of weights with given stabilization as a function of $\mathbf{k}$.

\begin{example}
Suppose that $\Phi=A_2$, as in Figures~\ref{fig:a2_k0} and~\ref{fig:a2_k12}. Then we have
\begin{align*}
L_{0}^{\mathrm{sym}}(k) &= 3k^2+3k+1; \\
L_{\omega_1}^{\mathrm{sym}}(k) &= 3k^2+6k+3; \\
L_{\omega_2}^{\mathrm{sym}}(k) &= 3k^2+6k+3; \\
L_{\omega_1+\omega_2}^{\mathrm{sym}}(k) &= 6k+6.
\end{align*}
Here $L_{0}^{\mathrm{sym}}(k)$ counts the points in the inner red ``regular'' hexagon in Figures~\ref{fig:a2_k0} and~\ref{fig:a2_k12} as it grows (in Figure~\ref{fig:a2_k0} it is just a single point). And $L_{\omega_1+\omega_2}^{\mathrm{sym}}(k)$ counts the points on the boundary of the outer red ``regular'' hexagon. Meanwhile, $L_{\omega_1}^{\mathrm{sym}}(k)$ counts the points in the inner blue ``irregular'' hexagon, and similarly $L_{\omega_2}^{\mathrm{sym}}(k)$ counts the points in the inner green ``irregular'' hexagon (in Figure~\ref{fig:a2_k0} these are actually triangles).
\end{example}

\begin{conj}[{See~\cite[\oldpaperconjecture]{galashin2017rootfiring1}}] \label{conj:positivity}
For all good $\mathbf{k}\in\mathbb{N}[\Phi]^W$, both $L^{\mathrm{sym}}_{\lambda}(\mathbf{k})$ and $L^{\mathrm{tr}}_{\lambda}(\mathbf{k})$ are given by polynomials in $\mathbf{k}$ with nonnegative integer coefficients.
\end{conj}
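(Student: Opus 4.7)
My plan is to establish Conjecture~\ref{conj:positivity} in the symmetric case by proving the explicit formula of Theorem~\ref{thm:main_intro}, whose right-hand side is manifestly a polynomial in $\mathbf{k}$ with nonnegative integer coefficients. The truncated case I expect to be the main obstacle, and I will address it at the end.

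For the symmetric half, I would first reduce to geometrically realizing the fiber $(s^{\mathrm{sym}}_{\mathbf{k}})^{-1}(\lambda)$ as a signed combination of discrete permutohedra. When $\lambda\in P_{\geq 0}$ with $I^{0,1}_{\lambda}=[n]$, Lemma~\ref{lem:perm_non_escaping} ensures that $\Pi^Q(\lambda+\rho_{\mathbf{k}})$ is a union of connected components of the symmetric firing graph, and Proposition~\ref{prop:sinks} together with Proposition~\ref{prop:dominant_maximal} identify the sinks inside it as $\eta_{\mathbf{k}}(\mu)$ for dominant $\mu\leq\lambda$. Inclusion–exclusion then yields \eqref{eqn:sym_components_intro}. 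For a general weight $\lambda$ satisfying $\langle\lambda,\alpha^\vee\rangle\neq -1$ for all $\alpha\in\Phi^+$, I would exploit Proposition~\ref{prop:symmetry} and the characterization of $W^{I^{0,1}_{\lambda_{\mathrm{dom}}}}$ to transport the analysis to the appropriate parabolic subsystem, obtaining an analogous difference of permutohedra after orthogonal projection. If $\langle\lambda,\alpha^\vee\rangle=-1$ for some positive root, Proposition~\ref{prop:sinks} forces $\eta_{\mathbf{k}}(\lambda)$ to fail to be a sink, so $L^{\mathrm{sym}}_{\lambda}\equiv 0$, matching the first clause of Theorem~\ref{thm:main_intro}.

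Next, I would substitute the lattice-point count of Theorem~\ref{thm:perm} into each $\#\Pi^Q(\mu+\rho_{\mathbf{k}})$ appearing in the signed decomposition, giving $\sum_{X}\#\mathrm{quot}_X(\Pi^Q(\mu))\cdot\mathrm{rVol}_Q(X)\,\mathbf{k}^{X}$ indexed by linearly independent $X\subseteq\Phi^+$. The alternating sum over dominant $\mu\leq\lambda$ should then telescope using the disjoint-union-of-lattices structure of $(P_{\geq 0},\leq)$ from Proposition~\ref{prop:meet_weights} and Proposition~\ref{prop:minuscule}: after fixing $X$, the contribution to the coefficient of $\mathbf{k}^X$ becomes the number of weights in $\Pi^Q(\lambda)$ whose $\mathrm{quot}_X$-image is \emph{not} in any $\mathrm{quot}_X(\Pi^Q(\mu))$ for $\mu<\lambda$ dominant. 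Using Corollary~\ref{cor:quotient} together with the characterization of minimal elements of the root-ordered dominant weights in the sub-root system $\Phi\cap\mathrm{Span}_{\mathbb{R}}(X)$, this count collapses to the number of $\mu$ in the appropriate $W_{I^{0,1}_{\lambda_{\mathrm{dom}}}}$-orbit whose $\pi_X$-projection is zero or minuscule — exactly the quantity asserted in Theorem~\ref{thm:main_intro}.

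The truncated case is the genuine obstacle. The author flags Conjecture~\ref{conj:main_intro} as false in general, so the fibers $(s^{\mathrm{tr}}_{\mathbf{k}})^{-1}(\lambda)$ cannot be decomposed as differences of permutohedra in the clean manner that drives the symmetric proof. The decomposition $L^{\mathrm{sym}}_{\lambda}(\mathbf{k})=\sum_{\mu\in W(\lambda)}L^{\mathrm{tr}}_{\mu}(\mathbf{k})$ gives positivity only in aggregate, not summand-by-summand. My tentative strategy for this half would be first to establish polynomiality uniformly (currently known only in the simply-laced case~\cite{galashin2017rootfiring1}), by finding a tractable polytopal description of $(s^{\mathrm{tr}}_{\mathbf{k}})^{-1}(\lambda)$ amenable to Lemma~\ref{lem:multiparam_ehrhart}, and then to search in Types~A and~B for a corrected positive formula refining Conjecture~\ref{conj:main_intro}. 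A uniform resolution, however, seems to demand genuinely new ideas beyond the permutohedral machinery developed here.
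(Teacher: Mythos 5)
You have correctly read the situation: the statement is a conjecture that the paper itself only ``half'' resolves, and your plan for the symmetric half reproduces the paper's own argument for Theorem~\ref{thm:main}. Specifically, you use the permutohedron non-escaping lemma (Lemma~\ref{lem:perm_non_escaping}) plus Propositions~\ref{prop:sinks} and~\ref{prop:dominant_maximal} to obtain the fiber decomposition of Proposition~\ref{prop:fibers}; you substitute Theorem~\ref{thm:perm} into the signed sum and perform M\"obius inversion over $(P_{\geq 0},\leq)$ — which in the paper is carried out via Propositions~\ref{prop:intersections_perm}, \ref{prop:intersections_quot}, and~\ref{prop:quots_minuscule} together with Stembridge's semilattice structure (Propositions~\ref{prop:meet_weights}, \ref{prop:minuscule}); and you invoke the slice-integrality Corollary~\ref{cor:quotient} to replace $\mathrm{quot}_X(\Pi(\lambda))\cap\mathrm{quot}_X(Q+\lambda)$ by $\mathrm{quot}_X(\Pi^Q(\lambda))$, which is exactly what lets the coefficient collapse to the orbit count in Theorem~\ref{thm:main}. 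Your reduction from general $\lambda$ to the case $I^{0,1}_\lambda=[n]$ via Propositions~\ref{prop:symmetry} and~\ref{prop:orbit_symmetry}, and the vanishing clause when $\langle\lambda,\alpha^\vee\rangle=-1$ from Proposition~\ref{prop:sinks}, also match the paper.

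Two minor cautions on the symmetric half. First, the sinks inside $\Pi^Q(\lambda+\rho_{\mathbf{k}})$ are $\eta_{\mathbf{k}}(\mu)$ for \emph{all} $\mu\in\Pi^Q(\lambda)$ with $\langle\mu,\alpha^\vee\rangle\neq -1$, not only dominant ones; it is the combination of the non-escaping lemma with Proposition~\ref{prop:dominant_maximal} that lets you sort these by the dominant representative and obtain the difference of permutohedra in~\eqref{eqn:sym_components}. Second, the ``telescoping'' step requires the precise identity $\mathrm{quot}_X(\Pi^Q(\lambda))\cap\mathrm{quot}_X(\Pi^Q(\mu))=\mathrm{quot}_X(\Pi^Q(\lambda\wedge\mu))$ of Proposition~\ref{prop:intersections_quot}, which itself leans on Lemma~\ref{lem:slice}; M\"obius inversion over a meet-semilattice only produces the ``not in any'' count when the intersections agree with meets, so this is a genuine dependency rather than formal bookkeeping.

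On the truncated half you are right that this remains open and that the paper offers no proof: polynomiality is known only in the simply-laced case, the naive analogue (Conjecture~\ref{conj:main}) fails already in $G_2$, $C_3$, $D_4$, and the identity $L^{\mathrm{sym}}_{\lambda}(\mathbf{k})=\sum_{\mu}L^{\mathrm{tr}}_{\mu}(\mathbf{k})$ gives positivity only in aggregate. So your proposal, like the paper, establishes the symmetric clause of Conjecture~\ref{conj:positivity} and leaves the truncated clause conjectural; you should present it as such rather than as a proof of the full statement.
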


By ``polynomial in $\mathbf{k}$,'' we mean: in case that $\Phi$ is simply laced, a univariate polynomial in $\mathbf{k}=k$; and in the case that $\Phi$ is not simply laced, a bivariate polynomial in~$k_l$ and~$k_s$. In light of Conjecture~\ref{conj:positivity}, we refer to $L^{\mathrm{sym}}_{\lambda}(\mathbf{k})$ and $L^{\mathrm{tr}}_{\lambda}(\mathbf{k})$ as the \emph{symmetric} and \emph{truncated Ehrhart-like polynomials}, respectively.

In~\cite{galashin2017rootfiring1} the polynomiality of $L^{\mathrm{sym}}_{\lambda}(\mathbf{k})$ was established for all root systems~$\Phi$, and the polynomiality of $L^{\mathrm{tr}}_{\lambda}(\mathbf{k})$ was established for simply laced~$\Phi$. In both cases it was shown that these polynomials have integer coefficients. But in neither case was it shown that the coefficients are nonnegative, except for special choices of $\lambda$ like $\lambda =0$ or $\lambda$ minuscule.

We will show in the next subsection that $L^{\mathrm{sym}}_{\lambda}(\mathbf{k})$ has positive coefficients by giving an explicit, combinatorial formula for these coefficients.

\subsection{Positive formula for symmetric Ehrhart-like polynomials}

The first step in our proof of the positivity of the coefficients of $L^{\mathrm{sym}}_{\lambda}(\mathbf{k})$ is to directly relate the fibers $\#(s^{\mathrm{sym}}_\mathbf{k})^{-1}(\lambda)$ to polytopes of the form $\Pi^Q(\lambda+\rho_{\mathbf{k}})$. In fact, the permutohedron non-escaping lemma already does this, as the following proposition explains:

\begin{prop}\label{prop:fibers}
For $\lambda \in P_{\geq 0}$ and good $\mathbf{k}\in\mathbb{N}[\Phi]^W$,
\[\bigcup_{w \in W^{I^{0,1}_{\lambda}}} (s^{\mathrm{sym}}_{\mathbf{k}})^{-1}(w\lambda) = \Pi^Q(\lambda+\rho_{\mathbf{k}}) \setminus \bigcup_{\substack{\mu \neq \lambda \in P_{\geq 0}, \\ \mu \leq \lambda}} \Pi^Q(\mu+\rho_{\mathbf{k}}). \]
\end{prop}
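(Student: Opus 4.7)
The plan is to combine the permutohedron non-escaping lemma (Lemma~\ref{lem:perm_non_escaping}) with the characterization of stable points (Proposition~\ref{prop:sinks}) and the containment criterion for dominant weights (Proposition~\ref{prop:perm_containment}). The key preliminary observation is that for any $w \in W^{I^{0,1}_\lambda}$, writing $w = w_{\mu}$ with $\mu = w(\lambda) \in W(\lambda)$, one has $\eta_{\mathbf{k}}(w\lambda) = w(\lambda+\rho_{\mathbf{k}})$, which belongs to the Weyl orbit of $\lambda+\rho_{\mathbf{k}}$ and hence to $\Pi^Q(\lambda+\rho_{\mathbf{k}})$ (the coset condition holds automatically since the two weights are $W$-related).

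For the inclusion $\subseteq$, suppose $s^{\mathrm{sym}}_{\mathbf{k}}(\nu)=w\lambda$ for some $w\in W^{I^{0,1}_\lambda}$. The stabilization $\eta_{\mathbf{k}}(w\lambda)$ lies in $\Pi^Q(\lambda+\rho_{\mathbf{k}})$, so running Lemma~\ref{lem:perm_non_escaping} backwards along the firing sequence from $\nu$ to this stabilization puts $\nu$ inside $\Pi^Q(\lambda+\rho_{\mathbf{k}})$. If in addition $\nu \in \Pi^Q(\mu+\rho_{\mathbf{k}})$ for some dominant $\mu\neq\lambda$ with $\mu\leq\lambda$, then applying non-escaping \emph{forward} would push the stabilization $w(\lambda+\rho_{\mathbf{k}})$ into $\Pi^Q(\mu+\rho_{\mathbf{k}})$; by $W$-invariance of permutohedra, $\lambda+\rho_{\mathbf{k}}\in\Pi^Q(\mu+\rho_{\mathbf{k}})$ (the coset condition holds since $\lambda-\mu\in Q$), and Proposition~\ref{prop:perm_containment} applied to the dominant weights $\lambda+\rho_{\mathbf{k}}$ and $\mu+\rho_{\mathbf{k}}$ then forces $\lambda\leq\mu$, contradicting $\mu\neq\lambda$.

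For the inclusion $\supseteq$, take $\nu$ in the right-hand side and set $\sigma:=s^{\mathrm{sym}}_{\mathbf{k}}(\nu)$; by Proposition~\ref{prop:sinks}, $\langle\sigma,\alpha^\vee\rangle\neq -1$ for all $\alpha\in\Phi^+$. Let $\mu:=\sigma_{\mathrm{dom}}$, so that $\eta_{\mathbf{k}}(\sigma) = w_\sigma(\mu+\rho_{\mathbf{k}}) \in \Pi^Q(\mu+\rho_{\mathbf{k}})$. Applying non-escaping to the $\lambda$-permutohedron (which contains $\nu$) puts $\eta_{\mathbf{k}}(\sigma)$ in $\Pi^Q(\lambda+\rho_{\mathbf{k}})$; $W$-invariance together with Proposition~\ref{prop:perm_containment} then gives $\mu\leq\lambda$. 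Applying non-escaping instead to the $\mu$-permutohedron (which contains $\eta_{\mathbf{k}}(\sigma)$) and tracing backward to $\nu$ forces $\nu\in\Pi^Q(\mu+\rho_{\mathbf{k}})$. Since $\nu$ lies in the right-hand side, this is only compatible with $\mu=\lambda$. Hence $\sigma\in W(\lambda)$ with $\langle\sigma,\alpha^\vee\rangle\neq -1$ for all positive roots, so the description $W^{I^{0,1}_\lambda} = \{w_{\mu'}\colon \mu'\in W(\lambda),\, \langle\mu',\alpha^\vee\rangle\neq-1 \text{ for all } \alpha\in\Phi^+\}$ gives $\sigma = w\lambda$ with $w=w_\sigma\in W^{I^{0,1}_\lambda}$, placing $\nu$ in the left-hand side.

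The main delicacy is the two-sided use of Lemma~\ref{lem:perm_non_escaping}: once forward (stabilization cannot exit a permutohedron containing $\nu$) and once backward (starting point cannot escape a permutohedron containing the stabilization). These two applications, applied with respect to both the $\lambda$- and $\mu$-permutohedra, are precisely what squeeze $\sigma_{\mathrm{dom}}$ between $\leq\lambda$ and the exclusion built into the right-hand side, forcing $\sigma_{\mathrm{dom}}=\lambda$.
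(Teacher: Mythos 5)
Your proof is correct and follows essentially the same route as the paper: both directions are driven by repeated application of the permutohedron non-escaping lemma (Lemma~\ref{lem:perm_non_escaping}), combined with the containment criterion of Proposition~\ref{prop:perm_containment} and the identification $\eta_{\mathbf{k}}(w\lambda)=w(\lambda+\rho_{\mathbf{k}})$ for $w\in W^{I^{0,1}_\lambda}$. The only cosmetic difference is in the last step of the $\supseteq$ direction: where the paper invokes the "easy exercise" $\eta_{\mathbf{k}}(P)\cap\Pi^Q(\lambda+\rho_{\mathbf{k}})=\eta_{\mathbf{k}}(\Pi^Q(\lambda))$, you instead write the stabilization as $w_\sigma(\sigma_{\mathrm{dom}}+\rho_{\mathbf{k}})$ and apply Proposition~\ref{prop:perm_containment} directly to pin down $\sigma_{\mathrm{dom}}$; this is the same argument made a bit more explicit.
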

\begin{proof}
By Lemma~\ref{lem:perm_non_escaping}, no $\nu \in P$ with $\nu \notin \Pi^Q(\lambda+\rho_{\mathbf{k}})$ could possibly $\xrightarrow[\mathrm{sym},\,\mathbf{k}]{}$-stabilize to a weight in $\Pi^Q(\lambda+\rho_{\mathbf{k}})$, so certainly $\bigcup_{w \in W^{I^{0,1}_{\lambda}}} (s^{\mathrm{sym}}_{\mathbf{k}})^{-1}(w\lambda) \subseteq  \Pi^Q(\lambda+\rho_{\mathbf{k}})$. On the other hand, if~$\nu \in \Pi^Q(\mu+\rho_{\mathbf{k}})$ for some $\mu \leq \lambda \in P_{\geq 0}$ with $\nu \neq \lambda$, then, again by  Lemma~\ref{lem:perm_non_escaping}, the~$\lambda \xrightarrow[\mathrm{sym},\,\mathbf{k}]{}$-stabilization of~$\nu$ must still belong to $\Pi^Q(\mu+\rho_{\mathbf{k}})$ and so cannot be equal to $\eta_{\mathbf{k}}(w\lambda)$ for any $w\in W^{I^{0,1}_{\lambda}}$. Finally, it is an easy exercise to check (for instance using Proposition~\ref{prop:perm_containment}) that $\eta_{\mathbf{k}}(P)\cap\Pi^Q(\lambda+\rho_{\mathbf{k}}) = \eta_{\mathbf{k}}(\Pi^Q(\lambda))$. Hence, if~$\nu \in \Pi^Q(\lambda+\rho_{\mathbf{k}})$ and~$\nu \notin \Pi^Q(\mu+\rho_{\mathbf{k}})$ for any $\mu \leq \lambda \in P_{\geq 0}$ with $\mu \neq \lambda$, then, once more by Lemma~\ref{lem:perm_non_escaping}, the~$\lambda \xrightarrow[\mathrm{sym},\,\mathbf{k}]{}$-stabilization of $\nu$ cannot belong to $\Pi^Q(\mu+\rho_{\mathbf{k}})$ for any~$\mu \leq \lambda \in P_{\geq 0}$ with $\mu \neq \lambda$, but must belong to $\Pi^Q(\lambda+\rho_{\mathbf{k}})$, so the only possibility is that this stabilization is equal to $\eta_{\mathbf{k}}(w\lambda)$ for some $w \in W^{I^{0,1}_{\lambda}}$.
\end{proof}

\begin{remark} \label{remark:induction}
If $\lambda \in P_{\geq 0}$ satisfies $I^{0,1}_{\lambda} = [n]$, then Proposition~\ref{prop:fibers} says that for any good $\mathbf{k}\in\mathbb{N}[\Phi]^W$ we have
\begin{equation} \label{eqn:sym_components}
(s^{\mathrm{sym}}_{\mathbf{k}})^{-1}(\lambda) = \Pi^Q(\lambda+\rho_{\mathbf{k}}) \setminus \bigcup_{\substack{\mu \neq \lambda \in P_{\geq 0}, \\ \mu \leq \lambda}}\Pi^Q(\mu+\rho_{\mathbf{k}}).
\end{equation}
In~\cite{galashin2017rootfiring1} it is shown that for any $\lambda \in P$ with $\langle \lambda, \alpha^\vee\rangle \neq -1$ for all $\alpha \in \Phi^+$, the set~$(s^{\mathrm{sym}}_{\mathbf{k}})^{-1}(\lambda)$ belongs to the affine subspace $\lambda + \mathrm{Span}_{\mathbb{R}}(w_{\lambda}\Phi_{I^{0,1}_{\lambda}})$. Thus, for weights belonging to $(s^{\mathrm{sym}}_{\mathbf{k}})^{-1}(\lambda)$, symmetric interval-firing is the same as the corresponding process with respect to the sub-root system $w_{\lambda}\Phi_{I^{0,1}_{\lambda}}$. But $\langle \lambda,\beta^\vee \rangle \in \{0,1\}$ for any root~$\beta \in \Phi$ which is a simple root of $w_{\lambda}\Phi_{I^{0,1}_{\lambda}}$. In this way, every $(s^{\mathrm{sym}}_{\mathbf{k}})^{-1}(\lambda)$ can be written as a difference of permutohedra as in~\eqref{eqn:sym_components}, except that we may first need to project to a sub-root system. Hence by induction on the rank of our root system, to understand the $(s^{\mathrm{sym}}_{\mathbf{k}})^{-1}(\lambda)$ and $L^{\mathrm{sym}}_{\lambda}(\mathbf{k})$ for arbitrary $\lambda \in P$,  it is enough to just consider those $\lambda \in P_{\geq 0}$ with~$I^{0,1}_{\lambda} = [n]$. However, we will not invoke these kind of inductive arguments in this section because we can easily avoid them.
\end{remark}

With Proposition~\ref{prop:fibers} in hand, the strategy to understand $L^{\mathrm{sym}}_{\lambda}(\mathbf{k})$ is now just to use inclusion-exclusion on our formula for $\Pi^Q(\lambda+\rho_{\mathbf{k}})$ (Theorem~\ref{thm:perm}). The following series of propositions will prepare us for applying this inclusion-exclusion.

\begin{prop} \label{prop:intersections_perm}
Let $\lambda,\mu \in P_{\geq 0}$ with $\lambda-\mu \in Q$. Then,
\begin{itemize}
\item $\Pi(\lambda+\rho_{\mathbf{k}})\cap\Pi(\mu+\rho_{\mathbf{k}})=\Pi(\lambda\wedge\mu+\rho_{\mathbf{k}})$ for any $\mathbf{k}\in\mathbb{N}[\Phi]^W$;
\item $\Pi^Q(\lambda+\rho_{\mathbf{k}})\cap\Pi^Q(\mu+\rho_{\mathbf{k}})=\Pi^Q(\lambda\wedge\mu+\rho_{\mathbf{k}})$ for any $\mathbf{k}\in\mathbb{N}[\Phi]^W$.
\end{itemize}
\end{prop}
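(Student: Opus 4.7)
My plan is to prove the first (real polytope) identity first, from which the discrete version follows by intersecting both sides with an appropriate coset of $Q$. The backbone of the argument is Proposition~\ref{prop:perm_containment}, which reduces containment among permutohedra of dominant vectors to componentwise inequalities in the simple root basis.

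For the containment $\Pi(\lambda\wedge\mu+\rho_{\mathbf{k}}) \subseteq \Pi(\lambda+\rho_{\mathbf{k}})\cap\Pi(\mu+\rho_{\mathbf{k}})$, I would first note that by Proposition~\ref{prop:meet_weights}, $\lambda\wedge\mu\in P_{\geq 0}$, and directly from the definition of the meet we have $\lambda-\lambda\wedge\mu \in Q_{\geq 0}$ and $\mu-\lambda\wedge\mu \in Q_{\geq 0}$. Adding $\rho_{\mathbf{k}}\in P_{\geq 0}^{\mathbb{R}}\subseteq Q_{\geq 0}^{\mathbb{R}}$ to both sides preserves these inequalities, so $(\lambda+\rho_{\mathbf{k}})-(\lambda\wedge\mu+\rho_{\mathbf{k}}) \in Q_{\geq 0}^{\mathbb{R}}$ and similarly with $\mu$. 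Since $\lambda\wedge\mu+\rho_{\mathbf{k}}$ is dominant, Proposition~\ref{prop:perm_containment} gives $\lambda\wedge\mu+\rho_{\mathbf{k}}\in\Pi(\lambda+\rho_{\mathbf{k}})\cap\Pi(\mu+\rho_{\mathbf{k}})$, and then taking convex hulls of Weyl group orbits yields the claimed containment of permutohedra.

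For the reverse containment, take $v\in \Pi(\lambda+\rho_{\mathbf{k}})\cap\Pi(\mu+\rho_{\mathbf{k}})$; by $W$-invariance of both sides we may assume $v\in P_{\geq 0}^{\mathbb{R}}$. Write everything in the simple root basis: $\lambda=\sum a_i\alpha_i$, $\mu=\sum a'_i\alpha_i$, $\rho_{\mathbf{k}}=\sum b_i\alpha_i$, $v=\sum c_i\alpha_i$. By Proposition~\ref{prop:perm_containment}, the hypotheses $v\in\Pi(\lambda+\rho_{\mathbf{k}})$ and $v\in\Pi(\mu+\rho_{\mathbf{k}})$ translate to $c_i \leq a_i+b_i$ and $c_i \leq a'_i+b_i$ for all $i$, and hence $c_i \leq \min(a_i,a'_i)+b_i$. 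This is exactly the statement that $(\lambda\wedge\mu+\rho_{\mathbf{k}})-v\in Q_{\geq 0}^{\mathbb{R}}$. Since $\lambda\wedge\mu+\rho_{\mathbf{k}}$ is dominant (by Proposition~\ref{prop:meet_weights}) and $v$ is dominant, Proposition~\ref{prop:perm_containment} yields $v\in\Pi(\lambda\wedge\mu+\rho_{\mathbf{k}})$.

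For the discrete version, observe that $\lambda-\mu\in Q$ implies $\lambda+\rho_{\mathbf{k}}$, $\mu+\rho_{\mathbf{k}}$, and $\lambda\wedge\mu+\rho_{\mathbf{k}}$ all lie in the same coset $Q+\lambda$. Therefore
\[ \Pi^Q(\lambda+\rho_{\mathbf{k}})\cap\Pi^Q(\mu+\rho_{\mathbf{k}}) = \bigl(\Pi(\lambda+\rho_{\mathbf{k}})\cap\Pi(\mu+\rho_{\mathbf{k}})\bigr)\cap(Q+\lambda) = \Pi(\lambda\wedge\mu+\rho_{\mathbf{k}})\cap(Q+\lambda\wedge\mu) = \Pi^Q(\lambda\wedge\mu+\rho_{\mathbf{k}}), \]
using the first part of the proposition in the middle equality. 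I do not anticipate a serious obstacle here: the only subtle point is ensuring $\lambda\wedge\mu+\rho_{\mathbf{k}}$ is dominant so that Proposition~\ref{prop:perm_containment} applies on the nose, and that is precisely what Stembridge's Proposition~\ref{prop:meet_weights} provides.
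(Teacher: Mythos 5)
Your proof is correct and takes exactly the approach the paper has in mind: the paper dispatches the proposition with a one-line remark that it "follows immediately from Proposition~\ref{prop:perm_containment}," and your argument is simply a careful unpacking of that, translating membership in the permutohedra into componentwise inequalities in the simple root basis, which commute with taking a coordinatewise minimum. One tiny slip: the common coset containing $\lambda+\rho_{\mathbf{k}}$, $\mu+\rho_{\mathbf{k}}$, and $\lambda\wedge\mu+\rho_{\mathbf{k}}$ is $Q+(\lambda+\rho_{\mathbf{k}})$, not $Q+\lambda$, since $\rho_{\mathbf{k}}$ need not lie in $Q$ (e.g.\ $\rho\notin Q$ already in $A_1$); the chain of equalities goes through verbatim with this correction and the argument is otherwise sound.
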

\begin{proof}
The first bulleted item follows from~Proposition~\ref{prop:perm_containment}. The second follows immediately from the first.
\end{proof}

\begin{prop} \label{prop:intersections_quot}
Let $\lambda,\mu \in P_{\geq 0}$ with $\lambda-\mu \in Q$. Then,
\begin{itemize}
\item $\mathrm{quot}_X(\Pi(\lambda))\cap\mathrm{quot}_X(\Pi(\mu)) = \mathrm{quot}_X(\Pi(\lambda\wedge\mu))$ for any $X\subseteq \Phi^+$;
\item $\mathrm{quot}_X(\Pi^Q(\lambda))\cap\mathrm{quot}_X(\Pi^Q(\mu)) = \mathrm{quot}_X(\Pi^Q(\lambda\wedge\mu))$ for any $X\subseteq \Phi^+$.
\end{itemize}
\end{prop}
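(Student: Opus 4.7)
The plan is to prove both containments, handling the $\supseteq$ direction trivially and pushing all the substance into the $\subseteq$ direction of the first bullet. For $\supseteq$: by Proposition~\ref{prop:meet_weights} the meet $\lambda\wedge\mu$ is a dominant weight, and it satisfies $\lambda\wedge\mu\le\lambda,\mu$ by construction, so Proposition~\ref{prop:perm_containment} gives $\Pi(\lambda\wedge\mu)\subseteq\Pi(\lambda)\cap\Pi(\mu)$ and $\Pi^Q(\lambda\wedge\mu)\subseteq\Pi^Q(\lambda)\cap\Pi^Q(\mu)$; both inclusions survive applying $\mathrm{quot}_X$.

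For the $\subseteq$ direction of the first bullet, I would reuse the ``innermost point'' device from the proof that Lemma~\ref{lem:root_projection} implies Lemma~\ref{lem:slice}. Given $\bar{v}\in\mathrm{quot}_X(\Pi(\lambda))\cap\mathrm{quot}_X(\Pi(\mu))$, pick preimages $v_1\in\Pi(\lambda)$ and $v_2\in\Pi(\mu)$; by construction they lie in a common affine fiber $A\coloneqq v_1+\mathrm{Span}_{\mathbb{R}}(X)$. Let $A^0\in A$ be the unique vector with $\pi_X(A^0)=0$, and let $W'\subseteq W$ be the Weyl group of the sub-root system $\Phi\cap\mathrm{Span}_{\mathbb{R}}(X)$. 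Exactly as in the cited argument, $\mathrm{ConvexHull}\,W'(u)=A^0+\mathrm{ConvexHull}\,W'(\pi_X(u))$ and the origin lies in the second factor, so $A^0\in\mathrm{ConvexHull}\,W'(u)\subseteq\Pi(u)$ for every $u\in A$. Specializing to $u=v_1$ and $u=v_2$ places $A^0$ in $\Pi(\lambda)\cap\Pi(\mu)$, and Proposition~\ref{prop:intersections_perm} then pins $A^0$ down inside $\Pi(\lambda\wedge\mu)$. Since $\mathrm{quot}_X(A^0)=\bar{v}$, this finishes the first bullet.

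For the $\subseteq$ direction of the second (discrete) bullet, suppose $\bar{v}$ has lattice preimages $v_1\in\Pi^Q(\lambda)$ and $v_2\in\Pi^Q(\mu)$. The first bullet already produces $\bar{v}\in\mathrm{quot}_X(\Pi(\lambda\wedge\mu))$. Because $v_1\in Q+\lambda$ and $\lambda-\lambda\wedge\mu\in Q$, we also have $v_1\in Q+\lambda\wedge\mu$, so $\bar{v}\in\mathrm{quot}_X(Q+\lambda\wedge\mu)$. Corollary~\ref{cor:quotient} applied to the dominant weight $\lambda\wedge\mu$ then upgrades these two containments to $\bar{v}\in\mathrm{quot}_X(\Pi^Q(\lambda\wedge\mu))$, completing the proof.

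The hard part is nothing new: all the integrality content is absorbed into Corollary~\ref{cor:quotient} (equivalently Lemma~\ref{lem:slice}), which is where the classification-dependent Lemma~\ref{lem:root_projection} is doing its work. Everything else in the plan is formal, relying only on the innermost-point trick together with Propositions~\ref{prop:meet_weights}, \ref{prop:perm_containment}, and~\ref{prop:intersections_perm}.
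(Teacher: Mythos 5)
Your proposal is correct and follows essentially the same route as the paper's own proof: the trivial $\supseteq$ via the meet, the ``innermost point'' argument (lifted from the proof that Lemma~\ref{lem:root_projection} implies Lemma~\ref{lem:slice}) combined with Proposition~\ref{prop:intersections_perm} for the $\subseteq$ direction of the first bullet, and the integrality property (Lemma~\ref{lem:slice}, in the form of Corollary~\ref{cor:quotient}) to pass to the discrete statement. You merely spell out the coset bookkeeping ($v_1\in Q+\lambda$ and $\lambda-\lambda\wedge\mu\in Q$ give $v_1\in Q+\lambda\wedge\mu$) which the paper leaves implicit.
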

\begin{proof}
We begin with the first bulleted item. Since by Proposition~\ref{prop:intersections_perm} we have that $\Pi(\lambda)\cap\Pi(\mu)=\Pi(\lambda\wedge\mu)$, it is clear that $ \mathrm{quot}_X(\Pi(\lambda\wedge\mu)) \subseteq \mathrm{quot}_X(\Pi(\lambda))\cap\mathrm{quot}_X(\Pi(\mu))$. Let us show the other containment. A point in $\mathrm{quot}_X(\Pi(\lambda))\cap\mathrm{quot}_X(\Pi(\mu))$ is an affine subspace of the form~$v+\mathrm{Span}_{\mathbb{R}}(X)$ for some $v\in V$ with $(v+\mathrm{Span}_{\mathbb{R}}(X))\cap\Pi(\lambda)\neq \varnothing$ and~$(v+\mathrm{Span}_{\mathbb{R}}(X))\cap\Pi(\mu)\neq \varnothing$. Let $v^0_X$ denote the unique point in $v+\mathrm{Span}_{\mathbb{R}}(X)\cap\Pi(\lambda)$ for which $\pi_X(v^0_X)=0$. As described in the beginning of the proof of Lemma~\ref{lem:slice}, the fact that $v+\mathrm{Span}_{\mathbb{R}}(X)\cap\Pi(\lambda)\neq \varnothing$ implies that $v^0_X \in \Pi(\lambda)$. Similarly, we have $v^0_X \in \Pi(\mu)$. Since, as mentioned, $\Pi(\lambda)\cap\Pi(\mu)=\Pi(\lambda\wedge\mu)$, we therefore get $v^0_X \in \Pi(\lambda \wedge \mu)$, and hence $(v+\mathrm{Span}_{\mathbb{R}}(X))\cap\Pi(\lambda \wedge \mu)\neq \varnothing$, meaning that $v+\mathrm{Span}_{\mathbb{R}}(X)\in \mathrm{quot}_X(\Pi(\lambda\wedge\mu))$.

The second bulleted item follows from the first by intersecting with $(Q+\lambda)$ and applying the integrality property of slices of permutohedra (Lemma~\ref{lem:slice}).
\end{proof}

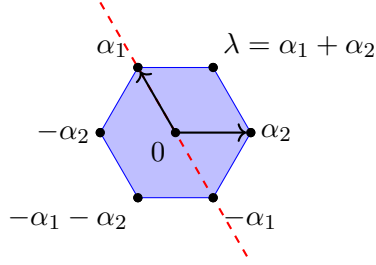
\begin{figure}
\begin{tikzpicture}
\node[scale=\scl,draw,circle,fill=black] (1) at (0:1) {};
\node[scale=\scl,draw,circle,fill=black] (2) at (60:1) {};
\node[scale=\scl,draw,circle,fill=black] (3) at (120:1) {};
\node[scale=\scl,draw,circle,fill=black] (4) at (180:1) {};
\node[scale=\scl,draw,circle,fill=black] (5) at (240:1) {};
\node[scale=\scl,draw,circle,fill=black] (6) at (300:1) {};
\filldraw[fill=blue!25,draw=blue]  (1.center) -- (2.center) -- (3.center) -- (4.center) -- (5.center) -- (6.center)-- cycle;
\draw[dashed,thick,red] (120:2)--(300:2);
\node[scale=\scl,draw,circle,fill=black] (1) at (0:1) {};
\node[scale=\scl,draw,circle,fill=black] (2) at (60:1) {};
\node[scale=\scl,draw,circle,fill=black] (3) at (120:1) {};
\node[scale=\scl,draw,circle,fill=black] (4) at (180:1) {};
\node[scale=\scl,draw,circle,fill=black] (5) at (240:1) {};
\node[scale=\scl,draw,circle,fill=black] (6) at (300:1) {};
\draw[->,thick] (0,0) -- (3);
\draw[->,thick] (0,0) -- (1);
\node[scale=\scl,draw,circle,fill=black] (0) at (0,0) {};
\node[anchor=west] at (1) {$\alpha_2$};
\node[anchor=south west] at (2) {$\lambda=\alpha_1+\alpha_2$};
\node[anchor=south east] at (3) {$\alpha_1$};
\node[anchor=east] at (4) {$-\alpha_2$};
\node[anchor=north east] at (5) {$-\alpha_1-\alpha_2$};
\node[anchor=north west] at (6) {$-\alpha_1$};
\node[anchor = north east] at (0,0)  {$0$};
\end{tikzpicture}
\caption{Example from the proof of Proposition~\ref{prop:quots_minuscule}.} \label{fig:quots_minuscule}
\end{figure}

\begin{prop} \label{prop:quots_minuscule}
For $\lambda \in P_{\geq 0}$ and $X\subseteq \Phi^+$,
\[\# \left( \hspace{-0.1cm} \mathrm{quot}_X(\Pi^Q(\lambda)) \setminus   \hspace{-0.4cm} \bigcup_{\substack{\mu \neq \lambda \in P_{\geq 0}, \\ \mu \leq \lambda}}  \hspace{-0.4cm} \mathrm{quot}_X(\Pi^Q(\mu)) \hspace{-0.1cm}  \right)  = \#\left\{\mu \in W(\lambda)\colon \parbox{1.55in}{\begin{center}$\langle\mu,\alpha^\vee\rangle \in \{0,1\}$ for \\ all $\alpha \in \Phi^+\cap \mathrm{Span}_{\mathbb{R}}(X)$\end{center}}\right\} .\]
\end{prop}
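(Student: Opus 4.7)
My plan is to exhibit a bijection between
$M := \{\mu \in W(\lambda) : \langle \mu, \alpha^\vee\rangle \in \{0,1\} \text{ for all } \alpha \in \Phi^+ \cap \mathrm{Span}_\mathbb{R}(X)\}$
and the set $S$ of slices counted by the LHS, given by $\mu \mapsto \mu + \mathrm{Span}_\mathbb{R}(X)$. First I would record an intrinsic description of $S$: by Proposition~\ref{prop:intersections_quot}, a slice $s$ lies in $\mathrm{quot}_X(\Pi^Q(\mu'))$ for some dominant $\mu' \lneq \lambda$ precisely when $s$ contains a weight $\nu \in \Pi^Q(\lambda)$ with $\nu_{\mathrm{dom}} \lneq \lambda$, so $s \in S$ if and only if $\varnothing \neq s \cap \Pi^Q(\lambda) \subseteq W(\lambda)$. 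Writing $\Psi := \Phi \cap \mathrm{Span}_\mathbb{R}(X)$, note that the 0/1 condition on $\mu$ says exactly that $\pi_X(\mu)$ is zero or a minuscule weight of $\Psi$.

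The hard part is well-definedness. Suppose for contradiction that the slice $s(\mu)$ of some $\mu \in M$ meets $\Pi^Q(\mu')$ at a weight $\nu$, for some dominant $\mu' \lneq \lambda$. Setting $\mu^0_X := \mu - \pi_X(\mu)$, I would apply Proposition~\ref{prop:parabolic_subspace} to produce $w \in W$ and $I \subseteq [n]$ with $w\mathrm{Span}_\mathbb{R}(X) = \mathrm{Span}_\mathbb{R}(\{\alpha_i\}_{i \in I})$ and $w\mu^0_X$ dominant. Then $\pi_I(w\mu) = w\pi_X(\mu)$ is still zero or minuscule in $\Phi_I$, so by the central estimate in the proof of Lemma~\ref{lem:slice} (an application of Lemma~\ref{lem:root_projection}) together with the dominance of $w\mu^0_X$, we get $\langle w\mu, \alpha^\vee\rangle \geq -1$ for all $\alpha \in \Phi^+$. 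Next, choose $w' \in W_{\Phi_I}$ so that $\pi_I(w'w\nu)$ is dominant in $\Phi_I$; as $W_{\Phi_I}$ preserves both $\Pi^Q(\mu')$ (by $W$-invariance) and the $w$-translated slice (fixing its orthogonal complement pointwise), the weight $\tilde\nu := w'w\nu$ lies in both. Proposition~\ref{prop:minuscule} applied to $\Phi_I$ says that $\pi_I(w\mu)$ is the unique minimum dominant-in-$\Phi_I$ element of its $Q_{\Phi_I}$-coset, so $\tilde\nu - w\mu = \pi_I(\tilde\nu) - \pi_I(w\mu)$ is a nonnegative integer combination of $\{\alpha_i : i \in I\}$ and in particular lies in $Q_{\geq 0}$. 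Combined with Proposition~\ref{prop:dominant_maximal}, this gives $w\mu \leq \tilde\nu \leq \tilde\nu_{\mathrm{dom}} = \nu_{\mathrm{dom}} \leq \mu'$, and then Proposition~\ref{prop:unique_dominant_greater} yields $\lambda = (w\mu)_{\mathrm{dom}} \leq \mu'$, contradicting $\mu' \lneq \lambda$.

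Injectivity is easier: two elements of $M$ in the same slice share $\mu^0_X$, and their $\pi_X$-images are zero or minuscule weights of $\Psi$ differing by an element of $Q \cap \mathrm{Span}_\mathbb{R}(X) = Q_\Psi$ (a fact confirmed by conjugating to a parabolic subspace via Proposition~\ref{prop:parabolic_subspace} with $v=0$), so Proposition~\ref{prop:minuscule} applied to $\Psi$ forces equality. For surjectivity, given $s \in S$ I would take $\nu \in s \cap W(\lambda)$ and use $W_\Psi \subseteq W$ (which preserves the slice, fixing $\mu^0_X$) to obtain $\mu \in s \cap W(\lambda)$ with $\pi_X(\mu)$ dominant in $\Psi$. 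If $\mu$ failed the 0/1 condition then $\langle \mu, \beta^\vee\rangle \geq 2$ for some $\beta \in \Psi^+$, but $\mu - \beta$ would then lie in the slice and in $\Pi^Q(\lambda)$ by convexity of $\Pi(\lambda)$ (as $\mu, s_\beta\mu \in W(\lambda)$), while the norm identity $|\mu - j\beta|^2 = |\mu|^2 + j(j - \langle \mu, \beta^\vee\rangle)|\beta|^2$ forces $j \in \{0, \langle \mu, \beta^\vee\rangle\}$ among $W$-conjugates of $\mu$ on the $\beta$-string, so $\mu - \beta \notin W(\lambda)$, contradicting $s \cap \Pi^Q(\lambda) \subseteq W(\lambda)$.
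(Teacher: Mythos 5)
Your proof is structured as an explicit bijection $M \to S$, which is a somewhat different organization from the paper's. The paper first reduces to slices through $W(\lambda)$, then for each $\nu \in W(\lambda)$ directly checks (using the observation that the slice of $\Pi^Q(\lambda)$ through a vertex $\nu$ is a translate of $\mathrm{ConvexHull}\,W'(\pi_X(\nu))$ intersected with the lattice, and the characterization of zero/minuscule weights as those whose discrete permutohedron is just the Weyl orbit) whether $\pi_X(\nu)$, after conjugating into the $\Psi$-dominant chamber, is zero or minuscule. Your surjectivity argument via the length-invariance identity $|\mu - j\beta|^2 = |\mu|^2 + j(j - \langle\mu,\beta^\vee\rangle)|\beta|^2$ is an appealing elementary substitute for the paper's appeal to that characterization, and your well-definedness step replays the machinery of the proof of Lemma~\ref{lem:slice} rather than relying on the "observe" claim, which is arguably more self-contained.

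However, there is a gap in the well-definedness step that needs to be patched. After you apply Proposition~\ref{prop:parabolic_subspace} to produce $w$ and $I$ with $w\mu^0_X \in P_{\geq 0}^{\mathbb{R}}$, you assert that $\pi_I(w\mu) = w\pi_X(\mu)$ is ``zero or minuscule in $\Phi_I$.'' But Proposition~\ref{prop:parabolic_subspace} only controls $w\mu^0_X$; it says nothing about whether $w$ carries $\Psi^+$ to $\Phi_I^+$, so $w\pi_X(\mu)$ need not be $\Phi_I$-dominant. This does not hurt the estimate $\langle w\mu,\alpha^\vee\rangle > -2$ from Lemma~\ref{lem:root_projection} (only $|\langle \pi_I(w\mu),\beta^\vee\rangle|\le 1$ for $\beta \in \Phi_I$ is used, which is $W_I$-invariant), but it does break the later step where you invoke Proposition~\ref{prop:minuscule} in $\Phi_I$ to conclude that $\pi_I(w\mu)$ is the unique minimal $\Phi_I$-dominant element of its $Q_{\Phi_I}$-coset and thence $\pi_I(\tilde\nu) - \pi_I(w\mu) \in Q_{\geq 0}$: that argument requires $\pi_I(w\mu)$ to actually be $\Phi_I$-dominant. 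The fix is easy — after applying Proposition~\ref{prop:parabolic_subspace}, post-compose $w$ with an element of $W_I$ chosen to make $\pi_I(w\mu)$ dominant in $\Phi_I$; since $W_I$ fixes $(\mathrm{Span}_{\mathbb{R}}\{\alpha_i\colon i \in I\})^\perp$ pointwise, $w\mu^0_X$ is unchanged and remains in $P_{\geq 0}^{\mathbb{R}}$, and your argument then proceeds as written.
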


\begin{proof}
In this proof we will have as a running example $\Phi=A_2$, $\lambda=\alpha_1+\alpha_2$ and $X = \{\alpha_1\}$: this is depicted in Figure~\ref{fig:quots_minuscule}, where $\Pi(\lambda)$ is shaded in blue and $\mathrm{Span}_{\mathbb{R}}(X)$ is drawn as a dashed red line. Note that $\Pi^Q(\lambda)=\{0,\pm\alpha_1,\pm\alpha_2,\pm(\alpha_1+\alpha_2)\}$.

First observe that 
\[ \mathrm{quot}_X(\Pi^Q(\lambda)) \setminus   \bigcup_{\substack{\mu \neq \lambda \in P_{\geq 0}, \\ \mu \leq \lambda}}  \mathrm{quot}_X(\Pi^Q(\mu))  = \mathrm{quot}_X(W(\lambda)) \setminus  \bigcup_{\substack{\mu \neq \lambda \in P_{\geq 0}, \\ \mu \leq \lambda}} \mathrm{quot}_X(\Pi^Q(\mu)).\]
Indeed, suppose that $\nu \in \Pi^Q(\lambda)$ and $\nu \notin W(\lambda)$. (In our running example this happens with $\nu=0$.) Then $\nu_{\mathrm{dom}}\leq \lambda$ (by Propositions~\ref{prop:perm_containment}) but~$\nu_{\mathrm{dom}} \neq \lambda$. And of course $\nu_{\mathrm{dom}} \in \Pi^Q(\lambda)$. Therefore $\mathrm{quot}_X(\nu)$ does not belong to the set we are interested in counting.

So from now on suppose $\nu \in W(\lambda)$. Let $W' \subseteq W$ denote the Weyl group of the sub-root system $\Phi' \coloneqq \Phi\cap\mathrm{Span}_{\mathbb{R}}(X)$. Let $\nu'$ be the unique element in $W'(\nu)$ for which $\pi_X(\nu')$ is a dominant weight of $\Phi'$.

First suppose that $\pi_X(\nu')$ is not zero or a minuscule weight of $\Phi'$, i.e., that we have $\langle\nu',\alpha^\vee\rangle \notin \{0,1\}$ for some $\alpha \in \Phi^+\cap \mathrm{Span}_{\mathbb{R}}(X)$. (In our running example this happens with $\nu=\alpha_1$ and $\nu=-\alpha_1$.) Then  by applying Proposition~\ref{prop:minuscule_alternate} to $\Phi'$ we get $\mathrm{ConvexHull} \, W'(\pi_X(\nu')) \cap (\mathrm{Span}_{\mathbb{Z}}(\Phi')+\pi_X(\nu')) \neq W'(\pi_X(\nu'))$. Moreover, it is clear that $\nu - \pi_X(\nu) + \mathrm{ConvexHull} \, W'(\pi_X(\nu')) \subseteq (\nu + \mathrm{Span}_{\mathbb{R}}(X)) \cap \Pi(\lambda)$. So we see in this case that there is some $\mu\in(\nu+\mathrm{Span}_{\mathbb{R}}(X))\cap\Pi^Q(\lambda)$ with $\mu_{\mathrm{dom}}\neq \lambda$, and thus (by Propositions~\ref{prop:perm_containment}) $\mathrm{quot}_X(\nu)$ does not belong to the set we are interested in counting. 

Now suppose $\pi_X(\nu')$ is zero or a minuscule weight of $\Phi'$, i.e., that $\langle\nu',\alpha^\vee\rangle \in \{0,1\}$ for all $\alpha \in \Phi^+\cap \mathrm{Span}_{\mathbb{R}}(X)$. (In our running example this happens with $\nu=\alpha_1+\alpha_2$, $\nu=-\alpha_1-\alpha_2$, $\nu=\alpha_2$, and $\nu=-\alpha_2$.) Let $\mu \in (\nu+\mathrm{Span}_{\mathbb{R}}(X))\cap \Pi^Q(\lambda)$. We claim that $\mu_{\mathrm{dom}}=\lambda$. Let $\mu'$ be the unique element of $\mathrm{Span}_{\mathbb{Z}}(\Phi') + \mu$ for which $\pi_X(\mu')$ is a zero-or-minuscule weight of $\Phi'$. Note that $\pi_X(\mu') \in Q+\pi_X(\nu')$. As explained in the proof of Proposition~\ref{prop:parabolic_subspace}, we can choose a set of simple roots $\beta_1,\ldots,\beta_\ell$ of $\Phi'$ which can be extended to a set of simple roots $\beta_1,\ldots,\beta_n$ of $\Phi$. And then by writing $\pi_X(\mu') =\pi_X(\nu') + \sum_{i=1}^{n}b_i\beta_i$ for $b_i\in\mathbb{Z}$, we see that $\pi_X(\mu') \in \mathrm{Span}_{\mathbb{Z}}(\Phi') + \pi_X(\nu')$. But since $\pi_X(\mu')$ and $\pi_X(\nu')$ are both zero-or-minuscule weights of $\Phi'$, this means (by Proposition~\ref{prop:minuscule}) that $\pi_X(\mu')=\pi_X(\nu')$ and hence that $\mu'=\nu'$. Moreover, by applying Propositions~\ref{prop:perm_containment},~\ref{prop:dominant_maximal}, and~\ref{prop:minuscule} to~$\Phi'$, we see that~$\mu' \in  \mathrm{ConvexHull} \, W'(\mu) \subseteq \Pi(\mu)$; on the other hand, $\mu\in \Pi(\lambda)=\Pi(\nu')=\Pi(\mu')$; this is only possible (again by Proposition~\ref{prop:perm_containment}) if $\mu_{\mathrm{dom}}=\lambda$. Thus in this case $\mathrm{quot}_X(\nu)$ does in fact belong to the set we are interested in counting. But we would overcount if we counted two different elements of $W(\lambda)$ which become equal after quotienting by $X$. Therefore, we only count a given $\nu$ if $\nu=\nu'$, i.e., if $\langle\nu,\alpha^\vee\rangle \in \{0,1\}$ for all~$\alpha \in \Phi^+\cap \mathrm{Span}_{\mathbb{R}}(X)$. (In our running example this happens with $\nu=\alpha_1+\alpha_2$ and $\nu=-\alpha_2$.) In this way we obtain the claimed formula.
\end{proof}

We now apply inclusion-exclusion on the formula for $\Pi^Q(\lambda+\rho_{\mathbf{k}})$ (Theorem~\ref{thm:perm}).

\begin{cor} \label{cor:main}
For $\lambda \in P_{\geq 0}$ and good $\mathbf{k}\in\mathbb{N}[\Phi]^W$,
\[\sum_{w \in W^{I^{0,1}_{\lambda}}} L^{\mathrm{sym}}_{w\lambda}(\mathbf{k})=  \hspace{-0.4cm}\sum_{\substack{X\subseteq \Phi^+ \\ \textrm{ $X$ is linearly} \\ \textrm{independent}}}  \hspace{-0.4cm}\#\left\{\mu \in W(\lambda)\colon \parbox{1.55in}{\begin{center}$\langle\mu,\alpha^\vee\rangle \in \{0,1\}$ for \\ all $\alpha \in \Phi^+\cap \mathrm{Span}_{\mathbb{R}}(X)$\end{center}}\right\} \cdot \mathrm{rVol}_Q(X) \, \mathbf{k}^{X}.\]
\end{cor}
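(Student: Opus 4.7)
The plan is to combine Proposition~\ref{prop:fibers} with the explicit formula from Theorem~\ref{thm:perm} by an inclusion-exclusion argument, using the fact (from Propositions~\ref{prop:intersections_perm} and~\ref{prop:intersections_quot}) that intersections of the relevant permutohedra and their quotients are themselves given by the meet operation, so that the inclusion-exclusion telescopes cleanly.

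First, I would apply Proposition~\ref{prop:fibers} to rewrite the left-hand side as a cardinality of a set-theoretic difference:
\[\sum_{w \in W^{I^{0,1}_{\lambda}}} L^{\mathrm{sym}}_{w\lambda}(\mathbf{k}) = \#\left(\Pi^Q(\lambda+\rho_{\mathbf{k}}) \setminus \bigcup_{\substack{\mu \neq \lambda \in P_{\geq 0}, \\ \mu \leq \lambda}}\Pi^Q(\mu+\rho_{\mathbf{k}})\right).\]
Note that the union is finite because there are only finitely many dominant weights $\mu$ with $\mu \leq \lambda$ (they all lie in the finite set $\Pi^Q(\lambda)\cap P_{\geq 0}$). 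Moreover, each $\Pi^Q(\mu+\rho_{\mathbf{k}})$ is contained in $\Pi^Q(\lambda+\rho_{\mathbf{k}})$ by Proposition~\ref{prop:perm_containment}, so the standard inclusion-exclusion identity applies, giving
\[\sum_{S} (-1)^{|S|}\,\#\!\bigcap_{\mu\in S}\Pi^Q(\mu+\rho_{\mathbf{k}}),\]
where the sum is over finite subsets $S$ of $\{\mu \in P_{\geq 0} : \mu \leq \lambda\}$ (with the $S=\varnothing$ term, which I interpret as $\#\Pi^Q(\lambda+\rho_{\mathbf{k}})$, included by convention; equivalently, one can let $S$ also contain $\lambda$ itself since that term does not shrink the intersection). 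By Proposition~\ref{prop:intersections_perm}, each intersection equals $\Pi^Q(\bigwedge_{\mu \in S}\mu + \rho_{\mathbf{k}})$, where the meet is well-defined and dominant by Proposition~\ref{prop:meet_weights}, since all the weights lie in the single coset $\lambda + Q$ of $Q$ in $P$ and are dominant.

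Next, I would apply Theorem~\ref{thm:perm} to each $\#\Pi^Q(\nu + \rho_{\mathbf{k}})$ appearing above and swap the order of summation. Since $\mathrm{rVol}_Q(X)\,\mathbf{k}^X$ does not depend on $\nu$, the coefficient of $\mathrm{rVol}_Q(X)\,\mathbf{k}^X$ for each linearly independent $X\subseteq \Phi^+$ becomes
\[\sum_{S} (-1)^{|S|}\,\#\mathrm{quot}_X\!\left(\Pi^Q\!\left(\textstyle\bigwedge_{\mu\in S}\mu\right)\right).\]
Now by Proposition~\ref{prop:intersections_quot}, intersections of the quotients $\mathrm{quot}_X(\Pi^Q(\mu))$ behave identically to intersections of the discrete permutohedra themselves: they are governed by the meet. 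Hence the same inclusion-exclusion runs in reverse and this coefficient equals
\[\#\!\left(\mathrm{quot}_X(\Pi^Q(\lambda)) \setminus \bigcup_{\substack{\mu \neq \lambda \in P_{\geq 0}, \\ \mu \leq \lambda}}\mathrm{quot}_X(\Pi^Q(\mu))\right),\]
which by Proposition~\ref{prop:quots_minuscule} is precisely $\#\{\mu \in W(\lambda) : \langle\mu,\alpha^\vee\rangle \in \{0,1\} \text{ for all } \alpha \in \Phi^+\cap\mathrm{Span}_{\mathbb{R}}(X)\}$. Summing over $X$ yields the claimed formula.

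The only real obstacle is keeping the bookkeeping of the two parallel inclusion-exclusions straight: the first expands the cardinality of a set-theoretic difference of permutohedra into intersections (which collapse to single permutohedra via the meet), and the second reassembles a cardinality of a set-theoretic difference of quotients out of intersections of those quotients (again collapsing via the meet). The symmetry between Propositions~\ref{prop:intersections_perm} and~\ref{prop:intersections_quot} — both of which rely on the integrality property of slices of permutohedra from Lemma~\ref{lem:slice} for the quotient version — is what makes this forward-then-reverse inclusion-exclusion work, and is essentially the whole content of the proof.
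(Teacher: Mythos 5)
Your proof is correct and follows essentially the same route as the paper's: reduce the left-hand side to a count of a set difference of discrete permutohedra via Proposition~\ref{prop:fibers}, use that intersections collapse to meets (Propositions~\ref{prop:intersections_perm} and~\ref{prop:meet_weights}) to run inclusion-exclusion, expand each resulting term by Theorem~\ref{thm:perm}, reverse the inclusion-exclusion at the level of quotients via Proposition~\ref{prop:intersections_quot}, and finish with Proposition~\ref{prop:quots_minuscule}. The only presentational difference is that you carry out the inclusion-exclusion directly as an alternating sum over subsets $S$ of $\{\mu\in P_{\geq 0}:\mu\leq\lambda\}$, whereas the paper packages the same computation as M\"{o}bius inversion over the meet-semilattice $(P_{\geq 0},\leq)$; these are equivalent, and your forward-then-reverse framing correctly isolates the one structural fact that makes the argument work.
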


\begin{proof}
First note that $\sum_{w \in W^{I^{0,1}_{\lambda}}} L^{\mathrm{sym}}_{w\lambda}(\mathbf{k})=\#\left(\bigcup_{w \in W^{I^{0,1}_{\lambda}}} (s^{\mathrm{sym}}_{\mathbf{k}})^{-1}(w\lambda) \right)$ because all the fibers of $s^{\mathrm{sym}}_{\mathbf{k}}$ are disjoint. Thus, by Proposition~\ref{prop:fibers} we have
\begin{equation} \label{eq:sum_sym_polys}
\sum_{w \in W^{I^{0,1}_{\lambda}}} L^{\mathrm{sym}}_{w\lambda}(\mathbf{k})=\#\left(\Pi^Q(\lambda+\rho_{\mathbf{k}}) \setminus \bigcup_{\substack{\mu \neq \lambda \in P_{\geq 0}, \\ \mu \leq \lambda}} \Pi^Q(\mu+\rho_{\mathbf{k}})\right)
\end{equation}

Let $(\mathcal{L},\leq)$ be a meet semi-lattice, and let $F\colon\mathcal{L}\to 2^S$ be a function which associates to every $p\in\mathcal{L}$ some finite subset $F(p)$ of a set $S$ such that $F(p)\cap F(q)=F(p\wedge q)$, where $\wedge$ is the meet operation of $\mathcal{L}$. Then a simple application of the M\"{o}bius inversion formula (see e.g.~\cite[\S3.7]{stanley2012ec1}) says that
\[\#\left(F(p)\setminus\bigcup_{q\leq p, q\neq p}F(q)\right) = \sum_{q\leq p}\mu_{\mathcal{L}}(q,p) \cdot \#F(q),\]
where $\mu_{\mathcal{L}}(q,p)$ is the \emph{M\"{o}bius function} of~$\mathcal{L}$. (Do not confuse this M\"{o}bius function with a weight $\mu\in P$.)

Hence, by Proposition~\ref{prop:intersections_perm} we have
\begin{equation} \label{eq:perm_minus_mobius}
\#\left(\Pi^Q(\lambda+\rho_{\mathbf{k}}) \setminus \bigcup_{\substack{\mu \neq \lambda \in P_{\geq 0}, \\ \mu \leq \lambda}} \Pi^Q(\mu+\rho_{\mathbf{k}})\right) = \sum_{\nu \leq \lambda \in P_{\geq 0}} \mu_{(P_{\geq 0},\leq)} (\nu,\lambda) \cdot \#\Pi^Q(\nu+\rho_{\mathbf{k}}),
\end{equation}
where $ \mu_{(P_{\geq 0},\leq)}$ is the M\"{o}bius function of the poset $(P_{\geq 0},\leq)$ of dominant weights with respect to root order (here we are using Stembridge's result, stated as Proposition~\ref{prop:meet_weights} above, that each connected component of this poset is a meet semi-lattice).

Then by Theorem~\ref{thm:perm}, we have
\begin{align} \label{eq:quot_mobius}
\nonumber \textrm{RHS of~\eqref{eq:perm_minus_mobius}} &= \sum_{\nu \leq \lambda \in P_{\geq 0}} \mu_{(P_{\geq 0},\leq)} (\nu,\lambda) \cdot  \left(\sum_{\substack{X\subseteq \Phi^+ \\ \textrm{ $X$ is linearly} \\ \textrm{independent}}} \# \mathrm{quot}_X(\Pi^Q(\nu))\cdot \mathrm{rVol}_Q(X) \, \mathbf{k}^{X}\right) \\
\nonumber &= \sum_{\substack{X\subseteq \Phi^+ \\ \textrm{ $X$ is linearly} \\ \textrm{independent}}} \left(  \sum_{\nu \leq \lambda \in P_{\geq 0}} \mu_{(P_{\geq 0},\leq)} (\nu,\lambda) \cdot  \#\mathrm{quot}_X(\Pi^Q(\nu)) \right) \cdot  \mathrm{rVol}_Q(X) \, \mathbf{k}^{X} \\
\nonumber &= \sum_{\substack{X\subseteq \Phi^+ \\ \textrm{ $X$ is linearly} \\ \textrm{independent}}} \# \left( \hspace{-0.1cm} \mathrm{quot}_X(\Pi^Q(\lambda)) \setminus   \hspace{-0.4cm} \bigcup_{\substack{\mu \neq \lambda \in P_{\geq 0}, \\ \mu \leq \lambda}}  \hspace{-0.4cm} \mathrm{quot}_X(\Pi^Q(\mu)) \hspace{-0.1cm}  \right)   \cdot  \mathrm{rVol}_Q(X) \, \mathbf{k}^{X}, \\
&=\sum_{\substack{X\subseteq \Phi^+ \\ \textrm{ $X$ is linearly} \\ \textrm{independent}}} \#\left\{\mu \in W(\lambda)\colon \parbox{1.55in}{\begin{center}$\langle\mu,\alpha^\vee\rangle \in \{0,1\}$ for \\ all $\alpha \in \Phi^+\cap \mathrm{Span}_{\mathbb{R}}(X)$\end{center}}\right\}   \cdot  \mathrm{rVol}_Q(X) \, \mathbf{k}^{X}
\end{align}
where in the third line we applied Proposition~\ref{prop:intersections_quot} together with M\"{o}bius inversion, and in the last line we applied Proposition~\ref{prop:quots_minuscule}. Putting together equations~\eqref{eq:sum_sym_polys},~\eqref{eq:perm_minus_mobius}, and~\eqref{eq:quot_mobius} proves the corollary.
\end{proof}

\begin{prop} \label{prop:poly_symmetry}
Let $\lambda \in P_{\geq 0}$. Let $w \in W^{I^{0,1}_{\lambda}}$. Let $\mathbf{k}\in\mathbb{N}[\Phi]^W$ be good. Then we have~$L^{\mathrm{sym}}_{w\lambda}(\mathbf{k})=L^{\mathrm{sym}}_{\lambda}(\mathbf{k})$. Consequently, $L^{\mathrm{sym}}_{w\lambda}(\mathbf{k}) = \frac{1}{[W:W_{I^{0,1}_{\lambda}}]}\sum_{w' \in W^{I^{0,1}_{\lambda}}} L^{\mathrm{sym}}_{w'\lambda}(\mathbf{k})$.
\end{prop}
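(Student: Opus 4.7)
The plan is to identify each fiber $(s^{\mathrm{sym}}_{\mathbf{k}})^{-1}(\mu)$ with a connected component of the undirected graph on $P$ underlying the relation $\xrightarrow[\mathrm{sym},\,\mathbf{k}]{}$, and then use the $W$-symmetry of this undirected graph (Proposition~\ref{prop:symmetry}) to produce a bijection from $(s^{\mathrm{sym}}_{\mathbf{k}})^{-1}(\lambda)$ onto $(s^{\mathrm{sym}}_{\mathbf{k}})^{-1}(w\lambda)$.

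First I would observe that because $\xrightarrow[\mathrm{sym},\,\mathbf{k}]{}$ is confluent and terminating (Theorem~\ref{thm:confluence}), two weights share a stabilization if and only if they lie in the same connected component of the undirected graph: if $\mu$ and $\nu$ are joined by an undirected edge then one fires to the other, so their stabilizations agree; conversely, two weights with common stabilization $\eta_{\mathbf{k}}(\mu^*)$ are both connected to $\eta_{\mathbf{k}}(\mu^*)$ by directed paths, hence lie in the same undirected component. Thus each fiber of $s^{\mathrm{sym}}_{\mathbf{k}}$ is precisely the undirected component of its sink.

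Next, since $\lambda$ is dominant we have $w_\lambda = e$ and hence $\eta_{\mathbf{k}}(\lambda) = \lambda + \rho_{\mathbf{k}}$. For $w \in W^{I^{0,1}_\lambda}$, the inclusion $I^0_\lambda \subseteq I^{0,1}_\lambda$ yields $W^{I^{0,1}_\lambda} \subseteq W^{I^0_\lambda}$ (a minimal-length representative of a larger coset is automatically minimal in the smaller coset it belongs to), so $w$ is of minimal length in its coset of the stabilizer $W_{I^0_\lambda}$ of $\lambda$; in other words $w_{w\lambda} = w$. Therefore
\[\eta_{\mathbf{k}}(w\lambda) = w\lambda + w_{w\lambda}(\rho_{\mathbf{k}}) = w(\lambda + \rho_{\mathbf{k}}) = w \cdot \eta_{\mathbf{k}}(\lambda).\]
By Proposition~\ref{prop:symmetry}, the map $\nu \mapsto w(\nu)$ carries the undirected component of $\eta_{\mathbf{k}}(\lambda)$ bijectively onto the component of $w \cdot \eta_{\mathbf{k}}(\lambda) = \eta_{\mathbf{k}}(w\lambda)$. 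Translating through the identification of fibers with components yields a bijection $(s^{\mathrm{sym}}_{\mathbf{k}})^{-1}(\lambda) \to (s^{\mathrm{sym}}_{\mathbf{k}})^{-1}(w\lambda)$, proving $L^{\mathrm{sym}}_{w\lambda}(\mathbf{k}) = L^{\mathrm{sym}}_{\lambda}(\mathbf{k})$. The ``consequently'' clause is then immediate: since $L^{\mathrm{sym}}_{w'\lambda}(\mathbf{k})$ takes the constant value $L^{\mathrm{sym}}_{\lambda}(\mathbf{k})$ as $w'$ ranges over the $[W:W_{I^{0,1}_\lambda}]$ elements of $W^{I^{0,1}_\lambda}$, the average of these values equals each individual term.

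I expect the main point requiring care is the identity $\eta_{\mathbf{k}}(w\lambda) = w \cdot \eta_{\mathbf{k}}(\lambda)$: this hinges on $w$ being shortest not just in its $W_{I^{0,1}_\lambda}$-coset but in its $W_{I^0_\lambda}$-coset, which is exactly the extra input provided by the inclusion $I^0_\lambda \subseteq I^{0,1}_\lambda$. Once this is in hand, Proposition~\ref{prop:symmetry} does all the work, and everything else is bookkeeping about components versus fibers.
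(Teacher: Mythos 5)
Your argument is correct and takes the same route as the paper: the paper's own proof is a one-line appeal to the $W$-symmetry of Proposition~\ref{prop:symmetry}, and you have unpacked exactly the bookkeeping (confluence identifies fibers with undirected components; $w \in W^{I^{0,1}_\lambda} \subseteq W^{I^0_\lambda}$ gives $w_{w\lambda} = w$ and hence $\eta_{\mathbf{k}}(w\lambda) = w\,\eta_{\mathbf{k}}(\lambda)$; then $\nu \mapsto w\nu$ transports components) that makes that one-liner rigorous. The only detail left implicit is that $(s^{\mathrm{sym}}_{\mathbf{k}})^{-1}(w\lambda)$ is nonempty --- equivalently that $\eta_{\mathbf{k}}(w\lambda)$ is actually a $\xrightarrow[\mathrm{sym},\,\mathbf{k}]{}$-stable point, so that the fiber over $w\lambda$ really is the full component of $\eta_{\mathbf{k}}(w\lambda)$ rather than empty. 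This does follow from the same hypothesis: $w \in W^{I^{0,1}_\lambda}$ means $\langle w\lambda, \alpha^\vee\rangle \neq -1$ for all $\alpha\in\Phi^+$ (the paper records $W^{I^{0,1}_\lambda} = \{w_\mu\colon \mu\in W(\lambda),\ \langle\mu,\alpha^\vee\rangle\neq -1 \text{ for all } \alpha\in\Phi^+\}$), and then Proposition~\ref{prop:sinks} guarantees stability; it would be worth one sentence in the write-up, but it is not a gap in substance.
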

\begin{proof}
This is an immediate consequence of the $W$-symmetry of the symmetric interval-firing process (Proposition~\ref{prop:symmetry}).
\end{proof}

\begin{prop} \label{prop:orbit_symmetry}
Let $\lambda \in P_{\geq 0}$ and $w \in W^{I^{0,1}_{\lambda}}$.  Then for any $X\subseteq \Phi^+$, the quantity
\[\#\left\{\mu \in wW_{I^{0,1}_{\lambda}}(\lambda)\colon \parbox{1.55in}{\begin{center}$\langle\mu,\alpha^\vee\rangle \in \{0,1\}$ for \\ all $\alpha \in \Phi^+\cap \mathrm{Span}_{\mathbb{R}}(X)$\end{center}}\right\}\]
is nonzero only if $w^{-1}(X)\subseteq \Phi_{I^{0,1}_{\lambda}}$. Consequently,
\begin{gather*}
\sum_{\substack{X\subseteq \Phi^+ \\ \textrm{ $X$ is linearly} \\ \textrm{independent}}}  \hspace{-0.4cm}\#\left\{\mu \in wW_{I^{0,1}_{\lambda}}(\lambda)\colon \parbox{1.55in}{\begin{center}$\langle\mu,\alpha^\vee\rangle \in \{0,1\}$ for \\ all $\alpha \in \Phi^+\cap \mathrm{Span}_{\mathbb{R}}(X)$\end{center}}\right\} \cdot \mathrm{rVol}_Q(X) \, \mathbf{k}^{X} \\ 
=\frac{1}{[W:W_{I^{0,1}_{\lambda}}]}\sum_{\substack{X\subseteq \Phi^+ \\ \textrm{ $X$ is linearly} \\ \textrm{independent}}}  \hspace{-0.4cm}\#\left\{\mu \in W(\lambda)\colon \parbox{1.55in}{\begin{center}$\langle\mu,\alpha^\vee\rangle \in \{0,1\}$ for \\ all $\alpha \in \Phi^+\cap \mathrm{Span}_{\mathbb{R}}(X)$\end{center}}\right\} \cdot \mathrm{rVol}_Q(X) \, \mathbf{k}^{X}.
\end{gather*}
\end{prop}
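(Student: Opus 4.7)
The plan is to prove the two assertions in sequence, with the ``consequently'' identity following from the first by a $W$-equivariant change of variables. Write $I := I^{0,1}_\lambda$. For the first assertion, suppose $\mu = ww'(\lambda)$ with $w' \in W_I$ lies in the set, and fix $\alpha \in X$; it suffices to show $\beta := w^{-1}\alpha$ lies in $\Phi_I$. Setting $\beta' := (w')^{-1}\beta$, the defining hypothesis becomes $\langle \lambda, \beta'^\vee\rangle \in \{0,1\}$. The key observation is that $W_I$ stabilizes $\Phi\setminus\Phi_I$: each generator $s_{\alpha_i}$ with $i \in I$ shifts any root by a scalar multiple of $\alpha_i$, which cannot move a root out of or into the subspace $\mathrm{Span}_{\mathbb{R}}\{\alpha_j : j \in I\}$ (and a root lies in $\Phi_I$ iff it lies in that subspace). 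So if $\beta \notin \Phi_I$ then $\beta' \notin \Phi_I$ as well. For such $\beta' \in \Phi^+ \setminus \Phi_I$ with expansion $\beta'^\vee = \sum_i c_i^\vee \alpha_i^\vee$ (where $c_i^\vee \in \mathbb{Z}_{\geq 0}$), some $c_{j_0}^\vee \geq 1$ must have $j_0 \notin I$. The definition $I = I^{0,1}_\lambda$ forces $\langle\lambda,\alpha_i^\vee\rangle \in \{0,1\}$ for $i \in I$ and $\langle\lambda,\alpha_i^\vee\rangle \geq 2$ for $i \notin I$, so $\langle\lambda,\beta'^\vee\rangle \geq c_{j_0}^\vee \cdot 2 \geq 2$, contradicting $\in\{0,1\}$; the case $\beta'\in\Phi^-\setminus\Phi_I$ is symmetric and yields $\leq -2$.

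For the ``consequently'' identity, the first assertion restricts the $X$-sum on the left to those $X$ with $w^{-1}X \subseteq \Phi_I$. Since $w \in W^I$ implies $w^{-1}\Phi^+ \cap \Phi_I = \Phi_I^+$ (a standard property of minimal coset representatives), these linearly independent $X \subseteq \Phi^+$ correspond bijectively via $X = wY$ to linearly independent $Y \subseteq \Phi_I^+$. Under the substitution $\mu = w\eta$, $\alpha = w\gamma$, the counted set is transported to $\{\eta \in W_I(\lambda) : \langle\eta,\gamma^\vee\rangle \in \{0,1\} \text{ for all } \gamma \in \Phi^+\cap\mathrm{Span}_{\mathbb{R}}(Y)\}$, which is manifestly independent of $w$, and $\mathrm{rVol}_Q(X) = \mathrm{rVol}_Q(Y)$ (since $w$ preserves $Q$) and $\mathbf{k}^X = \mathbf{k}^Y$ (by $W$-invariance of $\mathbf{k}$). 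Hence the left-hand side is a $w$-independent quantity $S$. But summing the left-hand side over $w \in W^I$ reconstructs $\sum_X \#\{\mu \in W(\lambda):\cdots\}\mathrm{rVol}_Q(X)\mathbf{k}^X$ via the disjoint decomposition $W(\lambda) = \bigsqcup_{w \in W^I} wW_I(\lambda)$, and this sum must therefore equal $[W:W_I]\cdot S$; dividing by $[W:W_I]$ yields the claimed identity.

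The main obstacle is the first assertion. Its proof crucially combines (i) the parabolic invariance $W_I(\Phi\setminus\Phi_I) = \Phi\setminus\Phi_I$ with (ii) the precise dichotomy defining $I^{0,1}_\lambda$; together these produce the integer gap forcing $\langle\lambda,\beta'^\vee\rangle$ outside $\{0,1\}$ for every root $\beta'\in\Phi\setminus\Phi_I$. Once the first assertion is in place, the ``consequently'' identity is routine bookkeeping.
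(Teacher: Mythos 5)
Your proof is correct and follows essentially the same approach as the paper's: you establish the vanishing of the count when $w^{-1}(X)\not\subseteq\Phi_I$ by showing the pairing $\langle\lambda,\beta'^\vee\rangle$ picks up a contribution $\geq 2$ from a simple coroot $\alpha_j^\vee$ with $j\notin I$, and you derive the identity by the $X\mapsto w^{-1}X$ change of variables (using $w\in W^I$) together with averaging over $W^I$. In fact your argument is slightly more careful than the paper's at one point: the paper's proof of the first claim bounds $|\langle\lambda,w^{-1}(\alpha)^\vee\rangle|$ and then abruptly asserts the count is zero, leaving implicit the key fact (which you state explicitly) that $W_I$ preserves the $\alpha_j^\vee$-coefficients for $j\notin I$, so the bound persists for all $\mu=ww'(\lambda)$ with $w'\in W_I$.
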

\begin{proof}
For the first claim: let $X\subseteq \Phi^+$ for which  $w^{-1}(X)\not\subseteq \Phi_{I^{0,1}_{\lambda}}$. This means there is some $\alpha \in X$ and $i \notin I^{0,1}_{\lambda}$ for which $w^{-1}(\alpha)^\vee$ has a nonzero coefficient in front of~$\alpha_i^\vee$ in its expansion in terms of simple coroots. By definition of $ I^{0,1}_{\lambda}$ we have $\langle \lambda,\alpha_i^\vee\rangle \geq 2$. Since the coefficients expressing $w^{-1}(\alpha)^\vee$ in terms of simple coroots are either all nonnegative integers or all nonpositive integers, we have~$|\langle \lambda,w^{-1}(\alpha)^\vee\rangle| \geq 2$. Thus~$|\langle \lambda,\alpha^\vee\rangle| \geq 2$, which means that the claimed quantity is indeed zero.

For the ``consequently,'' statement: observe that the prior statement implies that in the sum
\begin{gather} \label{eq:orbit_sum}
\sum_{\substack{X\subseteq \Phi^+ \\ \textrm{ $X$ is linearly} \\ \textrm{independent}}}  \hspace{-0.4cm}\#\left\{\mu \in wW_{I^{0,1}_{\lambda}}(\lambda)\colon \parbox{1.55in}{\begin{center}$\langle\mu,\alpha^\vee\rangle \in \{0,1\}$ for \\ all $\alpha \in \Phi^+\cap \mathrm{Span}_{\mathbb{R}}(X)$\end{center}}\right\} \cdot \mathrm{rVol}_Q(X) \, \mathbf{k}^{X} 
\end{gather}
we only get nonzero terms for the $X$ with $X\subseteq w(\Phi_{I^{0,1}_{\lambda}})\cap \Phi^+$. Furthermore, recall that we have $w(\Phi_{I^{0,1}_{\lambda}})\cap \Phi^+=w(\Phi_{I^{0,1}_{\lambda}}\cap \Phi^+)$ because $w \in W^{I^{0,1}_{\lambda}}$. That is to say, the expression~\eqref{eq:orbit_sum} is equal to
\begin{gather} \label{eq:orbit_sum2}
\sum_{\substack{X\subseteq w(\Phi_{I^{0,1}_{\lambda}}\cap \Phi^+) \\ \textrm{ $X$ is linearly} \\ \textrm{independent}}}  \hspace{-0.4cm}\#\left\{\mu \in wW_{I^{0,1}_{\lambda}}(\lambda)\colon \parbox{1.55in}{\begin{center}$\langle\mu,\alpha^\vee\rangle \in \{0,1\}$ for \\ all $\alpha \in \Phi^+\cap \mathrm{Span}_{\mathbb{R}}(X)$\end{center}}\right\} \cdot \mathrm{rVol}_Q(X) \, \mathbf{k}^{X} 
\end{gather}
Then making the substitutions $X\mapsto w(X)$ and $\mu \mapsto w(\mu)$ in~\eqref{eq:orbit_sum2}, and observing again that $\Phi^+\cap \mathrm{Span}_{\mathbb{R}}(w(X))=w(\Phi^+\cap \mathrm{Span}_{\mathbb{R}}(X))$ for any $X\subseteq w(\Phi_{I^{0,1}_{\lambda}}\cap \Phi^+)$ because $w \in W^{I^{0,1}_{\lambda}}$, we see that the expression~\eqref{eq:orbit_sum} is in fact equal to
\begin{gather} \label{eq:orbit_sum_indep}
\sum_{\substack{X\subseteq \Phi_{I^{0,1}_{\lambda}}\cap \Phi^+ \\ \textrm{ $X$ is linearly} \\ \textrm{independent}}}  \hspace{-0.4cm}\#\left\{\mu \in W_{I^{0,1}_{\lambda}}(\lambda)\colon \parbox{1.55in}{\begin{center}$\langle\mu,\alpha^\vee\rangle \in \{0,1\}$ for \\ all $\alpha \in \Phi^+\cap \mathrm{Span}_{\mathbb{R}}(X)$\end{center}}\right\} \cdot \mathrm{rVol}_Q(X) \, \mathbf{k}^{X}. 
\end{gather}
But note that $\{wW_{I^{0,1}_{\lambda}}(\lambda)\colon w \in W^{I^{0,1}_{\lambda}}\}$ is a partition of $W(\lambda)$ into $[W:W_{I^{0,1}_{\lambda}}]$ disjoint sets. So by summing the expression~\eqref{eq:orbit_sum} over all $w\in W^{I^{0,1}_{\lambda}}$, and observing that~\eqref{eq:orbit_sum} does not depend on the choice of $w \in W^{I^{0,1}_{\lambda}}$ since it is equal to~\eqref{eq:orbit_sum_indep}, we obtain the claimed formula.
\end{proof}

\begin{thm} \label{thm:main}
Let $\lambda \in P$. Let $\mathbf{k}\in\mathbb{N}[\Phi]^W$ be good. Then $L^{\mathrm{sym}}_{\lambda}(\mathbf{k})=0$ if $\langle \lambda,\alpha^\vee\rangle = -1$ for some positive root~$\alpha \in \Phi^+$, and otherwise
\[L^{\mathrm{sym}}_{\lambda}(\mathbf{k}) = \hspace{-0.5cm} \sum_{\substack{X\subseteq \Phi^+, \\ \textrm{$X$ is linearly} \\ \textrm{independent}}} \hspace{-0.5cm} \#\left\{\mu \in w_{\lambda}W_{I^{0,1}_{\lambda}} (\lambda_{\mathrm{dom}})\colon \parbox{1.6in}{\begin{center}$\langle\mu,\alpha^\vee\rangle \in \{0,1\}$ for \\ all $\alpha \in \Phi^+\cap \mathrm{Span}_{\mathbb{R}}(X)$\end{center}}\right\} \cdot \mathrm{rVol}_Q(X) \,\, \mathbf{k}^{X}.\]
\end{thm}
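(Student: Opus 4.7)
The plan is to deduce Theorem \ref{thm:main} by essentially combining Corollary \ref{cor:main} with the two symmetry propositions (\ref{prop:poly_symmetry} and \ref{prop:orbit_symmetry}) we have just set up, after first disposing of the vanishing case.

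First I would treat the case $\langle \lambda, \alpha^\vee\rangle = -1$ for some $\alpha \in \Phi^+$. By the very definition of $s^{\mathrm{sym}}_{\mathbf{k}}$, a weight $\lambda$ lies in the image of the stabilization map if and only if $\eta_{\mathbf{k}}(\lambda)$ is itself a $\xrightarrow[\mathrm{sym},\,\mathbf{k}]{}$-stable point. But Proposition \ref{prop:sinks} characterizes the stable points of $\xrightarrow[\mathrm{sym},\,\mathbf{k}]{}$ as $\{\eta_{\mathbf{k}}(\mu)\colon \mu \in P,\ \langle \mu,\alpha^\vee\rangle \neq -1 \text{ for all } \alpha \in \Phi^+\}$; since $\eta_{\mathbf{k}}$ is injective, our hypothesis on $\lambda$ means $\eta_{\mathbf{k}}(\lambda)$ is not of this form. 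Hence $(s^{\mathrm{sym}}_{\mathbf{k}})^{-1}(\lambda) = \varnothing$, giving $L^{\mathrm{sym}}_{\lambda}(\mathbf{k}) = 0$.

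Now assume $\langle \lambda, \alpha^\vee\rangle \neq -1$ for every $\alpha \in \Phi^+$. As recalled just after Proposition \ref{prop:sinks}, this is equivalent to $w_{\lambda}$ being of minimal length in its coset of $W_{I^{0,1}_{\lambda_{\mathrm{dom}}}}$, i.e.\ $w_{\lambda} \in W^{I^{0,1}_{\lambda_{\mathrm{dom}}}}$. Applying Proposition \ref{prop:poly_symmetry} to the dominant weight $\lambda_{\mathrm{dom}}$, with the element $w_{\lambda}$, yields
\[
L^{\mathrm{sym}}_{\lambda}(\mathbf{k}) \;=\; L^{\mathrm{sym}}_{w_{\lambda}\lambda_{\mathrm{dom}}}(\mathbf{k}) \;=\; \frac{1}{[W:W_{I^{0,1}_{\lambda_{\mathrm{dom}}}}]}\sum_{w' \in W^{I^{0,1}_{\lambda_{\mathrm{dom}}}}} L^{\mathrm{sym}}_{w'\lambda_{\mathrm{dom}}}(\mathbf{k}).
\]
I then substitute the identity of Corollary \ref{cor:main} for $\lambda_{\mathrm{dom}}$ into the numerator on the right. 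Finally, Proposition \ref{prop:orbit_symmetry} (applied with $w = w_{\lambda}$) rewrites the averaged $W(\lambda_{\mathrm{dom}})$-orbit sum as the single $w_{\lambda}W_{I^{0,1}_{\lambda_{\mathrm{dom}}}}(\lambda_{\mathrm{dom}})$-coset sum appearing in the theorem statement, yielding exactly the claimed formula.

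There is no real obstacle remaining: the substantive work has already been carried out in establishing Proposition \ref{prop:fibers} (expressing the fibers as differences of discrete permutohedra via the non-escaping lemma), Corollary \ref{cor:main} (whose proof combined Theorem \ref{thm:perm}, Stembridge's meet result, and Möbius inversion on root order), and the two symmetry propositions. The only point that deserves a line of verification is that $w_{\lambda} \in W^{I^{0,1}_{\lambda_{\mathrm{dom}}}}$ so that Propositions \ref{prop:poly_symmetry} and \ref{prop:orbit_symmetry} may indeed be invoked, and this is exactly guaranteed by the hypothesis $\langle\lambda,\alpha^\vee\rangle\neq -1$ via the characterization noted after Proposition \ref{prop:sinks}.
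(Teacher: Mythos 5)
Your proposal is correct and follows exactly the route the paper takes: combine Corollary~\ref{cor:main} with Propositions~\ref{prop:poly_symmetry} and~\ref{prop:orbit_symmetry}, invoking the identification of $w_{\lambda}$ as a minimal coset representative via the remark after Proposition~\ref{prop:sinks}. You in fact supply slightly more detail than the paper's one-line proof, in particular by explicitly deriving the vanishing case from Proposition~\ref{prop:sinks} and the injectivity of $\eta_{\mathbf{k}}$, which the paper leaves implicit.
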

\begin{proof}
This follows immediately by combining Corollary~\ref{cor:main} with Propositions~\ref{prop:poly_symmetry} and~\ref{prop:orbit_symmetry}.
\end{proof}

\begin{remark}
If we only cared about proving the positivity of the coefficients of the symmetric Ehrhart-like polynomials, we could have actually avoided the use of the subtle integrality property of slices of permutohedra. This is because the same inclusion-exclusion strategy as above but invoking only Theorem~\ref{thm:poly_plus_zono} and the first bulleted item in Proposition~\ref{prop:intersections_quot} would yield the formula
\begin{equation}\label{eq:bad_formula}
L^{\mathrm{sym}}_{\lambda}(\mathbf{k}) \hspace{-0.1cm} = \hspace{-0.6cm} \sum_{\substack{X\subseteq \Phi^+, \\ \textrm{$X$ is linearly} \\ \textrm{independent}}} \hspace{-0.5cm} \# \left( \left( \mathrm{quot}_X(\Pi(\lambda))\setminus \hspace{-0.5cm} \bigcup_{\substack{\mu \neq \lambda \in P_{\geq 0}, \\ \mu \leq \lambda}}  \hspace{-0.5cm}\mathrm{quot}_X(\Pi(\mu)) \right) \cap \mathrm{quot}_X(Q+\lambda)  \right) \mathrm{rVol}_Q(X) \,\, \mathbf{k}^{X},
\end{equation}
for $\lambda \in P_{\geq 0}$ with $I^{0,1}_{\lambda} = [n]$. As mentioned in Remark~\ref{remark:induction}, by induction on the rank of the root system it is enough to consider $\lambda$ of this form. However, the formula in~\eqref{eq:bad_formula} is not ideal from a combinatorial perspective because the coefficients potentially involve checking every rational point in $\Pi(\lambda)$. The formula in Theorem~\ref{thm:main} is much more combinatorial, and makes clear the significance of minuscule weights.
\end{remark}

\section{Truncated Ehrhart-like polynomials and other future directions} \label{sec:future}

In this section we discuss some open questions and future directions, starting with the truncated Ehrhart-like polynomials.

\subsection{Truncated Ehrhart-like polynomials} 
One might hope that the formula for the symmetric Ehrhart-like polynomials could suggest a formula for the truncated polynomials. Indeed, in~\cite{galashin2017rootfiring1} it was shown that for any good $\mathbf{k}\in\mathbb{N}[\Phi]^W$ and any $\lambda \in P$ with~$\langle \lambda,\alpha^\vee\rangle \neq -1$ for all $\alpha\in \Phi^+$ we have
\[ (s^{\mathrm{sym}}_{\mathbf{k}})^{-1}(\lambda) = \hspace{-0.2cm} \bigcup_{\mu \in w_\lambda W_{I^{0,1}_{\lambda}} (\lambda_{\mathrm{dom}})}  \hspace{-0.2cm} (s^{\mathrm{tr}}_{\mathbf{k}})^{-1}(\mu),\]
or at the level of Ehrhart-like polynomials,
\[ L^{\mathrm{sym}}_{\lambda}(\mathbf{k}) =  \hspace{-0.2cm} \sum_{\mu \in w_\lambda W_{I^{0,1}_{\lambda}} (\lambda_{\mathrm{dom}})}  \hspace{-0.2cm} L^{\mathrm{tr}}_{\mu}(\mathbf{k}).\]
Hence, the formula in Theorem~\ref{thm:main} very naturally suggests the following conjecture:
\begin{conj} \label{conj:main}
Let $\lambda \in P$ be any weight. Then for any good $\mathbf{k}\in\mathbb{N}[\Phi]^W$ we have
\[L^{\mathrm{tr}}_{\lambda}(\mathbf{k}) = \sum_{X} \mathrm{rVol}_Q(X) \, \mathbf{k}^{X},\]
where the sum is over all $X\subseteq \Phi^+$ such that:
\begin{itemize}
\item $X$ is linearly independent;
\item $\langle \lambda,\alpha^\vee\rangle \in \{0,1\}$ for all $\alpha \in \Phi^+\cap\mathrm{Span}_{\mathbb{R}}(X)$.
\end{itemize}
\end{conj}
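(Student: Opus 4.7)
The plan is to pursue an inclusion-exclusion strategy parallel to the proof of Theorem~\ref{thm:main} for the symmetric case, but at the level of individual truncated fibers rather than Weyl-orbit unions. First I would try to establish an analog of Proposition~\ref{prop:fibers} that expresses $(s^{\mathrm{tr}}_{\mathbf{k}})^{-1}(\lambda)$ as a difference of discrete regions to which Theorem~\ref{thm:poly_plus_zono} and the slice-integrality of permutohedra (Lemma~\ref{lem:slice_intro}) can be applied. Concretely, one wants to exhibit a family of polytopes $\{\mathcal{R}_\mu\}_{\mu \le \lambda_{\mathrm{dom}}}$, each a translate or subregion of a dilating permutohedron, such that the truncated fiber is $\mathcal{R}_\lambda \setminus \bigcup_{\mu \neq \lambda_{\mathrm{dom}},\, \mu \le \lambda_{\mathrm{dom}}} \mathcal{R}_\mu$; the truncated analog of the permutohedron non-escaping lemma should produce this decomposition.

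Granting that, one applies M\"obius inversion on $(P_{\ge 0}, \le)$, expands each $\#\mathcal{R}_\mu(\mathbf{k})$ via Theorem~\ref{thm:perm}, and swaps the order of summation. The M\"obius inversion should collapse the quotient sets $\mathrm{quot}_X(\Pi^Q(\mu))$ down to their ``extremal'' contributions, which—by the characterization of minuscule weights on sub-root systems used in Proposition~\ref{prop:quots_minuscule}—one hopes produce precisely the indicator of the condition $\langle \lambda, \alpha^\vee \rangle \in \{0,1\}$ for all $\alpha \in \Phi^+ \cap \mathrm{Span}_{\mathbb{R}}(X)$. As a consistency check, summing the conjectured expression over $\mu \in w_\lambda W_{I^{0,1}_{\lambda_{\mathrm{dom}}}}(\lambda_{\mathrm{dom}})$ formally recovers Theorem~\ref{thm:main}, since the orbit-counting factor in that theorem disaggregates termwise into per-orbit-element conditions of exactly this form.

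The hard part, and the step I expect to be the genuine obstruction, is precisely this disaggregation. Unlike the symmetric process, truncated firing is \emph{not} $W$-invariant: Proposition~\ref{prop:symmetry} has no truncated analog, so there is no a priori reason the individual truncated fibers must distribute the symmetric count evenly across the orbit, and in particular the analog of Proposition~\ref{prop:orbit_symmetry} cannot be used to divide by $[W:W_{I^{0,1}_\lambda}]$. One would need either a genuinely new non-escaping principle that controls the boundary between truncated fibers, or a direct polytopal model of $(s^{\mathrm{tr}}_{\mathbf{k}})^{-1}(\lambda)$ as some half-open region whose lattice-point enumerator matches the claim.

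Anticipating what the paper then tells us—that Conjecture~\ref{conj:main_intro} is false in general—this is presumably exactly where such an attempted proof must break: the fiber-as-difference-of-polytopes identity above fails, and some orbit elements contribute anomalously. A reasonable refined goal, then, is to isolate a combinatorial condition on $\lambda$ (or on the Cartan–Killing type, such as Type~A or Type~B where minuscule weights are plentiful and well-behaved) that guarantees both the truncated non-escaping property and the requisite orbit-uniformity, and to verify the conjecture under that condition. Pinpointing the precise obstruction—why certain roots $\alpha$ with $\langle \lambda,\alpha^\vee\rangle \notin \{0,1\}$ nonetheless contribute nontrivially to $L^{\mathrm{tr}}_{\lambda}(\mathbf{k})$—appears to be the essential open problem.
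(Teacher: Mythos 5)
The statement you are attempting to prove is, as you suspected, false in general, and the paper does not prove it. Instead the paper \emph{disproves} Conjecture~\ref{conj:main} by exhibiting explicit counterexamples (Tables~\ref{tab:counterexamples_1} and~\ref{tab:counterexamples_2}), the smallest in Type~$G_2$, with further failures in $C_3$, $D_4$, and $C_4$. Your proposal is therefore best read as a diagnosis of why the natural strategy cannot succeed, and that diagnosis is essentially right.

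Your identification of the obstruction --- there is no truncated analog of Proposition~\ref{prop:symmetry}, hence no way to argue, as Propositions~\ref{prop:poly_symmetry} and~\ref{prop:orbit_symmetry} do for the symmetric process, that the orbit-aggregated count distributes evenly across the $w_\lambda W_{I^{0,1}_{\lambda_{\mathrm{dom}}}}$-orbit --- is exactly consistent with the counterexample data. In $G_2$, the two counterexample weights $\omega_1$ and $-\omega_1+\omega_2$ lie in the same $W$-orbit; the coefficient of $k_l$ in the true $L^{\mathrm{tr}}$ is $4$ for one and $2$ for the other, bracketing the uniformly predicted $3$, and the discrepancies cancel in the orbit sum as they must by Theorem~\ref{thm:main}. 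The paper notes this pairing phenomenon explicitly (whenever $\lambda$ is a counterexample, some other $\mu \in W(\lambda)$ is too). So the inclusion-exclusion machinery does produce the correct \emph{orbit aggregate}; it is precisely the per-fiber disaggregation that has no uniform justification and, empirically, fails.

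Two small caveats. First, you pinpoint the failure as ``the fiber-as-difference-of-polytopes identity fails.'' The paper supplies no failed proof attempt to confirm this, so it remains plausible but unverified --- the fiber might still admit such a decomposition, with the breakdown occurring instead in the passage from slice counts to the conjectured indicator condition. Second, your proposed refinement (restricting to Types~A and~B, where the computational evidence is clean) coincides exactly with the paper's own Question~\ref{question:conjecture_ab}, which is left open; the paper offers no proof in either direction, only exhaustive checks through $A_5$ and $B_4$.
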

However, in fact Conjecture~\ref{conj:main} is {\bf false in general}! The smallest counterexample is in Type~$G_2$. Table~\ref{tab:counterexamples_1} records, for $\Phi=A_1$, $A_2$, $B_2$, $G_2$, $A_3$, $B_3$, $C_3$, $A_4$, and~$D_4$, all counterexamples to Conjecture~\ref{conj:main} among those~$\lambda \in P_{\geq 0}$ with $I^{0,1}_{\lambda}=[n]$ (recall that, as in Remark~\ref{remark:induction}, for other $\lambda$ we can understand the Ehrhart-like polynomials by projecting to a smaller sub-root system). These counterexamples were found using Sage~\cite{sagemath, Sage-Combinat}. There are counterexamples in $G_2$, $C_3$, and~$D_4$. Observe that whenever there is a counterexample $\lambda$ to Conjecture~\ref{conj:main} there must be also be another $\mu\in W(\lambda)$ which is a counterexample because we know, thanks to Theorem~\ref{thm:main}, that if we sum the left- and right-hand sides of the formula in Conjecture~\ref{conj:main} along Weyl group orbits they agree. Furthermore, although for larger root systems $\Phi$ we could not carry out an exhaustive search for counterexamples, for~$\Phi=B_4$, $C_4$, and~$A_5$ we were able to check whether the left- and right-hand sides of Conjecture~\ref{conj:main} agree when we plug in $\mathbf{k}=1$; this information is recored in Table~\ref{tab:counterexamples_2}.

\begin{table}
\renewcommand*{\arraystretch}{1.3}
\setlength\tabcolsep{3pt}
\begin{tabular}{c|c|c}
$\Phi$ & $\#\{\lambda \colon I^{0,1}_{\lambda}\hspace{-0.2cm} = [n]\}$ & Counterexamples $\lambda$ to Conjecture~\ref{conj:main} \\[-0.3cm]
& & \\ \hline
$A_1$ & $3$ & (None) \\ \hline
$A_2$ & $13$ & (None) \\ \hline
$B_2$ & $17$ & (None) \\ \hline
$G_2$ & $25$ & \begin{tabular}{r l}$\omega_1$: & \parbox{1.7in}{\begin{center}LHS $= \textcolor{red}{4k_l}+2k_s+1$ \\ RHS $=\textcolor{red}{3k_l}+2k_s+1$\end{center}} \\ \hline $-\omega_1+\omega_2$: &\parbox{1.7in}{\begin{center}LHS $= \textcolor{red}{2k_l}+k_s+1$ \\ RHS $=\textcolor{red}{3k_l}+k_s+1$\end{center}} \end{tabular} \\ \hline
$A_3$ & $75$ & (None) \\ \hline
$B_3$ & $147$ & (None) \\ \hline
$C_3$ & $147$ & \begin{tabular}{r l}$-\omega_1+\omega_2$: & \parbox{2.9in}{\begin{center}LHS $= 4k_l^2 + \textcolor{red}{14k_lk_s} + 8k_s^2 + 3k_l+5k_s+1$ \\ RHS $= 4k_l^2 + \textcolor{red}{13k_lk_s} + 8k_s^2 + 3k_l+5k_s+1$\end{center}} \\ \hline $-\omega_2+\omega_3$: &\parbox{2.9in}{\begin{center}LHS $= 2k_l^2 + \textcolor{red}{7k_lk_s} + 4k_s^s + 3k_l+4k_s+k_s+1$ \\ RHS $= 2k_l^2 + \textcolor{red}{8k_lk_s} + 4k_s^s + 3k_l+4k_s+k_s+1$\end{center}} \\ \hline \hline $\omega_2$: & \parbox{2.9in}{\begin{center}LHS $= 4k_l^2 + \textcolor{red}{14k_lk_s} + 8k_s^2 + 3k_l+5k_s+1$ \\ RHS $= 4k_l^2 + \textcolor{red}{13k_lk_s} + 8k_s^2 + 3k_l+5k_s+1$\end{center}} \\ \hline $\omega_1-\omega_2+\omega_3$: &\parbox{2.9in}{\begin{center}LHS $= 2k_l^2 + \textcolor{red}{7k_lk_s} + 4k_s^s + 3k_l+4k_s+k_s+1$ \\ RHS $= 2k_l^2 + \textcolor{red}{8k_lk_s} + 4k_s^s + 3k_l+4k_s+k_s+1$\end{center}} \end{tabular} \\ \hline 
$A_4$ & $541$ & (None) \\ \hline
$D_4$ & $865$ & \begin{tabular}{r l}$-\omega_1+\omega_2$: & \parbox{2in}{\begin{center}LHS $= \textcolor{red}{106k^3}+51k^2+11k+1$ \\ RHS $= \textcolor{red}{105k^3} + 51k^2 + 11k +1$\end{center}} \\ \hline $-\omega_2+\omega_3+\omega_4$: &\parbox{2in}{\begin{center}LHS $= \textcolor{red}{53k^3} + 39k^2 + 10k + 1$ \\ RHS $= \textcolor{red}{54k^3} + 39k^2 + 10k + 1$\end{center}} \\ \hline \hline $\omega_2$: & \parbox{2in}{\begin{center}LHS $= \textcolor{red}{106k^3}+51k^2+11k+1$ \\ RHS $= \textcolor{red}{105k^3} + 51k^2 + 11k +1$\end{center}} \\ \hline $\omega_1-\omega_2+\omega_3+\omega_4$: &\parbox{2in}{\begin{center}LHS $= \textcolor{red}{53k^3} + 39k^2 + 10k + 1$ \\ RHS $= \textcolor{red}{54k^3} + 39k^2 + 10k + 1$\end{center}} \\ \hline \hline $\omega_2-\omega_3$: & \parbox{2in}{\begin{center}LHS $= \textcolor{red}{106k^3}+51k^2+11k+1$ \\ RHS $= \textcolor{red}{105k^3} + 51k^2 + 11k +1$\end{center}} \\ \hline $\omega_1-\omega_2+\omega_4$: &\parbox{2in}{\begin{center}LHS $= \textcolor{red}{53k^3} + 39k^2 + 10k + 1$ \\ RHS $= \textcolor{red}{54k^3} + 39k^2 + 10k + 1$\end{center}} \\ \hline \hline $\omega_2-\omega_4$: & \parbox{2in}{\begin{center}LHS $= \textcolor{red}{106k^3}+51k^2+11k+1$ \\ RHS $= \textcolor{red}{105k^3} + 51k^2 + 11k +1$\end{center}} \\ \hline $\omega_1-\omega_2+\omega_3$: &\parbox{2in}{\begin{center}LHS $= \textcolor{red}{53k^3} + 39k^2 + 10k + 1$ \\ RHS $= \textcolor{red}{54k^3} + 39k^2 + 10k + 1$\end{center}} \end{tabular} \\ \hline
\end{tabular} \medskip
\caption{Counterexamples to Conjecture~\ref{conj:main} for small $\Phi$.} \label{tab:counterexamples_1}
\end{table}

\begin{table}
\renewcommand*{\arraystretch}{1.3}
\begin{tabular}{c|c|c}
$\Phi$ & $\#\{\lambda \colon I^{0,1}_{\lambda} = [n]\}$ & \parbox{3in}{\begin{center}Number of $\lambda$ for which the LHS and RHS of Conjecture~\ref{conj:main} disagree when $\mathbf{k}=1$\end{center}}  \\ \hline
$B_4$ & $1697$ & $0$ \\ \hline
$C_4$ & $1697$ & $60$ \\ \hline
$A_5$ & $4683$ & $0$ \\ \hline
\end{tabular} \medskip
\caption{Counterexamples to Conjecture~\ref{conj:main} with $\mathbf{k}=1$ for small $\Phi$.} \label{tab:counterexamples_2}
\end{table}

Another remark about the wrong formula in Conjecture~\ref{conj:main}: it at least has the symmetry that we expect. Namely, there is a copy of the abelian group $P/Q$ inside of~$W$ which acts in a natural way on (the symmetric closure of) the truncated interval-firing process. We believe that the truncated polynomials should be invariant under this action of $P/Q$ (see~\cite[\oldpapertruncatedsymremark]{galashin2017rootfiring1}). This action of $P/Q$ preserves the $\Phi^\vee$-Shi arrangement, and hence the right-hand side of the formula appearing in Conjecture~\ref{conj:main} is indeed invariant under this action of $P/Q$.

Looking at Tables~\ref{tab:counterexamples_1} and~\ref{tab:counterexamples_2}, we see that no counterexamples to Conjecture~\ref{conj:main} are known when $\Phi$ is of either Type~A or Type~B. Therefore we are prompted to ask:

\begin{question} \label{question:conjecture_ab}
Is Conjecture~\ref{conj:main} true when $\Phi$ is of either Type~A or Type~B?
\end{question}

Note that in~\cite{galashin2017rootfiring1} it was also shown that if $\Phi$ is simply laced, then for any $k\in\mathbb{Z}_{\geq 0}$ and any $\lambda \in P$ we have 
\[ (s^{\mathrm{tr}}_{k})^{-1}(\lambda) = \bigcup_{\mu \in (s^{\mathrm{tr}}_{1})^{-1}(\lambda)} (s^{\mathrm{sym}}_{k-1})^{-1}(\mu),\]
or at the level of Ehrhart-like polynomials,
\begin{equation} \label{eq:tr_decompose}
L^{\mathrm{tr}}_{\lambda}(k) = \sum_{\mu \in (s^{\mathrm{tr}}_{1})^{-1}(\lambda)} L^{\mathrm{sym}}_{\mu}(k-1).
\end{equation}
Indeed, equation~\eqref{eq:tr_decompose} is precisely how the polynomiality of $L^{\mathrm{tr}}_{\lambda}(k)$ was established. But the fact that $k-1$ appears on the right-hand side of~\eqref{eq:tr_decompose} means that it is totally unclear how to deduce positivity for the truncated polynomials from the positivity for the symmetric polynomials.

The truncated Ehrhart-like polynomials remain largely mysterious, but still seem very much worthy of further investigation.

\subsection{Lattice point formulas via tilings}

The way we obtained the formula for the symmetric Ehrhart-like polynomials was via a miraculous transfer from inclusion-exclusion at the level of polynomials to inclusion-exclusion at the level of coefficients. We could ask for a more geometric proof via tilings which better ``explains'' why these polynomials have the form they do. Let us describe what we have in mind.

For a linearly independent set $X=\{v_1,\ldots,v_m\}$ of lattice vectors $v_1,\ldots,v_m\in\mathbb{Z}^n$, a \emph{half-open parallelepiped with edge set $X$} is a convex set $\mathcal{Z}^{h.o.}_X$ of the form
\[\mathcal{Z}^{h.o.}_X \coloneqq \sum_{i=1}^{m} \begin{cases} [0,v_i) &\textrm{ if $\epsilon_i=1$}; \\ (0,v_i] &\textrm{if $\epsilon_i=-1$},\end{cases} \]
for some choice of signs $(\epsilon_1,\epsilon_2,\ldots,\epsilon_m)\in\{-1,1\}^m$.  For such a half-open parallelepiped, we always have that $\#(\mathcal{Z}^{h.o.}_X \cap \mathbb{Z}^n)=\mathrm{rVol}_{\mathbb{Z}^n}(X)$ (see e.g.~\cite[Lemma 9.8]{beck2015computing}). As mentioned in the proof of Theorem~\ref{thm:poly_plus_zono}, it is well-known that the zonotope $\mathcal{Z} \coloneqq \sum_{i=1}^{m} [0,v_i]$ can be decomposed into pieces which are (up to translation) of the form $\mathcal{Z}^{h.o.}_X$ for linearly independent subsets $X\subseteq\{v_1,\ldots,v_m\}$, with each such subset $X$ contributing exactly one piece. In fact, we can decompose the $k$th dilate $k\mathcal{Z}$ into pieces of the form $\mathcal{Z}^{h.o.}_{kX}$ in a manner consistent across all $k\in\mathbb{Z}_{\geq 0}$ (here we use the notation $kX \coloneqq \{kv\colon v\in X\}$).

Given the form of the formula in Theorem~\ref{thm:main}, we can ask whether a similar decomposition into half-open parallelepipeds exists for the symmetric Ehrhart-like polynomials. Here for $X\subseteq\Phi^+$ and $\mathbf{k}\in\mathbb{N}[\Phi]^W$ we use the notation $\mathbf{k}X\coloneqq\{\mathbf{k}(\alpha)\alpha\colon \alpha\in X\}$.

\begin{question}
For good $\mathbf{k}\in\mathbb{N}[\Phi]^W$ and $\lambda \in P_{\geq 0}$ with $I^{0,1}_{\lambda}=[n]$, can we decompose $(s^{\mathrm{sym}}_{\mathbf{k}})^{-1}(\lambda)$ into pieces which are (up to translation) of the form $\mathcal{Z}^{h.o.}_{\mathbf{k}X} \cap Q$ for linearly independent $X\subseteq\Phi^+$, with each such subset $X$ contributing 
\[\#\{\mu\in W(\lambda)\colon \langle \mu,\alpha^\vee\rangle \in \{0,1\} \textrm{ for all $\alpha\in\mathrm{Span}_{\mathbb{R}}(X)\cap\Phi^+$}\}\] 
many pieces? (Of course we also want the decomposition to be consistent across all $\mathbf{k}$.)
\end{question}

In light of Question~\ref{question:conjecture_ab} above, we can even ambitiously ask whether the same could be done for truncated Ehrhart-like polynomials in Types~A and~B.

\begin{question}
Suppose $\Phi$ is of Type~A or~B. For good $\mathbf{k}\in\mathbb{N}[\Phi]^W$ and $\lambda \in P$, can we decompose $(s^{\mathrm{tr}}_{\mathbf{k}})^{-1}(\lambda)$ into pieces which are (up to translation) of the form $\mathcal{Z}^{h.o.}_{\mathbf{k}X} \cap Q$ for linearly independent $X\subseteq\Phi^+$ which satisfy $\langle \lambda,\alpha^\vee\rangle \in \{0,1\}$ for all $\alpha \in \mathrm{Span}_{\mathbb{R}}(X)\cap\Phi^+$, with each such subset $X$ contributing exactly one piece? (Of course we also want the decomposition to be consistent across all $\mathbf{k}$.)
\end{question}

\begin{remark}
Rather than decompositions into parallelepipeds, we could also consider decompositions into simplices. An interesting decomposition of dilates of the fundamental parallelepiped of a root system~$\Phi$ into partially closed simplices is studied in~\cite{yoshinaga2018worpitzky}. This decomposition is used to resolve some Ehrhart-style questions related to the Catalan and Shi hyperplane arrangements, which as mentioned in the introduction seem to have a strong spiritual connection to our interval-firing processes.
\end{remark}

\subsection{Reciprocity for Ehrhart-like polynomials}

The Ehrhart polynomial $L_{\mathcal{P}}$ of a $d$-dimensional lattice polytope $\mathcal{P}$ satisfies a \emph{reciprocity theorem} which says that the evaluation $L_{\mathcal{P}}(-k)$ of this polynomial at a negative integer $-k$ is $(-1)^d$ times the number of lattice points in the \emph{(relative) interior} of the $k$th dilate $k\mathcal{P}$ of the polytope (see~\cite{macdonald1971polynomials} or~\cite[Theorem 4.6.9]{stanley2012ec1}). It is thus reasonable to ask if the Ehrhart-like polynomials also satisfy any kind of reciprocity. Unfortunately, we have not been able to discover anything interesting along these lines. In fact, we actually have some negative examples: Table~\ref{tab:reciprocity} shows the evaluation at $k=-1$ for some symmetric and truncated Ehrhart-like polynomials in the case $\Phi=B_3$ (of course these should be bivariate polynomials because $B_3$ is not simply laced, but we reduced to the univariate case $k_l=k_s=k$ for simplicity). There is no discernable pattern to the sign of this evaluation. In particular it does not just depend on the rank of $\Phi$ and the degree of the polynomial. We have no conjectures about reciprocity at the moment.

\begin{table}
\renewcommand*{\arraystretch}{1.5}
\begin{tabular}{c|c|c|c|c}
Weight $\lambda$ & $L_{\lambda}^{\mathrm{sym}}(k)$ & $L_{\lambda}^{\mathrm{sym}}(-1)$ & $L_{\lambda}^{\mathrm{tr}}(k)$ & $L_{\lambda}^{\mathrm{tr}}(-1)$ \\ \hline
$0$ & $87k^3 + 39k^2 + 9k + 1$ & $-56$ & $87k^3 + 39k^2 + 9k + 1$ & $-56$ \\ \hline
$\omega_1$ & $78k^2 + 36k + 6$ & $48$ &  $23k^2 + 8k + 1$ & $16$ \\ \hline
$\omega_2$ & $36k^2 + 48k + 12k$ & $0$ & $7k^2 + 6k + 1$ & $2$ \\ \hline
$\omega_3$ & $87k^3 + 108k^2 + 48k + 8$ & $-19$ & $87k^3 + 39k^2 + 9k + 1$ & $-56$ \\ \hline
$\omega_1+\omega_2$ & $12k^2 + 60k + 24$ & $-24$ &  $k^2 + 4k + 1$ & $-2$ \\ \hline
$\omega_1+\omega_3$  & $78k^2 + 84k + 24$ & $18$ & $12k^2 + 6k + 1$ & $7$ \\ \hline
$\omega_2+\omega_3$ & $36k^2 + 60k + 24$ & $0$ & $4k^2 + 4k + 1$ & $1$ \\ \hline
$\omega_1+\omega_2+\omega_3$ & $12k^2 + 72k + 48$ & $-12$ & $k^2 + 3k + 1$ & $-1$ \\ \hline
\end{tabular}
\medskip
\caption{Evaluation of symmetric and truncated polynomials at $-1$ for $\Phi=B_3$.} \label{tab:reciprocity}
\end{table}

\subsection{\texorpdfstring{$h^*$}{h star}-polynomials}
The $h^*$-polynomial $h^*_{\mathcal{P}}(z)$ of a $d$-dimensional lattice polytope $\mathcal{P}$ is defined by 
\[ \sum_{k \geq 0} L_{\mathcal{P}}(k)z^k = \frac{h^*_{\mathcal{P}}(z)}{(1-z)^{d+1}}. \]
By a celebrated result of Stanley~\cite{stanley1980decompositions}, the coefficients of $h^*_{\mathcal{P}}(z)$ are nonnegative. Hence, one might wonder whether something similar holds for the Ehrhart-like polynomials. Unfortunately, there are small counterexamples. Let $\Phi=A_3$ and $\lambda \coloneqq \omega_1+\omega_2+\omega_3$. Then $L_{\lambda}^{\mathrm{sym}}(k) = 6k^2+36k+24$ and
\[ \sum_{k \geq 0} (6k^2+36k+24) \, z^k = \frac{-6z^2-6z+24}{(1-z)^{3}}. \]
Similarly, $L_{\lambda}^{\mathrm{tr}}(k) = k^2+3k+1$ and
\[ \sum_{k \geq 0} (k^2+3k+1) \, z^k = \frac{-z^2+2z+1}{(1-z)^{3}}. \]
We have no conjectures about $h^*$-polynomials at the moment.

\subsection{Uniform proof of integrality property} 

We would obviously prefer to have a completely uniform proof of the subtle integrality property of slices of permutohedra (Lemma~\ref{lem:slice}). A uniform explanation of the projection-dilation property of root polytopes (Lemma~\ref{lem:root_projection}) would yield such a proof. Another place to start looking for such a proof might be representation theory. The most direct connection to representation theory is the following: let $\mathfrak{g}$ be a simple Lie algebra over the complex numbers whose corresponding root system is $\Phi$; then for a dominant weight $\lambda\in P_{\geq 0}$, the discrete permutohedron $\Pi^Q(\lambda)$ consists of those weights~$\mu \in P$ appearing with nonzero multiplicity in the irreducible representation~$V^{\lambda}$ of $\mathfrak{g}$ with highest weight $\lambda$ (see e.g.~\cite{stembridge1998partial}). Hence it is not unreasonable to think that the slices of permutohedra, and the corresponding integrality property, could have some representation-theoretic meaning.

\subsection{Slices of permutohedra}

It would be interesting to further study the slices of permutohedra which appear in the integrality lemma (Lemma~\ref{lem:slice}). By slices we mean polytopes of the form $(\mu+\mathrm{Span}_{\mathbb{R}}(X))\cap \Pi(\lambda)$ for $\lambda \in P_{\geq 0}$, $\mu \in Q+\lambda$, and $X\subseteq \Phi^+$. Note that these polytopes are certainly invariant under the Weyl group~$W'\subseteq W$ of the sub-root system $\mathrm{Span}_{\mathbb{R}}(X)\cap \Phi$, but they need not be $W'$-permutohedra. As mentioned in Section~\ref{subsec:type_a}, the \emph{quotients} $\mathrm{quot}_X(\Pi(\lambda))$ of permutohedra are generalized permutohedra (at least in Type~A; but we believe the corresponding statement should be true in all types). However, these slices $(\mu+\mathrm{Span}_{\mathbb{R}}(X))\cap \Pi(\lambda)$ need not even be generalized permutohedra: for instance, in $\Phi=A_3$ the slices can have more vertices than would be possible for a generalized permutohedron.

In general these slices do not necessarily have ``integer vertices;'' that is to say, their vertices do not necessarily belong to $Q+\lambda$, as can be seen in Figure~\ref{fig:slice}. But let us quickly explain how in Type~A these slices actually \emph{do} have vertices in $Q+\lambda$. So assume that~$\Phi=A_n$. First observe that any vertex of~$(\mu+\mathrm{Span}_{\mathbb{R}}(X))\cap \Pi(\lambda)$  is the intersection of $\mu+\mathrm{Span}_{\mathbb{R}}(X)$ with a face $F$ of $\Pi(\lambda)$ of codimension $\ell \coloneqq \mathrm{dim}(\mathrm{Span}_{\mathbb{R}}(X))$. Thus we can find $\beta_1,\ldots,\beta_\ell \in \Phi$ which span $\mathrm{Span}_{\mathbb{R}}(X)$ and $\beta_{\ell+1},\ldots,\beta_n \in \Phi$ which affinely span~$F$ such that $\beta_1,\ldots,\beta_n$ is a basis of $V$. But then because the collection of roots in Type A is totally unimodular, any subset of roots which forms a basis of $V$ generates the root lattice~$Q$. This means that we have~$\mu = \lambda + \sum_{i=1}^{n}b_i\beta_i$ for $b_i \in \mathbb{Z}$. By acting by the Weyl group we are free to assume that $F$ contains $\lambda$. Hence the unique point in~$(\mu+\mathrm{Span}_{\mathbb{R}}(X))\cap F$ is~$\lambda+\sum_{i=\ell+1}^{n}b_i\beta_i$, which indeed belongs to $Q+\lambda$ because the~$b_i$ are integers. This argument gives a much simpler proof of  Lemma~\ref{lem:slice} for Type~A, but we have been unable to adapt it to other types where the collection of roots is not totally unimodular.

\subsection{The projection-dilation constant for centrally symmetric sets}

Let $\mathcal{S}\subseteq V$ be a centrally symmetric, bounded set. Then we can define the \emph{projection-dilation constant} of $\mathcal{S}$ to be the be minimal $\kappa \geq 1$ such that for all nonzero subspaces $\{0\}\neq U \subseteq V$ spanned by elements of $\mathcal{S}$, we have $\pi_U(\mathrm{ConvexHull}(\mathcal{S})) \subseteq \kappa\cdot \mathrm{ConvexHull}(\mathcal{S}\cap U)$. Lemma~\ref{lem:root_projection} asserts that the projection-dilation constant of $\Phi^\vee$ is strictly less than $2$. It might be interesting to study this quantity for other choices of~$\mathcal{S}$. For example, if we let $\mathcal{S} \coloneqq \{(\pm 1,\pm 1,\ldots, \pm 1)\} \subseteq \mathbb{R}^n$ be the set of vertices of the standard $n$-hypercube, then it can be shown that the projection-dilation constant of $\mathcal{S}$ is at least on the order of $\sqrt{n}$: to see this we can take $U$ to be the orthogonal complement to the vector $(m,1,1,\ldots,1)$ where~$m$ is an integer close to $\sqrt{n}$. More generally, we might consider $\mathcal{S}_{n,k}$, the set of vectors in $\mathbb{R}^n$ with $k$ nonzero entries which are all $\pm 1$. So $\mathcal{S}_{n,n}$ is the set of vertices of the $n$-hypercube, and has unbounded projection-dilation constant. On the other hand, $\mathcal{S}_{n,2}$ is the root system $\Phi=D_n$, and has projection-dilation constant bounded by $2$.

\appendix

\section{The projection-dilation property of root polytopes} \label{sec:root_polytopes}

In this appendix, we prove Lemma~\ref{lem:root_projection}, concerning dilations of projections of the root polytope $\mathcal{P}_{\Phi^\vee}\coloneqq \mathrm{ConvexHull}(\Phi^\vee)$. Unfortunately our proof will consist of a case-by-case analysis, invoking the classification of root systems. Hence, we now go over this classification.

The \emph{Cartan matrix} of $\Phi$ is the $n\times n$ matrix $\mathbf{C} = (\mathbf{C}_{ij})$ with entries $\mathbf{C}_{ij} = \langle \alpha_i,\alpha_j^\vee\rangle$ for all $i,j\in[n]$. In other words, the rows of the Cartan matrix are the coefficients expressing the simple roots in the basis of fundamental weights. The matrix~$\mathbf{C}$ determines~$\Phi$ up to isomorphism. The \emph{Dynkin diagram} of $\Phi$ is another way of encoding the same information as the Cartan matrix in the form of a decorated graph. The Dynkin diagram, which has $[n]$ as its set of nodes, is obtained as follows: first for all $i \neq j\in[n]$ we draw $\langle\alpha_i,\alpha_j^\vee\rangle\langle\alpha_j,\alpha_i^\vee\rangle$ edges between~$i$ and~$j$; then, if $\langle\alpha_i,\alpha_j^\vee\rangle \neq \langle\alpha_j,\alpha_i^\vee\rangle$ for some $i\neq j\in[n]$ we draw an arrow on top of the edges between $i$ and $j$, with the arrow going from $i$ to $j$ if~$\langle\alpha_i,\alpha_i\rangle \geq \langle\alpha_j,\alpha_j\rangle$. That~$\Phi$ is irreducible is equivalent to this Dynkin diagram being connected. There are no arrows in the Dynkin diagram of $\Phi$ if and only if $\Phi$ is simply laced. Figure~\ref{fig:dynkin_classification} depicts the Dynkin diagrams for all the irreducible root systems: these are the \emph{classical types} $A_n$ for $n\geq 1$, $B_n$ for~$n\geq 2$, $C_n$ for $n\geq 3$, $D_n$ for~$n\geq 4$, as well as the \emph{exceptional types} $G_2$, $F_4$, $E_6$, $E_7$, and $E_8$. The subscript in the name of the type denotes the number of nodes of the Dynkin diagram, i.e., the number of simple roots. Our numbering of the simple roots here is consistent with Bourbaki~\cite{bourbaki2002lie}. The \emph{dual Dynkin diagram} of $\Phi$, i.e., the Dynkin diagram of $\Phi^\vee$, is obtained from that of $\Phi$ by reversing the direction of the arrows.

\begin{figure}
\begin{tikzpicture}
\node (A) at (0,0) {\begin{tikzpicture}
\node[scale=\scl,draw,circle,fill=black] (1) at (-1,0) {};
\draw (-1,0) circle (0.1);
\node[scale=\scl,draw,circle,fill=black] (2) at (0,0) {};
\draw (0,0) circle (0.1);
\node[scale=\scl,draw,circle,fill=black] (3) at (1,0) {};
\draw (1,0) circle (0.1);
\node[scale=\scl,draw,circle,fill=black] (4) at (2,0) {};
\draw (2,0) circle (0.1);
\node[anchor=north] at (1.south) {$1$};
\node[anchor=north] at (2.south) {$2$};
\node[anchor=north] at (3.south) {${n-1}$};
\node[anchor=north] at (4.south) {$n$};
\draw[thick] (1)--(2);
\draw[thick,dashed] (2)--(3);
\draw[thick] (3)--(4);
\end{tikzpicture}};
\node at (A.south) {$A_n$};
\node (B) at (0,-1.5) {\begin{tikzpicture}[decoration={markings,mark=at position 0.7 with {\arrow{>}}}]
\node[scale=\scl,draw,circle] (1) at (-1,0) {};
\node[scale=\scl,draw,circle] (2) at (0,0) {};
\node[scale=\scl,draw,circle] (3) at (1,0) {};
\node[scale=\scl,draw,circle,fill=black] (4) at (2,0) {};
\draw (2,0) circle (0.1);
\node[anchor=north] at (1.south) {$1$};
\node[anchor=north] at (2.south) {$2$};
\node[anchor=north] at (3.south) {${n-1}$};
\node[anchor=north] at (4.south) {$n$};
\draw[thick] (1)--(2);
\draw[thick,dashed] (2)--(3);
\draw[thick,double,postaction={decorate}] (3)--(4);
\end{tikzpicture}};
\node at (B.south) {$B_n$};
\node (C) at (0,-3) {\begin{tikzpicture}[decoration={markings,mark=at position 0.7 with {\arrow{>}}}]
\node[scale=\scl,draw,circle,fill=black] (1) at (-1,0) {};
\node[scale=\scl,draw,circle] (2) at (0,0) {};
\node[scale=\scl,draw,circle] (3) at (1,0) {};
\node[scale=\scl,draw,circle] (4) at (2,0) {};
\node[anchor=north] at (1.south) {$1$};
\node[anchor=north] at (2.south) {$2$};
\node[anchor=north] at (3.south) {${n-1}$};
\node[anchor=north] at (4.south) {$n$};
\draw (-1,0) circle (0.1);
\draw (2,0) circle (0.1);
\draw[thick] (1)--(2);
\draw[thick,dashed] (2)--(3);
\draw[thick,double,postaction={decorate}] (4)--(3);
\end{tikzpicture}};
\node at (C.south) {$C_n$};
\node (D) at (0,-4.5) {\begin{tikzpicture}
\node[scale=\scl,draw,circle,fill=black] (1) at (-1,0) {};
\node[scale=\scl,draw,circle] (2) at (0,0) {};
\node[scale=\scl,draw,circle] (3) at (1,0) {};
\node[scale=\scl,draw,circle,fill=black] (4) at (2,-0.3) {};
\node[scale=\scl,draw,circle,fill=black] (5) at (2,0.3) {};
\node[anchor=north] at (1.south) {$1$};
\node[anchor=north] at (2.south) {$2$};
\node[anchor=north] at (3.south) {${n-2}$};
\node[anchor=north] at (4.south) {${n-1}$};
\node[anchor=south] at (5.north) {$n$};
\draw (-1,0) circle (0.1);
\draw (2,-0.3) circle (0.1);
\draw (2,0.3) circle (0.1);
\draw[thick] (1)--(2);
\draw[thick,dashed] (2)--(3);
\draw[thick] (3)--(4);
\draw[thick] (3)--(5);
\end{tikzpicture}};
\node at (D.south) {$D_n$};
\node (G) at (6,1) {\begin{tikzpicture}[decoration={markings,mark=at position 0.7 with {\arrow{>}}}]
\node[scale=\scl,draw,circle] (1) at (-1,0) {};
\node[scale=\scl,draw,circle] (2) at (0,0) {};
\draw (-1,0) circle (0.1);
\node[anchor=north] at (1.south) {$1$};
\node[anchor=north] at (2.south) {$2$};
\draw[thick,double distance=2pt,postaction={decorate}] (2)--(1);
\draw[thick] (1)--(2);
\end{tikzpicture}};
\node at (G.south) {$G_2$};
\node (F) at (6,-0.5) {\begin{tikzpicture}[decoration={markings,mark=at position 0.7 with {\arrow{>}}}]
\node[scale=\scl,draw,circle] (1) at (-1,0) {};
\node[scale=\scl,draw,circle] (2) at (0,0) {};
\node[scale=\scl,draw,circle] (3) at (1,0) {};
\node[scale=\scl,draw,circle] (4) at (2,0) {};
\draw (2,0) circle (0.1);
\node[anchor=north] at (1.south) {$1$};
\node[anchor=north] at (2.south) {$2$};
\node[anchor=north] at (3.south) {$3$};
\node[anchor=north] at (4.south) {$4$};
\draw[thick] (1)--(2);
\draw[thick,double,postaction={decorate}] (2)--(3);
\draw[thick] (3)--(4);
\end{tikzpicture}};
\node at (F.south) {$F_4$};
\node (E6) at (6,-2) {\begin{tikzpicture}
\node[scale=\scl,draw,circle,fill=black] (1) at (-1,0) {};
\node[scale=\scl,draw,circle] (2) at (1,0.5) {};
\node[scale=\scl,draw,circle] (3) at (0,0) {};
\node[scale=\scl,draw,circle] (4) at (1,0) {};
\node[scale=\scl,draw,circle] (5) at (2,0) {};
\node[scale=\scl,draw,circle,fill=black] (6) at (3,0) {};
\draw (-1,0) circle (0.1);
\draw (3,0) circle (0.1);
\node[anchor=north] at (1.south) {$1$};
\node[anchor=east] at (2.west) {$2$};
\node[anchor=north] at (3.south) {$3$};
\node[anchor=north] at (4.south) {$4$};
\node[anchor=north] at (5.south) {$5$};
\node[anchor=north] at (6.south) {$6$};
\draw[thick] (1)--(3)--(4)--(5)--(6);
\draw[thick] (2)--(4);
\end{tikzpicture}};
\node at (E6.south) {$E_6$};
\node (E7) at (6,-3.5) {\begin{tikzpicture}
\node[scale=\scl,draw,circle] (1) at (-1,0) {};
\node[scale=\scl,draw,circle] (2) at (1,0.5) {};
\node[scale=\scl,draw,circle] (3) at (0,0) {};
\node[scale=\scl,draw,circle] (4) at (1,0) {};
\node[scale=\scl,draw,circle] (5) at (2,0) {};
\node[scale=\scl,draw,circle] (6) at (3,0) {};
\node[scale=\scl,draw,circle,fill=black] (7) at (4,0) {};
\draw (1,0.5) circle (0.1);
\draw (4,0) circle (0.1);
\node[anchor=north] at (1.south) {$1$};
\node[anchor=east] at (2.west) {$2$};
\node[anchor=north] at (3.south) {$3$};
\node[anchor=north] at (4.south) {$4$};
\node[anchor=north] at (5.south) {$5$};
\node[anchor=north] at (6.south) {$6$};
\node[anchor=north] at (7.south) {$7$};
\draw[thick] (1)--(3)--(4)--(5)--(6)--(7);
\draw[thick] (2)--(4);
\end{tikzpicture}};
\node at (E7.south) {$E_7$};
\node (E8) at (6,-5) {\begin{tikzpicture}
\node[scale=\scl,draw,circle] (1) at (-1,0) {};
\node[scale=\scl,draw,circle] (2) at (1,0.5) {};
\node[scale=\scl,draw,circle] (3) at (0,0) {};
\node[scale=\scl,draw,circle] (4) at (1,0) {};
\node[scale=\scl,draw,circle] (5) at (2,0) {};
\node[scale=\scl,draw,circle] (6) at (3,0) {};
\node[scale=\scl,draw,circle] (7) at (4,0) {};
\node[scale=\scl,draw,circle] (8) at (5,0) {};
\draw (1,0.5) circle (0.1);
\draw (-1,0) circle (0.1);
\node[anchor=north] at (1.south) {$1$};
\node[anchor=east] at (2.west) {$2$};
\node[anchor=north] at (3.south) {$3$};
\node[anchor=north] at (4.south) {$4$};
\node[anchor=north] at (5.south) {$5$};
\node[anchor=north] at (6.south) {$6$};
\node[anchor=north] at (7.south) {$7$};
\node[anchor=north] at (8.south) {$8$};
\draw[thick] (1)--(3)--(4)--(5)--(6)--(7)--(8);
\draw[thick] (2)--(4);
\end{tikzpicture}};
\node at (E8.south) {$E_8$};
\end{tikzpicture}
\caption{Dynkin diagrams of all irreducible root systems. The nodes corresponding to minuscule weights are filled in black. The nodes whose removal does not disconnect the dual extended Dynkin diagram are circled.}  \label{fig:dynkin_classification}
\end{figure}
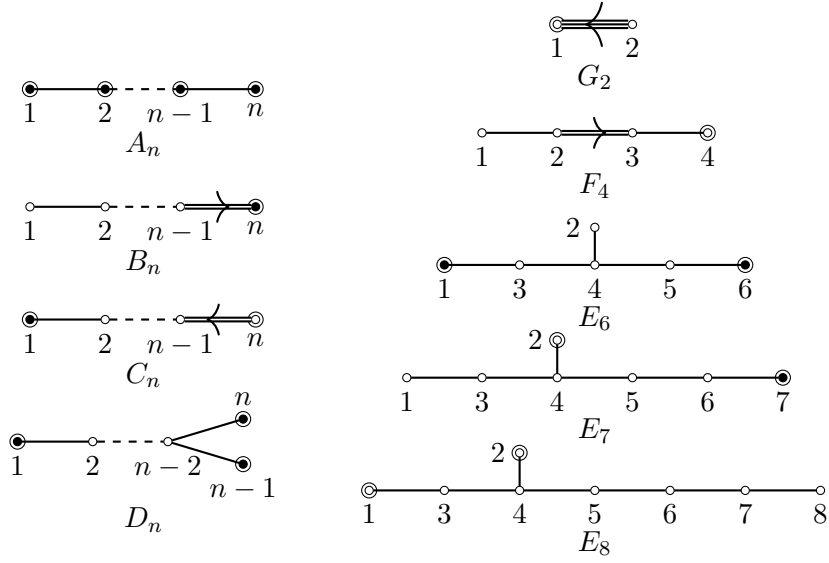

One can observe that in Figure~\ref{fig:dynkin_classification} we added some extra decorations to the Dynkin diagrams: we filled in black some nodes and we circled some nodes. Let us now explain what these extra decorations mean. In an irreducible root system $\Phi$, there exists a unique positive root that is maximal with respect to root order. We denote this root by $\theta \in \Phi$ and we call it the \emph{highest root}. We use $\widehat{\theta}$ to denote the root of $\Phi$ for which~$\widehat{\theta}^\vee$ is the highest root of~$\Phi^\vee$ ($\widehat{\theta}$ is called the \emph{highest short root}). The existence of a highest root implies that the minuscule weights of $\Phi$ are a subset of the fundamental weights. In Figure~\ref{fig:dynkin_classification}, we filled in black the nodes $i\in[n]$ of the Dynkin diagram for which $\omega_i$ is a minuscule weight.  And in Figure~\ref{fig:dynkin_classification}, we circled the nodes $i\in[n]$ of the Dynkin diagram for which $\{\alpha_1^\vee,\alpha_2^\vee,\ldots,\overline{\alpha_i^\vee},\ldots,\alpha_n^\vee,-\widehat{\theta}^\vee\}$ (where the overline denotes omission) forms the set of simple roots of an \emph{irreducible} root system. We also refer to these nodes as ``the nodes whose removal does not disconnect the dual extended Dynkin diagram,'' although we will not explain precisely what an extended Dynkin diagram is. Note that the nodes corresponding to minuscule weights are always a subset of this collection (this can be established uniformly: e.g., there is a copy of the abelian group~$P/Q$ inside of~$W$ which transitively permutes the set $\{\alpha^\vee_i\colon\omega_i \textrm{ is minuscule}\}\cup\{-\widehat{\theta}^\vee\}$, as described in~\cite{lam2012alcoved}). 

By now we have seen the significance of the minuscule weights. What is the significance of the nodes whose removal does not disconnect the dual extended Dynkin diagram? They arise in a description of the facets of the root polytope due to Cellini and Marietti~\cite{cellini2015root}.

\begin{thm}[{Cellini and Marietti~\cite{cellini2015root}}] \label{thm:root_polytope_facets}
We have
\[\mathcal{P}_{\Phi^\vee} = \left\{v\in V\colon \langle v,w(\omega_i)\rangle \leq a_i, \parbox{2.5in}{ \begin{center} for all $w\in W$, and all nodes $i\in[n]$ \\ of the Dynkin diagram of $\Phi$  \\ whose removal does not disconnect \\ the dual extended Dynkin diagram\end{center}}\right\}, \]
where the coefficients $a_i$ are determined by writing $\widehat{\theta}^\vee = a_1\alpha^\vee_1+a_2\alpha^\vee_2+\cdots+\alpha^\vee_n$.
\end{thm}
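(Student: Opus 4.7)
The plan is to prove Theorem~\ref{thm:root_polytope_facets} via polar duality. I would first identify $\mathcal{P}_{\Phi^\vee}^* = \{v \in V : \langle v,\alpha^\vee\rangle \leq 1 \text{ for all } \alpha \in \Phi\}$ with the union $\bigcup_{w \in W} w(A)$ of $W$-translates of the ``fundamental alcove'' $A \coloneqq \{v \in V : \langle v,\alpha_j^\vee\rangle \geq 0 \text{ for all } j \in [n],\ \langle v,\widehat{\theta}^\vee\rangle \leq 1\}$. This holds because on the dominant chamber of $\Phi$, the linear function $\alpha^\vee \mapsto \langle v,\alpha^\vee\rangle$ is maximized over $\Phi^\vee$ at $\widehat{\theta}^\vee$ (the highest coroot has the componentwise-largest expansion in simple coroots, and negative coroots give nonpositive pairings). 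Since $A$ is a simplex with vertex set $\{0\} \cup \{\omega_i/a_i : i \in [n]\}$, and the facets of $\mathcal{P}_{\Phi^\vee}$ correspond by polar duality to the vertices of $\mathcal{P}_{\Phi^\vee}^*$, the candidate facet normals are exactly the $W$-orbit representatives $\omega_i/a_i$.

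Next I would determine which candidates are genuine vertices. The origin is fixed by $W$ and lies in every $w(A)$, so it is in the interior of $\mathcal{P}_{\Phi^\vee}^*$ and hence not a vertex. For each $i \in [n]$, the normal cone of $\mathcal{P}_{\Phi^\vee}^*$ at $\omega_i/a_i$ is generated by the set $S_i \coloneqq \{\alpha^\vee \in \Phi^\vee : \langle \omega_i/a_i,\alpha^\vee\rangle = 1\}$ of coroots on active supporting hyperplanes at $\omega_i/a_i$. Since $\langle \omega_i/a_i,\alpha^\vee\rangle = b_i/a_i$ when $\alpha^\vee = \sum_j b_j\,\alpha_j^\vee$, this set is precisely the coroots whose $\alpha_i^\vee$-coefficient equals $a_i$. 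Standard polyhedral duality gives that $\omega_i/a_i$ is a vertex of $\mathcal{P}_{\Phi^\vee}^*$ if and only if $S_i$ linearly spans $V$.

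The hardest step, and the main obstacle, is the equivalence: $S_i$ spans $V$ if and only if node $i$ is good. I would prove this by a change of basis to the rank-$n$ set $\{\alpha_j^\vee : j \neq i\} \cup \{\alpha_0^\vee\}$, with $\alpha_0^\vee \coloneqq -\widehat{\theta}^\vee$, which is a simple system for the rank-$n$ sub-root system $\Psi_i$ of $\Phi^\vee$. Re-expanding any coroot in this basis, an element $\alpha^\vee$ lies in $S_i$ exactly when its $\alpha_0^\vee$-coefficient equals $-1$. If $\Psi_i$ is reducible, writing $\Psi_i = \Psi_i^{(1)} \oplus \cdots \oplus \Psi_i^{(r)}$ with $\alpha_0^\vee \in \Psi_i^{(1)}$, any root of $\Psi_i$ with nonzero $\alpha_0^\vee$-coefficient must lie in $\mathrm{Span}_{\mathbb{R}}(\Psi_i^{(1)})$, a proper subspace of $V$, so $S_i$ fails to span $V$. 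Conversely, if $\Psi_i$ is irreducible, I would use the connectedness of its Dynkin diagram to traverse a path from $\alpha_0^\vee$ to each other simple-root node and iteratively produce $W_I$-images of $\widehat{\theta}^\vee$ in $S_i$ whose pairwise differences span $\mathrm{Span}(\alpha_j^\vee : j \neq i)$; together with $\widehat{\theta}^\vee$ itself these span $V$. Combining this equivalence with the polar-duality setup will yield the claimed facet description.
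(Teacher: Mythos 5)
The paper does not actually prove Theorem~\ref{thm:root_polytope_facets} --- it is stated and attributed to Cellini and Marietti~\cite{cellini2015root} and then used as a black box in the appendix's case analysis --- so there is no in-paper argument to compare yours against. I will therefore assess your proposal on its own.

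Your overall strategy is sound and fairly natural: pass to the polar dual $\mathcal{P}^*_{\Phi^\vee}$, identify it with the $W$-translates of the fundamental alcove $A$ (which is correct, since on the dominant chamber the binding coroot is $\widehat{\theta}^\vee$), read off the vertices of $A$ as $\{0\}\cup\{\omega_i/a_i\}$, discard $0$ as interior, and then decide which $\omega_i/a_i$ are genuine vertices by checking when the set $S_i$ of active coroots spans $V$. The irreducible direction is essentially right: $W_{[n]\setminus\{i\}}$ preserves the $\alpha_i^\vee$-coordinate, so $W_{[n]\setminus\{i\}}(\widehat{\theta}^\vee)\subseteq S_i$, and your path-traversal in the Dynkin diagram of $\Psi_i$ produces, step by step, a nonzero multiple of each $\alpha_j^\vee$ ($j\ne i$) as a difference of orbit elements, so $S_i$ spans.

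The genuine gap is in the reducible case. There you write ``any root of $\Psi_i$ with nonzero $\alpha_0^\vee$-coefficient must lie in $\mathrm{Span}_{\mathbb{R}}(\Psi_i^{(1)})$, so $S_i$ fails to span $V$,'' but $S_i$ is a subset of $\Phi^\vee$, and $\Psi_i$ --- the root system generated by $\{\alpha_j^\vee : j\ne i\}\cup\{-\widehat{\theta}^\vee\}$ --- is in general a \emph{proper} sub-root system of $\Phi^\vee$. You have shown elements of $S_i$ have $\alpha_0^\vee$-coordinate $-1$ in the new basis (hence lie in the $\mathbb{Z}$-span of the new basis since $a_i\mid a_i$), but not that they are actually roots of $\Psi_i$, and without that the conclusion does not follow from the displayed sentence. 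One way to close the gap without invoking Borel--de Siebenthal: take $\alpha^\vee\in S_i$ and a saturated chain $\widehat{\theta}^\vee=\beta_0^\vee>\beta_1^\vee>\cdots>\beta_m^\vee=\alpha^\vee$ of positive coroots, each step subtracting a simple coroot. Since $\widehat{\theta}^\vee-\alpha^\vee$ has zero $\alpha_i^\vee$-coefficient and all subtractions are nonnegative, the chain never subtracts $\alpha_i^\vee$. Moreover, if $J_2$ indexes a component of $\Phi_{[n]\setminus\{i\}}$ with $\langle\widehat{\theta}^\vee,\alpha_l\rangle=0$ for all $l\in J_2$, then by induction down the chain (subtracting $\alpha_j^\vee$ with $j\notin J_2$ leaves $\langle\,\cdot\,,\alpha_l\rangle=0$ for $l\in J_2$) one never subtracts $\alpha_l^\vee$ with $l\in J_2$. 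Hence $S_i\subseteq\widehat{\theta}^\vee+\mathrm{Span}_{\mathbb{R}}\{\alpha_j^\vee : j\in J_1\}=\mathrm{Span}_{\mathbb{R}}(\Psi_i^{(1)})$, which is what you need. With that inserted, the argument is complete.
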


Note that $\omega_i$ is a minuscule weight of $\Phi$ if and only if the corresponding coefficient~$a_i$ in the expansion $\widehat{\theta}^\vee = a_1\alpha^\vee_1+a_2\alpha^\vee_2+\cdots+\alpha^\vee_n$ satisfies $a_i=1$.

Now let us return to our discussion of Lemma~\ref{lem:root_projection}. Lemma~\ref{lem:root_projection} asserts that for any nonzero subspace $\{0\}\neq U\subseteq V$ spanned by a subset of~$\Phi^\vee$, there is some $1\leq \kappa <2$ so that $\pi_U(\mathcal{P}_{\Phi^\vee}) \subseteq \kappa\cdot \mathcal{P}_{\Phi_U^\vee}$ (where we recall the notation $\Phi^\vee_U\coloneqq \Phi^\vee\cap U$). Recall that in Example~\ref{ex:d4} we gave an example showing what these projections look like, and explaining why this assertion is nontrivial.

\begin{table}
\renewcommand*{\arraystretch}{2.5}
\begin{tabular}{c|c}
$\Phi$ & Dominant vertices of $\mathcal{P}^{*}_{\Phi^\vee}$ \\ \hline
$A_n$ & $\omega_j = \sum_{i=1}^{n} \min \left(\frac{i(n+1-j)}{n+1},\frac{j(n+1-i)}{n+1}\right) \alpha_i$, for all $j\in [n]$. \\ \hline
$B_n$ & $\omega_n = \sum_{i=1}^{n} \frac{i}{2} \alpha_i$. \\ \hline
$C_n$ & \parbox{4in}{\begin{center}  $\omega_1 = \left(\sum_{i=1}^{n-1} \alpha_i\right) + \frac{1}{2} \alpha_n$;  \\  $\frac{1}{2}\omega_n = \left(\sum_{i=1}^{n-1} \frac{i}{2}\alpha_i\right) + \frac{n}{4} \alpha_n$.\end{center}} \\ \hline
$D_n$ & \parbox{4in}{\begin{center} $\omega_1 =  \left( \sum_{i=1}^{n-2} \alpha_i \right) + \frac{1}{2}\alpha_{n-1}+ \frac{1}{2}\alpha_n$; \\ $\omega_{n-1} =  \left( \sum_{i=1}^{n-2} \frac{i}{2} \alpha_i \right) + \frac{n}{4}\alpha_{n-1}+ \frac{n-2}{4}\alpha_n$;  \\ $\omega_{n} =  \left( \sum_{i=1}^{n-2} \frac{i}{2} \alpha_i \right) + \frac{n-2}{4}\alpha_{n-1}+ \frac{n}{4}\alpha_n$.\end{center}} \\ \hline
\end{tabular}
\medskip
\caption{Formulas for the simple root coordinates of the dominant vertices of~$\mathcal{P}^{*}_{\Phi^\vee}$.} \label{tab:minuscule_coeffs}
\end{table}

We find it more convenient to work with the polar dual of the root polytope rather than the root polytope itself: this polar dual is $\mathcal{P}^{*}_{\Phi^\vee}=\{v\in V\colon \langle v,\alpha^\vee \rangle \leq 1\}$. Theorem~\ref{thm:root_polytope_facets} says that the vertices of~$\mathcal{P}^{*}_{\Phi^\vee}$ lying in $P_{\geq 0}^{\mathbb{R}}$ are $\frac{1}{a_i}\omega_i$ for nodes $i\in [n]$ whose removal does not disconnect the dual extended Dynkin diagram. Let us refer to these as the \emph{dominant vertices} of $\mathcal{P}^{*}_{\Phi^\vee}$. The minuscule weights of $\Phi$ are always dominant vertices of $\mathcal{P}^{*}_{\Phi^\vee}$, but there may be more dominant vertices than this (although there are not too many more, as can be seen in Figure~\ref{fig:dynkin_classification}). Formulas for the simple root coordinates of the dominant vertices of~$\mathcal{P}^{*}_{\Phi^\vee}$ for the classical types are recorded in Table~\ref{tab:minuscule_coeffs}: these can be obtained by computing the inverse of a Cartan matrix; see, e.g.~\cite[\S13, Table~1]{humphreys1972lie}. We have so far assumed that $\Phi$ is irreducible; but note that if $\Phi$ is reducible with $\Phi = \Phi' \oplus \Phi''$ then $\mathcal{P}^{*}_{\Phi^\vee} = \mathcal{P}^{*}_{(\Phi')^\vee} \times \mathcal{P}^{*}_{(\Phi'')^\vee}$ (and so the dominant vertices of $\mathcal{P}^{*}_{\Phi^\vee}$ are then the sums of the dominant vertices of~$\mathcal{P}^{*}_{(\Phi')^\vee}$ and of~$\mathcal{P}^{*}_{(\Phi'')^\vee}$).

Lemma~\ref{lem:root_projection} amounts to the assertion for any nonzero subspace $\{0\}\neq U\subseteq V$ spanned by a subset of~$\Phi^\vee$, we have $\langle v,\alpha^\vee \rangle < 2$ for all vertices $v$ of~$\mathcal{P}^{*}_{\Phi_U^\vee}$ and all~$\alpha^\vee \in \Phi^\vee$. First let us observe that we can reduce to the case where $\mathrm{dim}(U)=  \mathrm{dim}(V)-1$. Indeed, note that if there were $U\subseteq V$ and $\alpha^\vee \in \Phi^\vee$ which provided a counterexample, then this counterexample would also occur for the root system~$\Phi \cap (U+\mathrm{Span}_{\mathbb{R}}\{\alpha\})$. Moreover, by Proposition~\ref{prop:parabolic_subspace}, we can also assume that $U$ is a parabolic subspace. Thus, we may assume that~$U$ is a \emph{maximal parabolic subspace} of $\Phi$, i.e., a subspace spanned by all but one of the simple roots. For $i\in[n]$, we use the notation $\Phi_i \coloneqq \Phi_{[n]\setminus \{i\}}$ and~$W_i \coloneqq W_{[n]\setminus \{i\}}$.

So to prove Lemma~\ref{lem:root_projection} we need only to show that for all $i\in[n]$, we have $\langle v,\alpha^\vee \rangle < 2$ for all vertices $v$ of~$\mathcal{P}^{*}_{\Phi_i^\vee}$ and all~$\alpha^\vee \in \Phi^\vee$. By the $W_i$-invariance of $\mathcal{P}^{*}_{\Phi_i^\vee}$, it is enough to prove this for $\Phi^\vee_i$-dominant vertices $v$ of~$\mathcal{P}^{*}_{\Phi_i^\vee}$, and for $\alpha^\vee \in \Phi^\vee$ which are $\Phi^\vee_i$-dominant in the sense that $\langle \alpha_j,\alpha^\vee\rangle \geq0$ for all $j\neq i\in [n]$. Furthermore, clearly we need only consider $\alpha^\vee \in \Phi^\vee\setminus \Phi^\vee_i$. We have a good understanding of the $\Phi^\vee_i$-dominant vertices of~$\mathcal{P}^{*}_{\Phi_i^\vee}$ thanks to Theorem~\ref{thm:root_polytope_facets}. To understand the $\Phi^\vee_i$-dominant coroots $\alpha^\vee \in \Phi^\vee\setminus \Phi_i^\vee$, we can appeal to Oshima's lemma:

\begin{lemma}[{``Oshima's lemma''~\cite{oshima2006classification, dyer2018parabolic}}] \label{lem:oshima}
For any $i \in [n]$, any $c\in\mathbb{Z}$, and any $\ell \in \mathbb{R}_{\geq 0}$, there is at most one coroot~$\alpha^\vee \in \Phi^\vee\setminus \Phi_i^\vee$ with $\alpha^\vee_i$ coordinate $c$ (in the basis of simple coroots) and $\langle \alpha^\vee,\alpha^\vee\rangle = \ell$ which is $\Phi^\vee_i$-dominant.
\end{lemma}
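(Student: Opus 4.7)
The plan is to reinterpret the lemma in terms of $W_i$-orbits and then verify it via the classification of irreducible root systems.

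First I would observe that both the $\alpha_i^\vee$-coordinate $c$ and the squared length $\ell$ are constant on $W_i$-orbits of coroots. Indeed, each generator $s_{\alpha_j}$ with $j\neq i$ acts on coroots via $s_{\alpha_j}(\gamma^\vee)=\gamma^\vee-\langle\gamma^\vee,\alpha_j\rangle\,\alpha_j^\vee$, which only modifies the $\alpha_j^\vee$-coordinate; and $W$ acts orthogonally on $V$. Since each $W_i$-orbit in $\Phi^\vee\setminus\Phi_i^\vee$ contains a unique $\Phi_i^\vee$-dominant representative (the closed $\Phi_i^\vee$-dominant chamber, extended trivially along $\mathrm{Span}_{\mathbb{R}}(\Phi_i)^\perp$, is a fundamental domain for $W_i$ on $V$), the lemma is equivalent to the assertion that the pair $(c,\ell)$ separates $W_i$-orbits on $\Phi^\vee\setminus\Phi_i^\vee$.

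Next I would pass to orthogonal projections in order to reduce the problem to a lower-dimensional one. Writing $\alpha^\vee=c\alpha_i^\vee+\beta^\vee$ with $\beta^\vee\in\mathrm{Span}_{\mathbb{Z}}(\{\alpha_j^\vee\colon j\neq i\})$, let $\pi$ denote the orthogonal projection of $V$ onto $\mathrm{Span}_{\mathbb{R}}(\Phi_i)$. Then $\pi(\alpha^\vee)=c\,\pi(\alpha_i^\vee)+\beta^\vee$ lies in a lattice coset of $\mathrm{Span}_{\mathbb{Z}}(\{\alpha_j^\vee\colon j\neq i\})$ depending only on $c$, has squared length $\ell-c^{2}\,|\alpha_i^\vee-\pi(\alpha_i^\vee)|^{2}$ depending only on $(c,\ell)$, and is $\Phi_i^\vee$-dominant iff $\alpha^\vee$ is. So the lemma reduces to: at most one $\Phi_i^\vee$-dominant vector in this prescribed lattice coset of prescribed length arises as the projection of a coroot of $\Phi^\vee$.

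The final step is a case-by-case verification. For the classical types $A_n$, $B_n$, $C_n$, $D_n$ the coroots of $\Phi^\vee$ have explicit descriptions in terms of the standard basis of $\mathbb{R}^{n}$ (or a quotient thereof); one can enumerate, for each maximal parabolic and each level $c$, the coroots at that level and check that up to the action of the (signed) permutation group $W_i$ they are separated by length. For the exceptional types $G_2$, $F_4$, $E_6$, $E_7$, $E_8$ the verification reduces to a finite check that can be carried out by direct computer enumeration, e.g.\ in Sage.

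The hard part will be the absence of a uniform, type-free proof. Ideally one would like a case-free argument --- perhaps by induction on the rank of $\Phi$ via the $\alpha_i^\vee$-grading of $\Phi^\vee$, or by an exchange argument exploiting the observation that the difference of two distinct $\Phi_i^\vee$-dominant coroots at level $c$ of the same length must be orthogonal to their sum, which is itself $\Phi_i^\vee$-dominant and lies at level $2c$ --- but such an argument appears elusive, and is presumably why both Oshima and Dyer fall back on the classification in their proofs.
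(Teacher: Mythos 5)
The paper itself never proves this lemma: it is imported as a black box, with citations to Oshima's original classification paper and to Dyer--Lehrer, and the text immediately following notes only that the cited results are more general (covering all parabolic subgroups, not just maximal ones). So there is no ``paper's own proof'' to compare against, and anything you write here is supplementary.

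That said, your outline is sound and matches the route the cited sources actually take. The preliminary reductions are all correct: $s_{\alpha_j}$ for $j\neq i$ fixes the $\alpha_i^\vee$-coordinate of a coroot (since $s_{\alpha_j}(\gamma^\vee)=\gamma^\vee-\langle\gamma^\vee,\alpha_j\rangle\alpha_j^\vee$), $W_i$ acts by isometries so preserves $\ell$, and each $W_i$-orbit in $\Phi^\vee\setminus\Phi_i^\vee$ has a unique $\Phi_i^\vee$-dominant representative; hence the lemma is exactly the statement that the invariant pair $(c,\ell)$ separates $W_i$-orbits on $\Phi^\vee\setminus\Phi_i^\vee$. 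Your projection computation is also correct, and the observation that $\pi$ is injective on coroots at a fixed level $c$ means the reformulation in terms of $\pi(\Phi^\vee)$ is legitimate --- though as you acknowledge it is a reformulation rather than a genuine simplification, and the burden still falls on a type-by-type enumeration. Two caveats: first, this is an outline rather than a proof, since the classical-type enumerations and the exceptional-type computer checks are described but not carried out; second, your concluding ``exchange argument'' idea (that $\alpha^\vee-\beta^\vee$ is orthogonal to $\alpha^\vee+\beta^\vee$ and the latter is $\Phi_i^\vee$-dominant at level $2c$) is a correct observation but, as you say, does not obviously close the argument --- it is not clear how to derive a contradiction from it without further input, and indeed no uniform proof of Oshima's lemma appears to be known. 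For the purposes of this paper it would be entirely appropriate to simply cite the result as the authors do; if you do want a self-contained account, your plan is the right one, but you would need to actually populate the case analysis (the information in Table~\ref{tab:parabolic_orbit_reps} of the appendix is essentially the output of such a check for the classical types at maximal parabolics).
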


Actually Oshima's lemma is more general in that it applies to all parabolic subgroups, not just maximal parabolic subgroups. But Lemma~\ref{lem:oshima} is all we will need. Using Lemma~\ref{lem:oshima} (together with knowledge of the simple root coordinates of the highest roots as recorded for instance in~\cite[\S12, Table~2]{humphreys1972lie}) it is easy to determine all the $\Phi^\vee_i$-dominant representatives for $W_i$-orbtis of~$\Phi^\vee \setminus \Phi^\vee_i$. For the classical types, this information is recorded in Table~\ref{tab:parabolic_orbit_reps}. 

By combining the information contained in Tables~\ref{tab:minuscule_coeffs} and~\ref{tab:parabolic_orbit_reps} (together with knowledge of what the Dynkin diagram of $\Phi$ looks like), we can compute for any $i\in[n]$ the maximum value of~$\langle v,\alpha^\vee \rangle$ over $v\in\mathcal{P}^{*}_{\Phi_i^\vee}$ and~$\alpha^\vee \in \Phi^\vee\setminus \Phi^\vee_i$ for the classical types: this information is recorded in Table~\ref{tab:classical_maxes}. For the exceptional types, we have computed these maximums via computer: this information is recorded in Table~\ref{tab:exceptional_maxes}. Sage~\cite{sagemath, Sage-Combinat} code used to generate Table~\ref{tab:exceptional_maxes} is available upon request from the first author. 

An inspection of Table~\ref{tab:classical_maxes} and Table~\ref{tab:exceptional_maxes} completes the verification of Lemma~\ref{lem:root_projection}.

\begin{table}
\renewcommand*{\arraystretch}{1.8}
\begin{tabular}{c|c|c}
$\Phi$ & $i$ & $\Phi^\vee_i$-dominant representatives for $W_i$-orbtis of $\Phi^\vee \setminus \Phi^\vee_i$ \\ \hline
$A_n$ & $i=1,\ldots,n$ & \parbox{4in}{\begin{center} $\alpha^\vee_1+\alpha^\vee_2+\cdots+\alpha^\vee_n $; \\ 
$-\alpha^\vee_i$.
\end{center}} \\ \hline \hline
$B_n$ & $i=1$ & \parbox{4in}{\begin{center} $2\alpha^\vee_1+2\alpha^\vee_2+\cdots+2\alpha^\vee_{n-1}+\alpha^\vee_n$ (long); \\ 
$-2\alpha^\vee_1-2\alpha^\vee_2-\cdots-2\alpha^\vee_{n-1}-\alpha^\vee_n$ (long); \\ 
$\alpha^\vee_1+2\alpha^\vee_2+2\alpha^\vee_3+\cdots+2\alpha^\vee_{n-1}+\alpha^\vee_n$ (short); \\ 
$-\alpha^\vee_1$ (short).
 \end{center}}  \\ \hline
$B_n$ & $i=2,\ldots,n-2$ &  \parbox{4in}{\begin{center} $2\alpha^\vee_1+2\alpha^\vee_2+\cdots+2\alpha^\vee_{n-1}+\alpha^\vee_n$ (long); \\ 
$-2\alpha^\vee_i-2\alpha^\vee_{i-1}-\cdots-2\alpha^\vee_{n-1}-\alpha^\vee_n$ (long); \\
$\alpha^\vee_1+2\alpha^\vee_2+2\alpha^\vee_3+\cdots+2\alpha^\vee_{n-1}+\alpha^\vee_n$ (short); \\ 
$-\alpha^\vee_{i-1}-2\alpha^\vee_{i}-2\alpha^\vee_{i+1}-\cdots-2\alpha^\vee_{n-1}-\alpha^\vee_n$ (short); \\ 
$\alpha^\vee_1+\cdots+\alpha^\vee_i+2\alpha^\vee_{i+1}+\cdots+2\alpha^\vee_{n-1}+\alpha^\vee_n$ (short); \\
$-\alpha^\vee_i$ (short).
 \end{center}}\\ \hline
$B_n$ & $i=n$ & \parbox{4in}{\begin{center} $2\alpha^\vee_1+2\alpha^\vee_2+\cdots+2\alpha^\vee_{n-1}+\alpha^\vee_n$ (long); \\ 
$-\alpha^\vee_n$ (long); \\
$\alpha^\vee_1+2\alpha^\vee_2+2\alpha^\vee_3+\cdots+2\alpha^\vee_{n-1}+\alpha^\vee_n$ (short); \\ 
$-\alpha^\vee_{n-1}-\alpha^\vee_n$ (short).
\end{center}} \\ \hline \hline
$C_n$ & $i=1$ & \parbox{4in}{\begin{center} $\alpha^\vee_1+2\alpha^\vee_2+2\alpha^\vee_3+\cdots+2\alpha^\vee_n$ (long); \\ 
$-\alpha_1^\vee$ (long); \\ 
$\alpha_1^\vee+\alpha_2^\vee+\cdots+\alpha_n^\vee$ (short); \\ 
$-\alpha_1^\vee-\alpha_2^\vee-\cdots-\alpha_n^\vee$ (short).
\end{center}}  \\ \hline
$C_n$ & $i=2,\ldots,n-2$ &  \parbox{4in}{\begin{center} $\alpha^\vee_1+2\alpha^\vee_2+2\alpha^\vee_3+\cdots+2\alpha^\vee_n$ (long); \\ 
$-\alpha^\vee_{i-1}-2\alpha^\vee_{i}-2\alpha^\vee_{i+1}-\cdots-2\alpha^\vee_n$ (long); \\ 
$\alpha^\vee_i+2\alpha^\vee_{i+1}+2\alpha^\vee_{i+2}+\cdots+2\alpha^\vee_{n}$ (long); \\ 
$-\alpha^\vee_i$ (long); \\
$\alpha_1^\vee+\alpha_2^\vee+\cdots+\alpha_n^\vee$ (short); \\ 
$-\alpha^\vee_{i}-\alpha^\vee_{i+2}-\cdots-\alpha^\vee_n$ (short). \\ 
 \end{center}}\\ \hline
 $C_n$ & $i=n$ & \parbox{4in}{\begin{center} $\alpha^\vee_1+2\alpha^\vee_2+2\alpha^\vee_3+\cdots+2\alpha^\vee_n$ (long); \\ 
$-\alpha^\vee_{n-1}-2\alpha^\vee_n$ (long); \\ 
$\alpha_1^\vee+\alpha_2^\vee+\cdots+\alpha_n^\vee$ (short); \\ 
$-\alpha^\vee_n$ (short). 
\end{center}} \\ \hline \hline
$D_n$ & $i\in\{1,n-1,n\}$ & \parbox{4in}{\begin{center} $\alpha^\vee_1+\alpha^\vee_2+\cdots+\alpha^\vee_n $; \\  
$-\alpha^\vee_i$. 
\end{center}} \\ \hline
$D_n$ & $i=2,\ldots,n-2$ & \parbox{4in}{\begin{center} $\alpha^\vee_1+2\alpha^\vee_2+2\alpha^\vee_3+\cdots+2\alpha^\vee_{n-2}+\alpha^\vee_{n-1}+\alpha^\vee_n$; \\ 
$-\alpha^\vee_{i-1}-2\alpha^\vee_{i}-2\alpha^\vee_{i}-\cdots-2\alpha^\vee_{n-2}-\alpha^\vee_{n-1}-\alpha^\vee_{n}$; \\
$\alpha^\vee_1+\cdots+\alpha^\vee_i+2\alpha^\vee_{i+1}+\cdots+2\alpha^\vee_{n-2}+\alpha^\vee_{n-1}+\alpha^\vee_{n-2}$; \\ 
$-\alpha^\vee_i$. 
\end{center}} \\ \hline
\end{tabular}
\medskip
\caption{Orbit representatives for the roots under the action of maximal parabolic subgroups for the classical types.}  \label{tab:parabolic_orbit_reps}
\end{table}

\begin{table}
\renewcommand*{\arraystretch}{2.5}
\begin{tabular}{c|c|c|c}
$\Phi$ & $i$ & max~$\langle v,\alpha^\vee\rangle$ & $v \in \mathcal{P}^{*}_{\Phi_i^\vee}$; $\alpha^\vee \in \Phi^\vee\setminus \Phi^\vee_i$  attaining max\\ \hline
\rowcolor[gray]{.9} $A_n$ & $i=1,\ldots,n$ & \parbox{1.25in}{\begin{center} $\frac{i-1}{i} + \frac{n-i}{n-i+1}$; \\(maximum at \\  $i=\lfloor \frac{n-1}{2}\rfloor+1$:\\ $\frac{\lfloor \frac{n-1}{2}\rfloor}{\lfloor \frac{n-1}{2}\rfloor+1} + \frac{\lceil \frac{n-1}{2}\rceil}{\lceil \frac{n-1}{2}\rceil+1}$) \end{center}} & $v=\omega'_{i-1}+\omega'_{i+1}$; $\alpha^\vee = -\alpha_i^\vee$ \\ \hline \hline
$B_n$ & $i=1$ & $\frac{1}{2}$  & $v=\omega'_{n}$; $\alpha^\vee =-\alpha_i^\vee$ \\ \hline
$B_n$ & $i=2,\ldots,n-1$ & $\frac{2i-2}{i}$  & \parbox{2.5in}{\begin{center}$v=\omega_{i-1}'+\omega'_{n}$; \\$\alpha^\vee =-2\alpha^\vee_i-\cdots-2\alpha^\vee_{n-1}-\alpha^\vee_n$ \end{center}}\\ \hline
\rowcolor[gray]{.9} $B_n$ & $i=n$ & $\frac{2n-2}{n}$ & $v = \omega'_{n-1}$; $\alpha^\vee =-\alpha^\vee_n$ \\ \hline \hline
$C_n$ & $i=1$ & $1$ & $v=\omega'_2$; $\alpha^\vee =-\alpha^\vee_1$ \\ \hline
\rowcolor[gray]{.9} $C_n$ & $i=2,\ldots,n-1$ & \parbox{1.25in}{\begin{center}$\frac{2i-1}{i}$; \\ (maximum at \\ $i=n-1$: $\frac{2n-3}{n-1}$) \end{center}} & $v=\omega'_{i-1}+\omega'_{i+1}$; $\alpha^\vee =-\alpha^\vee_i$ \\ \hline
$C_n$ & $i=n$ & $\frac{2n-4}{n}$ & \parbox{2.5in}{\begin{center} $v=\omega'_{2}$; \\ $\alpha^\vee =\alpha^\vee_1+2\alpha^\vee_2+2\alpha^\vee_3+\cdots+2\alpha^\vee_n$\end{center}} \\ \hline \hline
$D_n$ & $i=1$ & $1$ & $v=\omega'_2$; $\alpha^\vee =-\alpha^\vee_1$ \\ \hline
$D_n$ & $i=2,\ldots,n-3$ & $\frac{2i-1}{i}$ & $v=\omega'_{i-1}+\omega'_{i+1}$; $\alpha^\vee =-\alpha^\vee_1$ \\ \hline
\rowcolor[gray]{.9} $D_n$ & $i=n-2$ & $\frac{2n-5}{n-2}$ & $v=\omega'_{n-3}+\omega'_{n-1}+\omega'_{n}$; $\alpha^\vee =-\alpha^\vee_1$ \\ \hline
$D_n$ & $i\in\{n-1,n\}$ & $\frac{2n-4}{n}$ & $v=\omega'_{n-2}$; $\alpha^\vee =-\alpha^\vee_i$ \\ \hline
\end{tabular}
\medskip
\caption{Maximum values of~$\langle v,\alpha^\vee \rangle$ for $v\in \mathcal{P}^{*}_{\Phi_i^\vee}$ and $\alpha^\vee \in \Phi^\vee\setminus \Phi^\vee_i$ for the classical types. We use $\{\omega'_j\colon j\neq i\in[n]\}$ to denote the fundamental weights of $\Phi_i$ (with the convention $\omega'_0 \coloneqq \omega'_{n+1} \coloneqq 0$). The overall maximum for each $\Phi$ is highlighted in gray.} \label{tab:classical_maxes}
\end{table}

\begin{table}
\begin{tabular}{c|c|c}
$\Phi$ & $i$ & max~$\langle v,\alpha^\vee\rangle$ \\ \hline
\rowcolor[gray]{.9} $G_2$ & $i=1$ & $3/2$ \\ \hline
$G_2$ & $i=2$ & $1/2$ \\ \hline \hline
$F_4$ & $i=1$ & $1$ \\ \hline
$F_4$ & $i=2$ & $5/3$ \\ \hline
\rowcolor[gray]{.9} $F_4$ & $i=3$ & $11/6$ \\ \hline
$F_4$ & $i=4$ & $3/2$ \\ \hline \hline
$E_6$ & $i=1$ & $5/4$ \\ \hline
$E_6$ & $i=2$ & $3/2$ \\ \hline
$E_6$ & $i=3$ & $17/10$ \\ \hline
\rowcolor[gray]{.9} $E_6$ & $i=4$ & $11/6$ \\ \hline
$E_6$ & $i=5$ & $17/10$ \\ \hline
$E_6$ & $i=6$ & $5/4$ \\ \hline
\end{tabular} \qquad
\begin{tabular}{c|c|c}
$\Phi$ & $i$ & max~$\langle v,\alpha^\vee\rangle$ \\ \hline
$E_7$ & $i=1$ & $3/2$ \\ \hline
$E_7$ & $i=2$ & $12/7$ \\ \hline
$E_7$ & $i=3$ & $11/6$ \\ \hline
\rowcolor[gray]{.9} $E_7$ & $i=4$ & $23/12$ \\ \hline
$E_7$ & $i=5$ & $28/15$ \\ \hline
$E_7$ & $i=6$ & $7/4$ \\ \hline
$E_7$ & $i=7$ & $4/3$ \\ \hline \hline
$E_8$ & $i=1$ & $7/4$ \\ \hline
$E_8$ & $i=2$ & $15/8$ \\ \hline
$E_8$ & $i=3$ & $27/14$ \\ \hline
\rowcolor[gray]{.9} $E_8$ & $i=4$ & $59/30$ \\ \hline
$E_8$ & $i=5$ & $39/20$ \\ \hline
$E_8$ & $i=6$ & $23/12$ \\ \hline
$E_8$ & $i=7$ & $11/6$ \\ \hline
$E_8$ & $i=8$ & $3/2$ \\ \hline
\end{tabular}
\medskip
\caption{Maximum values of~$\langle v,\alpha^\vee \rangle$ for $v\in \mathcal{P}^{*}_{\Phi_i^\vee}$ and $\alpha^\vee \in \Phi^\vee\setminus \Phi^\vee_i$ for the exceptional types. The overall maximum for each $\Phi$ is highlighted in gray.} \label{tab:exceptional_maxes}
\end{table}

\begin{remark}
Let $\kappa\geq 1$ be minimal such that $\pi_U(\mathcal{P}_{\Phi^\vee}) \subseteq \kappa\cdot \mathcal{P}_{\Phi_U^\vee}$ for all nonzero subspaces $\{0\}\neq U\subseteq V$ spanned by a subset of~$\Phi^\vee$. We just proved that $\kappa < 2$. How close to~$2$ does it get? Looking at Table~\ref{tab:classical_maxes}, we see that it gets arbitrarily close to $2$. Moreover, $(2-\kappa)$ appears to be on the order of $n^{-1}$. Therefore, one might wonder how the quantity $(\textrm{rank of $\Phi$})\times (2-\kappa)$ behaves for various~$\Phi$. This information is recorded in Table~\ref{tab:rank_kappa}. The root system which minimizes the quantity $(\textrm{rank of $\Phi$})\times (2-\kappa)$ (i.e., the root system which ``comes closest'' to breaking Lemma~\ref{lem:root_projection}, at least relative to its rank) is $\Phi=E_8$. Hence we have added to the long list of ways in which $E_8$ is the ``most exceptional'' root system (see, e.g.~\cite{garibaldi2016e8}).
\end{remark}

\begin{table}
\renewcommand*{\arraystretch}{2.5}
\begin{tabular}{c|c}
$\Phi$ & $(\textrm{rank of $\Phi$})\times(2-\kappa)$ \\ \hline
$A_n$ & always $\geq 2$ (for $n\geq 2$), and $\approx 4$ as $n\to \infty$ \\ \hline
$B_n$ & $2$ \\ \hline
$C_n$ & always $>1$, but $\approx 1$ as $n \to \infty$ \\ \hline
$D_n$ & always $>1$, but $\approx 1$ as $n \to \infty$ \\ \hline
$G_2$ & $1$ \\ \hline
$F_4$ & $\frac{2}{3}$ \\ \hline
$E_6$ & $1$ \\ \hline
$E_7$ & $\frac{7}{12}$ \\ \hline
$E_8$ & $\frac{8}{30}$ \\ \hline
\end{tabular}
\medskip
\caption{Rank times $(2-\kappa)$ for all the irreducible root systems, where~$\kappa \geq 1$ is minimal such that $\mathcal{P}^*_{\Phi_U^\vee} \subseteq \kappa \cdot \mathcal{P}^*_{\Phi^\vee}$ for all nonzero subspaces $\{0\}\neq U\subseteq V$ spanned by a subset of~$\Phi^\vee$.} \label{tab:rank_kappa}
\end{table}

\bibliography{ehrhart_formula}{}
\bibliographystyle{plain}

\end{document}